\documentclass[11pt,reqno]{amsart} 
\usepackage{amsthm,mathrsfs,fancyhdr,subfigure,amssymb,amsmath}
\usepackage{graphicx} 
\usepackage[noadjust,sort]{cite}
\usepackage[normalem]{ulem}
\usepackage{xpunctuate}
\usepackage{url}
\usepackage{comment}
\usepackage[foot]{amsaddr}
\usepackage{longtable,booktabs,array}
\usepackage[normalem]{ulem}
\usepackage{amssymb,amsmath,bbm}
\usepackage{enumitem}
\usepackage{esint}
\usepackage{soul}
\usepackage[table,xcdraw]{xcolor}

\usepackage{pifont}
\usepackage{float}
\usepackage{url}
\usepackage{varwidth}
\usepackage{mathrsfs}
\usepackage{tikz}
\usepackage{bbm}

\usepackage{rotate}
\usepackage{needspace}
\usepackage[margin=1.05in]{geometry}
\usepackage{rotate}
\usepackage{doi}
\hypersetup{hidelinks}

\makeatletter
\def\paragraph{\@startsection{paragraph}{4}%
  \z@\z@{-\fontdimen2\font}%
  {\normalfont\bfseries}}
\makeatother

\newcommand{\vol}{\operatorname{vol}}
\renewcommand\le{\leqslant}
\renewcommand\ge{\geqslant}
\renewcommand\leq{\leqslant}
\renewcommand\geq{\geqslant}

\allowdisplaybreaks

\numberwithin{equation}{section}
\numberwithin{figure}{section}

\theoremstyle{plain}
\newtheorem{theorem}{Theorem}[section]
\newtheorem*{theorem*}{Theorem}
\newtheorem{lemma}[theorem]{Lemma}
\newtheorem{proposition}[theorem]{Proposition}
\newtheorem{corollary}[theorem]{Corollary}

\newtheorem{definition}[theorem]{Definition}

\theoremstyle{definition}
\newtheorem*{definition*}{Definition}

\newcommand\xqed[1]{%
    \leavevmode\unskip\penalty9999 \hbox{}\nobreak\hfill
    \quad\hbox{#1}}

\newtheorem{exampleqqq}[theorem]{Example}
\newenvironment{example}{\begin{exampleqqq}}
  {\xqed{$\triangle$}
  \end{exampleqqq}}

\newtheorem{remarkqqq}[theorem]{Remark}
\newenvironment{remark}{\begin{remarkqqq}}
  {\xqed{$\triangle$}\end{remarkqqq}}

\theoremstyle{remark}

\newcounter{dummy}
\makeatletter
\newcommand\myitem[1][]{\item[#1]\refstepcounter{dummy}\def\@currentlabel{#1}}
\makeatother

\newenvironment{romenumerate}[1][-10pt]{%
\addtolength{\leftmargini}{#1}\begin{enumerate}
 \renewcommand{\labelenumi}{\textup{(\roman{enumi})}}%
 \renewcommand{\theenumi}{\labelenumi}%
 }{\end{enumerate}}

\newcounter{oldenumi}

\newcounter{thmenumerate}
\newenvironment{thmenumerate}
{\setcounter{thmenumerate}{0}%
 \def\item{\par
 \refstepcounter{thmenumerate}\textup{(\roman{thmenumerate})\enspace}}
}
{}

\newcounter{xenumerate}

\newcommand{\refT}[1]{Theorem~\ref{#1}}
\newcommand{\refTs}[1]{Theorems~\ref{#1}}
\newcommand{\refC}[1]{Corollary~\ref{#1}}

\newcommand{\refL}[1]{Lemma~\ref{#1}}
\newcommand{\refLs}[1]{Lemmas~\ref{#1}}
\newcommand{\refR}[1]{Remark~\ref{#1}}

\newcommand{\refS}[1]{Section~\ref{#1}}

\newcommand{\refSS}[1]{Section~\ref{#1}}

\newcommand{\refProp}[1]{Proposition~\ref{#1}}
\newcommand{\refD}[1]{Definition~\ref{#1}}
\newcommand{\refE}[1]{Example~\ref{#1}}

\newcommand{\refApp}[1]{Appendix~\ref{#1}}

\newcommand\MC{\operatorname{MC}}
\newcommand\C{\operatorname{C}}
\newcommand\MS{\operatorname{MS}}
\newcommand\wpure{_{\mathrm{pure}}}
\newcommand\pure{{\mathrm{pure}}}
\newcommand\mix{{\mathrm{mix}}}
\newcommand\nupure{{\nu,\pure}}
\newcommand\kkpure{{\kk,\pure}}
\newcommand\kkmix{{\kk,\mix}}

\newcommand\fra{{\mathrm{frac}}}  
\newcommand\kkfra{{\kk,\fra}}
\newcommand\comp{^{\textsf c}}

\newcommand\pfitemx[1]{\par#1:}
\newcommand\pfitemref[1]{\pfitemx{\ref{#1}}}

\newcommand{\sumi}{\sum_{i=1}^\infty}

\newcommand{\sumikk}{\sum_{i=1}^\kk}
\newcommand{\sumjkk}{\sum_{j=1}^\kk}
\newcommand{\sumlkk}{\sum_{\ell=1}^\kk}
\newcommand{\sumkkk}{\sum_{k=1}^\kk}
\newcommand{\sumglkk}{\sum_{\glm=1}^\kk}
\newcommand{\sumgllkk}{\sum_{\gl=1}^\kk}
\newcommand{\sumlgl}{\sum_{\ell=1}^\glm}
\newcommand{\sumlnu}{\sum_{\ell=1}^\nu}

\newcommand{\glm}{\gl} 
\newcommand{\prodin}{\prod_{i=1}^n}
\newcommand{\prodinu}{\prod_{i=1}^\nu}

\newcommand\set[1]{\ensuremath{\{#1\}}}
\newcommand\bigset[1]{\ensuremath{\bigl\{#1\bigr\}}}

\newcommand\xpar[1]{(#1)}
\newcommand\bigpar[1]{\bigl(#1\bigr)}
\newcommand\Bigpar[1]{\Bigl(#1\Bigr)}
\newcommand\biggpar[1]{\biggl(#1\biggr)}

\newcommand\bigsqpar[1]{\bigl[#1\bigr]}
\newcommand\sqpar[1]{[#1]}

\newcommand\biggsqpar[1]{\biggl[#1\biggr]}

\newcommand\cpar[1]{\{#1\}}
\newcommand\bigcpar[1]{\bigl\{#1\bigr\}}

\newcommand\bigabs[1]{\bigl\lvert#1\bigr\rvert}
\newcommand\Bigabs[1]{\Bigl\lvert#1\Bigr\rvert}
\newcommand\biggabs[1]{\biggl\lvert#1\biggr\rvert}

\newcommand\norm[1]{\lVert#1\rVert}

\def\rompar(#1){\textup(#1\textup)}

\newcommand\innprod[1]{\langle#1\rangle}

\def\xexp(#1){e^{#1}}

\newcommand\frax[1]{\{#1\}}

\newcommand\ntoo{\ensuremath{{n\to\infty}}}
\newcommand\Ntoo{\ensuremath{{N\to\infty}}}

\newcommand\toinf{\to\infty}

\newcommand{\tend}{\longrightarrow}

\newcommand\asto{\overset{\mathrm{a.s.}}{\tend}}

\newcommand\eqd{\overset{\mathrm{d}}{=}}

\newcommand\bbR{\mathbb R}

\newcommand\bbN{\mathbb N}

\newcommand\bbNx{\bbN^*}

\newcommand{\IR}{\mathbbm{R}}

\newcommand\E{\operatorname{\mathbb E}{}}
\renewcommand\P{\operatorname{\mathbb P{}}}
\renewcommand\Pr{\P} 

\newcommand{\pr}{\mathbb P}
\newcommand{\IP}{\mathbb P}
\newcommand{\Prob}{\mathbb P}
\newcommand{\IE }{\mathbbm E}

\newcommand\gD{\Delta}
\newcommand\gf{\varphi}

\newcommand\kk{\kappa}
\newcommand\gl{\lambda}
\newcommand\gL{\Lambda}

\newcommand\gO{\Omega}
\newcommand\gs{\sigma}

\newcommand\gU{\Upsilon}
\newcommand\eps{\varepsilon}

\newcommand\cA{{\mathcal A}}
\newcommand\cB{\mathcal B}

\newcommand\cF{\mathcal F}
\newcommand\cG{\mathcal G}

\newcommand\cL{{\mathcal L}}

\newcommand\cU{{\mathcal U}}

\newcommand\cW{\mathcal W}

\newcommand\tW{\widetilde W}
\newcommand\tX{{\widetilde X}}

\newcommand\tWW{\widetilde\WW}
\newcommand\tPsi{\tilde\Psi}

\newcommand\tchi{\tilde\chi}
\newcommand\tgf{\tilde\gf}
\newcommand\ttau{\tilde\tau}
\newcommand\tg{\tilde g}

\newcommand\bWW{\overline\WW}
\newcommand\bPsi{\overline\Psi}
\newcommand\bpsi{\overline\psi}

\newcommand\bh{\overline h}

\newcommand\WW{\mathbf{W}}
\newcommand\ww{\mathbf{w}}
\newcommand\bx{\mathbf{x}}
\newcommand\FF{\mathbf{F}}
\newcommand\GG{\mathbf{G}}
\newcommand\GGx{\GG}
\newcommand\hGG{\mathbf{\widehat G}}
\newcommand\KK{\mathbf{K}}
\newcommand\ee{\mathbf{e}}

\newcommand{\tone}{\mathbf{1}}

\newcommand\sindic[1]{\boldsymbol1_{#1}}
\newcommand\indic[1]{\boldsymbol1\cpar{#1}}
\newcommand\bigindic[1]{\boldsymbol1\bigcpar{#1}}

\newcommand\etta{\boldsymbol1}

\newcommand\pairs{^{(2)}}
\newcommand\npairs{[n]^{(2)}}

\newcommand{\tinj}{t_{\mathrm{inj}}}
\newcommand{\tind}{t_{\mathrm{ind}}}

\newcommand\cLoo{\cL_\infty}

\newcommand\hW{\widehat{W}}
\newcommand\hWW{\widehat{\WW}}
\newcommand\cWW{\boldsymbol{\cW}}
\newcommand\cWWX{\cWW^*}
\newcommand\hcWW{\widehat{\cWW}}
\newcommand\cWo{\cW}
\newcommand\hcWo{\widehat{\cW}}
\newcommand\cWc{\cW_c}

\newcommand\tauu{\tau^+}
\newcommand\tgO{{\widetilde \gO}}
\newcommand\tcF{{\widetilde \cF}}
\newcommand\tmu{{\widetilde \mu}}

\newcommand\cnorm[1]{\|#1\|_{\square}}
\newcommand\linorm[1]{\|#1\|_{L^1}}
\newcommand\dcut{{\delta_{\square}}}
\newcommand{\dels}{\delta_\square}

\newcommand\wx{weak-$*$}
\newcommand\wxt{weak-$*$ topology}

\newcommand\mpp{measure-preserving}

\newcommand\xioo{\xi_\emptyset}
\newcommand\PPsi{\gL}
\newcommand\Psix{\Psi^*}
\newcommand\psix{\psi^*}
\newcommand\OFmu{(\gO,\cF,\mu)}
\newcommand\Omu{(\gO,\mu)}
\newcommand\bL{\overline L}

\newcommand\thx{\widetilde h}

\newcommand\hq{\hat q}
\newcommand\hqx{\hat q^*}
\newcommand\qx{q^*}
\newcommand\hQ{\widehat{Q}}
\newcommand\hQx{\widehat{Q}^*}

\newcommand{\qhatstarmix}{\hat{q}^{\,*}_{\mathrm{mix}}}
\newcommand{\qhatstarpur}{\hat{q}^{\,*}_{\mathrm{pure}}}
\newcommand{\qhatstarpurtwo}{\hat{q}^{\,*}_{2,\mathrm{pure}}}
\newcommand{\qhat}{\hat{q}}

\newcommand{\q}{q^*}

\newcommand{\cc}{{\rm cc}}

\newcommand{\Gnp}{G_{n,p}}

\newcommand\intoi{\int_0^1}

\newcommand\oi{\ensuremath{[0,1]}}

\newcommand\ooo{[0,\infty)}

\newcommand\setoi{\set{0,1}}

\newcommand\dd{\,\mathrm{d}}

\newcommand\Uoi{\ensuremath{\mathsf U[0,1]}}
\newcommand\cLeb{\cB^\ast}

\newcommand\qw{^{-1}}

\newcommand{\gsf}{$\gs$-field}

\newcommand\rhs{right-hand side}

\newcommand\xoo{_1^\infty}

\newcommand\CHI{\chi^*}
\newcommand\essinf{\operatorname{ess\,inf}}

\newcommand\hr{\hat{r}}
\newcommand\dnormmin{\widetilde{d}_{\min}}
\newcommand\Ev{\operatorname{E_{\mathit v}}}

\def\given{\typeout{Command 'given' should only be used within bracket command}}
\newcounter{@bracketlevel}

\def\@bracketfactory#1#2#3#4#5#6{
\expandafter\def\csname#1\endcsname##1{%
\addtocounter{@bracketlevel}{1}%
\global\expandafter\let\csname @middummy\alph{@bracketlevel}\endcsname\given%
\global\def\given{\mskip#5\csname#4\endcsname\vert\mskip#6}%
\csname#4l\endcsname#2##1\csname#4r\endcsname#3%
\global\expandafter\let\expandafter\given\csname @middummy\alph{@bracketlevel}\endcsname
\addtocounter{@bracketlevel}{-1}}%
}

\def\bracketfactory#1#2#3{%
\@bracketfactory{#1}{#2}{#3}{relax}{1mu plus 0.25mu minus 0.25mu}{0.6mu plus 0.15mu minus 0.15mu}
\@bracketfactory{b#1}{#2}{#3}{big}{1mu plus 0.25mu minus 0.25mu}{0.6mu plus 0.15mu minus 0.15mu}
\@bracketfactory{bb#1}{#2}{#3}{Big}{2.4mu plus 0.8mu minus 0.8mu}{1.8mu plus 0.6mu minus 0.6mu}
\@bracketfactory{bbb#1}{#2}{#3}{bigg}{3.2mu plus 1mu minus 1mu}{2.4mu plus 0.75mu minus 0.75mu}
\@bracketfactory{bbbb#1}{#2}{#3}{Bigg}{4mu plus 1mu minus 1mu}{3mu plus 0.75mu minus 0.75mu}
}

\bracketfactory{clc}{\lbrace}{\rbrace}
\bracketfactory{clr}{(}{)}
\bracketfactory{cls}{[}{]}
\bracketfactory{abs}{\lvert}{\rvert}
\bracketfactory{norm}{\Vert}{\Vert}
\bracketfactory{floor}{\lfloor}{\rfloor}
\bracketfactory{ceil}{\lceil}{\rceil}
\bracketfactory{angle}{\langle}{\rangle}


\title{Graphon Colourings and Limits of Graph Partitions}

\newcommand{\trian}{
  \begin{tikzpicture}[baseline=-0.3ex,scale=0.25]
  \tikzstyle{vertex}=[circle,fill=black, minimum size=2pt,inner sep=1pt]
  \node[vertex] (v1) at (-0.5, 0){};
  \node[vertex] (v2) at (0.5,0){};
  \node[vertex] (v3) at (0,0.8){};
  \draw (v1)--(v2)--(v3)--(v1);
  \end{tikzpicture}}

\newcommand{\cherry}{
  \begin{tikzpicture}[baseline=-0.3ex,scale=0.25]
  \tikzstyle{vertex}=[circle,fill=black, minimum size=2pt,inner sep=1pt]
  \node[vertex] (v1) at (-0.5, 0){};
  \node[vertex] (v2) at (0.5,0){};
  \node[vertex] (v3) at (0,0.8){};
  \draw (v1)--(v3)--(v2);
  \end{tikzpicture}}

\author[S. Janson]{Svante~Janson$^\ast$}
\address{$^\ast$Department of Mathematics, Uppsala University, \upshape \texttt{\{svante.janson, anda.skeja, fiona.skerman\}@math.uu.se}}

\author[T. Lo]{Tiffany~Y.~Y.~Lo$^\circ$}
\address{$^{\circ}$Department of Mathematics, Stockholm University, \upshape \texttt{tiffany.y.y.lo@math.su.se}}

\author[C. McDiarmid]{Colin~McDiarmid$^\diamond$}
\address{$^\diamond$ Department of Statistics, Oxford University, \upshape \texttt{cmcd@stats.ox.ac.uk}}

\author[A. Skeja]{\\ Anda~Skeja$^\ast$}
\author[F. Skerman]{Fiona~Skerman$^\ast$}

\begin{document}

\begin{abstract}
We develop a framework for pure and mixed graphon colourings and use it to study the limiting behaviour of graph partition parameters under cut convergence.
Pure colourings correspond to measurable partitions, while mixed colourings assign a probability
distribution on the colour set to each point. We develop the representation, equivalence and convergence theory, and show that every mixed graphon colouring has an equivalent pure representative,
possibly on a different probability space. For each fixed finite colour set, we establish compactness and
describe the topology of coloured graph limits using the cut norm
on graphons and the weak-$*$ topology on colourings. The usual coloured cut distance characterises convergence for pure colourings, but not for general mixed colourings.

As applications,  we prove that the Cheeger constant and modularity converge to their mixed graphon counterparts along cut-convergent graph sequences whose orders tend to infinity, assuming uniformly linear minimum degree and positive limiting edge density, respectively. Both mixed functionals are invariant under graphon
equivalence. We also establish their continuity in cut distance on the corresponding classes of graphons, with a stability bound for the Cheeger constant. For modularity, we further 
characterise coloured graphons attaining extremal modularity values and prove attainment of the mixed optimum for each fixed number of colours. We also exhibit a graphon whose modularity supremum is not attained by any finite colouring.
\end{abstract}

\maketitle 

{
\footnotesize\tableofcontents}

\section{Introduction}
Graph partition problems optimise an objective over partitions of the vertex set. Examples include maximum cut, minimum $k$-section, modularity and the Cheeger constant. Studying the asymptotic behaviour of such parameters along convergent graph sequences leads to the following question: how should the partitions over which the optimisation is performed be represented in the limit? More precisely, given a sequence of graphs $(G_n)_{n \ge 1}$ with $|G_n| \to \infty$, whose associated step graphons $W_{G_n}$ converge in cut distance to a graphon $W$, and partitions $\mathcal P_n$ of $V(G_n)$, what is the appropriate limiting representation of the sequence $(G_n, \mathcal P_n)$?

Viewing a vertex partition as a colouring, an observation also made by \cite{Diao}, 
provides the bridge to graph limits: the graph and its partition can then be represented jointly by a coloured graphon. In this paper, we develop a coloured-graphon framework that incorporates vertex partitions into dense graph limits and provide a continuum setting for graph partition optimisation. Within this framework, partitions may be represented either by \emph{pure} colourings, corresponding to measurable partitions of the underlying probability space, or by \emph{mixed} colourings, corresponding to probability-valued assignments. Although finite vertex partitions are naturally pure colourings, mixed colourings arise as the continuum counterpart of fractional partitions and play an important role in the limiting theory. Figure \ref{fig:pure-mixed-coloured-graphons} illustrates pure and mixed colourings in the finite and graphon settings. 
\begin{figure}[t]
   \centering

    \includegraphics[width=\textwidth]{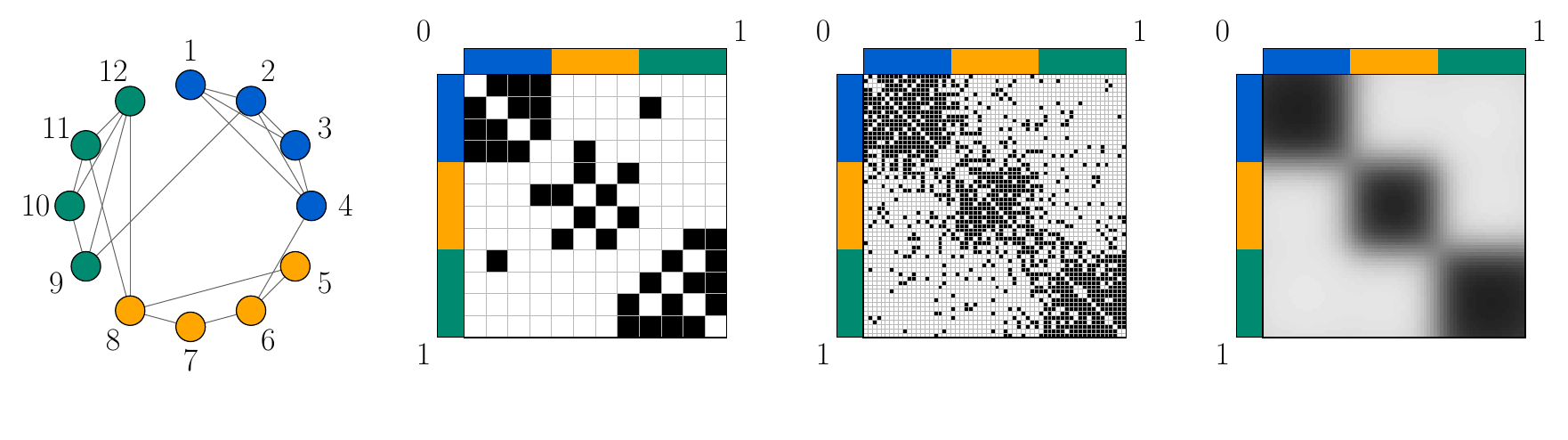}

    \smallskip

    \medskip

    \includegraphics[width=\textwidth]{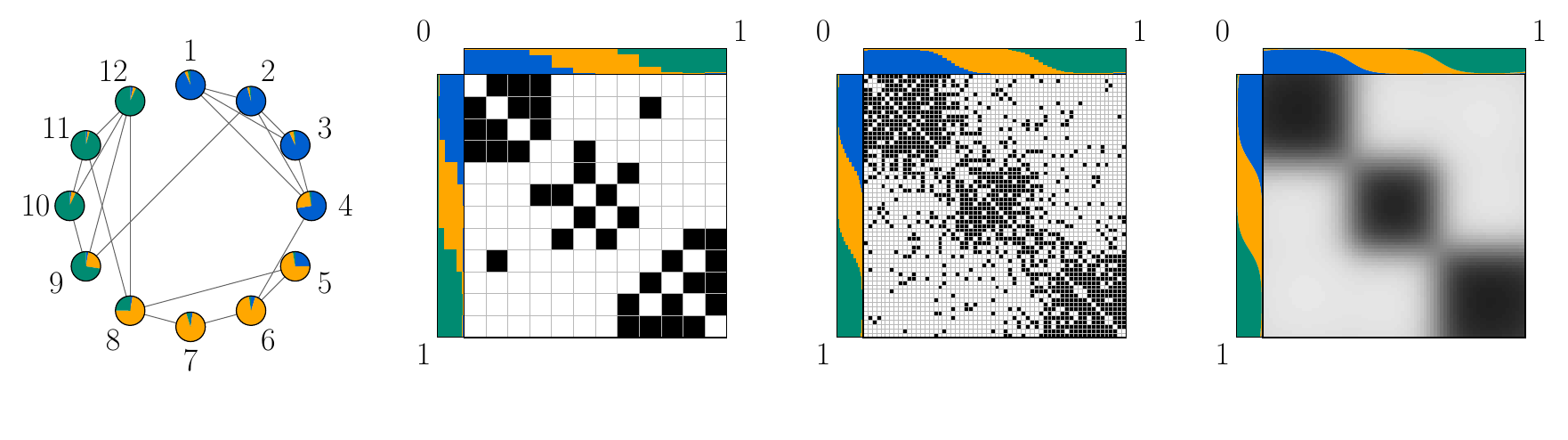}

    \smallskip
    \caption{{\small Finite coloured graphs and their associated coloured graphons. The coloured strips represent the vertex assignments, with a pure colouring \((W,\chi)\) in the top panel and a mixed colouring \((W,\Psi)\) in the bottom panel. In each row, the first two panels show a finite graph with a colouring and its associated step graphon. The third panel shows a step graphon associated with a larger graph, and the final panel illustrates a possible graphon limit. }}
    \label{fig:pure-mixed-coloured-graphons}
\end{figure}

Once partitions are represented on the underlying probability space of the graphon, finite partition optimisation becomes a continuum variational problem over measurable colour classes. Maximum cut illustrates this passage as an optimisation over measurable $2$-colourings of the underlying probability space. Constraints on the sizes of the finite parts translate into constraints on the masses of the corresponding colour classes.  
 Minimum bisection, and more generally minimum $k$-section illustrate this correspondence, with finite balance constraints becoming colour-mass constraints. This perspective complements the graph-partition energy framework of \cite{BCLSV2}, see Remark~\ref{rem:conn-bclsv}.

The Cheeger constant, defined as the minimum ratio of the number of crossing edges to the smaller sum of degrees on either side
of a cut, presents a difficulty because both the numerator
and the denominator must be controlled. A graphon analogue of the Cheeger constant, corresponding to our
formulation in terms of pure colourings, was introduced by \cite{Khetan}. They also showed that cut convergence alone does not imply convergence of Cheeger constants. We prove cut-metric stability for graphons whose degree functions
share a positive lower bound
(Proposition~\ref{prop:cheeger-stability}).
Together with a comparison between finite graphs and their
associated step graphons, this yields convergence under a uniform
linear minimum-degree condition on the finite graphs
(Theorem~\ref{thm:cheeger-conv}).

Modularity, introduced by \cite{NG04}, is a widely used partition parameter for community detection \cite{POM09},  measuring within-community connectivity relative to a degree-based null model. To recall its definition, given a graph $G$, we denote the degree of a vertex $v\in V(G)$ by $d_v$.
Furthermore, for a set $A\subseteq V(G)$ of vertices, we 
let $e(A)$ denote the number of edges inside $A$, and $\vol(A):=\sum_{v\in A} d_v =2e(A)+e(A,A^c)
$.

\begin{definition}[Modularity]\label{D1}
    Let $G$ be a graph with $m\geq 1$ edges. For a partition $\mathcal{A}$
    of the vertices of $G$, the modularity score of $\cA$ on $G$ is
    \begin{align}\label{D1a}
        q(G, \cA) :=
\frac{1}{2m}\sum_{A\in \mathcal{A}} \sum_{u,v\in A}
\Bigpar{\tone_{uv\in G}-\frac{d_ud_v}{2m} } 
= \sum_{A\in \cA}
\Bigpar{\frac{e(A)
}{m} - \frac{\vol(A)^2}{4m^2}},
    \end{align} 
    and the modularity of $G$ is 
    \begin{align}\label{D1b}
q^*(G):=\max_{\cA}q(G, \cA),      
    \end{align}
where the maximum is over all possible vertex partitions of $G$. If $G$ has no edges, then for any vertex partition $\cA$, we let $q(G, \cA):=0$
and so $\qx(G)=0$.
\end{definition}
The within-class degree correction and the unrestricted number of communities make the passage to graphon limits less direct for modularity. A graphon analogue of modularity was introduced by \cite{Klimm} by optimising over measurable partitions, corresponding in our terminology to pure colourings. For a graphon $W$ on $(\Omega,\mu)$ with positive edge density, $\rho_W:=\iint_{\Omega^2}W(x,y) \mathrm d \mu(x) \mathrm d \mu(y)$ and degree function $k_W(x):=\int_\Omega W(x,y) \mathrm d \mu(y)$, we extend their definition by introducing modularity score of a mixed $\kappa$-colouring $\Psi=(\psi_\ell)_{\ell=1}^\kappa$:
\begin{equation}
\hat q(W,\Psi):=\frac{1}{\rho_W}\iint_{\Omega^2}\left(W(x,y)-\frac{k_W(x)k_W(y)}{\rho_W}\right)\sum_{\ell=1}^\kappa \psi_\ell (x) \psi_\ell(y) \mathrm d \mu(x) \mathrm d \mu (y).
\end{equation}
We define the mixed graphon modularity by 
\begin{equation}
\hat{q}_{\mathrm{mix}}^*(W):=\sup_{\kappa \ge 1} \sup_{\Psi} \hat{q}(W,\Psi),
\end{equation}
and use
$\hat q^*(W):=\hat q^*_{\mathrm{mix}}(W)$ as our definition
on general probability spaces. We set $\hat q(W,\Psi):=0$ and consequently $\hqx(W):=0$ when
$\rho_W=0$.

In addition to introducing the graphon analogue of modularity, \cite{Klimm} explore the problem of finding the modularity-maximising partition for several graphon models, but do not establish
convergence of the optimal modularity values along general
cut-convergent graph sequences with positive limiting edge density.
The following theorem establishes this convergence and is one
of the main results of the paper.
\begin{theorem*}[Graphon limit of modularity]\label{thm:intro-main}
Let $(G_n)_{n\geq 1}$ be a sequence of finite simple graphs such that $|G_n|\to\infty$. Suppose that $W_{G_n}\to W$ in cut distance for a graphon $W:[0,1]^2 \to [0,1]$ with $\rho_W>0$. Then
\begin{align}
q^*(G_n)\to \hat q^*(W).
\end{align}
\end{theorem*}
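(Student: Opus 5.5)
The plan is to prove the two inequalities $\liminf_n q^*(G_n)\ge \hat q^*(W)$ and $\limsup_n q^*(G_n)\le \hat q^*(W)$ separately. Both rest on three ingredients. The first is that $\hat q(\cdot,\Psi)$ is stable under cut-distance perturbations, uniformly in $\Psi$. The second is a comparison between $q^*(G_n)$ and the mixed modularity of the step graphon, $\hat q^*(W_{G_n})$. The third is invariance of $\hat q^*$ under graphon equivalence, which lets us replace $W$ by $W'$ with $\|W_{G_n}-W'\|_\square\to 0$ after relabelling by measure-preserving maps.

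\emph{Step 1 (uniform cut-norm stability).} Fix $\kappa$ and a mixed colouring $\Psi$. Write
\[
\hat q(W,\Psi)=\frac{1}{\rho_W}\sum_{\ell}\iint W\psi_\ell\otimes\psi_\ell-\frac{1}{\rho_W^2}\sum_\ell\Bigl(\int k_W\psi_\ell\Bigr)^2 .
\]
Since $0\le\psi_\ell\le1$, we have $\bigl|\iint (W-U)\psi_\ell\otimes\psi_\ell\bigr|\le 4\|W-U\|_\square$. We also have $\bigl|\int (k_W-k_U)\psi_\ell\bigr|\le \|W-U\|_\square$, and $|\rho_W-\rho_U|\le\|W-U\|_\square$. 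The catch is that summing over $\ell$ would introduce a factor $\kappa$. To avoid this, use $\sum_\ell\psi_\ell=1$ and linearity. Choose independent uniform random colours: for each $\ell$, the set-based bound applies to $\{\psi_\ell>t\}$ after integrating over a threshold $t$. Then $\sum_\ell \iint (W-U)\psi_\ell\otimes\psi_\ell$ is a single bilinear form, $\iint (W-U)(x,y)\,K(x,y)$ with $K(x,y)=\sum_\ell\psi_\ell(x)\psi_\ell(y)$. Bounding this by $O(\|W-U\|_\square)$ uniformly is the main obstacle.

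I would first try the following. Since $K(x,y)=\mathbb P(c(x)=c(y))$ for independent random colours $c(x)\sim\psi(x)$, one can realise $K$ as an expectation over a random pure partition into at most $\kappa$ parts. The cut norm only controls sums over a bounded number of rectangles, so the fallback is different: reduce to boundedly many colours. Merging colours into at most $m=O(1/\varepsilon)$ groups of small degree-mass changes $\hat q$ by only $O(\varepsilon)$. Indeed, merging parts $A,B$ changes the score by $\frac{2}{\rho}\bigl(e(A,B)-\mathrm{vol}(A)\mathrm{vol}(B)/\rho\bigr)$, and the small parts can be grouped greedily so the loss is controlled by their total squared volumes. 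This is standard for finite modularity and carries over to mixed colourings. With $m$ colours fixed, the error is $O(m\|W-U\|_\square/\rho^2)$. Choosing $\varepsilon\to0$ slowly gives $|\hat q^*(W)-\hat q^*(U)|\to0$ when $\|W-U\|_\square\to0$ and $\rho_W>0$.

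\emph{Step 2 (finite versus step graphon).} Every partition of $V(G_n)$ is a pure colouring of $W_{G_n}$, and $q(G_n,\mathcal A)=\hat q(W_{G_n},\chi_{\mathcal A})$ up to the diagonal correction $O(1/(|G_n|\rho))$, which tends to $0$. Hence $q^*(G_n)\le \hat q^*(W_{G_n})+o(1)$. For the reverse inequality, a mixed colouring of the step graphon can be averaged on each step, since $\hat q$ only depends on the step averages of $\Psi$. This gives an assignment of probability vectors $p_v$ to vertices. Rounding by independently colouring each vertex $v$ from $p_v$ gives an expected score equal to $\hat q$ up to the diagonal terms $O(1/|G_n|)$; the volume term needs a second-moment argument, since the variance of each volume is $O(n\,\Delta^2)$, which is negligible relative to $m^2$ when $m=\Theta(n^2)$. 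Some partition attains at least the expectation, so $q^*(G_n)\ge\hat q^*(W_{G_n})-o(1)$.

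Finally, combine the steps. By equivalence invariance and Step 1 applied to $W_{G_n}$ versus a suitably relabelled $W$ (note $\rho_{W_{G_n}}\to\rho_W>0$), we get $\hat q^*(W_{G_n})\to\hat q^*(W)$. Step 2 then yields $q^*(G_n)\to\hat q^*(W)$.
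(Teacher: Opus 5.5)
Your argument is correct and follows essentially the paper's route: Step~2 is Lemma~\ref{lem:rounding} combined with $q^*_{\mathrm{frac}}(G_n)=\hat q^*(W_{G_n})$ (Theorem~\ref{TM2}), and the fallback in Step~1 is the proof of Theorem~\ref{TM}, namely fixed-$\kappa$ cut-norm stability (Theorem~\ref{TMk}), invariance under relabelling, and a reduction to boundedly many colours that is uniform in the graphon. The one variation is that you reduce colours by deterministic greedy merging of low-volume classes (additive loss $O(\varepsilon)$ with $O(1/\varepsilon)$ colours) rather than the paper's random merging in Lemma~\ref{LMkl}. Two small points: $q(G_n,\mathcal A)=\hat q(W_{G_n},\chi_{\mathcal A})$ holds exactly, with no diagonal correction; and in the rounding step, the quantity that must be small is the \emph{sum} over colours of the volume variances, which is at most $\sum_v d_v^2$ independently of $\kappa$.
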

 
On atomless Borel probability spaces, the suprema over pure and mixed colourings agree; see \refT{LX1} and
Remark~\ref{R01+}. The proof combines continuity of graphon modularity at positive density with a comparison between ordinary and fractional modularity. The unrestricted number of communities is handled by a uniform approximation using a bounded number of colours (Lemma~\ref{LMk2}). The assumption $\rho_W>0$ is essential, since graph sequences with different limiting modularity values may all converge to the zero graphon as was noted in \cite{sampling}. 

\subsection{Our contributions}
The paper develops a framework for graphon colourings and applies it to maximum cut, minimum $k$-section, modularity, and the Cheeger constant, with particular emphasis on the latter two.

\vspace{0.5em}

First, we treat pure and mixed colourings simultaneously for any fixed finite colour set. Coloured homomorphism densities describe the limiting sampling laws, and every mixed coloured graphon has an equivalent pure representative. We also introduce simply coloured representatives and establish common-pullback representations for equivalent pure or simply coloured graphons. The product of the cut-norm topology on graphons and the weak-$*$ topology on colourings induces the topology of coloured graph limits.  The usual coloured cut distance describes this topology for pure colourings, but does not characterise equivalence for general mixed colourings. Maximum cut and minimum $k$-section illustrate the use of the framework.

\vspace{0.5em}
Second, we formulate the Cheeger constant using mixed two-colourings, prove equality with the pure infimum on atomless Borel probability spaces, and establish invariance under measure-preserving pullbacks. A finite-graph comparison and a cut-distance stability under a positive degree lower bound yield convergence for growing, cut-convergent
graph sequences with uniformly linear minimum degree. The degree assumption is imposed on the approximating graphs; a lower bound on the limiting graphon's degrees alone does not suffice.

\vspace{0.5em}
Third, we extend the graphon modularity formulation of \cite{Klimm} to mixed colourings on general probability spaces. The mixed optimum is invariant under graphon equivalence, agrees with the pure supremum on atomless Borel probability spaces, and is attained when the number of colours is fixed. We prove continuity at positive density and identify graphon modularity as the limit of finite modularity along cut-convergent graph sequences.  We relate fractional modularity to blow-ups of finite graphs, characterise equality in the bounds for coloured modularity scores, and exhibit a graphon whose modularity is not attained by any finite colouring.

\vspace{0.5em}
Finally, we introduce the notion of restricted estimability, where estimation from a small induced subgraph is required only for graphs satisfying a prescribed condition. Under suitable assumptions on the restriction, we characterise
restricted estimability through convergence and continuity conditions (Proposition~\ref{prop:res-estimability}). Applications include the Cheeger constant under a positive lower
bound on normalised minimum degree, and modularity and the clustering coefficient under a positive lower bound on edge density.

\subsection{Related work}\label{sec:related-work}
\paragraph{Coloured graphons.}
Several variants of coloured graphons have appeared in the graph-limit literature. A related graph-limit theory for pure vertex-coloured graphons with a finite colour set was developed by \cite{Diao}. In common with our framework, they use coloured homomorphism densities to describe convergence of finite coloured graphs and their graphon limits. They additionally introduce a coloured cut metric and establish the corresponding counting lemma, compactness, and equivalence between homomorphism-density and cut-metric convergence. A complementary framework was developed by \cite{AHR26}, who consider two-coloured graphons with soft vertex colourings in the study of co-evolving vertex and edge dynamics on dense graphs. Their framework treats $\kappa=2$ colours and allows mixed colourings $c\colon[0,1]\to[0,1]$. They use coloured subgraph densities to describe convergence. The characterisation of this convergence by coloured cut distance requires restriction to pure colourings. In another line of research,
\cite[Section 7]{GrebikP} use coloured graphons to study large deviation
properties for graphons. Their coloured graphons are more restrictive than
ours; they
define coloured graphons using a partition
$\bigcup_1^\kk A_\ell$ 
of the  probability space (in their case $\oi$) into measurable parts, 
which is equivalent to our pure coloured graphons.

\vspace{0.5em}

\paragraph{Partition functionals.} 
Graph partition problems in dense graphs have been studied in
\cite{BCLSV2} using the so-called fractional $q$-quotients, which are weighted graphs on $q$ vertices with edge and vertex weights given by fractional $q$-partitions, here known as mixed colourings with $q$ colours.
They show that two graphons that are close in cut distance also have sets of fractional $q$-quotients that are close in Hausdorff distance. For a family of partition problems
they establish equivalences between left-convergence of a graph sequence and convergences of these $q$-colour partition problems -- see~\refR{rem:conn-bclsv}.

In \cite{Diao} the authors consider a partition problem that is based on node-level statistics such as normalised vertex degrees and local triangle densities. Informally, a vertex is assigned to a certain part if its corresponding statistics fall within a designated range. By developing a theory of \emph{pure} coloured graphons, they show that, under suitable continuity and relabelling assumptions on the statistics
and boundary assumptions on the prescribed regions, a cut-convergent
dense graph sequence equipped with the resulting partitions converges
in coloured cut distance to the limiting graphon with its induced
pure colouring \cite{Diao}.

The partition problems considered in \cite{BCLSV2} and \cite{Diao} do not include modularity and the Cheeger constant; see Remark~\ref{rem:conn-bclsv} for more details. 
Modularity is moreover 
optimised over partitions with an unbounded number of colours, while these results  
consider optimisation over partitions with a fixed number of parts.

 Analogues of the Cheeger constant were defined for graphons 
 in~\cite{Khetan}, where they also establish corresponding Cheeger and Buser inequalities. For finite weighted graphs, they introduce a fractional Cheeger constant that agrees with the Cheeger constant of the associated step-graphon, and use probabilistic rounding to compare it with the ordinary finite-graph Cheeger constant; they also show that Cheeger constants are not, in general, continuous under cut-norm convergence. We establish a stability bound for graphon Cheeger constants and show convergence from graphs to graphons under a uniform linear minimum-degree bound (Proposition \ref{prop:cheeger-stability} and Theorem \ref{thm:cheeger-conv}).

\vspace{0.5em}
\paragraph{Parameter estimation.} A simple graph parameter (or graph parameter) is a function from the set of simple graphs to reals that is invariant under vertex relabelling. The authors of \cite{BCLSV1} 
show that a graph parameter $f$ is estimable if and only if $f(G_n)$ converges for every convergent graph sequence $(G_n)_{n\ge 1}$ as $n\toinf$. Apart from the trivial examples of subgraph densities, other examples of estimable parameters include maximum cut \cite{AKK99
}, free energies \cite{Lovasz} and edit distance to a hereditary property \cite{BCLSSV}.  The authors of \cite{BKK12} extend the equivalence notions of estimability to weighted bounded parameters and establish that certain balanced minimum multi-way cut densities are estimable. A related 
notion is property testing, pioneered by \cite{GGR98}; see  also \cite{AS05} and references therein. In particular, \cite{GGR98} establishes estimability for an abstract graph partition problem that encompasses $k$-colourability and Max-Cut. This result does not cover  modularity, clustering coefficient or the Cheeger constant -- all of which we show to be $r$-estimable in Section~\ref{sec.estim}.

\subsection{Organisation} The rest of the paper is organised as
follows. Section~\ref{sec:coloured-graphons} develops the representation,
equivalence and topology of coloured
graphons. Section~\ref{sec:graphon-partition} treats maximum cut, minimum
$k$-section and the Cheeger constant. Section~\ref{sec3:modularity} develops
graphon modularity, its relation to fractional partitions and graph
blow-ups, and its continuity and convergence properties. Section \ref{sec.estim} introduces and characterises restricted
estimability, with applications to the Cheeger constant, modularity and the clustering coefficient.

\section{A Framework for Graphon Colourings}\label{sec:coloured-graphons}

This section develops a framework for coloured graphons, which allows both pure colourings, corresponding to measurable partitions of the
underlying probability space, and mixed colourings, corresponding to
probability-valued vertex colours. 

This coloured graphon framework allows the representation and
equivalence theory needed for graph-partition problems. In particular, we
establish the relation between pure and mixed colourings, characterise the
associated coloured graph limits through coloured homomorphism densities, and
develop the pullback and equivalence theory needed to make the framework
independent of the chosen graphon representation. These results will later
allow us to pass between finite, measurable, and fractional partitions.

As noted in the introduction, some parts of the theory developed below have
appeared in~\cite{Diao, GrebikP, AHR26}; see Remark~\ref{rem:earlier-coloured} for a precise account of earlier contributions.

\subsection{Pure and mixed colourings of graphons}
We may without loss of generality (see below) 
consider graphons defined on $\oi$, but it is sometimes useful to define
them on other probability spaces $\OFmu$, so we make the definitions more
general. (We sometimes omit $\cF$ from the notation for brevity and write
$\Omu$.)

\begin{definition}[Pure and mixed-coloured graphons]\label{def:pure-mixed}
For $\kk\in\bbN$, let $\gD_\kk$ be the set of probability measures on
$[\kk]$; this
can be identified with the simplex 
$\bigset{\bx:=(x_\ell)_1^\kk\in\ooo^\kk:\sum_{\ell=1}^\kk x_\ell=1}$.

Let $\kappa\ge1$ be a fixed integer.
A \emph{pure $\kk$-coloured graphon} on a probability space $(\gO,\cF,\mu)$ is a pair
$\WW=(W,\chi)$, where $W$ is a graphon, i.e., a measurable symmetric
function $\gO^2\to\oi$, and $\chi$ is a measurable mapping $\gO\to[\kk]$.

A \emph{mixed $\kk$-coloured graphon} on a probability space $(\gO,\cF,\mu)$ is a pair
$\WW=(W,\Psi)$, where $W$ is a graphon on $\gO$ and $\Psi$ is a measurable map $\gO\to\gD_\kk$;
equivalently, $\Psi=(\psi_\ell)_1^\kk$ where each $\psi_\ell$ is a measurable map
$\gO\to\oi$ and $\sum_\ell\psi_\ell=1$.
(We will use the notation $\Psi=(\psi_\ell)_1^\kk$ frequently, 
often without comment.)
\end{definition}

\noindent The informal interpretation is that for a pure coloured graphon, each $x\in\gO$ is painted
with colour $\chi(x)$, while for a mixed coloured graphon, each $x$ is painted randomly, with
probability $\psi_\ell(x)$ for colour $\ell$ (independently for different
$x\in\gO$).

We will also refer to $\chi$ and $\Psi$ as pure and mixed
\emph{$\kk$-colourings} of $\gO$, or of $W$.

The number of colours $\kk$ is usually fixed, and will often be omitted from
the names.
Note that the case $\kk=1$ is trivial, in the sense that $\chi$ and $\Psi$
then are fixed; hence 1-coloured graphons can be identified with standard
uncoloured graphons.

\begin{remark}\label{R01}
When we consider graphons on $\gO=\oi$,   
we tacitly regard $\oi$ as a probability space by equipping it with
Lebesgue measure $\gl$.
Just as for uncoloured graphons, it does not matter 
whether we then use the Borel or Lebesgue \gsf{} on $\oi$,
since any Lebesgue measurable coloured graphon $(W,\Psi)$
is a.e.\ equal to a Borel measurable one $(W',\Psi')$
(meaning that $W=W'$ a.e.\ on $\oi^2$ and $\Psi=\Psi'$ a.e.\ on $\oi$),
and graphons that are a.e.\ equal will be identified in the sequel.
\end{remark}

\begin{remark}\label{R01+}
We usually consider graphons defined on an arbitrary probability space
$\OFmu$, but we sometimes consider only the traditional case $(\oi,\gl)$.
(Sometimes because it really matters, and sometimes just for
convenience.)
Note that if $\OFmu$ is an atomless Borel probability space, then $\OFmu$ is
isomorphic to $(\oi,\gl)$ by a \mpp{} bijection, see e.g.\ 
\cite[Theorem A.7]{SJ249}. 
Hence, all results below for graphons on $\oi$ hold for any atomless Borel
probability space $\OFmu$.
\end{remark}

\begin{remark}\label{rem:earlier-coloured}
\cite{Diao} define coloured graphons as our pure coloured graphons. For a fixed finite colour set, they establish
cut-metric compactness, characterise convergence by coloured
homomorphism densities, and prove density of finite coloured graphs
\cite[Theorems~3.7--3.9]{Diao}.

\cite{GrebikP} define coloured graphons using a partition 
$\bigcup_1^\kk A_\ell$
of the  probability space (in their case $\oi$) into measurable parts. This
is also 
clearly equivalent to our pure coloured graphons. 
They establish compactness for a coloured version of the cut metric
\cite[Theorem~7.1]{GrebikP}, see \refSS{SSmetrics}.

\cite{AHR26}
consider only the case $\kk=2$ (and $\gO=\oi$). They define a colouring of a
graphon as a function $c:\oi\to\oi$, which equals our $\psi_1$; since
$\psi_2=1-\psi_1$ when $\kk=2$, their definition is equivalent to our definition of mixed colourings
(in the case $\kk=2$);
they also
establish compactness for a coloured version of the cut metric
\cite[Section~2.1]{AHR26};
see again \refSS{SSmetrics}.
\end{remark}

We may regard $[\kk]$ as a subset of $\gD_\kk$, identifying $\ell\in[\kk]$ with the
point mass at~$\ell$, which in the simplex is represented by the unit vector 
$(\etta_{j=\ell})_{j=1}^\kk$. Thus the pure coloured graphons form a subset of the mixed coloured graphons;
more precisely, a mixed coloured graphon $(\WW,(\psi_\ell)_1^\kk)$ is pure if and only if
each $\psi_\ell:\gO\to\setoi$, i.e., if and only if each
$\psi_\ell:=\etta_{\gO_\ell}$ for a measurable partition $\gO=\bigcup_1^\kk\gO_\ell$.

We will see that every mixed coloured graphon is equivalent to a pure one
in the sense defined below; 
hence we may restrict ourselves to pure coloured graphons without real loss of generality.
Nevertheless, it will be convenient to consider mixed coloured graphons,
and we will 
usually do so; 
\emph{by ``coloured graphon" we will mean a mixed coloured graphon}. 

\begin{remark}
  Our definitions generalise standard concepts for graphs.
A \emph{(pure) coloured graph} is a graph $G$ equipped with a mapping
$\chi:V(G)\to [\kk]$ that assigns a colour to each vertex.
Similarly, a \emph{fractionally coloured graph} is a graph $G$
equipped with a mapping $\Psi:V(G)\to \gD_\kk$ that assigns a colour
distribution 
to each vertex. We will usually consider pure coloured graphs only; hence
(in contrast to graphons),
\emph{by ``coloured graph" we will mean a pure coloured graph}. 
\end{remark}

\begin{example}\label{EG1}
  An uncoloured graph $G$ defines a graphon  by a well-known standard
  construction; there are two closely related versions 
which are equivalent for many but not all purposes.
(See \refE{ex:C5} and \eqref{c511} there 
for a situation where the difference matters.)
In one version, which we denote by $W'_G$,
we take $\gO=V(G)$, with the uniform probability measure,
and let $W'_G$\label{def.WdashG} be the graphon defined by the adjacency matrix;
in another version $W_G$\label{def.WG}
we take $\gO=\oi$, partition it into $|G|$ intervals $I_v$ ($v\in V(G)$) of
the same lengths, and define $W_G$ by blowing up each point $v$ in the first
version to the interval $I_v$.
(This is a pullback of $W'_G$ by the map $\gf$ that maps each $I_v$ to
$v$; see \eqref{Wpull}.)
We may call $W'_G$ the \emph{atomic}
and $W_G$ the \emph{continuous}
graphon corresponding to $G$.

For a (pure) coloured graph $\GG$ 
we similarly
define pure coloured graphons $\WW'_\GG$ and
$\WW_\GG$ by adding to $W'_G$ and $W_G$
the map $\chi:\gO\to[\kk]$ which gives each point $v$, or
each interval $I_v$, the colour in $G$ of $v$.
Similarly, a mixed coloured graph $\GG$ defines
(mixed) coloured graphons $\WW'_\GG$ and $\WW_{\GG}$.
\end{example}

\subsubsection{Random graphs defined by a coloured graphon}\label{SSGG}

For $n\in\bbNx:=\mathbbm{N}\cup\{\infty\}$, 
let $\cL_n=\cL_{n;\kk}$ be the set of all $\kk$-coloured graphs with vertex set
$[n]$, where we define $[\infty]:=\bbN$.
$\cL_n$ can be identified with $[\kk]^n\times\setoi^{\npairs}$;
thus $\cL_n$ is a finite set if $n$ is finite, and $\cLoo$ is a compact
metric space. 
Given a mixed coloured graphon $\WW=(W,\Psi)$,
and $n\in\bbN$, let $\GGx(n,\WW)\label{def.WW-random-graph}\in\cL_n$ be the random pure-coloured graph with
vertex set 
$[n]$ defined as follows (generalising the well-known uncoloured case):
first sample i.i.d.\  points $X_i\in\gO$ (according to the
measure $\mu$); then, conditioned on $(X_i)_1^n$,
for each pair $(i,j)\in\npairs$ 
add an edge $ij$
with probability $W(X_i,X_j)$, and for each $i\in[n]$, give 
vertex $i$ colour $\ell$ with probability $\psi_\ell(X_i)$
(with all random choices independent, conditioned on $(X_i)_1^n$).
Note that, as a special case, for a pure coloured graphon $\WW=(W,\chi)$, we
simply give vertex $i$ colour $\chi(X_i)$; see Figure \ref{fig:graphon-sampling}. 

Similarly, $\GGx(\infty,\WW)\in\cL_\infty$ is the countably infinite random graph with
vertex set $[\infty]=\bbN$ defined in the same way.

Note that $\GGx(n,\WW)$, for any $n\in\bbNx$, is an exchangeable random graph,
i.e., its distribution is invariant under any permutation of the vertices.
Note also that $\GGx(n,\WW)$ is the induced subgraph of $\GGx(\infty,\WW)$ defined by
the first $n$ vertices. 

Let $\cL_n^{\fra}=\cL_{n,\kappa}^{\fra}$ be the set of fractionally
$\kappa$-coloured graphs with vertex set $[n]$. 
Given a mixed coloured graphon $\WW=(W,\Psi)$ and $n\in\bbNx$,
we define also a fractionally coloured random graph 
$\GG^{\fra}(n,\WW)\in \cL_n^{\fra}$\label{def.WW-random-graph-frac} as the 
fractionally 
coloured graph $(G(n,W),\eta)$  obtained as follows. First, sample
i.i.d.\  points $X_i\in \Omega$ and define the graph
$G(n,W)$ as above. Then, for each
$i\in[n]$, set $\eta(i)=(\psi_\ell(X_i))_{\ell=1}^\kk$. 
Note that for a pure-coloured graphon $\WW=(W,\chi)$, $\GG^{\fra}(n, \WW)=\GGx(n, \WW)$.

\begin{figure}[t]
    \centering
\includegraphics[width=\textwidth]{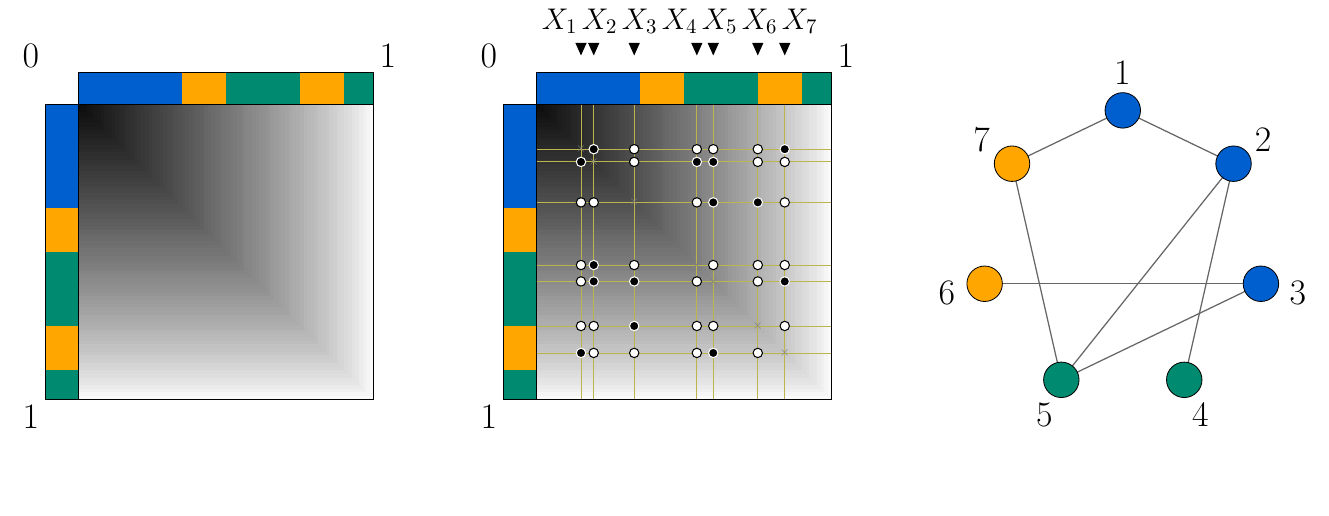}
    \caption{{\small Sampling a pure-coloured random graph $\GGx(n,\WW)$ from a pure-coloured graphon $\WW=(W,\chi)$, illustrated for $n=7$. \emph{Left:} grey levels represent $W$ and the coloured strips represent $\chi$. \emph{Middle:} latent positions $X_1,\ldots,X_7$ are sampled independently, and edges are drawn conditionally independently with probabilities $W(X_i,X_j)$. \emph{Right:} the resulting graph, with vertex $i$ coloured $\chi(X_i)$.}}\label{fig:graphon-sampling}
\end{figure}

\subsection{Homomorphism densities of coloured graphons and equivalence} 

We next define homomorphism densities for coloured graphons.
For a coloured graphon $\WW=\bigpar{W,(\psi_\ell)_\ell}$ and a 
(finite) pure coloured graph $\FF\in\cL_n$
we define, letting $\ell_i$ be the colour of vertex $i\in\FF$, 
\begin{align}\label{pa1a}
  t(\FF,\WW)&:=\P\bigpar{ \GGx(n,\WW)\supseteq\FF}
\\&\phantom:
=\int_{\gO^n} \prodin \psi_{\ell_i}(x_i) \prod_{ij\in  E(\FF)} W(x_i,x_j)
\dd\mu(x_1)\dotsm \dd \mu(x_n)
\label{pa1b}
,\end{align}
where the last equality follows from the definition of $\GGx(n,\WW)$. 
Since the distribution of $\GGx(n,\WW)$ is exchangeable, $t(\FF,\WW)$ is
invariant under relabellings of the vertices, and is thus also well defined
for $\FF\in\cU$, where $\cU$ is the countably infinite set of all
finite unlabelled $\kk$-coloured graphs, i.e.\ $\cU:=\bigcup_1^\infty\cU_n$,
where $\cU_n:=\cU_{n;\kk}$ is the (finite)
set of all unlabelled $\kk$-coloured graphs with 
$n$ vertices. 

\begin{lemma}\label{L=}
  If\/ $\WW_1$ and $\WW_2$ are two coloured graphons, then the following are equivalent.
  \begin{romenumerate}
  \item \label{L=t}
$t(\FF,\WW_1)=t(\FF,\WW_2)$ for every coloured graph $\FF\in\cU$.
  \item\label{L=n} 
$\GGx(n,\WW_1)\eqd \GGx(n,\WW_2)$ for every $n\in\bbN$.
  \item\label{L=oo} 
$\GGx(\infty,\WW_1)\eqd \GGx(\infty,\WW_2)$.
  \end{romenumerate}
\end{lemma}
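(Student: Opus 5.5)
We will prove the cycle (i)$\Rightarrow$(ii)$\Rightarrow$(iii)$\Rightarrow$(i). For (i)$\Rightarrow$(ii), fix $n$. The set $\cL_n$ is finite, so it suffices to show that $\P(\GGx(n,\WW)=\FF)$, for each labelled $\FF\in\cL_n$, is determined by the numbers $t(\cdot,\WW)$. By \eqref{pa1a}, $t(\FF',\WW)=\P(\GGx(n,\WW)\supseteq\FF')$ for every $\FF'\in\cL_n$, where ``$\supseteq$'' means that the colours agree and that the edges of $\FF'$ are present. Fix the colouring $\chi$ of $\FF$. Then the event $\GGx(n,\WW)=\FF$ is the event that the colouring is $\chi$, every edge of $\FF$ is present, and no edge outside $E(\FF)$ is present. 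Inclusion--exclusion over the subsets $S$ of non-edges gives
\begin{align*}
\P\bigpar{\GGx(n,\WW)=\FF}=\sum_{S\subseteq \npairs\setminus E(\FF)}(-1)^{|S|}\,t\bigpar{\FF\cup S,\WW},
\end{align*}
where $\FF\cup S$ is the coloured graph with colouring $\chi$ and edge set $E(\FF)\cup S$. Each term depends only on the isomorphism class of $\FF\cup S$ in $\cU_n$, because $t$ is invariant under relabelling, as noted after \eqref{pa1b}. Hence (i) implies that the two laws agree on every point of $\cL_n$.

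For (ii)$\Rightarrow$(iii), use that $\GGx(n,\WW)$ is the restriction of $\GGx(\infty,\WW)$ to $[n]$. So (ii) says that all finite-dimensional marginals of the random elements of $\cLoo=[\kk]^{\bbN}\times\setoi^{\bbN\pairs}$ agree. Cylinder sets determined by finitely many coordinates form a $\pi$-system generating the product \gsf{}, because every such cylinder is determined by the restriction to some $[n]$. A $\pi$--$\lambda$ (Dynkin) argument, or equivalently Kolmogorov consistency, then gives equality in law on $\cLoo$.

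For (iii)$\Rightarrow$(i), take $\FF\in\cU$ with $n$ vertices and fix a labelling of it in $\cL_n$. Then $t(\FF,\WW)=\P(\GGx(n,\WW)\supseteq\FF)=\P(\GGx(\infty,\WW)|_{[n]}\supseteq\FF)$. This is the probability of a cylinder event in $\cLoo$, so it is determined by the law of $\GGx(\infty,\WW)$.

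None of the steps is hard. The only point that needs care is the inclusion--exclusion in (i)$\Rightarrow$(ii). There we must keep the colours fixed and only remove edges, since $t$ records ``colours equal, edges contained'' and not induced subgraphs. We must also check that the terms, which are defined for labelled graphs, are legitimately replaced by unlabelled values through the exchangeability of $\GGx(n,\WW)$.
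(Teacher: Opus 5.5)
Your proof is correct and follows essentially the same route as the paper. The paper's proof of (i)$\iff$(ii) simply asserts that the law of $\GGx(n,\WW)$ on the finite set $\cL_n$ is determined by $(t(\FF,\WW))_{\FF\in\cL_n}$ and conversely; your inclusion--exclusion over non-edges makes that step explicit. Its proof of (ii)$\iff$(iii) appeals to the product structure of $\cLoo$, which your cylinder-set $\pi$--$\lambda$ argument spells out, and arranging the implications as a cycle rather than two equivalences is only a cosmetic difference.
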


\begin{proof}
\ref{L=t}$\iff$\ref{L=n}:
  It follows from the definition \eqref{pa1a} that for every $n\in\bbN$,
the distribution of $\GGx(n,\WW)\in\cL_n$ is determined by the vector
$(t(\FF,\WW))_{\FF\in\cL_n}$, 
or equivalently
by $(t(\FF,\WW))_{\FF\in\cU_n}$, 
and conversely.

\ref{L=n}$\iff$\ref{L=oo}:
Follows immediately from the
definitions and the fact that 
$\cL_\infty\cong [\kk]^\infty\times\setoi^{[\infty]\pairs}$ has a product topology.
(Cf.\ the proof of \refL{L1}\ref{L1k}$\implies$\ref{L1hG} below.)
\end{proof}
\begin{figure}[htbp]
    \centering
\includegraphics[width=0.7\textwidth]{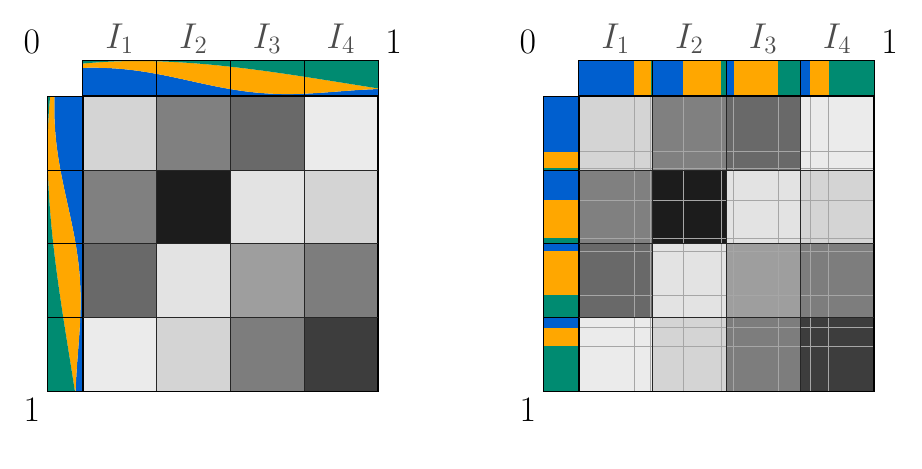}
    \caption{{\small 
    \emph{Left:} The grey blocks represent $W$ on the partition $I_1,\ldots,I_4$, while the coloured strips encode the mixed colouring $\Psi$. \emph{Right:} An equivalent pure-coloured representative, obtained by subdividing each $I_i$ into deterministic colour classes and retaining the value of $W$ from the corresponding parent blocks.}}\label{fig.mix_to_pure}
\end{figure}
\begin{definition}\label{D=}
  Two coloured graphons $\WW_1$ and $\WW_2$ are \emph{equivalent} 
(written $\WW_1\cong\WW_2)$
if they satisfy
one, and thus all, of
the equivalent conditions in \refL{L=}.
(This is, in the uncoloured case, often called 
\emph{weak equivalence}, but we will
not consider any other type of equivalence.)
In particular, note that two coloured graphons that are a.e.\ equal are equivalent.
\end{definition}

 In Figure~\ref{fig.mix_to_pure} we give an illustration of how to obtain an equivalent pure-colouring from a mixed-colouring on a step graphon. 
We show that this holds more generally and that
for every graphon there is an equivalent pure
graphon. 
\begin{lemma}\label{Lmix}
  Every mixed coloured graphon is equivalent to a pure coloured graphon.
\end{lemma}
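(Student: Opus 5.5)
Given a mixed coloured graphon $\WW=(W,\Psi)$ on $\OFmu$, I will build a pure coloured graphon on the enlarged space $\tgO:=\gO\times\oi$ with product measure $\tmu:=\mu\times\gl$. The extra coordinate $u\in\oi$ serves as an auxiliary uniform variable that realises the random colour choice, exactly as in Figure~\ref{fig.mix_to_pure}. Set $s_0(x):=0$ and $s_\ell(x):=\sum_{j\le\ell}\psi_j(x)$. Define
\begin{equation*}
\tchi(x,u):=\ell \quad\text{if } s_{\ell-1}(x)< u\le s_\ell(x),
\end{equation*}
with, say, $\tchi(x,0):=1$ (a null set, so irrelevant). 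Let $\tW\bigpar{(x,u),(y,v)}:=W(x,y)$. Then $\tW$ is measurable, symmetric and $\oi$-valued. The map $\tchi$ is measurable into $[\kk]$, since $\set{\tchi=\ell}=\set{(x,u):s_{\ell-1}(x)<u\le s_\ell(x)}$ and each $s_\ell$ is measurable. Hence $\tWW:=(\tW,\tchi)$ is a pure coloured graphon.

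To show $\tWW\cong\WW$, I will verify condition \ref{L=t} of \refL{L=} using \eqref{pa1b}. For $\FF\in\cL_n$ with colours $\ell_i$, write $t(\FF,\tWW)$ as an integral over $(x_i,u_i)_{i=1}^n$. The factor $\prod_{ij\in E(\FF)}\tW$ depends only on the $x_i$. The colour factor is $\prod_i\etta\set{\tchi(x_i,u_i)=\ell_i}$. By Fubini, integrating out each $u_i$ separately, this factor gives $\prod_i\gl\set{u:\tchi(x_i,u)=\ell_i}=\prod_i\bigpar{s_{\ell_i}(x_i)-s_{\ell_i-1}(x_i)}=\prod_i\psi_{\ell_i}(x_i)$. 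What remains is exactly the integral in \eqref{pa1b} for $\WW$, so $t(\FF,\tWW)=t(\FF,\WW)$ for all $\FF$. By \refL{L=} and \refD{D=}, $\tWW\cong\WW$.

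Alternatively, the same conclusion follows by coupling the sampling procedures. Sampling $(X_i,U_i)$ i.i.d.\ from $\tmu$ and colouring vertex $i$ by $\tchi(X_i,U_i)$ gives, conditionally on $(X_i)$, independent colours with law $\Psi(X_i)$. Hence $\GGx(n,\tWW)\eqd\GGx(n,\WW)$.

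There is no serious obstacle here. The only points needing care are measurability of $\tchi$ and the use of Fubini; both are routine because all functions are bounded and measurable. The statement allows the new probability space to differ from $\gO$, so the product space is acceptable. If one wants a pure representative on $\oi$ when $\gO=\oi$, one can compose with a \mpp{} isomorphism $\oi^2\cong\oi$ (Remark~\ref{R01+}). This step is not needed for the lemma as stated.
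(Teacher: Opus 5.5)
Your proof is correct and is essentially the paper's argument. The paper also adjoins the colour as an extra coordinate, but it uses $\tgO=\gO\times[\kk]$ with $\dd\tmu(x,\ell)=\psi_\ell(x)\dd\mu(x)$ (the law of the pair $(X_i,L_i)$), $\tW=W^{\pi_1}$ and colouring $\pi_2$, and then checks $\GGx(n,\WW')\eqd\GGx(n,\WW)$ just as in your coupling remark. Your graphon on $\gO\times\oi$ is the pullback of that one under the \mpp{} map $(x,u)\mapsto(x,\tchi(x,u))$. Your version has a $\Psi$-independent, atomless product measure, so Remark~\ref{R01+} gives a representative on $\oi$ at once when $\gO$ is a Borel space. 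The paper's version makes the colour a coordinate projection, which it exploits later in \refL{LS2}.
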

\begin{proof}
  Let $\WW=\bigpar{W,(\psi_\ell)_1^\kk}$ be a mixed coloured graphon defined on a probability
  space $(\gO,\cF,\mu)$.
Consider the definition of $\GGx(n,\WW)$ above, let $X_i$ be as there, and let
$L_i$ be the colour assigned to vertex $i$.
Then the pairs $\tX_i:=(X_i,L_i)$ are i.i.d.\  random variables in the space 
$\tgO:=\gO\times[\kk]$.
Let $\tmu$ be the distribution of $\tX_i$. Then $\tmu$ is a probability
measure on $\tgO$ (equipped with the product \gsf{} $\tcF$) given by
\begin{align}\label{tmu}
\dd\tmu(x,\ell) = \psi_\ell(x)\dd\mu(x) 
\quad\text{on}\quad 
\gO\times\set \ell,
\;\ell\in[\kk].
\end{align}
Let $\pi_1:\tgO\to\gO$ and $\pi_2:\tgO\to[\kk]$ be the projections of
$\tgO=\gO\times[\kk]$ onto the first and second coordinate, respectively,
and define $\tW(x,y):=W(\pi_1(x),\pi_1(y))$ for $x,y\in\tgO$.
Then  $\WW':=(\tW,\pi_2)$ is a pure coloured graphon defined on
$(\tgO,\tcF,\tmu)$ and it is easy to see that 
$\GGx(n,\WW')\eqd \GGx(n,\WW)$ for every $n\le\infty$, and thus $\WW'\cong\WW$.
\end{proof}

Similarly, we have the following lemma.
\begin{lemma}\label{Loi}
  Every coloured graphon defined on a probability space $(\gO,\cF,\mu)$
is equivalent to a pure coloured graphon defined on $\oi$.
\end{lemma}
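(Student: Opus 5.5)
By \refL{Lmix}, we may first replace the given coloured graphon by an equivalent pure coloured graphon $(W,\chi)$ on some probability space $\OFmu$. Equivalence is defined through the laws of $\GGx(n,\cdot)$, so it is transitive. It therefore suffices to show that every pure coloured graphon is equivalent to a pure coloured graphon on $\oi$.

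\textbf{Step 1: transfer $(W,\chi)$ to a standard Borel space.} I would mimic the standard uncoloured argument (e.g.\ in \cite{SJ249}). Since $W$ is measurable, it is measurable with respect to a countably generated sub-\gsf{} of $\cF\otimes\cF$. Hence there are countably many measurable sets $A_1,A_2,\dots\subseteq\gO$ such that $W$ is measurable with respect to $\cF_0\otimes\cF_0$, where $\cF_0:=\gs(A_1,A_2,\dots)$. We add the $\kk$ sets $\chi^{-1}(\ell)$ to this list. Define $\gf:\gO\to\setoi^\bbN$ by $\gf(x):=(\etta_{A_i}(x))_i$. Then $W=W_0\circ(\gf\times\gf)$ and $\chi=\chi_0\circ\gf$ for some Borel $W_0$ and $\chi_0$ on the compact metric space $\setoi^\bbN$. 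This is the Doob--Dynkin factorisation, applied on $\gO^2$ with the map $\gf\times\gf$. We then symmetrise $W_0$: on the support of $\nu\otimes\nu$ we may take $\frac12(W_0(x,y)+W_0(y,x))$, since $W$ is symmetric. Let $\nu:=\mu\circ\gf^{-1}$. Then $(W,\chi)$ is the pullback of $(W_0,\chi_0)$ on $(\setoi^\bbN,\nu)$, and a pullback by a measure-preserving map gives the same homomorphism densities \eqref{pa1b}. By \refL{L=}, the two are equivalent.

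\textbf{Step 2: pass from a standard Borel space to $\oi$.} Every Borel probability measure $\nu$ on a Polish space is the pushforward of Lebesgue measure under some Borel map $\tau:\oi\to\setoi^\bbN$. This holds even when $\nu$ has atoms: use a Borel isomorphism of the space with a Borel subset of $\oi$, then the quantile transform. Set $W_1(s,t):=W_0(\tau(s),\tau(t))$ and $\chi_1:=\chi_0\circ\tau$. Then $(W_1,\chi_1)$ is a pure coloured graphon on $\oi$. Changing variables in \eqref{pa1b}, with $\psi_\ell=\etta_{\chi=\ell}$, gives $t(\FF,(W_1,\chi_1))=t(\FF,(W_0,\chi_0))$ for every $\FF$. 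Applying \refL{L=} again completes the chain of equivalences.

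\textbf{Main obstacle.} I expect Step 1 to be the delicate part. It requires a factorisation of $W$ through a single countably generated \gsf{} on the first and second coordinates simultaneously, and it requires keeping $W_0$ symmetric and $[0,1]$-valued. The key fact is that any measurable function on $\gO^2$ is measurable with respect to $\gs(\{A_i\times A_j\})$ for countably many rectangles. This holds because the class of sets in $\cF\otimes\cF$ generated by countably many rectangles is closed under the \gsf{} operations, and $W$ is determined by countably many of its level sets. The colouring adds only finitely many sets, so it causes no extra difficulty. Alternatively, one could cite the uncoloured result that every graphon is a pullback from a standard space and simply append the sets $\chi^{-1}(\ell)$ to the generating family.
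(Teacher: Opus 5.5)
Your proof is correct and follows the paper's route. You reduce to the pure case by \refL{Lmix} and then represent the pure graphon on $\oi$; the paper cites \cite[Theorem 7]{SJ210} for that second step, and your Steps 1--2 (factor through a countably generated \gsf{} into $\setoi^\bbN$, then a quantile transform) are the standard proof of that representation theorem, with the colour classes added to the generating family. One small simplification: since $W$ is symmetric everywhere, you can define $W_0'(u,v):=\frac12\bigl(W_0(u,v)+W_0(v,u)\bigr)$ globally, so there is no need to restrict to the support of $\nu\otimes\nu$.
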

\begin{proof}
  This follows from the proof of \refT{TW} below, but can also (more easily)
be seen directly as a special case of \cite[Theorem 7]{SJ210}, combined with
\refL{Lmix} which shows that we may assume the graphon to be pure.
\end{proof}

\subsection{Coloured graph limits and graphon representations}\label{Slim}

In this section we consider limits of sequences of coloured graphs
(with a fixed number of colours); we generalise definitions and results that
are well-known for the uncoloured case, see e.g.\ \cite{Lovasz}.
We first define convergence of sequences of coloured graphs without
specifying the limits; we then use this to define the set of coloured graph
limits in a rather abstract way; finally we identify these limits with
equivalence classes of
coloured graphons. (This is partly inspired by the treatment of uncoloured
graph limits in \cite{SJ209}.)

\subsubsection{Homomorphism densities and convergence of coloured graphs} 
Let $\FF$ and $\GG$ be (pure) coloured graphs.
We define, 
again generalising standard notation,
$t(\FF,\GG)$ as the probability that a random map $V(\FF)\to V(\GG)$
is a homomorphism $\FF\to\GG$, 
which for coloured graphs means that it maps each edge
to an edge, and each vertex to a vertex with the same colour.
We define similarly
$\tinj(\FF,\GG)$ when $|\FF|\le|\GG|$
as the probability that an random injective map $V(\FF)\to V(\GG)$ 
(obtained by sampling $|\FF|$ vertices from $|\GG|$ by drawing
without replacement) is a homomorphism;
we define also
$\tind(\FF,\GG)$ when $|\FF|\le|\GG|$
as the probability that an random injective map $V(\FF)\to V(\GG)$ 
is an isomorphism;
when $|\FF|>|\GG|$ we define $\tinj(\FF,\GG):=\tind(\FF,\GG):=0$.
Note that $\tinj(\FF,\GG)$, and $\tind(\FF,\GG)$ 
are invariant under relabelling
of the vertices of $\FF$ or $\GG$; thus they are well defined for both
labelled and unlabelled coloured graphs.

For a (possibly unlabelled) coloured graph $\GG$ and $k\le|\GG|$, let $\GG[k]$ be
the random induced 
subgraph of $\GG$ obtained by selecting $k$ distinct vertices
$v_1,\dots,v_k$ of $\GG$ uniformly at random; we regard $\GG[k]$ as a labelled
coloured graph in $\cL_k$. Note that for any $\FF\in\cL_k$,
\begin{align}\label{tind}
\tind(\FF,\GG)&=  \P(\GG[k]=\FF),
\\\label{tinj}
\tinj(\FF,\GG)&=  \P(\GG[k]\supseteq\FF).
\end{align}
In particular, if $n=|\GG|$, then $\GG[n]\in\cL_n$ is the random labelled coloured graph
obtained by labelling the vertices of $\GG$ by $1,\dots,n$ at random. 
(If $\GG$ already is labelled we ignore the labels and relabel.)
We extend $\GG[n]$ to a random infinite coloured graph $\hGG\in\cLoo$ by adding new
vertices $n+1,\dots$ but no new edges; the new vertices are given arbitrary
colours, say colour 1 for definiteness.
Note that for $k\le n$, $\GG[k]$ is the induced subgraph of $\hGG$ defined
by the first $k$ vertices.

We extend the notion of graph convergence to coloured graphs. 
We use the definitions above to give a number of different equivalent
conditions.

\begin{definition}\label{Dconv}
 Let\/ $(\GG_n)\xoo$ be a sequence of $\kk$-coloured graphs such that  $|\GG_n|\to\infty$.
 We say that the sequence converges if it satisfies one, and thus all, of
 the equivalent  conditions in \refL{L1} below.
\end{definition}

\begin{lemma}\label{L1}
Let\/ $(\GG_n)\xoo$ be a sequence of $\kk$-coloured graphs such that $|\GG_n|\to\infty$.
Then the following are equivalent.
\begin{romenumerate}  
\item \label{L1t}
$t(\FF,\GG_n)$ converges as \ntoo{} for every $\FF\in\cU$.
\item \label{L1tinj}
$\tinj(\FF,\GG_n)$ converges as \ntoo{} for every $\FF\in\cU$.
\item \label{L1tind}
$\tind(\FF,\GG_n)$ converges as \ntoo{} for every $\FF\in\cU$.
\item \label{L1k}
The random coloured graphs $\GG_n[k]\in\cL_k$ converge in distribution as
\ntoo{} for every fixed $k\in\bbN$. 
\item \label{L1hG}
The random infinite coloured graphs $\hGG_n\in\cLoo$ converge in distribution as \ntoo.
\end{romenumerate}
\end{lemma}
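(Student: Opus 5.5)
We will show the chain (iii)$\iff$(iv)$\iff$(v), then (ii)$\iff$(iii), and finally (i)$\iff$(ii).

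\emph{The equivalences among (iii), (iv) and (v).} By \eqref{tind}, for a fixed $k$ the law of $\GG_n[k]$ on the finite set $\cL_k$ is the vector $(\tind(\FF,\GG_n))_{\FF\in\cL_k}$. Convergence in distribution on a finite set is just pointwise convergence of these probabilities. Moreover $\tind$ is invariant under relabelling, so this is the same as convergence of $\tind(\FF,\GG_n)$ for every $\FF\in\cU_k$. Since $|\GG_n|\to\infty$, we have $k\le|\GG_n|$ eventually, so $\GG_n[k]$ is well defined for large $n$. This gives (iii)$\iff$(iv).

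For (iv)$\iff$(v), recall that $\cLoo\cong[\kk]^\infty\times\setoi^{[\infty]\pairs}$ carries the product topology. Convergence in distribution in this compact metric space is therefore equivalent to convergence of all finite-dimensional distributions. Every finite set of coordinates (vertex colours and edge indicators) is contained in the set of coordinates of the induced subgraph on the first $k$ vertices, for some $k$. For $n$ large enough that $|\GG_n|\ge k$, that induced subgraph of $\hGG_n$ is exactly $\GG_n[k]$. Thus (v) holds if and only if $\GG_n[k]$ converges in law for each $k$.

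\emph{The equivalence (ii)$\iff$(iii).} By \eqref{tinj}, $\tinj(\FF,\GG)=\sum_{\FF'\supseteq\FF}\tind(\FF',\GG)$ for $\FF\in\cL_k$. Here the sum runs over $\FF'\in\cL_k$ with the same vertex colours as $\FF$ and with edge set containing $E(\FF)$. Conversely, inclusion--exclusion (Möbius inversion on the edge sets, with colours fixed) gives
\[
\tind(\FF,\GG)=\sum_{\FF'\supseteq\FF}(-1)^{|E(\FF')\setminus E(\FF)|}\,\tinj(\FF',\GG).
\]
Both are finite linear relations that do not depend on $\GG$, so convergence of one family implies convergence of the other.

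\emph{The equivalence (i)$\iff$(ii).} This is the only step needing an estimate. A uniformly random map $V(\FF)\to V(\GG)$ with $|\FF|=k$ is injective with probability at least $1-\binom k2/|\GG|$. Conditioned on being injective, it is a uniformly random injective map. Hence
\[
|t(\FF,\GG)-\tinj(\FF,\GG)|\le \binom k2\big/|\GG|.
\]
Since $|\GG_n|\to\infty$, this difference tends to $0$, so $t(\FF,\GG_n)$ converges if and only if $\tinj(\FF,\GG_n)$ does, and the two limits agree.

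The only mildly delicate points are bookkeeping: the colours must be held fixed in the Möbius inversion, and the relevant $\GG_n[k]$ are defined only for large $n$. Neither poses a real obstacle.
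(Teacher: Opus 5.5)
Your proposal is correct and follows essentially the same route as the paper. Both use the same four links: (i)$\iff$(ii) via the $O(|\GG_n|^{-1})$ probability of a non-injective map, (ii)$\iff$(iii) via an invertible linear (inclusion--exclusion) relation for fixed $k$, (iii)$\iff$(iv) via \eqref{tind}, and (iv)$\iff$(v) via the product topology on $\cLoo$. You simply make explicit the bound $\binom k2/|\GG|$ and the M\"obius inversion formula, which the paper leaves implicit.
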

In \ref{L1k} and \ref{L1hG}, the limit is a random graph in $\cL_k$ and
$\cLoo$, respectively.
\begin{proof}
The proof is as for uncoloured graphs (see e.g.\ \cite{SJ209}), 
so we will omit some details.

  \ref{L1t}$\iff$\ref{L1tinj}: 
A random map $V(\FF)\to V(\GG_n)$ is injective
  with probability $1-O(|\GG_n|\qw)=1-o(1)$, and thus
  \begin{align}
\label{tid}
t(\FF,\GG_n)=\tinj(\FF,\GG_n)+o(1). 
\end{align}

  \ref{L1tinj}$\iff$\ref{L1tind}: 
For each fixed $k$ and all $\GG_n$ such that $|\GG_n|\ge k$, 
the vectors $(\tinj(\FF,\GG_n))_{\FF\in\cL_k}$
and $(\tind(\FF,\GG_n))_{\FF\in\cL_k}$ are related by an invertible 
linear map depending only on $k$. 
(See \eqref{tind}--\eqref{tinj} and use inclusion-exclusion.)

  \ref{L1tind}$\iff$\ref{L1k}: 
We see from \eqref{tind} that
for each $k\le|\GG_n|$, 
the distribution of $\GG_n[k]\in\cL_k$ is given by the vector
$(\tind(\FF,\GG_n))_{\FF\in\cL_k}$.

  \ref{L1k}$\iff$\ref{L1hG}: 
Since $\cLoo$ has a product topology, it follows that convergence in
distribution in $\cLoo$ is equivalent to convergence of the restrictions of
the graphs to $[k]$ for every fixed $k<\infty$. As remarked above, this
restriction of $\hGG_n$ (i.e., the induced subgraph with vertex set $[k]$)
is $\GG_n[k]$ as soon as $n$ is so large that $|\GG_n|\ge k$, and the
equivalence follows.
\end{proof}

\begin{remark}\label{RL1}
\refL{L1} can be extended to convergence (in distribution) of a sequence of \emph{random}
  unlabelled coloured graphs $\GG_n$.
See  \cite[Theorems 3.1 and 4.1]{SJ209} (for uncoloured graphs)
which extend verbatim to coloured graphs;
the limit then is a random coloured graph limit.
\end{remark}

\subsubsection{Coloured graph limits}\label{SSSlim}

We have in \refD{Dconv} defined convergence
without saying what the limit is.
From an abstract point of view, we may define the set 
$\hcWW=\hcWW_\kk$
of $\kk$-coloured graph
limits by considering the set of all convergent sequences $(\GG_n)\xoo$
and taking the quotient modulo the equivalence relation defined by
$(\GG_n)\xoo\cong(\GG_n')\xoo$
when the intermingled sequence $\GG_1,\GG'_1,\GG_2,\GG'_2,\dots$ converges
(this is thus interpreted as the two sequences having the same limit).
\refL{L1} implies two more concrete versions, stated as the following
preliminary definition.
\begin{definition}\label{DhcWW}
The set $\hcWW=\hcWW_\kk$ of $\kk$-coloured graph limits,
for a given $\kk\ge1$, 
can be defined by any of:
\begin{enumerate}
 \renewcommand{\labelenumi}{\textup{(W\arabic{enumi})}}%
 \renewcommand{\theenumi}{\labelenumi}%
\item \label{hcWWa}
Let\/ $\tau:\cU\to\oi^\cU$ be the map defined by
\begin{align}\label{tau}
\tau(\GG):=(t(\FF,\GG))_{\FF\in\cU}.  
\end{align}
Then $\hcWW$ is the subset of $\oi^{\cU}$ 
consisting of all limit points of 
$\tau(\GG_n)$ for convergent sequences
of $\kk$-coloured graphs $\GG_n$
with $|\GG_n|\to\infty$.
If $\hWW\in\hcWW$ and $|\GG_n|\to\infty$, 
then $\GG_n\to\hWW$ iff $\tau(\GG_n)\to\hWW$ in $\oi^\cU$.

\item \label{hcWWb}
$\hcWW$ is the set of probability distributions on $\cLoo$ that are limits 
in distribution of $\hGG_n$ for some convergent sequence 
of $\kk$-coloured graphs $\GG_n$ 
with $|\GG_n|\to\infty$.
If $\hWW\in\hcWW$ and $|\GG_n|\to\infty$,
then $\GG_n\to\hWW$ iff $\hGG_n\to\hWW$ in distribution.
\end{enumerate}  
\end{definition}

These versions are formally different, since they define the coloured graph
limits,
i.e., the elements of $\hcWW$, as different types of objects.
Nevertheless, they are equivalent by \refL{L1}, and we may regard both
\ref{hcWWa} and \ref{hcWWb} as different representations of the set of graph
limits. We will use both representations below.
However, these two representations seem to have limited use;
as the large literature on uncoloured graph limits shows,
the most important representation of uncoloured graph limits is by graphons,
and we extend this to the coloured case in \refT{TW} below after some further
preparations. (This is why \refD{DhcWW} should be regarded as preliminary.)

We note one important simple consequence of \ref{hcWWa}.
We give $\hcWW_\kk$ the topology as a subset of $\oi^\cU$.
  \begin{lemma}\label{LC}
 The space $\hcWW_\kk$ of $\kk$-coloured graph limits is a compact metric space.
  \end{lemma}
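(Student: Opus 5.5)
The plan is to use representation \ref{hcWWa} of \refD{DhcWW}, so that $\hcWW_\kk$ is a subset of $\oi^\cU$ with the subspace topology. Since $\cU$ is countable, $\oi^\cU$ with the product topology is a compact metrizable space, for instance with the metric
\begin{align*}
d(x,y):=\sum_{j=1}^\infty 2^{-j}\abs{x_{F_j}-y_{F_j}}
\end{align*}
for an enumeration $(F_j)_{j\ge1}$ of $\cU$. This holds by Tychonoff, or directly by a diagonal argument. Hence $\hcWW_\kk$ is metrizable. It therefore suffices to show that $\hcWW_\kk$ is closed in $\oi^\cU$, since a closed subset of a compact metric space is compact.

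To prove closedness, I take $\hWW^{(m)}\in\hcWW_\kk$ with $\hWW^{(m)}\to\hWW$ in $\oi^\cU$. By definition, each $\hWW^{(m)}$ is the limit of $\tau(\GG^{(m)}_n)$ for some sequence of coloured graphs with $|\GG^{(m)}_n|\to\infty$. For each $m$ I choose a single graph $\GG'_m:=\GG^{(m)}_{n_m}$ such that $|\GG'_m|\ge m$ and $d\bigl(\tau(\GG'_m),\hWW^{(m)}\bigr)<1/m$. Then $|\GG'_m|\to\infty$, and by the triangle inequality $\tau(\GG'_m)\to\hWW$ in $\oi^\cU$. In particular, $t(\FF,\GG'_m)$ converges for every $\FF\in\cU$. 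So the sequence $(\GG'_m)$ converges in the sense of \refD{Dconv} via \refL{L1}\ref{L1t}, and its limit point is $\hWW$. Thus $\hWW\in\hcWW_\kk$.

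There is no real obstacle here. The only points needing care are that convergence in $\oi^\cU$ is coordinatewise, which the metric $d$ captures, and that the diagonal choice keeps $|\GG'_m|\to\infty$; I enforce the latter explicitly by requiring $|\GG'_m|\ge m$.
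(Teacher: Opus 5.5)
Your proof is correct and takes the same route as the paper: both show that $\hcWW_\kk$ is closed in the compact metrizable space $\oi^\cU$. The paper packages your diagonal argument by adjoining the coordinate $|\GG|^{-1}$ through the map $\tauu$ and writing $\hcWW_\kk=\{\ww:(\ww,0)\in\overline{\{\tauu(\GG):\GG\in\cU\}}\}$, so closedness, including the condition $|\GG_n|\to\infty$, follows at once from the closedness of a closure.
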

\begin{proof}
This is almost immediate, but we have to take care of the condition
$|\GG_n|\to\infty$.
We thus define $\tauu:\cU\to\oi^\cU\times\oi$ by
\begin{align}\label{tauu}
\tauu(\GG):=\bigpar{\tau(\GG),|\GG|\qw}.
\end{align}
Then \ref{hcWWa} yields
\begin{align}\label{lc1}
 \hcWW = \bigset{\ww\in\oi^\cU:(\ww,0)\in\overline{\set{\tauu(\GG):\GG\in\cU}}}
.\end{align}
Hence, $\hcWW$ is a closed subset of $\oi^\cU$,
and the result follows since $\oi^\cU$
is a compact
metric spaces.
\end{proof}

\begin{remark}
  The map $\tauu$ defined in \eqref{tauu} is injective.
To see this, suppose that $\tauu(\GG_1)=\tauu(\GG_2)$, and note that this
means that
$t(\FF,\GG_1)=t(\FF,\GG_2)$ for every $\FF\in\cU$, and also
$|\GG_1|=|\GG_2|$.  
It is easy to see (by induction on $|\FF|$)
that this implies
$\tinj(\FF,\GG_1)=\tinj(\FF,\GG_2)$ 
for every coloured graph $\FF$. Taking $\FF=\GG_1$ shows that 
$\tinj(\GG_1,\GG_2)=\tinj(\GG_1,\GG_1)>0$ and thus there exists a
bijection $\gs:V(\GG_1)\to V(\GG_2)$ that is a homomorphism.
In particular, $e(\GG_2)\ge e(\GG_1)$, and by symmetry
$e(\GG_2)=e(\GG_1)$. It follows that $\gs$ is an isomorphism, and thus
$\GG_1=\GG_2$ as unlabelled coloured graphs.

Since $\tauu$ is injective, we may identify the space $\cU$ of
$\kk$-coloured graphs 
with its image $\tauu(\cU)$. 
We may also identify $\hcWW\subset\oi^\cU$
with $\hcWW\times\set0\subset\oi^\cU\times\oi$.
Then the sets $\cU$ of coloured graphs and $\hcWW$ of coloured
graph limits are both seen as subsets of the same space $\oi^\cU\times\oi$;
$\cU$ and $\hcWW$ are disjoint, and the closure $\overline{\cU}$ equals
$\cU\cup\hcWW$. 
Note that every element of $\cU$ is isolated in $\cU\cup\hcWW$.
Hence,
in this setting, a sequence $\GG_n$ of coloured graphs
converges (in $\cU\cup\hcWW$)
if either $|\GG_n|\to\infty$ and the sequence converges to a graph
limit in the sense above, or  the sequence is constant for large $n$. 
See \cite[Section 2]{SJ209} for further discussion (in the uncoloured case).
\end{remark}

\begin{remark}
  We may also define the topology on $\hcWW$ using \ref{hcWWb}, regarding
  $\hcWW$ as a subset of the compact set of all probability measures on $\cLoo$;
it is easily seen that this yields the same topology. 
\end{remark}

\subsubsection{Graphons and graph limits}\label{SSSgglim}

It is a well-known fact that uncoloured graph limits can be
represented by graphons; we can now extend this to the coloured case.
\begin{theorem}
  \label{TW}
Each coloured graphon\/ $\WW$ 
defines a coloured graph limit\/ $\hWW$
(usually also denoted by\/ $\WW$, by a standard abuse of notation)
such that the following holds.
\begin{romenumerate}
\item \label{TW1}
Every graph limit is given by some coloured graphon $\WW$. Moreover, 
we may choose $\WW$ as a pure coloured graphon on $\gO=\oi$. 
\item \label{TW2}
Two coloured graphons $\WW_1$ and $\WW_2$ define the same graph limit if and
only if they are equivalent.
\item \label{TW3}
  Let\/ $(\GG_n)\xoo$ be a sequence of $\kk$-coloured graphs such that $|\GG_n|\to\infty$.
Then
\begin{align}\label{tw3}
\GG_n\to\WW
\iff
  t(\FF,\GG_n)\to t(\FF,\WW)  \quad 
\forall
\FF\in\cU.
\end{align}
\item \label{TW4}
The sequence of random coloured graphs $\GGx(n,\WW)\to\WW$ almost surely.
\end{romenumerate}
\end{theorem}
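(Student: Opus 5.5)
We first define $\hWW$ and verify that it is a graph limit, then prove \ref{TW4} and \ref{TW3}, and derive \ref{TW2} and \ref{TW1} from them. Given a coloured graphon $\WW$, we set $\hWW:=(t(\FF,\WW))_{\FF\in\cU}\in\oi^\cU$, which matches representation \ref{hcWWa}. To show $\hWW\in\hcWW$ it suffices to exhibit a sequence of coloured graphs converging to it, and we take $\GGx(n,\WW)$. So we begin with \ref{TW4}.

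For \ref{TW4}, fix $\FF\in\cU$ with $|\FF|=k$. We work with $\GGx(\infty,\WW)$ and view $\GGx(n,\WW)$ as its restriction to $[n]$. The quantity $\tinj(\FF,\GGx(n,\WW))$ is a U-statistic: it averages, over injective $k$-tuples from $[n]$, the indicator that the induced coloured subgraph contains $\FF$. By exchangeability its expectation equals $\P(\GGx(k,\WW)\supseteq\FF)=t(\FF,\WW)$, using \eqref{pa1a}. Changing the vertex data (the point $X_i$, its colour, and its edges to earlier vertices) changes $\tinj$ by at most $k/n$. Since these per-vertex data are independent across $i$, the bounded differences (Azuma--McDiarmid) inequality gives
\begin{align*}
\P\bigl(|\tinj(\FF,\GGx(n,\WW))-t(\FF,\WW)|>\eps\bigr)\le 2e^{-c n\eps^2/k^2}.
\end{align*}
Borel--Cantelli then gives almost sure convergence. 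Combined with \eqref{tid} we get $t(\FF,\GGx(n,\WW))\to t(\FF,\WW)$ a.s. Since $\cU$ is countable, this holds simultaneously for all $\FF$ almost surely. Hence $\GGx(n,\WW)$ converges in the sense of \refD{Dconv}, its limit is $\hWW$, and $\hWW\in\hcWW$. Part \ref{TW3} is now immediate from \ref{hcWWa}, since $\GG_n\to\hWW$ iff $\tau(\GG_n)\to\hWW$ coordinatewise. Part \ref{TW2} follows because $\hWW_1=\hWW_2$ exactly says $t(\FF,\WW_1)=t(\FF,\WW_2)$ for all $\FF$, which is \refD{D=} via \refL{L=}\ref{L=t}.

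The main obstacle is \ref{TW1}, the existence of a representing graphon for an arbitrary limit. We use \ref{hcWWb}. Let $\GG_n\to\hWW$, so that $\hGG_n$ converges in distribution to a random $\mathbf H\in\cLoo$. Each $\hGG_n$ is not exactly exchangeable because of the padded vertices. However, its restriction to $[k]$ is $\GG_n[k]$, which is exchangeable, so the limit $\mathbf H$ is an exchangeable random infinite coloured graph. Its law is also determined by the coloured densities. The limit has one more property: it is dissociated, meaning that restrictions to disjoint vertex sets are independent. This holds because for disjoint $F_1,F_2$ we have $t(\FF_1\sqcup\FF_2,\GG_n)=t(\FF_1,\GG_n)t(\FF_2,\GG_n)$, and this identity passes to the limit by \eqref{tid}.

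We then apply the Aldous--Hoover representation to the jointly exchangeable array consisting of the vertex labels (colours) and the edge indicators. Dissociation removes the global variable $\alpha$. This gives a function $f$ and i.i.d.\ uniform variables $U_i,U_{ij}$ such that $\mathbf H$ has edge $ij$ iff $U_{ij}\le W(U_i,U_j)$ and colour $\chi(U_i)$ (or $g(U_i,V_i)$ with extra uniforms $V_i$). For the colours, the label $L_i=g(U_i,V_i)$ is a function of $U_i$ and independent noise. So $\psi_\ell(u):=\P(g(u,V)=\ell)$ defines a mixed colouring, and $\mathbf H\eqd\GGx(\infty,(W,\Psi))$. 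If one prefers to avoid citing the coloured Aldous--Hoover theorem, the same conclusion follows from the uncoloured representation applied to the pair (vertex marks, edges), as in \cite{SJ209}. Alternatively, one can use \cite[Theorem 7]{SJ210} in the form invoked in \refL{Loi}.

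Finally, \refL{Lmix} together with \refL{Loi} replaces this mixed graphon by an equivalent pure coloured graphon on $\oi$, and by \ref{TW2} the replacement represents the same limit. This completes \ref{TW1}.
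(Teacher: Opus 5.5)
Your proof is correct. For \ref{TW1}--\ref{TW3} it follows the paper closely: you define the limit via $\tau(\WW)$, apply the Aldous--Hoover/Kallenberg representation to the exchangeable limit $\mathbf H$ of $\hGG_n$, and use dissociation to remove the global variable.

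The genuine difference is in \ref{TW4}. The paper shows that $\tinj(\FF,\GGx(n,\WW))$ is a reverse martingale, so it converges a.s.\ to some $Z$. It then shows $Z$ is tail-measurable, hence constant by the Kolmogorov 0--1 law, so $Z=\E Z=t(\FF,\WW)$; this is the reverse-martingale proof of the SLLN for U-statistics. You instead use McDiarmid's inequality with increments $k/n$, followed by Borel--Cantelli.

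Your route has two advantages. It gives an explicit exponential tail, and it gives a.s.\ convergence for \emph{any} coupling of the graphs $\GGx(n,\WW)$ (complete convergence). The paper's route needs the nested coupling, but uses no concentration inequality and fits the exchangeability theme of the rest of the proof.

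One precision is needed for McDiarmid. The independent blocks must be $(X_i,V_i,(\xi_{ji})_{j<i})$, where $L_i=g(X_i,V_i)$ and edge $ji$ is present iff $\xi_{ji}\le W(X_j,X_i)$. They cannot be the edge indicators themselves, since those depend on earlier $X_j$. Changing one such block still affects only $k$-tuples containing $i$, so your $k/n$ bound stands.

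On \ref{TW1}, a few remarks.
\begin{itemize}
\item Your proof of dissociation via $t(\FF_1\sqcup\FF_2,\GG_n)=t(\FF_1,\GG_n)\,t(\FF_2,\GG_n)$, \eqref{tid}, and inclusion--exclusion over edges makes explicit what the paper calls ``easily seen''.
\item The representation gives $I_{ij}=f_2(\xi_i,\xi_j,\xi_{ij})$. The form $\indic{\xi_{ij}\le W(\xi_i,\xi_j)}$ holds only in distribution, after setting $W(x,y):=\int f_2(x,y,z)\dd z$ and using conditional independence given $(\xi_i)$.
\item Kallenberg's theorem gives $L_i=f_1(\xi_i)$ with no extra noise. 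So the colouring is already pure on $\oi$, and the paper reads off $(W,f_1)$ directly; your detour through a mixed colouring and \refL{Lmix} is unnecessary.
\item If you do take that detour, \refL{Loi} must be justified via \cite[Theorem 7]{SJ210}, or via \refR{R01+} applied to $(\oi\times[\kk],\tmu)$. The paper's first justification of \refL{Loi} refers to the proof of this very theorem, so citing it that way would be circular.
\item \cite[Theorem 7]{SJ210} cannot replace the exchangeable-array representation, since it only transports an already existing graphon to $\oi$. That ``alternatively'' should be dropped.
\end{itemize}
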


\begin{proof}
We use \ref{hcWWa} as a preliminary definition of the set $\hcWW$ of graph limits.
Recall that we have defined $\tau(\GG)$ for coloured graphs $\GG$ in
\ref{hcWWa}.
We similarly define $\tau(\WW)\in\oi^\cU$ 
as $(t(\FF,\WW))_{\FF\in\cU}$ for coloured graphons $\WW$.

Let $\WW=(W,\Psi)$ be a coloured graphon.
We see from \ref{hcWWa} that
for any sequence $\GG_n$ with $|\GG_n|\to\infty$,
we have
\begin{align}\label{lc3}
\GG_n\to\tau(\WW)
&\iff
\tau(\GG_n)\to\tau(\WW)\text{ in $\oi^\cU$}
\notag\\&
\iff
  t(\FF,\GG_n)\to t(\FF,\WW)  \quad 
\forall
\FF\in\cU.
\end{align}

Let $\FF\in\cL_k$ for some $k\ge1$.
As in the proof of \refL{Lmix}, let $X_i$ be as in 
the definition of $\GGx(\infty,\WW)$, and let
$L_i$ be the colour assigned to vertex $i$.
Then the pairs $\tX_i:=(X_i,L_i)$ are i.i.d.\  random variables in the space 
$\tgO:=\gO\times[\kk]$.
We may further assume that the edges in $\GGx(\infty,\WW)$
are given by the pairs $ij$ such that
$\xi_{ij}\le W(X_i,X_j)$ for some family $(\xi_{ij})_{i<j}$ of i.i.d.\
random variables, uniformly distributed on $\oi$ and independent of all
$X_i$ and $L_i$.
Let $n\ge k$ 
and let $i_1,\dots,i_k$ be distinct indices in $[n]$.
Then the indicator of the event that the injective map $[k]\to[n]$
given by  $s\mapsto i_s$ is a homomorphism $\FF\to \GGx(n,\WW)$
can be written as 
\begin{align}\label{fFF}
f\bigpar{(\tX_{i_s})_{1\le s\le k},(\xi_{i_s,i_t})_{1\le s < t\le k}}  
\end{align}
for some deterministic function $f$ (depending on $\FF$ and $\WW$).
Hence, $\tinj(\FF,\GGx(n,\WW))$ is the average of \eqref{fFF} over all 
sequences $i_1,\dots,i_k$ of distinct indices in $[n]$.
Note also that,
in particular, 
for the identity map $i_s=s$, we have
\begin{align}\label{fFF+}
f\bigpar{(\tX_{s})_{1\le s\le k},(\xi_{s,t})_{1\le s < t\le k}} 
= \indic{\FF\subseteq\GGx(k,\WW)}.
\end{align}
The distribution of $\GGx(n,\WW)$ is exchangeable, i.e.,
invariant under all permutations of $[n]$.
Furthermore, if we permute the indices in $[n]$ together 
with their corresponding
variables $\tX_i$ and $\xi_{ij}$, this will not affect
$\tinj(\FF,\GGx(m,\WW))$ for any $m\ge n$.
It follows that 
if $\cF_n$ is the \gsf{} generated by the variables
$\tinj(\FF,\GGx(m,\WW))$ for $m\ge n$, then
for any injective map $s\mapsto i_s$ as in \eqref{fFF}, we
have, using \eqref{fFF+},
\begin{align}
\E\bigsqpar{ f\bigpar{(\tX_{i_s})_{1\le s\le k},(\xi_{i_s,i_t})_{1\le s < t\le k}}  
\mid\cF_n}
&=
\E\bigsqpar{ f\bigpar{(\tX_{s})_{1\le s\le k},(\xi_{s,t})_{1\le s < t\le k}}  
\mid\cF_n}
\notag\\&
=\E\bigsqpar{\indic{\GGx(k,\WW)\supseteq\FF}\mid \cF_n}.  
\end{align}
Hence, taking the average over all such maps $s\mapsto i_s$,
we obtain
\begin{align}\label{pa2}
\tinj(\FF,\GGx(n,\WW))
=\E\bigsqpar{\tinj(\FF,\GGx(n,\WW))\mid\cF_n}
=\E\bigsqpar{\indic{\GGx(k,\WW)\supseteq\FF}\mid \cF_n}  
.\end{align}
Consequently, 
the sequence $\tinj(\FF,\GGx(n,\WW))$, $n\ge1$,
is a reverse martingale 
(for the decreasing sequence of \gsf{s} $\cF_n$). Hence,
by the martingale limit theorem,
there exists a random variable $Z$ such that, as \ntoo,
\begin{align}\label{limZ}
  \tinj(\FF,\GGx(n,\WW))\asto Z,
\end{align}
where furthermore, for every $n\ge1$,
\begin{align}\label{limZE}
\E Z =\E\bigsqpar{ \tinj(\FF,\GGx(n,\WW))}.
\end{align}
Recall again that $\tinj(\FF,\GGx(n,\WW))$ is the average of \eqref{fFF} over
sequences $i_1,\dots,i_k$ of distinct indices in $[n]$.
For any fixed $m$, the fraction of such sequences having some $i_k\le m$
is $O(m/n)=o(1)$ as \ntoo. Hence, the limit in \eqref{limZ} is the same if
we  only average \eqref{fFF} over $i_1,\dots,i_k\in [n]\setminus [m]$.
Consequently, $Z$ is independent of all $\tX_i$ and $\xi_{ij}$ for 
$1\le i \le m$.
Since this holds for every $m$, 
$Z$ is independent of all $\tX_i$ and $\xi_{ij}$, and thus of itself;
hence $Z$ is a.s.\ constant. 
(This is an instance of the Kolmogorov zero-one theorem
\cite[Theorem 1.5.1]{Gut}; more formally, we have shown that for any
$t\in\bbR$, the event \set{Z\le t} belongs to the tail \gsf{} defined 
in the natural way by the families $\tX_i$ and $\xi_{ij}$, and thus 
$\P(Z\le t)=0$ or 1.) 

Consequently, $Z=\E Z$ a.s., and recalling 
\eqref{limZ}, \eqref{limZE}, \eqref{pa2},
and \eqref{pa1a} 
we have
\begin{align}\label{pa3}
  \tinj(\FF,\GGx(n,\WW))
&\asto
\E\bigsqpar{\tinj(\FF,\GGx(k,\WW))}
=\P\bigpar{\GGx(k,\WW)\supseteq\FF}
=t(\FF,\WW)
.\end{align}
(The proof of \eqref{pa3} just given 
is a minor variation of 
a proof of the law of large numbers for $U$-statistics, 
see \cite{Berk} and \cite[Section 10.16.2]{Gut},
which in turn is an extension
of a well-known proof of the standard law of large numbers
using reverse martingales, 
see e.g.\ \cite[Section 10.16.1]{Gut}.)

By \eqref{pa3} and \eqref{tid}, we have
for every $\FF\in\cL$, and thus also for every $\FF\in\cU$ (since the labels
do not matter),
almost surely
\begin{align}\label{pa4}
t\bigpar{\FF,\GGx(n,\WW)}\to t(\FF,\WW).  
\end{align}
Since $\cU$ is a countable set of graphs, it follows that a.s.\  \eqref{pa4}
holds for all $\FF\in\cU$, and thus $\GGx(n,\WW)\to\tau(\WW)$ by \eqref{lc3}.

In particular, there exists a sequence $\GG_n$ with $|\GG_n|\to\infty$
such that $\tau(\GG_n)\to\tau(\WW)$, and thus
$\tau(\WW)\in\hcWW$ (as defined in \ref{hcWWa}).
In other words, $\tau(\WW)$ is a graph limit, which we now simply denote by
$\WW$.

This is the sought graph limit. By \refD{D=} with \refL{L=}\ref{L=t},
and the definition \eqref{tau},
$\WW_1\cong\WW_2 \iff\tau(\WW_1)=\tau(\WW_2)$, which shows \ref{TW2}.
Furthermore, \ref{TW3} follows from
\eqref{lc3}, and  \ref{TW4} from \eqref{pa4}.

It remains to show \ref{TW1}.
For this we use, at least implicitly,  the representation \ref{hcWWb} above. 
More explicitly, given a graph limit $\hWW$, 
choose a sequence $\GG_n$ of  coloured graphs with $|\GG_n|\to\infty$ such that
$\GG_n\to\hWW$. Consider the random infinite graphs $\hGG_n$ defined above by
random labelling of $\GG_n$. By \refL{L1}, $\hGG_n$ converges in
distribution (as random elements of $\cLoo$); the limit is a random infinite
graph in $\cLoo$ which we denote by $\hGG_\infty$.
For any $m\in\bbN$, the distribution of $\hGG_n$ is invariant under
any permutation of the first $m$ vertices for all $n$ that are so large that
$|\hGG_n|\ge m$; consequently, the distribution of the limit $\hGG_\infty$
is invariant under any permutation of finitely many vertices, which means
that the distribution of $\hGG_\infty$ is exchangeable.
The infinite random graph $\hGG_\infty$
is described by the random variables $L_i$ (the colour of $i$) 
and $I_{ij}$ (the indicator of an edge $ij$), for $i\in\bbN$ and
$1\le i<j<\infty$ respectively; 
the combined array $\sqpar{(L_i)_i,(I_{ij})_{i,j}}$ is thus
exchangeable;
more precisely, it is a jointly exchangeable array of the type in 
\cite[Chapter 7]{Kallenberg:symmetries}, which 
allows for combinations of arrays with different numbers of indices as in
our case.
The representation theory for exchangeable arrays is due to 
\cite{Aldous} and \cite{Hoover} and further developed by 
\cite{Kallenberg:symmetries}.
We use here \cite[Theorem 7.15 (or Theorem 7.22)]{Kallenberg:symmetries}
which in our case shows that there exist independent random variables $\xi_\emptyset$,
$\xi_i$ ($i\in\bbN$), and $\xi_{ij}$ $(1\le i<j<\infty)$, all uniformly
distributed on $\oi$,
and measurable functions $f_1:\oi^2\to[\kk]$ and $f_2:\oi^4\to\bbR$
such that
\begin{align}\label{ok1}
  L_i& = f_1(\xioo,\xi_i),
\\\label{ok2}
I_{ij}&=f_2(\xioo,\xi_i,\xi_j,\xi_{ij}),
\end{align}
and furthermore $f_2$ is symmetric in the second and third arguments.
(This theorem in \cite{Kallenberg:symmetries} considers more generally an
exchangeable array $X_J$ indexed by arbitrary finite subsets $J\subset\bbN$.
We let $X_J:=0$ for $|J|>2$.)
Here the variable $\xioo$ represents a global randomization that is required
in general (in our setting it corresponds to a random graphon, see e.g.\
\cite{SJ209}), but is not needed here.
In fact, it is easily seen that if $A$ and $B$ are two disjoint finite
subsets of $\bbN$, then the restrictions of $\hGG_n$ to $A$ and $B$ are
asymptotically independent, 
and thus the restrictions of $\hGG_\infty$ to $A$ and $B$ are
independent. It follows by 
\cite[Lemma 7.35]{Kallenberg:symmetries}, see also
\cite[Theorem 5.5]{SJ209},
that we do not need $\xioo$ in \eqref{ok1}--\eqref{ok2};
we may thus write the functions as $f_1(\xi_i)$ and
$f_2(\xi_i,\xi_j,\xi_{ij})$.

By \eqref{ok1}--\eqref{ok2}, the function $f_1$ takes its values in $[\kk]$
and function $f_2$ takes its values in $\setoi$.
We define, with $\xi\in U(0,1)$,
\begin{align}\label{ar1}
  W(x,y):=\P\bigpar{f_2(x,y,\xi)=1}=\intoi f_2(x,y,z)\dd z.
\end{align}
Then, $W$ is an uncoloured graphon (defined on $\oi$), and
$\WW:=(W,f_1)$ is a pure $\kk$-coloured graphon.
Furthermore, given $(\xi_i)\xoo$, \eqref{ok2} and \eqref{ar1}
imply that 
\begin{align}\label{ar2}
  \P\bigpar{I_{ij}=1\mid (\xi_i)\xoo}
=\P\bigpar{f_2(\xi_i,\xi_j,\xi_{ij})=1\mid (\xi_i)\xoo}
=W(\xi_i,\xi_j),
\end{align}
and these events are (conditionally) independent for all pairs
$(i,j)\in\bbN\pairs$.
It follows that the infinite random coloured graph $\GGx(\infty,\WW)$ has the
same distribution as $\hGG_\infty$. (Take $X_i:=\xi_i$ in the construction
of $\GGx(\infty,\WW)$ and compare with \eqref{ok1} and \eqref{ar2}.)
Hence, by the definition of $\hGG_\infty$ above as a limit of $\hGG_n$,
we have
$ \hGG_n\to \GGx(\infty,\WW)$ in distribution as \ntoo.
This means that for every coloured graph $\FF\in\cL_k$ for some $k\ge1$,
we have
\begin{align}\label{ffgg}
  \P(\FF\subseteq\hGG_n) \to \P\bigpar{\FF\subseteq \GGx(\infty,\WW)}.
\end{align}
If $n$ is so large that $|\GG_n|\ge k$, then 
$\P(\FF\subseteq\hGG_n) =\P(\FF\subseteq\GG_n[k])$,
which together with \eqref{tinj} and \eqref{pa1a} 
shows that \eqref{ffgg} is equivalent to
$\tinj(\FF,\GG_n)\to t(\FF,\WW)$, and thus, using \eqref{tid},
to $t(\FF,\GG_n)\to t(\FF,\WW)$. Hence, \ref{TW3} shows that $\GG_n\to\WW$,
and thus the coloured graph limit $\hWW$ is represented by the coloured
graphon $\WW$. 
\end{proof}

\refT{TW}
gives us a third way to
define the set of graph limits, different from \ref{hcWWa} and \ref{hcWWb}
in \refD{DhcWW} but equivalent to them.

\begin{theorem}\label{TWB}
The set\/ $\hcWW_\kk$ of $\kk$-coloured
graph limits can be identified with
the set of all $\kk$-coloured graphons modulo the equivalence relation
$\cong$ in \refD{D=}; 
furthermore, we may restrict to coloured graphons defined on $\oi$, and to
pure coloured graphons without affecting the resulting space of graph limits.
\end{theorem}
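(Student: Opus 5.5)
This is essentially a corollary of \refT{TW}, so the plan is to assemble the pieces already proved rather than introduce anything new. We take \ref{hcWWa} as the working definition of $\hcWW_\kk$, viewed as a subset of $\oi^\cU$. We then consider the map $\WW\mapsto\tau(\WW)=(t(\FF,\WW))_{\FF\in\cU}$ from the class of all $\kk$-coloured graphons, on arbitrary probability spaces, into $\oi^\cU$. The goal is to show that this map takes values in $\hcWW_\kk$, is onto $\hcWW_\kk$, and has fibres exactly the $\cong$-classes. Together these give a bijection between $\hcWW_\kk$ and the set of equivalence classes.

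The steps are as follows.

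\emph{Values in $\hcWW_\kk$.} This was shown in the proof of \refT{TW}: by \refT{TW}\ref{TW4}, $\GGx(n,\WW)\to\tau(\WW)$ almost surely, so $\tau(\WW)$ is a limit point of $\tau(\GG_n)$ with $|\GG_n|\to\infty$.

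\emph{Fibres.} By \refT{TW}\ref{TW2}, equivalently \refL{L=}\ref{L=t} together with \refD{D=}, we have $\tau(\WW_1)=\tau(\WW_2)$ if and only if $\WW_1\cong\WW_2$. Hence $\tau$ descends to an injective map on equivalence classes.

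\emph{Surjectivity.} This is \refT{TW}\ref{TW1}: every graph limit is $\tau(\WW)$ for some $\WW$.

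For the final clause we must check that restricting the domain does not shrink the image. \refT{TW}\ref{TW1} already provides, for each limit, a representative that is a pure coloured graphon on $\oi$. So the restricted class of pure coloured graphons on $\oi$ still maps onto $\hcWW_\kk$, and its fibres are the restrictions of the $\cong$-classes. Alternatively, \refL{Lmix} and \refL{Loi} show directly that every $\cong$-class contains a pure graphon on $\oi$. Restricting only to pure graphons, or only to graphons on $\oi$, is then sandwiched between the full class and this smallest class, so all these quotients coincide.

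There is no genuine obstacle here. The one point needing care is the meaning of ``identified''. We should also note that the identification respects the definition of convergence: \refT{TW}\ref{TW3} shows that $\GG_n\to\WW$ in the sense of \refD{Dconv} exactly when $\tau(\GG_n)\to\tau(\WW)$. This makes the bijection compatible with convergence, and in particular with the topology inherited from $\oi^\cU$, so the identification is canonical and not merely set-theoretic.
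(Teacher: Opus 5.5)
Your proposal is correct and follows the paper's proof, which is the one-line observation that the theorem is an immediate consequence of \refT{TW} together with \refL{Lmix} and \refL{Loi}. You simply spell out the bookkeeping: image in $\hcWW_\kk$, fibres equal to the $\cong$-classes, surjectivity, and the restriction to pure graphons on $\oi$.
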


\begin{proof}
  An immediate consequence of \refT{TW}
 together with \refLs{Lmix} and \ref{Loi}.
\end{proof}

\begin{corollary}\label{CTWB}
  The definition in \refT{TWB} of coloured graph limits as equivalence classes 
of coloured graphons means that
$t(\FF,\WW)$ (for $\FF\in\cL$) and 
$\GGx(n,\WW)$ (for $n\in\bbNx$) are well defined for coloured graph limits
$\WW\in\hcWW_\kk$. 
\end{corollary}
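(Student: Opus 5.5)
The plan is to reduce everything to the identification in \refT{TWB}: a coloured graph limit $\hWW\in\hcWW_\kk$ is an equivalence class $[\WW]$ of coloured graphons under $\cong$. To make $t(\FF,\hWW)$ and $\GGx(n,\hWW)$ well defined, I will pick any representative $\WW\in[\WW]$, set $t(\FF,\hWW):=t(\FF,\WW)$, and let $\GGx(n,\hWW)$ have the law of $\GGx(n,\WW)$. The only thing to prove is that these do not depend on the choice of representative. This is immediate from the characterisations of $\cong$ in \refL{L=}.

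For densities, suppose $\WW_1\cong\WW_2$. Then \refL{L=}\ref{L=t} gives $t(\FF,\WW_1)=t(\FF,\WW_2)$ for every $\FF\in\cU$. Densities for labelled $\FF\in\cL$ agree as well, since $t(\FF,\WW)$ is invariant under relabelling and so depends only on the unlabelled class of $\FF$. Equivalently, in the representation \ref{hcWWa}, $t(\FF,\hWW)$ is simply the $\FF$-coordinate of the point $\tau(\WW)\in\oi^\cU$. This point is independent of the representative by \refT{TW}\ref{TW2}.

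For the random graphs, \refL{L=}\ref{L=n} gives $\GGx(n,\WW_1)\eqd\GGx(n,\WW_2)$ for each finite $n$, and \refL{L=}\ref{L=oo} gives the same for $n=\infty$. So the distribution of $\GGx(n,\hWW)$ on $\cL_n$ is well defined for all $n\in\bbNx$. Since a random graph is specified only up to its law, this is exactly what well-definedness means here.

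No real obstacle is expected. The one point to state carefully is that $\GGx(n,\hWW)$ is defined only in distribution, and that the restriction in \refT{TWB} to pure graphons on $\oi$ changes nothing, because by \refL{Lmix} and \refL{Loi} every representative is equivalent to such a graphon.
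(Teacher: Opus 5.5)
Your proposal is correct and follows the paper's own argument: the paper's proof is simply ``an immediate consequence of \refD{D=} and \refL{L=}'', i.e.\ representative-independence of the densities via \refL{L=}\ref{L=t} and of the laws of $\GGx(n,\WW)$ via \refL{L=}\ref{L=n}--\ref{L=oo}. Your added remarks, on passing from labelled $\FF\in\cL$ to unlabelled classes and on $\GGx(n,\WW)$ being defined only in distribution, are appropriate clarifications of that one-line proof.
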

\begin{proof}
  An immediate consequence of \refD{D=} and \refL{L=}.
\end{proof}

\begin{remark}\label{Rdef}
As in the standard uncoloured case, it is often convenient to identify a
graphon with the corresponding graph limit, and we may do so without comment
in the sequel.
When it is necessary to be more formal, in particular when it is important to
distinguish between equivalent graphons, we use the definition in \refT{TW}
  and regard graph limits as equivalence classes of graphons.
\end{remark}

It follows from \eqref{tw3} that the embedding in \ref{hcWWa} 
of $\hcWW$ in $\oi^\cU$
is given by $\WW\mapsto(t(\FF,\WW))_{\FF\in\cU}$,
i.e., by extending the map $\tau$ in \eqref{tau} to graphons and graph limits.
Recall that the set $\hcWW$ is a compact metric space, with the topology
defined by the embedding $\tau$ (or $\tauu$).
This topology can be described in terms of the homomorphism numbers
$t(\FF,\WW)$: 
\begin{corollary}\label{CTWBb}
Let\/ $\WW_n$, $n\ge1$, and $\WW$ be coloured graph limits (or graphons).
Then
\begin{align}\label{tw9}
\WW_n\to\WW
\iff
  t(\FF,\WW_n)\to t(\FF,\WW)  \quad 
\forall
\FF\in\cU.
\end{align}
\end{corollary}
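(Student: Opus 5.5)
This is essentially a restatement of the definition of the topology on $\hcWW_\kk$, so the plan is to unwind the definitions. By \refT{TWB} and \refC{CTWB}, each coloured graph limit $\WW$, or each coloured graphon regarded as a graph limit, is identified with the point $\tau(\WW)=(t(\FF,\WW))_{\FF\in\cU}\in\oi^\cU$. Here $\tau$ is extended to graphons as in the proof of \refT{TW}, and its value depends only on the equivalence class, by \refL{L=}\ref{L=t} and \refD{D=}. The topology on $\hcWW_\kk$ was defined (before \refL{LC}) as the subspace topology from $\oi^\cU$. So $\WW_n\to\WW$ in $\hcWW_\kk$ means exactly $\tau(\WW_n)\to\tau(\WW)$ in $\oi^\cU$.

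Next, $\oi^\cU$ carries the product topology, and $\cU$ is countable; a compatible metric is, for example, $\sum_j 2^{-j}|x_{\FF_j}-y_{\FF_j}|$ for an enumeration $(\FF_j)$ of $\cU$. Convergence in a product space is coordinatewise convergence. Hence $\tau(\WW_n)\to\tau(\WW)$ holds if and only if $t(\FF,\WW_n)\to t(\FF,\WW)$ for every $\FF\in\cU$, which is \eqref{tw9}.

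For the graphon version of the statement, one more point is needed: the value $t(\FF,\WW)$ computed by \eqref{pa1b} for a concrete graphon representative equals the $\FF$-coordinate of the point of $\hcWW_\kk$ that represents it. This holds by construction in the proof of \refT{TW}, where the graph limit defined by $\WW$ is precisely $\tau(\WW)$. It is consistent with \ref{hcWWa} by \eqref{lc3} together with \refT{TW}\ref{TW4}, which shows that $\tau(\WW)$ really lies in $\hcWW_\kk$.

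There is no real obstacle here. The only care needed is bookkeeping: check that the embedding used to topologise $\hcWW_\kk$ agrees with $\WW\mapsto(t(\FF,\WW))_\FF$ on equivalence classes, and that the statement for graphons is read through this identification (\refR{Rdef}). If desired, one can add that by compactness (\refL{LC}) and injectivity of $\tau$ on $\hcWW_\kk$, limits are unique up to equivalence, though this is not needed for \eqref{tw9} itself.
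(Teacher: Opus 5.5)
Your proposal is correct and is essentially the paper's argument. The paper's proof is simply ``immediate from the definitions''. It relies on the remark just before the corollary (via \eqref{tw3}) that the embedding of $\hcWW$ into $\oi^\cU$ is $\WW\mapsto(t(\FF,\WW))_{\FF\in\cU}$, together with the fact that convergence in the product topology is coordinatewise, exactly as you do. Your extra bookkeeping about graphon representatives and your closing remark on uniqueness of limits are fine but not needed.
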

\begin{proof}
  Immediate from the definitions and comments above.
\end{proof}

\begin{remark}\label{Rmetric}
\refC{CTWBb} implies, 
since the set $\cU_\kk$ of $\kk$-coloured graphs is countable, 
that we can define
a metric on the space $\hWW_\kk$ of coloured graph limits by the following
standard construction: Let $\FF_1,\FF_2,\dots$ be an enumeration of the
unlabelled $\kk$-coloured graphs, and define
\begin{align}
  d(\WW_1,\WW_2):=\sumi 2^{-i}|t(\FF_i,\WW_1)-t(\FF_i,\WW_2)|.
\end{align}
(Cf.\ \refD{DhcWW}\ref{hcWWa}.)
However, as in other similar cases, this metric is not very interesting;
its main use is to show the existence of a metric.  
\end{remark}

\subsubsection{Uncolouring}\label{SSuncolour}

We define a mapping $\iota$, which we call \emph{uncolouring}, 
from the set $\hW$ of 
coloured graphons to uncoloured graphons by ignoring the colouring, i.e.,
$\iota(W,\Psi):=W$.
We record some simple facts.

\begin{theorem}\label{Tuncolour}
If two coloured graphons $\WW_1$ and $\WW_2$ are equivalent, then so are
the uncoloured graphons $\iota(\WW_1)$ and $\iota(\WW_2)$.

Hence, uncolouring is well defined for graph limits (regarded as equivalence
classes as in \refT{TWB}), and defines for every $\kk\in\bbN$ a mapping 
$\hcWW_\kk\to\hcWo$ of coloured graph limits to uncoloured ones.
Furthermore, this mapping is continuous.
\end{theorem}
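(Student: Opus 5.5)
The plan is to express uncoloured homomorphism densities as finite sums of coloured ones. For an uncoloured graph $F$ with vertex set $[k]$ and a colour vector $\boldsymbol\ell=(\ell_1,\dots,\ell_k)\in[\kk]^k$, let $F^{\boldsymbol\ell}\in\cL_k$ be $F$ with vertex $i$ coloured $\ell_i$. Since $\sum_{\ell=1}^\kk\psi_\ell\equiv1$, expanding $\prod_{i=1}^k\sum_{\ell_i}\psi_{\ell_i}(x_i)=1$ inside \eqref{pa1b} gives, for every coloured graphon $\WW=(W,\Psi)$,
\begin{align}\label{uncol-sum}
t(F,\iota(\WW))=t(F,W)=\sum_{\boldsymbol\ell\in[\kk]^k} t\bigpar{F^{\boldsymbol\ell},\WW}.
\end{align}
Probabilistically, this says that the uncoloured random graph $G(n,W)$ is obtained from $\GGx(n,\WW)$ by forgetting the colours, which is an equally short route.

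\emph{Equivalence.} Suppose $\WW_1\cong\WW_2$. By \refL{L=}\ref{L=t} the two graphons have the same coloured densities, so every term on the right of \eqref{uncol-sum} agrees. Hence $t(F,\iota(\WW_1))=t(F,\iota(\WW_2))$ for every uncoloured $F$, and the graphons $\iota(\WW_1)$ and $\iota(\WW_2)$ are (weakly) equivalent in the usual uncoloured sense. This is the $\kk=1$ case of \refL{L=} and \refD{D=}. Alternatively one can argue that $\GGx(n,\WW_1)\eqd\GGx(n,\WW_2)$ and then apply the measurable colour-forgetting map. Either way, $\iota$ descends to a well-defined map $\hcWW_\kk\to\hcWo$ on equivalence classes, by \refT{TWB} and its $\kk=1$ analogue.

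\emph{Continuity.} By \refC{CTWBb}, both spaces carry the topology of convergence of all homomorphism densities; in the uncoloured case this is the $\kk=1$ instance. Suppose $\WW_n\to\WW$ in $\hcWW_\kk$. Then $t(F^{\boldsymbol\ell},\WW_n)\to t(F^{\boldsymbol\ell},\WW)$ for each of the finitely many $\boldsymbol\ell\in[\kk]^k$, so by \eqref{uncol-sum}, $t(F,\iota(\WW_n))\to t(F,\iota(\WW))$ for every $F$. Hence $\iota(\WW_n)\to\iota(\WW)$. Since $\hcWW_\kk$ is metrisable by \refL{LC}, sequential continuity suffices.

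No step is a real obstacle. The only points needing care are that the sum in \eqref{uncol-sum} is finite, which is what makes continuity immediate, and that \refC{CTWBb} applied with $\kk=1$ identifies the topology on $\hcWo$ with the standard one.
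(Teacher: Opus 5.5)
Your proposal is correct and is essentially the paper's proof. The paper likewise reads off $t(F,\iota(\WW))=t(F,W)=\sum_{\FF}t(\FF,\WW)$ from \eqref{pa1b}, summing over all colourings $\FF$ of $F$, and then concludes both the invariance under equivalence and the continuity directly from \refD{D=} and the density description of the topology on graph limits; your count of $\kk^{|F|}$ terms is the right one (the paper's ``$|F|^\kk$'' is a slip).
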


\begin{proof}
  It follows from the
definition \eqref{pa1a}--\eqref{pa1b} that for any coloured graphon
$\WW=(W,\Psi)$ and any uncoloured graph $F$, we have
\begin{align}
  t(F,\iota(\WW))=t(F,W)
= \sum_{\FF} t(\FF,\WW),
\end{align}
summing over all $|F|^\kk$ colourings $\FF$ of $F$.
The results now follow easily from the definitions above, see \refD{D=} and
\refC{CTWB}.
\end{proof}

\subsubsection{Pullbacks and equivalence for coloured graphons} 

In the uncoloured case, it is well-known that convergence of graphons in
the space of uncoloured graph limits
$\hcWo$ also is described by the cut distance; see Appendix
\ref{app:uncoloured-pullback} for standard facts on pullbacks, cut norm and
cut distance of uncoloured graphons. In this
subsubsection we consider the problem of
extending this to coloured graphons. 

Let $\gf:\gO_1\to\gO_2$ be a \mpp{} map.
 If $\WW:=(W,\Psi)$ is a coloured graphon on $\gO_2$, we define
$\WW^\gf:=(W^\gf,\Psi^\gf)$; this is a coloured graphon on $\gO_1$.
If $X_i$ are i.i.d.\  points in $\gO_1$, then $\gf(X_i)$ are i.i.d.\  points in
$\gO_2$, and it follows from the definitions that $\GGx(n,\WW)\eqd \GGx(n,\WW^\gf)$
for every $n\le\infty$; thus 
by \refL{L=}\ref{L=n} and  \refD{D=},
\begin{align}\label{ar77}
\WW^\gf\cong\WW.  
\end{align}
In other words, by 
\refL{L=}\ref{L=t},
\begin{align}\label{ar8}
  t(\FF,\WW^\gf)=t(\FF,\WW),\qquad \FF\in\cU.
\end{align}

\begin{example}
  \label{EG2}
Let $\GG$ be a coloured graph, and recall the coloured graphons $\WW'_G$
and $\WW_G$
defined in \refE{EG1}. Then $\WW_G$ is a pullback of $\WW'_G$, and thus
$\WW'_G\cong\WW_G$.
\end{example}

For uncoloured graphons, as said above, equivalence of two graphons does not 
imply that one of them is the pullback of the other, but we have the
substitutes in \refT{Tunique0}. For coloured graphons, we note first that
\refT{Tunique0} cannot generalise to mixed coloured graphons. Indeed, let $\WW$
be any coloured graphon that is not pure-coloured. By \refL{Lmix},
$\WW\cong \WW'$ for some pure-coloured graphon $\WW'$. However, it is clear
that if $\WW_1=\WW_2^\gf$ for two coloured graphons $\WW_1,\WW_2$
and a \mpp{} map $\gf$, then either both or none of $\WW_1$ and $\WW_2$ are
pure-coloured. Hence, nothing similar to \refT{Tunique0} can hold for
arbitrary mixed-coloured graphons.

However, if we restrict to pure-coloured graphons, then
\refT{Tunique0}\ref{Tunique0BR} extends.
We conjecture that \refT{Tunique0}\ref{Tunique0BCL} too holds for pure
coloured graphons, but we leave this as an open problem.

\begin{theorem}
\label{Tunique1}
  Let\/ $\WW=(W,\chi)$ and $\tWW=(\tW,\tchi)$ 
be pure coloured graphons defined on 
$\oi$ such that $\WW\cong \tWW$. 
Then there exist \mpp{} maps $\gf,\tgf:\oi\to\oi$ such that
$\WW^{\gf}=\tWW^{\tgf}$ a.e.
\end{theorem}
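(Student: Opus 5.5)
The plan is to reduce to the uncoloured result \refT{Tunique0}\ref{Tunique0BR} by encoding the colouring into the graphon itself. A pure colouring $\chi$ can be recorded by a graphon on an enlarged space, so that equivalence of the coloured graphons becomes equivalence of uncoloured ones. Concretely, I would use the standard Borel-type version of the uniqueness theorem. This is the one proved via the Aldous--Hoover representation, or equivalently via twin-free quotients and Borel structure. That version says that two equivalent graphons on $\oi$ have a common pullback through \mpp{} maps $\oi\to\oi$. The same statement should hold for graphons with values in any Polish space, and more generally for ``decorated'' graphons consisting of a vertex function together with an edge function. I would reprove or invoke it in that generality.

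The first approach I would try avoids the general version. Define an uncoloured graphon on $\oi$ by putting the colour into a block structure. Split $\oi$ into the first half, which is a scaled copy of the original space, and a second half divided into $\kk$ ``marker'' intervals $J_1,\dots,J_\kk$ of length $1/(2\kk)$ each. Define $U$ on the first half by $W$ (rescaled), let $U(x,y)$ for $x$ in the first half and $y\in J_\ell$ be a distinct constant $c_\ell\cdot\etta\{\chi(x)=\ell\}$, and set $U$ equal to fixed distinct constants on $J_\ell\times J_m$.

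Equivalence of $\WW$ and $\tWW$ should imply equivalence of $U$ and $\tilde U$, since homomorphism densities of $U$ are polynomial combinations of coloured densities $t(\FF,\WW)$. I would check this by expanding each vertex's location into ``original part'' or ``marker $J_\ell$''. Applying \refT{Tunique0}\ref{Tunique0BR} then gives \mpp{} maps $\rho,\tilde\rho$ with $U^\rho=\tilde U^{\tilde\rho}$ a.e.

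The main obstacle is the next step: showing that the common pullback respects the decomposition into halves and markers, so that it restricts to maps on the original parts. Marker points are identifiable intrinsically by their degree profiles and by the distinct constants. I would choose these constants to be generic, for example algebraically independent or at least avoiding coincidences with values of $W$ in a measure-theoretic sense. This should force $\rho$ and $\tilde\rho$ to map the preimage of the first half to the first half a.e. That preimage is then an interval-like set of measure $1/2$, and it is isomorphic to $\oi$ by a \mpp{} map via \refR{R01+}, possibly after modifying the maps on a null set.

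Restricting to that set and rescaling yields maps $\gf,\tgf$ with $W^\gf=\tW^{\tgf}$ a.e. The marker columns then give $\etta\{\chi\circ\gf=\ell\}=\etta\{\tchi\circ\tgf=\ell\}$ a.e. for each $\ell$, which is the required conclusion.

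If the identification of marker points proves delicate, because $W$ itself may take the values $c_\ell$, I would fall back on the direct route. That route redoes the proof of \refT{Tunique0}\ref{Tunique0BR} through Aldous--Hoover for the exchangeable array $\bigpar{(L_i),(I_{ij})}$, exactly as in the proof of \refT{TW}\ref{TW1}. Kallenberg's uniqueness theorem for representations of jointly exchangeable arrays with mixed index sizes \cite[Theorem 7.28]{Kallenberg:symmetries} gives the two representing functions $(f_1,f_2)$ and $(\tilde f_1,\tilde f_2)$, built from $(\chi,W)$ and $(\tchi,\tW)$ as in \eqref{ok1}--\eqref{ar1}. That theorem supplies \mpp{} maps on the first-level variables $\xi_i$ that make the two representations agree a.e. Integrating out the edge variables $\xi_{ij}$ as in \eqref{ar1} then yields $W^\gf=\tW^{\tgf}$ and $\chi\circ\gf=\tchi\circ\tgf$ a.e. This works because in the pure case $L_i$ is a deterministic function of $\xi_i$ alone, with no extra randomisation. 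That is precisely why the theorem needs pure colourings.
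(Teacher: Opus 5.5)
Your fallback route is the paper's proof. The paper represents $\GG(\infty,\WW)$ by the array $\bigl((\chi(\xi_i))_i,(\indic{\xi_{ij}\le W(\xi_i,\xi_j)})_{i<j}\bigr)$, applies Kallenberg's Theorem 7.28 to its equality in law with the corresponding array for $\tWW$, and then conditions on $\xi_i,\xi_j$. Your sketch glosses over two details that the paper handles:
\begin{itemize}
\item The theorem also transforms the edge-level variables, through maps $g_2(\xi_i,\xi_j,\xi_{ij})$ that are \mpp{} only in the last argument. This is what makes the integration over $\xi_{ij}$ legitimate.
\item The theorem carries an extra global variable $\xioo$, which must be frozen at a suitable value.
\end{itemize}

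Your primary route is genuinely different and can be completed, but the step you flag needs a concrete argument. Neither degree profiles nor algebraic independence suffices, since $W$ may take any value. The following works.
\begin{itemize}
\item Choose the marker--marker constants $d_{\ell m}$ (indexed by unordered pairs) pairwise distinct and distinct from $0,c_1,\dots,c_\kk$. Also choose them outside the countable set of values $c$ with $\gl^2\{W=c\}>0$ or $\gl^2\{\tW=c\}>0$.
\item By Fubini, for a.e.\ point $z$ of the first half, the set $\{w:U(z,w)=d_{\ell\ell}\}$ is null. This set has measure at least $1/(2\kk)$ if $z\in J_\ell$, and it is empty if $z\in J_m$ with $m\ne\ell$.
\item Since $\rho$ is \mpp, $\gl\{w:U(\rho(x),w)=d\}=\gl\{w:U^\rho(x,w)=d\}$ for every $x$. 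Moreover $U^\rho(x,\cdot)=\tilde U^{\tilde\rho}(x,\cdot)$ a.e.\ for a.e.\ $x$. Hence $\rho\qw(J_\ell)=\tilde\rho\qw(J_\ell)$ a.e.\ for every $\ell$, and so $A:=\rho\qw([0,\frac12))=\tilde\rho\qw([0,\frac12))$ a.e.
\item Note that $A$ is an arbitrary measurable set of measure $\frac12$, not an interval-like set. You must reparametrise $A$ by one and the same \mpp{} isomorphism $\theta$ from $\oi$ onto $A$ with normalised measure, using \refR{R01+} and a null-set modification. Then set $\gf(x):=2\rho(\theta(x))$ and $\tgf(x):=2\tilde\rho(\theta(x))$.
\item The colours are read off from the blocks $A\times\rho\qw(J_\ell)$. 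This uses only $c_\ell\neq0$; the zeros in $c_\ell\etta\{\chi=\ell\}$ are harmless because only the $d$-levels are used to locate the markers.
\end{itemize}

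Compared with the paper, your reduction uses the uncoloured theorem purely as a black box, at the cost of a gadget and some measure-theoretic bookkeeping. The paper's argument is shorter once Kallenberg's theorem is available. A bonus of your reduction is that it is not tied to \refT{Tunique0}\ref{Tunique0BR}. Realise the markers as $\kk$ atoms of $\gO_j\sqcup[\kk]$, and cut the common space down to the complement of the marker sets. The same argument then appears to give the pure-coloured analogue of \refT{Tunique0}\ref{Tunique0BCL}, which the paper leaves open; this is worth checking carefully.
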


\begin{proof}
(This is based on the proof for the uncoloured case in \cite[Theorem 7.1]{SJ209}.)
  The random graph $\GG(\infty,\WW)$ is described by the sequence of vertex
  colours and the array of edge indicators $I_{ij}$, $1\le i<j$.
Let $(\xi_i)_{i\ge1}$ and $(\xi_{ij})_{1\le i<j}$ be a sequence and an array
of random \Uoi{} variables, all independent. In the construction of
$\GG(\infty,\WW)$ in \refSS{SSGG}, we may take $X_i=\xi_i$ and 
\begin{align}\label{ok90}
  I_{ij}:=\indic{\xi_{ij}\le W(\xi_i,\xi_j)}. 
\end{align}
We do the same for $\GG(\infty,\tWW)$. Thus, using \refL{L=}\ref{L=oo},
 the assumption $\WW\cong\tWW$ means
 \begin{align}\label{ok91}
\Bigpar{ (\chi(\xi_i))_{i},\bigpar{\indic{\xi_{ij}\le W(\xi_i,\xi_j)}}_{i,j}}
\eqd
\Bigpar{ (\tchi(\xi_i))_{i},\bigpar{\indic{\xi_{ij}\le \tW(\xi_i,\xi_j)}}_{i,j}}
. \end{align}
The two sides are jointly exchangeable arrays of the type in 
\cite[Chapter 7]{Kallenberg:symmetries}.
(Cf.~the proof of \refT{TW}.)
We apply
\cite[Theorem 7.28(i)$\implies$(ii)]{Kallenberg:symmetries} to \eqref{ok91}
and obtain (after unwrapping the rather heavy notation)
that there exist functions $g_1,\tg_1:\oi\to\oi$ and
$g_2,\tg_2:\oi^3\to\oi$, all of them \mpp{} if all but the last variable is
fixed, such that, for a.e.\ $\xi_i,\xi_j,\xi_{ij}$,
\begin{align}\label{ok92}
  \chi(g_1(\xi_i))&=\tchi(\tg_1(\xi_i))
\\\label{ok93}
\bigindic{g_2(\xi_i,\xi_j,\xi_{ij})\le W(g_1(\xi_i),g_1(\xi_j))}
&=
\bigindic{\tg_2(\xi_i,\xi_j,\xi_{ij})\le \tW(\tg_1(\xi_i),\tg_1(\xi_j))}.
\end{align}
Actually, we have simplified the result of 
\cite[Theorem 7.28(i)]{Kallenberg:symmetries}  a little; the theorem really
yields the above with functions $g_1,\tg_1,g_2,\tg_2$ having an extra
argument $\xi_\emptyset$, which is another $\Uoi$ random variable,
independent of everything else. 
(Cf.\ the proof of \refT{TW}.)
However, since \eqref{ok92}--\eqref{ok93}
with this modification hold for a.e.\ $\xi_\emptyset,\xi_i,\xi_j,\xi_{ij}$,
we can choose a fixed value of $\xi_\emptyset$ such that the formulas hold 
for a.e.\ $\xi_i,\xi_j,\xi_{ij}$, which yields the version
stated in \eqref{ok91}--\eqref{ok92}.

Take the conditional expectation of both sides of \eqref{ok92} given $\xi_i$
and $\xi_j$. 
Since $g_2$ is \mpp{} in the last variable, we have for any fixed $\xi_i$
and $\xi_j$ that $g_2(\xi_i,\xi_j,\xi_{ij})\eqd\xi_{ij}$, and thus
\begin{align}
  \E \bigsqpar{\bigindic{g_2(\xi_i,\xi_j,\xi_{ij})\le  W(g_1(\xi_i),g_1(\xi_j))}
\mid \xi_i,\xi_j}&
=\P\bigsqpar{\xi_{ij}\le W(g_1(\xi_i),g_1(\xi_j))\mid\xi_i,\xi_j}
\notag\\&
=W(g_1(\xi_i),g_1(\xi_j)).
\end{align}
and similarly for the \rhs.
Consequently, for a.e.\ $\xi_i,\xi_j$,
\begin{align}\label{ok95}
  W(g_1(\xi_i),g_1(\xi_j))
= \tW(\tg_1(\xi_i),\tg_1(\xi_j)).
\end{align}
Let $\gf:=g_1$ and $\tgf:=\tg_1$, recalling that these are \mpp; then
\eqref{ok92} and \eqref{ok95} yield, a.e., 
\begin{align}
  \WW^\gf = (W^\gf,\chi^\gf)
=(\tW^{\tgf},\tchi^{\tgf})
=\tWW^{\tgf},
\end{align}
which completes the proof.
\end{proof}

\subsubsection{Simply coloured graphons}

We can obtain a ``uniqueness'' result similar to \refT{Tunique1} 
for another class of coloured graphons.
We consider for simplicity in this subsection only graphons defined on
$\oi$,
except when we say otherwise.
Define, for $x\in\oi$,
\begin{align}\label{Wx}
  W_x(\cdot):=W(x,\cdot).
\end{align}
Thus, every $W_x$ is a measurable function $\oi\to\oi$, and we may regard
$W_x$ as an element of the separable Banach space $L^1\oi$, which we equip
with its Borel \gsf.
We let $\cF_W$ be the \gsf{} generated by the map $x\mapsto W_x$, i.e., the
smallest \gsf{} on $\oi$ such that $x\mapsto W_x$ is measurable
$(\oi,\cF_W)\to L^1\oi$. 
It is well-known 
(and follows by a monotone-class argument;
see also \cite[Lemma III.11.16]{DS}) 
that
$x\mapsto W_x$ is measurable for the Lebesgue \gsf{} $\cLeb$ on $\oi$;
hence $\cF_W\subseteq\cLeb$.
(If $W$ is Borel measurable on $\oi^2$, as we may assume, then 
$x\mapsto W_x$ is measurable for the Borel \gsf{} $\cB$ on $\oi$ and
$\cF_W\subseteq\cB$.) 

\begin{definition}
A \emph{simply} coloured graphon  is a coloured graphon $\WW=(W,\Psi)$
such that the colouring function $\Psi$ is $\cF_W$-measurable.
In this case, we also say that the colouring  $\Psi$ is \emph{simple}.
\end{definition}

\begin{lemma}\label{LSimple}
A coloured graphon is simple if and only if there exists a measurable
function 
$h:L^1\oi \to \gD_\kk$ such that
\begin{align}
    \Psi(x) = h(W_x),
\qquad x\in\oi.
\end{align}
\end{lemma}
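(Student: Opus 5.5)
This is a Doob–Dynkin factorisation argument. Write $T:\oi\to L^1\oi$ for the map $x\mapsto W_x$. By definition $\cF_W=T^{-1}(\cB(L^1))=\set{T^{-1}(B):B\in\cB(L^1\oi)}$, where $\cB(L^1)$ is the Borel \gsf{} of the separable Banach space $L^1\oi$. The equivalence to prove is then the standard statement that a function is $\gs(T)$-measurable iff it factors measurably through $T$. The target space $\gD_\kk$ is a compact subset of $\bbR^\kk$, so the usual real-valued factorisation lemma applies coordinatewise, with one extra step to keep the factor inside the simplex.

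The "if" direction is immediate. If $\Psi=h\circ T$ with $h$ Borel measurable, then for every Borel $A\subseteq\gD_\kk$ we have $\Psi^{-1}(A)=T^{-1}(h^{-1}(A))$. Since $h^{-1}(A)\in\cB(L^1)$, this set lies in $\cF_W$, so $\Psi$ is simple.

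For the "only if" direction, assume $\Psi=(\psi_\ell)_1^\kk$ is $\cF_W$-measurable.

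\begin{enumerate}
\item Fix $\ell$ and treat the real function $\psi_\ell$ by the Doob–Dynkin lemma. For an indicator $\etta_A$ with $A\in\cF_W$, write $A=T^{-1}(B)$ and set $h=\etta_B$. By linearity this handles simple $\cF_W$-measurable functions.
\item For general $\psi_\ell$, take simple $\cF_W$-measurable functions $s_n\to\psi_\ell$ pointwise, with factors $h_n$, so $s_n=h_n\circ T$. Define $\tilde h_\ell:=\limsup_n h_n$ where this limit is finite and $0$ elsewhere. This is Borel on $L^1$. On the image $T(\oi)$ the sequence $h_n$ converges, so $\tilde h_\ell\circ T=\psi_\ell$.
\item Set $\tilde h:=(\tilde h_\ell)_1^\kk:L^1\to\bbR^\kk$. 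It is Borel measurable, and $\tilde h(W_x)=\Psi(x)\in\gD_\kk$ for every $x$.
\item Let $D:=\tilde h^{-1}(\gD_\kk)$, a Borel set because $\gD_\kk$ is closed. Define $h:=\tilde h$ on $D$ and $h:=(1,0,\dots,0)$ off $D$. Then $h$ is Borel measurable into $\gD_\kk$. Since $T(\oi)\subseteq D$, we still have $h(W_x)=\Psi(x)$ for all $x\in\oi$.
\end{enumerate}

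The only delicate point is that $T(\oi)$ need not be a Borel subset of $L^1$, so we cannot define $h$ just on the image. Steps 2 and 4 get around this: the lim sup construction and the patch on the Borel set $D$ never require measurability of $T(\oi)$ itself. No separability beyond $L^1$ being a measurable space is needed.

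Finally, if equality is wanted only a.e. (graphons being identified a.e.), the argument above already gives it everywhere. So no modification is needed to pass between the Lebesgue and Borel versions of $\Psi$, as long as $\Psi$ is literally $\cF_W$-measurable.
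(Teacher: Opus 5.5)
Your proposal is correct and takes the same approach as the paper. The paper proves the lemma only by citing the Doob--Dynkin (functional representation) lemma \cite[Lemma 1.13]{Kallenberg}; you write out the standard proof of that lemma in this setting, correctly handling both that $T(\oi)$ need not be Borel and that the factor must take values in $\gD_\kk$.
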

In other words, a simple colouring is determined by the section $W_x$ of the
graphon. 

\begin{proof}
  This is a 
  standard fact from measure theory,
see e.g.\
  \cite[Lemma 1.13]{Kallenberg}.
\end{proof}

We next show that every coloured graphon is equivalent to a simply coloured
graphon, by an easy construction.

\begin{theorem}\label{TBWW}
  Let\/ $\WW=(W,\Psi)$ be a coloured graphon, and define a new colouring by
  \begin{align}\label{tbww}
    \bPsi=(\bpsi_i)_1^\kk:=\E(\Psi\mid\cF_W).
  \end{align}
Then $\WW\cong \bWW:=(W,\bPsi)$.
Consequently, every coloured graphon is equivalent to some simply coloured
graphon. 
\end{theorem}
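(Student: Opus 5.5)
We will verify condition \ref{L=t} of \refL{L=}, i.e.\ we show $t(\FF,\WW)=t(\FF,\bWW)$ for every $\FF\in\cL_n$, using the integral formula \eqref{pa1b}. First, $\bPsi$ is a legitimate colouring. Each $\bpsi_\ell=\E(\psi_\ell\mid\cF_W)$ can be chosen $\cF_W$-measurable with values in $\oi$. Since $\sum_\ell\psi_\ell=1$, linearity gives $\sum_\ell\bpsi_\ell=1$ a.e., and we may modify the versions on a null set in $\cF_W$ so that this holds everywhere. Thus $\bWW$ is a coloured graphon, and it is simply coloured by construction. The final assertion of the theorem then follows immediately once the equivalence is proved.

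The key step is to replace the factors $\psi_{\ell_i}(x_i)$ in \eqref{pa1b} one at a time by $\bpsi_{\ell_i}(x_i)$. Fix $i$ and all the other variables $x_j$, $j\ne i$. The integrand is
\[
\psi_{\ell_i}(x_i)\,\prod_{j\ne i}\psi'_{\ell_j}(x_j)\prod_{jk\in E(\FF)}W(x_j,x_k),
\]
where each $\psi'$ is either $\psi$ or $\bpsi$ depending on the stage of the replacement. As a function of $x_i$, this is $\psi_{\ell_i}(x_i)\,g(x_i)$ times a constant, with
\[
g(x_i)=\prod_{j:\,ij\in E(\FF)}W(x_i,x_j).
\]
For fixed $x_j$, each map $x\mapsto W(x,x_j)=W_x(x_j)$ should be $\cF_W$-measurable. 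Then $g$ is bounded and $\cF_W$-measurable, so
\[
\int\psi_{\ell_i}\,g\dd\mu=\int\bpsi_{\ell_i}\,g\dd\mu .
\]
Integrating over the remaining variables via Fubini and iterating over $i=1,\dots,n$ gives $t(\FF,\WW)=t(\FF,\bWW)$.

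The main obstacle is the measurability claim: for a.e.\ fixed $y$, the function $x\mapsto W(x,y)$ is $\cF_W$-measurable. Point evaluation $f\mapsto f(y)$ is not defined on $L^1$, so this is not immediate. I will avoid it by working with averages instead. For any bounded Borel $h$, the functional $f\mapsto\int f h$ is continuous on $L^1\oi$, so $x\mapsto\int W(x,y)h(y)\dd y$ is $\cF_W$-measurable.

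To handle the actual integrand, I integrate out $x_i$ last in a form that uses this. I write the integral over all variables as
\[
\int \psi_{\ell_i}(x_i)\,H(x_i)\dd x_i,
\]
where $H(x_i)$ is the integral over the other variables of the rest of the integrand. It then suffices to show that $H$ is $\cF_W$-measurable. Here $H(x)=\Phi(W_x)$ for the map $\Phi:L^1\to\bbR$ given by
\[
\Phi(f)=\int\prod_{j:\,ij\in E(\FF)}f(x_j)\,R\bigl((x_j)_{j\ne i}\bigr)\prod_{j\ne i}\dd x_j ,
\]
with $R$ bounded. This $\Phi$ is continuous, indeed Lipschitz, on the set $\{0\le f\le1\}$ in $L^1$, because products of $[0,1]$-valued functions telescope in $L^1$ norm. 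Hence $\Phi$ is Borel on that closed subset, and $H=\Phi(W_\cdot)$ is $\cF_W$-measurable. This gives $\int\psi_{\ell_i}H=\int\bpsi_{\ell_i}H$, which completes the replacement step for coordinate $i$, and the iteration over $i$ finishes the proof.
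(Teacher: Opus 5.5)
Your argument is correct, and its skeleton is the paper's. You write $t(\FF,\cdot)$ via Fubini as $\int\psi_{\ell_i}(x)H(x)\dd x$, swap $\psi_{\ell_i}$ for $\bpsi_{\ell_i}$ by the defining property of conditional expectation once $H$ is bounded and $\cF_W$-measurable, and iterate over the coordinates. (The pointwise-in-$x_j$ sketch in your second paragraph is unnecessary and, as you note, unjustified as stated; the argument via $H$ is what carries the proof.)

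Where you genuinely differ is the key measurability fact, which the paper isolates as \refL{LTD}. The paper proves it for an arbitrary integrable kernel in two steps. First it treats product kernels $h_1(y_1)\dotsm h_n(y_n)$: each factor $x\mapsto\int W_x h_j$ is $\cF_W$-measurable by \refL{Lgsf}, which uses the Pettis measurability theorem to identify the Borel, weak and weak-$*$ $\sigma$-fields on $L^\infty\oi$. It then passes to general kernels by density of linear combinations of such products in $L^1(\oi^n)$ (monotone class or the projective tensor product identity), with uniform convergence of the resulting functions of $x$.

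You instead observe that $H(x)=\Phi(W_x)$ with $\Phi$ Lipschitz on the closed order interval $\set{0\le f\le 1}$ of $L^1\oi$. This follows from the telescoping bound for products of $[0,1]$-valued functions already implicit in \refL{Lcounting}. Continuity gives Borel measurability of $\Phi$ immediately, so neither \refL{Lgsf} nor the tensor-density step is needed. Your route is shorter and fully elementary but uses boundedness of the remaining factors $R$, which is automatic here. The paper's lemma covers arbitrary integrable $H$ and uses the same approximation scheme as the weak-$*$ continuity result \refL{LA3}.
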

\begin{proof}
  It is clear that \eqref{tbww} defines an $\cF_W$-measurable map
  $\oi\to\gD_\kk$, i.e., a simple colouring of $W$.
Let $\FF\in\cL$ and $n:=|\FF|$, and define, for any (measurable) functions $h_1,\dots,h_n:\oi\to\oi$, 
\begin{align}\label{tid1}
\gU(h_1,\dots,h_n):=
\int_{\gO^n} \prodin h_i(x_i) \prod_{ij\in  E(\FF)} W(x_i,x_j)
\dd x_1\dotsm \dd x_n.
\end{align}
Thus, the definition \eqref{pa1b} says, letting again $\ell_i$ be the colour
of vertex $i\in\FF$, 
\begin{align}\label{tid2}
t(\FF,\WW)=\gU(\psi_{\ell_1},\dots,\psi_{\ell_n}),
\qquad
t(\FF,\bWW)=\gU(\bpsi_{\ell_1},\dots,\bpsi_{\ell_n}).
\end{align}
By Fubini's theorem, we can rewrite \eqref{tid1} as
\begin{multline}\label{tid3}
\gU(h_1,\dots,h_n)=
\intoi h_1(x_1)
\int_{\oi^{n-1}} 
\prod_{1j\in  E(\FF)} W_{x_1}(x_j)
\prod_{i=2}^n h_i(x_i) 
\prod_{\substack{ij\in  E(\FF):\\2\le i<j\le n}} W(x_i,x_j)
\\
\cdot\dd x_2\dotsm \dd x_n \cdot \dd x_1
\end{multline}
Denote the inner integral (over $\oi^{n-1}$) by $G(x_1)$. \refL{LTD} below
shows that $G$ is $\cF_W$-measurable.
Hence, if we define $\bh_i:=\E(h_i\mid\cF_W)$, we have, 
using \eqref{tid3} twice,
\begin{align}\label{tid4}
  \gU(h_1,h_2,\dots,h_n)
=\intoi h_1(x_1)G(x_1)\dd x_1
=\intoi \bh_1(x_1)G(x_1)\dd x_1
=  \gU(\bh_1,h_2,\dots,h_n).
\end{align}
By symmetry we may continue and exchange each $h_i$ for $\bh_i$, one by one;
this yields
\begin{align}\label{tid5}
  \gU(h_1,h_2,\dots,h_n)
=  \gU(\bh_1,\bh_2,\dots,\bh_n).
\end{align}
Now choose $h_i=\psi_{\ell_i}$; then \eqref{tid5} and \eqref{tid2} yield
\begin{align}
  t(\FF,\WW)=t(\FF,\bWW).
\end{align}
Since $\FF$ is arbitrary, 
this proves $\WW\cong\bWW$.
\end{proof}

Before proving \refL{LTD} used in the proof above, we give another lemma.
\begin{lemma}\label{Lgsf}
The following define the same \gsf{} on
$L^\infty\oi$:
\begin{romenumerate}
\item \label{Lgsf1}
The restriction of the Borel \gsf{} in $L^1\oi$.
\item \label{Lgsfw}
The restriction of the weak \gsf{} in $L^1\oi$, i.e., 
the \gsf{} generated by the family of mappings $f\mapsto\intoi fg$ for 
$g\in L^\infty\oi$.
\item \label{Lgsfw*}
The weak-$*$ \gsf{} in $L^\infty\oi$, i.e., 
the \gsf{} generated by the family of mappings $f\mapsto\intoi fg$ for 
$g\in L^1\oi$.
\end{romenumerate}
\end{lemma}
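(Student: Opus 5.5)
The plan is to compare the three \gsf{s} through their generating families, using only elementary facts about $L^1\oi$ and $L^\infty\oi$. Write $\cB_1$, $\cB_w$ and $\cB_{w*}$ for the \gsf{s} on $L^\infty\oi$ in \ref{Lgsf1}, \ref{Lgsfw} and \ref{Lgsfw*}. I will prove
\begin{align}
\cB_w\subseteq\cB_1,\qquad \cB_1\subseteq\cB_w,\qquad \cB_w\subseteq\cB_{w*},\qquad \cB_{w*}\subseteq\cB_w .
\end{align}

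\emph{The two easy inclusions.} For $g\in L^\infty\oi$, the functional $f\mapsto\intoi fg$ is norm-continuous on $L^1\oi$. It is therefore Borel measurable on $L^1\oi$, and so is its restriction to $L^\infty\oi$. This gives $\cB_w\subseteq\cB_1$. Next, $L^\infty\oi\subseteq L^1\oi$, so the family generating $\cB_{w*}$ contains the family generating $\cB_w$. This gives $\cB_w\subseteq\cB_{w*}$.

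\emph{The inclusion $\cB_{w*}\subseteq\cB_w$.} Fix $g\in L^1\oi$ and let $g_N:=g\,\etta_{|g|\le N}$, which lies in $L^\infty\oi$. For each $f\in L^\infty\oi$, dominated convergence (with dominating function $\|f\|_\infty|g|$) gives $\intoi fg_N\to\intoi fg$. Thus $f\mapsto\intoi fg$ is a pointwise limit on $L^\infty\oi$ of $\cB_w$-measurable maps, and is therefore $\cB_w$-measurable.

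\emph{The inclusion $\cB_1\subseteq\cB_w$.} This is the main step; here I use separability. Let $D$ be the countable set of step functions taking rational values in $[-1,1]$ on finitely many intervals with rational endpoints. I claim that, for every $u\in L^1\oi$,
\begin{align}
\|u\|_{L^1}=\sup_{g\in D}\intoi ug .
\end{align}
To see this, take $g_0:=\operatorname{sign}(u)$. By Lusin's theorem and the density of step functions in $L^1$, there are $g_k\in D$ with $g_k\to g_0$ a.e. Dominated convergence then gives $\intoi ug_k\to\|u\|_{L^1}$, while $\intoi ug\le\|u\|_{L^1}$ for every $g\in D$ since $|g|\le1$.

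Now fix $h\in L^1\oi$. Then $f\mapsto\|f-h\|_{L^1}=\sup_{g\in D}\bigpar{\intoi fg-\intoi hg}$ is a countable supremum of $\cB_w$-measurable functions, hence $\cB_w$-measurable. So the trace on $L^\infty\oi$ of every open ball of $L^1\oi$ lies in $\cB_w$. Since $L^1\oi$ is separable, every open set in it is a countable union of open balls with centres in a countable dense set. Hence the traces of all open sets, and thus all of $\cB_1$, lie in $\cB_w$.

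The only step I expect to need care is the norming identity over the countable family $D$. It relies on approximating $\operatorname{sign}(u)$ a.e. by elements of $D$ while keeping the bound $|g|\le1$, which is what makes dominated convergence apply.
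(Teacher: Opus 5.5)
Your proof is correct, and its skeleton matches the paper's. The inclusions $\cB_w\subseteq\cB_{w*}$ (smaller generating family) and $\cB_{w*}\subseteq\cB_w$ (approximate $g\in L^1$ by bounded functions, then take a pointwise limit on $L^\infty$) are exactly the paper's arguments. Your truncation $g\etta_{|g|\le N}$ with dominated convergence is just one particular choice of the $L^1$-approximants $g_n$ that the paper uses.

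The difference is in $\cB_1=\cB_w$. The paper does not prove this; it cites the Pettis measurability theorem, namely that the Borel and weak $\sigma$-fields coincide on any separable Banach space, and then passes to traces on $L^\infty$. You instead prove this special case by hand, using the countable norming family $D$ and the fact that every open set in a separable metric space is a countable union of balls. That is essentially the standard proof of the cited fact, so your version is self-contained at the cost of some length.

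Lusin's theorem is not needed for your norming identity. It is enough to approximate $\operatorname{sign}(u)$ in $L^1$ by rational step functions, clip them to $[-1,1]$ (which keeps them in $D$ and does not increase the distance to $\operatorname{sign}(u)$), and pass to an a.e.\ convergent subsequence.
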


\begin{proof}
Denote (temporarily) the three \gsf{s} by $\cF$, $\cF_w$, and $\cF_{w*}$.
  The Borel \gsf{} and the weak \gsf{} are the same on $L^1\oi$ (as on every
  separable Banach space) by the Pettis measurability  theorem
 \cite[Theorem III.6.11]{DS}, and thus $\cF=\cF_w$.

The \gsf{} $\cF_w$ is included in $\cF_{w*}$, since it is generated by a
smaller family of mappings.
Conversely, if $g\in L^1\oi$, let $g_n\in L^\infty\oi$ with $g_n\to g$ in $L^1$.
Then the mappings $f\mapsto\intoi fg_n$ are $\cF_{w}$-measurable 
and converge pointwise on $L^\infty\oi$ to $f\mapsto\intoi fg$;
hence the latter is $\cF_{w}$-measurable, and thus 
$\cF_{w*}\subseteq\cF_w$.
\end{proof}

\begin{lemma}\label{LTD}
  Let\/ $W$ be an uncoloured graphon, let $0\le k\le n$, and let
  $H:\oi^n\to\bbR$ be an integrable function. Then the function
  \begin{align}\label{ltd}
 G(x):=\int_{\oi^n}\prod_{j=1}^kW_x(y_j) H(y_1,\dots,y_n)\dd y_1\dotsm\dd y_n
  \end{align}
is $\cF_W$-measurable $\oi\to\bbR$.
\end{lemma}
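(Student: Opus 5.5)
The plan is to handle the lemma with a monotone-class argument in $H$, reducing first to product functions $H(y_1,\dots,y_n)=\prod_{j=1}^n h_j(y_j)$ with each $h_j$ bounded and measurable.

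For such a product $H$, Fubini gives
\begin{align*}
G(x)=\prod_{j=1}^k \intoi W_x(y)h_j(y)\dd y\cdot\prod_{j=k+1}^n\intoi h_j(y)\dd y.
\end{align*}
The second product is a constant. Each factor $x\mapsto\intoi W_x h_j$ is the composition of $x\mapsto W_x$, which is $\cF_W$-measurable into $L^1\oi$ by the definition of $\cF_W$, with the map $f\mapsto\intoi fh_j$. Since $h_j\in L^\infty$, this map is continuous on $L^1\oi$ and hence Borel. So each factor is $\cF_W$-measurable, and therefore so is $G$. (\refL{Lgsf} could be used here instead, but continuity already suffices.)

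Next, let $\mathcal H$ be the set of bounded measurable $H$ for which $G$ is $\cF_W$-measurable.
\begin{itemize}
\item $\mathcal H$ is a vector space and contains the constants.
\item It is closed under bounded monotone (indeed bounded pointwise) limits. If $H_m\to H$ boundedly, then $G_m(x)\to G(x)$ for every $x$ by dominated convergence, since $0\le W_x\le1$. A pointwise limit of $\cF_W$-measurable functions is $\cF_W$-measurable.
\item It contains the indicators of measurable rectangles, which form a $\pi$-system generating the product \gsf{} on $\oi^n$.
\end{itemize}
The functional monotone class theorem then gives that $\mathcal H$ contains all bounded measurable $H$. Lebesgue-measurable $H$ can be replaced by an a.e.\ equal Borel function without changing $G$.

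Finally, to pass to integrable $H$, truncate by setting $H_m:=(H\wedge m)\vee(-m)$. Then $|H_m|\le|H|\in L^1$, so dominated convergence again gives $G_m(x)\to G(x)$ for every $x$, using $\prod_j W_x(y_j)\le1$. Hence $G$ is $\cF_W$-measurable.

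The main point needing care is that every limit must hold for every $x$, not merely almost every $x$. Otherwise the limit would be measurable only with respect to the completion of $\cF_W$. Dominated convergence with the uniform bound $W\le1$ gives exactly this pointwise convergence at every $x$, so the step goes through. Measurability of $(x,\mathbf y)\mapsto W(x,y_j)$ for the Fubini step is standard.
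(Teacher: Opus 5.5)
Your proof is correct. It reaches the conclusion by a somewhat different route from the paper.

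\textbf{The paper's route.} The paper works with products $H=\prod_j h_j$ whose factors are only in $L^1\oi$. For such factors the functional $f\mapsto\intoi fh_j$ is defined only on $L^\infty$, so the paper needs \refL{Lgsf}: the Borel \gsf{} of $L^1$, restricted to $L^\infty$, coincides with the weak-$*$ \gsf. The paper then extends by linearity to the algebraic span $L$ of such products. It passes to $L^1$-limits using $|G_n(x)-G(x)|\le\linorm{H_n-H}$, which gives uniform convergence in $x$. Finally it quotes the density of $L$ in $L^1(\oi^n)$, via a monotone class argument or the projective tensor product.

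\textbf{Your route.} You take bounded factors, so $f\mapsto\intoi fh_j$ is continuous on $L^1$ and \refL{Lgsf} is not needed. You then apply the functional monotone class theorem directly to bounded Borel $H$, with pointwise convergence at every $x$ guaranteed by $0\le W\le 1$. You finish by truncation and dominated convergence.

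\textbf{Comparison.} Your argument is more self-contained: it needs neither \refL{Lgsf} nor an $L^1$-density theorem for product spaces. It uses the fact that, for measurability, pointwise limits at every $x$ are enough.

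The paper's argument gives more: $G$ depends on $H$ in a Lipschitz way with respect to $\linorm{\cdot}$. More importantly, it is the template reused in \refL{LA3}, where the goal is continuity in the \wxt{} rather than measurability. A monotone-class argument would not work there, since pointwise limits of continuous functionals need not be continuous, so the uniform $L^1$ approximation is essential.
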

\begin{proof}
  Consider first the case when $H$ factorizes as
  \begin{align}\label{ar4h}
H(y_1,\dots,y_n)=h_1(y_1)\dotsm h_n(y_n)
  \end{align}
  for some functions $h_1,\dots,h_n\in L^1\oi$.
Then
\begin{align}
  G(x)=\prod_{i=1}^k \intoi W_x(y_j) h(y_j)\dd y_j 
\cdot \prod_{i=k+1}^n\intoi h(y_j)\dd y_j 
\end{align}
where each factor is $\cF_W$-measurable by the definition of $\cF_W$ and
\refL{Lgsf}.
Hence $G(x)$ is $\cF_W$-measurable.

Next, it follows by linearity
that the lemma holds for every $H$ that is a linear
combination of such products \eqref{ar4h}. Let $L$ be the subspace of 
$L^1=L^1(\oi^n)$ consisting of such linear combinations.
If $H\in \bL$ (the closure of $L$ in $L^1$), then there is a sequence
$H_n\in L$ such that $\norm{H_n-H}_{L^1}\to0$. It follows from \eqref{ltd}
that for the corresponding integrals $G_n(x)$ we have
$G_n(x)\to G(x)$ for all $x\in\oi$ (in fact, uniformly).
We have just shown that each $G_n(x)$ is $\cF_W$-measurable,
and consequently $G(x)$ is $\cF_W$-measurable.

Finally, we claim that $\bL=L^1$, so that the last case covers all 
$H\in L^1(\oi^n)$.
This is well-known and can easily be verified in several ways. 
For example, it suffices to consider the case $H=\etta_A$ for a measurable
set $A$ in the product; this case follows easily from the monotone class
theorem. 
Alternatively, for any two $\gs$-finite measures $\mu_1$ and
$\mu_2$ on some measure spaces $\gO_1$ and $\gO_2$,
the projective tensor product
$L^1(\gO_1,\mu_1)\hat\otimes L^1(\gO_2,\mu_2)
\cong L^1(\gO_1\times\gO_2,\mu_1\times\mu_2)$, see e.g.\
\cite[Exercise 2.8]{Ryan}; by
induction this extends to any finite number of $L^1$-spaces.
This means more explicitly \cite[Proposition 2.2]{Ryan} 
the stronger statement
that convex combinations of products \eqref{ar4h} with all 
$\norm{h_i}_{L^1(\oi)}\le1$ are dense in the unit ball of $L^1(\oi^n)$.
\end{proof}

This completes the proof of \refT{TBWW}, showing that
every coloured graphon is equivalent to some simply coloured
graphon. If we restrict to simply coloured graphons,
then we also have a ``uniqueness'' result of the same type as
\refTs{Tunique0}\ref{Tunique0BR} and \ref{Tunique1}.

\begin{theorem}
\label{Tunique2}
  Let\/ $\WW=(W,\Psi)$ and $\tWW=(\tW,\tPsi)$ 
be simply coloured graphons defined on 
$\oi$ such that $\WW\cong \tWW$. 
Then there exist \mpp{} maps $\gf,\tgf:\oi\to\oi$ such that
$\WW^{\gf}=\tWW^{\tgf}$ a.e.
\end{theorem}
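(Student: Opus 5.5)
The plan is to reduce to the pure case, where \refT{Tunique1} is available. We first pass to pure representatives via the construction in \refL{Lmix}, then project back down using conditional expectations with respect to the $\gsf$s $\cF_W$. Simplicity of the colourings is exactly what lets the conditional expectation recover the original colouring.

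\textbf{Step 1 (lifting).} As in the proof of \refL{Lmix}, let $\WW'=(W',\pi_2)$ be the pure coloured graphon on $\tgO=\oi\times[\kk]$ with measure $\tmu$ given by \eqref{tmu} and $W'(x,y)=W(\pi_1x,\pi_1y)$. Define $\tWW'$ analogously from $\tWW$.

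Then $\WW'\cong\WW\cong\tWW\cong\tWW'$. The space $(\tgO,\tmu)$ is an atomless Borel probability space, since its $\pi_1$-marginal is Lebesgue measure. By Remark~\ref{R01+} we may therefore transport both lifted graphons to $\oi$ by measure-preserving bijections.

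Applying \refT{Tunique1} then gives measure-preserving maps $g,\tilde g:\oi\to\tgO$ (composed with these bijections) such that
\begin{equation*}
W'^{g}=\tW'^{\tilde g}\quad\text{and}\quad \pi_2\circ g=\pi_2\circ\tilde g \quad\text{a.e.}
\end{equation*}

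\textbf{Step 2 (projecting).} Set $\gf:=\pi_1\circ g$ and $\tgf:=\pi_1\circ\tilde g$. These are measure-preserving maps $\oi\to\oi$, because $\pi_1$ pushes $\tmu$ to $\gl$. Moreover $W^\gf=W'^{g}=\tW'^{\tilde g}=\tW^{\tgf}=:U$ a.e.

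It remains to show $\Psi\circ\gf=\tPsi\circ\tgf$ a.e. Fix a colour $\ell$ and let $A_\ell:=\{\pi_2\circ g=\ell\}=\{\pi_2\circ\tilde g=\ell\}$ up to null sets. Under $\tmu$, the conditional probability that $\pi_2=\ell$ given $\pi_1$ is $\psi_\ell(\pi_1)$. Hence
\begin{equation*}
\E\bigpar{\etta_{A_\ell}\mid \gf^{-1}(\cB)}=\psi_\ell\circ\gf .
\end{equation*}
Since $\psi_\ell$ is $\cF_W$-measurable, $\psi_\ell\circ\gf$ is $\gf^{-1}(\cF_W)$-measurable, and conditioning further on this smaller $\gsf$ gives
\begin{equation*}
\E\bigpar{\etta_{A_\ell}\mid\gf^{-1}(\cF_W)}=\psi_\ell\circ\gf .
\end{equation*}
Symmetrically, $\E\bigpar{\etta_{A_\ell}\mid\tgf^{-1}(\cF_{\tW})}=\tpsi_\ell\circ\tgf$. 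So it suffices to show that $\gf^{-1}(\cF_W)=\cF_U=\tgf^{-1}(\cF_{\tW})$ up to null sets.

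\textbf{Step 3 (identifying the $\gsf$s).} This is the main obstacle. We have $U_x=W_{\gf(x)}\circ\gf$, and the map $T:f\mapsto f\circ\gf$ is a linear isometry of $L^1\oi$ into itself because $\gf$ is measure-preserving. Being injective, $T$ maps the Borel $\gsf$ of $L^1$ onto the trace of the Borel $\gsf$ on its image; this can be checked via \refL{Lgsf}, using the weak $\gsf$ and the functionals $f\mapsto\intoi fg$. Consequently the $\gsf$ generated by $x\mapsto U_x=T(W_{\gf(x)})$ coincides with the one generated by $x\mapsto W_{\gf(x)}$, namely $\gf^{-1}(\cF_W)$.

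Care is needed because $U_x=W_{\gf(x)}\circ\gf$ only for a.e.\ $x$, so the equality of $\gsf$s holds modulo null sets. This suffices for the conditional expectations in Step 2. The same argument applies with $\tgf$ and $\tW$. Hence $\Psi^\gf=\tPsi^{\tgf}$ a.e., and together with $W^\gf=\tW^{\tgf}$ this gives $\WW^\gf=\tWW^{\tgf}$ a.e.
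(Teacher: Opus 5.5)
Your proposal is correct and follows essentially the paper's route. You lift to pure representatives via \refL{Lmix}, apply \refT{Tunique1}, and recover the colourings by conditioning on $\cF_W$; your Steps~2--3 simply unpack Lemmas~\ref{LS2} and~\ref{LS1} (including the identity $\cF_{W^\gf}=\gf^{-1}(\cF_W)$ of \eqref{db2}) into a direct computation with $\gf=\pi_1\circ g$. One small correction in Step~3: $T$ is bimeasurable onto its image because it is an isometry, so its inverse on the image is continuous, not merely because it is injective.
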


We prove first two lemmas.
We use the construction and notation
in \refT{TBWW}. 

\begin{lemma}\label{LS1}
Let\/ $\WW=(W,\Psi)$ be a coloured graphon on $\oi$, and let $\gf:\oi\to\oi$ be
\mpp. Then
$\bWW^\gf=\overline{\WW^\gf}$.
\end{lemma}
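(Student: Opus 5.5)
The plan is to compare the two $\gs$-fields directly: first show $\cF_{W^\gf}=\gf\qw(\cF_W)$, and then use the standard behaviour of conditional expectations under \mpp{} maps. By definition (see \eqref{tbww}), the claim $\bWW^\gf=\overline{\WW^\gf}$ says that the graphon parts agree, both being $W^\gf$, and that
\begin{align*}
\E(\Psi\mid\cF_W)\circ\gf=\E(\Psi\circ\gf\mid\cF_{W^\gf})\quad\text{a.e.}
\end{align*}

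\emph{Step 1: identify the sections.} For every $x\in\oi$ we have $(W^\gf)_x(y)=W(\gf(x),\gf(y))$, that is, $(W^\gf)_x=T(W_{\gf(x)})$, where $T:L^1\oi\to L^1\oi$ is the map $f\mapsto f\circ\gf$. Since $\gf$ is \mpp, $T$ is a well-defined linear isometry. Consequently $T$ is continuous, its image is closed, and $T\qw$ is continuous on that image. Put $g(x):=W_{\gf(x)}$.
\begin{itemize}
\item Since $x\mapsto (W^\gf)_x$ equals $T\circ g$, it is measurable with respect to $\sigma(g)$.
\item Conversely, $g=T\qw\circ(T\circ g)$, so $g$ is measurable with respect to $\cF_{W^\gf}$.
\end{itemize}
Hence $\cF_{W^\gf}=\sigma(g)$. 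Since $g$ is the composition of $\gf$ with $y\mapsto W_y$, we get $\sigma(g)=\gf\qw(\cF_W)$. The isometry argument is what makes this step work; I would use it rather than invoking Lusin--Souslin.

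\emph{Step 2: conditional expectation under $\gf$.} Let $\bPsi=\E(\Psi\mid\cF_W)$.
\begin{itemize}
\item Because $\bPsi$ is $\cF_W$-measurable, $\bPsi\circ\gf$ is $\gf\qw(\cF_W)$-measurable.
\item Every set in $\gf\qw(\cF_W)$ has the form $\gf\qw(B)$ with $B\in\cF_W$. Using that $\gf$ is \mpp{} twice, together with the defining property of $\bPsi$,
\begin{align*}
\int_{\gf\qw(B)}\Psi\circ\gf=\int_B\Psi=\int_B\bPsi=\int_{\gf\qw(B)}\bPsi\circ\gf .
\end{align*}
\end{itemize}
Therefore $\bPsi\circ\gf=\E(\Psi\circ\gf\mid\gf\qw(\cF_W))$ a.e. By Step 1 this is $\E(\Psi\circ\gf\mid\cF_{W^\gf})$, which proves the lemma.

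\emph{Main obstacle.} The only delicate point is Step 1, namely showing that the $\gs$-field generated by the transformed sections is not smaller than $\gf\qw(\cF_W)$. This is handled by the invertibility of $T$ on its closed image. A secondary care point is Lebesgue versus Borel measurability: as in Remark~\ref{R01}, we may assume $W$ is Borel, so that $y\mapsto W_y$ is defined everywhere and is Borel measurable. Then all the identities above hold pointwise, and the conditional-expectation identity holds a.e.
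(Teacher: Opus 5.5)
Your proposal is correct and follows essentially the same route as the paper's proof. The paper also uses that $f\mapsto f^\gf$ is an isometry of $L^1\oi$, hence bimeasurable onto its image, to obtain $\cF_{W^\gf}=\set{\gf\qw(A):A\in\cF_W}$, and then checks $\int_{\gf\qw(A)}(\Psi^\gf-\bPsi^\gf)\dd x=0$ for $A\in\cF_W$ by a change of variables.
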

\begin{proof}
We have, for $x\in\oi$, by the definition \eqref{Wx},
\begin{align}\label{db1}
 (W^\gf)_x=W_{\gf(x)}\bigpar{\gf(\cdot)}=\bigpar{W_{\gf(x)}}^\gf.
\end{align}
The map $f\mapsto f^\gf$ is an isometry of $L^1\oi$ into $L^1\oi$,
and thus a bimeasurable bijection of $L^1\oi$ onto its image.
Hence, \eqref{db1} implies that the \gsf{} $\cF_{W^\gf}$ is generated by the
map $x\mapsto W_{\gf(x)}$ into $L^1\oi$, which implies  that 
\begin{align}\label{db2}
  \cF_{W^\gf} = \set{\gf\qw(A)\subseteq\oi: A\in \cF_W}.
\end{align}
Since $\bPsi$ is $\cF_W$-measurable by the definition \eqref{tbww}, 
it follows from \eqref{db2} that $\bPsi^\gf$ is $\cF_{W^\gf}$-measurable.
Furthermore, if $A\in\cF_W$, then
\begin{align}
  \int_{\gf\qw(A)}\bigpar{\Psi^\gf(x)-\bPsi^\gf(x)}\dd x
&=
 \intoi \etta_{A}(\gf(x))\bigpar{\Psi^\gf(x)-\bPsi^\gf(x)}\dd x
\notag\\&
=
 \intoi \etta_{A}(y)\bigpar{\Psi(y)-\bPsi(y)}\dd y
=0.
\end{align}
Consequently, 
\begin{align}
\overline{\Psi^\gf}
=\E\bigpar{\Psi^\gf\mid\cF_{W^\gf}}
=\bPsi^\gf,
\end{align}
and thus, by the definitions,
\begin{align}
  \bWW^\gf
=(W^\gf,\bPsi^\gf)
=(W^\gf,\overline{\Psi^\gf})
=\overline{(W^\gf,\Psi^\gf)}
=\overline{\WW^\gf}.
\end{align}
\end{proof}
In the next lemma we consider also graphons defined on the probability space
$(\oi\times[\kk],\tmu)$ with the probability measure $\tmu$ in \eqref{tmu}
whose (first) marginal is Lebesgue measure $\gl$ on $\oi$.
This is a Borel space with an atomless measure, and is thus equivalent to
$(\oi,\gl)$ by an a.e.\ defined bijection, see \refR{R01+}. 
Hence the results above hold
also for such graphons, with only notational differences.

\begin{lemma}\label{LS2}
  Let\/ $\WW=(W,\Psi)$ be a coloured graphon on $\oi$,
and let $\WW'$ be the pure coloured graphon on $\tgO:=\oi\times[\kk]$
defined in the proof of \refL{Lmix}.
Then $\overline{\WW'} = \bWW^{\pi_1}$ a.e., where $\pi_1$ is the projection
$\tgO\to\oi$ onto the first coordinate.
\end{lemma}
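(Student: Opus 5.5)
The plan is to compute $\overline{\WW'}$ directly from the definition \eqref{tbww}, following the proof of \refL{LS1}. The key observation is that $\tW=W^{\pi_1}$, and that $\pi_1:(\tgO,\tmu)\to(\oi,\gl)$ is \mpp{}, because the first marginal of $\tmu$ is $\gl$ (sum \eqref{tmu} over $\ell$ and use $\sum_\ell\psi_\ell=1$). So $\WW'=(W^{\pi_1},\pi_2)$. Identifying $\pi_2$ with the pure colouring $(\etta_{\pi_2=\ell})_{\ell=1}^\kk$, we have to show
\begin{align*}
\E\bigpar{\etta_{\pi_2=\ell}\mid\cF_{\tW}}=\bpsi_\ell\circ\pi_1 \quad\text{$\tmu$-a.e., for each } \ell\in[\kk].
\end{align*}
Here $\cF_{\tW}$ is defined as before, using the sections $\tW_x\in L^1(\tgO,\tmu)$.

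\emph{Step 1: identify the $\sigma$-field.} As in \eqref{db1}, for $x\in\tgO$ we have $\tW_x=\bigpar{W_{\pi_1(x)}}^{\pi_1}$. Since $\pi_1$ is \mpp{}, the map $f\mapsto f\circ\pi_1$ is an isometry $L^1\oi\to L^1(\tgO,\tmu)$. It is therefore a bimeasurable bijection onto its image. Hence, exactly as in \eqref{db2},
\begin{align*}
\cF_{\tW}=\set{\pi_1\qw(A):A\in\cF_W}.
\end{align*}
This is the step needing the most care. The earlier discussion of $\cF_W$ was stated for $\oi$, and one must check that it carries over verbatim to $(\tgO,\tmu)$. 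Alternatively, one can transport everything to $\oi$ by the isomorphism mentioned before the lemma; the final statement only holds a.e., so null sets cause no trouble.

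\emph{Step 2: verify the conditional expectation.} The function $\bpsi_\ell\circ\pi_1$ is $\cF_{\tW}$-measurable by Step~1, because $\bpsi_\ell$ is $\cF_W$-measurable. It remains to check the defining integral identity. For $A\in\cF_W$, \eqref{tmu} gives
\begin{align*}
\int_{\pi_1\qw(A)}\etta_{\pi_2=\ell}\dd\tmu
=\int_A\psi_\ell(x)\dd x
=\int_A\bpsi_\ell(x)\dd x
=\int_{\pi_1\qw(A)}\bpsi_\ell\circ\pi_1\dd\tmu .
\end{align*}
The middle equality is the definition of $\bpsi_\ell=\E(\psi_\ell\mid\cF_W)$. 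The last equality uses that $\pi_1$ is \mpp{}.

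By uniqueness of conditional expectation, $\overline{\pi_2}=\bPsi\circ\pi_1=\bPsi^{\pi_1}$ a.e. Since the graphon component of $\overline{\WW'}$ is $\tW=W^{\pi_1}$, we obtain $\overline{\WW'}=(W^{\pi_1},\bPsi^{\pi_1})=\bWW^{\pi_1}$ a.e.
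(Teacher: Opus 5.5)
Your proposal is correct and follows essentially the same route as the paper. It first identifies $\cF_{W^{\pi_1}}=\set{\pi_1\qw(A):A\in\cF_W}$ via the argument behind \eqref{db2}, transported to $(\tgO,\tmu)$, and then checks the defining integral identity for $\E(\pi_2\mid\cF_{W^{\pi_1}})$ using \eqref{tmu} and $\bPsi=\E(\Psi\mid\cF_W)$; the only cosmetic differences are that the paper works with the vector-valued $\pi_2$ and also records $\overline{\WW^{\pi_1}}=\bWW^{\pi_1}$ from \refL{LS1}, which your direct comparison makes unnecessary.
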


\begin{proof}
Let $\WW=(W,\Psi)$.
We have by the definitions of $\WW'$ and $\bWW$,
with $\pi_1$ and $\pi_2$ as in the proof of \refL{Lmix},
\begin{align}\label{db6}
  \overline{\WW'}=\overline{\bigpar{W^{\pi_1},\pi_2}}
=\bigpar{W^{\pi_1},\overline{\pi_2}}
=\bigpar{W^{\pi_1},\E\xpar{\pi_2\mid\cF_{W^{\pi_1}}}}
\end{align}
and, using  also \refL{LS1},
\begin{align}\label{db7}
  \overline{\WW^{\pi_1}}
=
 \overline{\WW}^{\pi_1}
=\bigpar{W^{\pi_1},\overline{\Psi}^{\pi_1}}
,\end{align}
where thus $\overline{\Psi^{\pi_1}}=\overline{\Psi^{\pi_1}}$
is $\cF_{W^{\pi_1}}$-measurable.

By \eqref{db2}, 
\begin{align}
\cF_{W^{\pi_1}}=\set{\pi_1\qw(A):A\in\cF_W}  
=\set{A\times[\kk]:A\in\cF_W}. 
\end{align}
We have by \eqref{tmu}, for any $A\in\cF_W$, with $\ee_\ell$ the $\ell$-th
unit vector,
\begin{align}
  \int_{A\times[\kk]} \pi_2\dd\tmu
=\int_A \sum_{\ell=1}^\kk\psi_\ell(x)\ee_\ell \dd x
=\int_A\Psi(x) \dd x
=\int_A\overline\Psi(x) \dd x
=\int_{A\times[\kk]}{\overline\Psi}^{\pi_1}\dd \tmu.
\end{align}
Hence, 
\begin{align}
\overline{\pi_2}:=\E\bigpar{\pi_2\mid\cF_{W^{\pi_1}}}
=  {\overline\Psi}^{\pi_1},
\end{align}
and the result follows from \eqref{db6} and \eqref{db7}.
\end{proof}

\begin{proof}[Proof of \refT{Tunique2}]
Construct pure coloured graphons $\WW'$ and $\tWW'$ 
by the construction in
\refL{Lmix}; these are both defined on $\gO:=\oi\times[\kk]$ with
possibly different measures that we now denote by $\mu$ and $\tmu$.
Then $\WW'\cong\WW\cong\tWW\cong\tWW'$. Hence, \refT{Tunique1} applies
and shows the existence of \mpp{} maps $\tau:\oi\to(\gO,\mu)$ and
$\ttau:\oi\to(\gO,\tmu)$ such that (a.e.)
\begin{align}\label{dby}
(\WW')^{\tau} = (\tWW')^{\ttau}.  
\end{align}
We apply the operation $\WW\to\bWW$ in \refT{TBWW} and obtain by 
\refLs{LS1} and \ref{LS2}, and the assumption that $\WW$ is simply coloured
and thus $\bWW=\WW$,
\begin{align}\label{dby1}
  \overline{\bigpar{\WW'}^{\tau}}
= \bigpar{\overline{\WW'}}^{\tau}
= \bigpar{\overline{\WW}^{\pi_1}}^{\tau}
=\bWW^{\pi_1\circ\tau}
=\WW^{\pi_1\circ\tau},
\end{align}
and similarly 
\begin{align}\label{dby2}
  \overline{\bigpar{\tWW'}^{\ttau}}
=\tWW^{\pi_1\circ\ttau}.
\end{align}
Hence, \eqref{dby} implies
\begin{align}\label{tekla}
\WW^{\pi_1\circ\tau}
= \overline{\bigpar{\WW'}^{\tau}}
= \overline{\bigpar{\tWW'}^{\ttau}}
=\tWW^{\pi_1\circ\ttau},
\end{align}
where $\pi_1\circ\tau$ and $\pi_1\circ\ttau$ are \mpp.
\end{proof}

The following is one way to verify that a graphon is simply coloured.
We recall the following definition, see e.g.\ \cite[Section 13.1.1]{Lovasz}.
\begin{definition}
Let $W$ be an uncoloured graphon defined on $\oi$.  
We say that $x_1,x_2\in\oi$ are \emph{twins} (for $W$)
if $x_1\neq x_2$ and $W_{x_1}=W_{x_2}$ a.e.
We say that $W$ is \emph{twinfree} if there are no twins.
\end{definition}

\begin{lemma}
  If $W$ is twinfree, then any colouring $\Psi$ of $W$ is simple.
\end{lemma}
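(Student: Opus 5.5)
The plan is to show that the $\gs$-field $\cF_W$ contains every Borel subset of $\oi$. Then any Borel measurable colouring is automatically $\cF_W$-measurable. A general (Lebesgue measurable) $\Psi$ is a.e.\ equal to a Borel one by \refR{R01}, and coloured graphons that are a.e.\ equal are identified, so this suffices. Throughout I take $W$ Borel measurable on $\oi^2$, as the paper allows. By the remark before \refL{LSimple}, the map $T:\oi\to L^1\oi$, $T(x):=W_x$, is then Borel measurable, and by definition $\cF_W=\set{T\qw(B):B\text{ Borel in }L^1\oi}$.

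The first step is to note that twinfreeness says exactly that $T$ is injective. If $x_1\neq x_2$, then $W_{x_1}\neq W_{x_2}$ as elements of $L^1\oi$, i.e.\ $T(x_1)\neq T(x_2)$.

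The second step is to apply the Lusin--Souslin theorem. An injective Borel map from a Borel subset of a Polish space into a Polish space maps Borel sets to Borel sets. Here $\oi$ is Polish and $L^1\oi$ is a separable Banach space, hence Polish. So for every Borel $A\subseteq\oi$ the image $T(A)$ is Borel in $L^1\oi$. Injectivity then gives $A=T\qw(T(A))\in\cF_W$. Hence $\cB\subseteq\cF_W$, and combined with $\cF_W\subseteq\cB$ we get $\cF_W=\cB$. Consequently every Borel colouring $\Psi:\oi\to\gD_\kk$ is $\cF_W$-measurable, i.e.\ simple. Equivalently, in the form of \refL{LSimple}, one can take $h:=\Psi\circ T\qw$ on the Borel set $T(\oi)$, extended by a constant elsewhere.

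The only delicate point is the interplay between versions. Twinfreeness is a property of the chosen version of $W$, and it is the Borel version that is used here. Also, $\Psi$ being Lebesgue rather than Borel measurable is handled by the a.e.\ modification above. The substantive input is the Lusin--Souslin theorem; everything else is bookkeeping.
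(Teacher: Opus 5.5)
Your proof is correct and follows the same route as the paper. Twinfreeness makes $x\mapsto W_x$ an injective Borel map $\oi\to L^1\oi$ between Polish spaces, and the Lusin--Souslin theorem (which the paper cites via Cohn) gives a Borel image and a measurable inverse. The paper then concludes with the factorization $\Psi=h(W_x)$, $h=\Psi\circ\Phi^{-1}$, and Lemma~\ref{LSimple}, which is equivalent to your formulation $\cF_W=\cB$.
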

\begin{proof}
We may assume that $W:\oi^2\to\oi$ is Borel measurable.
Then  the map $\Phi:x\mapsto W_x$ is Borel measurable $\oi\to L^1\oi$, 
and $\Phi$ is injective by the assumption that there are no twins.
Since both $\oi$ and $L^1\oi$ are Polish spaces 
(i.e., complete separable metric spaces) and thus Borel spaces,
it follows \cite[Theorem 8.3.7 and Proposition 8.3.7]{Cohn}
that the image $B:=\Phi(\oi)\subseteq L^1\oi$ is a Borel set, and
that the inverse $\Phi\qw: B\to\oi$ is measurable.
Thus, for all $x\in\oi$,
\begin{align}
  \Psi(x) = \Psi\bigpar{\Phi\qw(W_x)}
=\Psi\circ\Phi\qw(W_x)
=:h(W_x),
\end{align}
where $h:=\Psi\circ\Phi\qw:B\to\gD_\kk$ is measurable
and can be extended (trivially) to a measurable map $h_1:L^1\oi\to\gD_\kk$.
The result follows from \refL{LSimple}.
\end{proof}

\subsection{A topology on coloured graphons}\label{SStop}
We have in \refSS{Slim} defined a topology on the space of coloured graph
limits, which we can identify with equivalence classes of coloured graphons.
In the classical theory of uncoloured graphons, it is well-known that it
is very useful to also consider another topology on graphons, 
defined by the cut norm \eqref{cutnorm1}.
This distinguishes equivalent graphons, but the topology on graph limits can
be obtained as a quotient topology, see \eqref{dcut}.
In this subsection we consider an extension of the cut norm topology to the
coloured case. 

We consider coloured  graphons on a fixed probability space $\OFmu$, and 
let $\cWW=\cWW_\kk\OFmu$ be the set of all coloured graphons on $\OFmu$ with a
given number $\kk$ of colours.

Let $\PPsi=\PPsi_\kk$ be the set of measurable maps 
$\Psi=(\psi_\ell)_1^\kk:\gO\to\gD_\kk
$, i.e., the possible colourings of a
graphon defined on $\gO$.
We regard $\PPsi$ as a subset of $(L^\infty\Omu)^\kk$. 
Recall that $L^\infty\Omu$ is the dual of the Banach space $L^1\Omu$.
We equip $L^\infty\Omu$ with the corresponding \wxt{}, i.e., the topology
generated by the maps $f\mapsto \int fg\dd\mu$ for $g\in L^1\Omu$.

Recall that the closed unit ball of a dual Banach space is compact in the \wxt.
(The Banach-Alaoglu theorem \cite[Theorem 3.15]{Rudin}.)
Furthermore, it is easily seen that $\PPsi$ is a \wx{} closed subset of
$B^\kk$, where $B$ is the closed unit ball of $L^\infty\Omu$.
Hence, $\PPsi$ is a compact Hausdorff space.
Moreover, if $L^1\Omu$ is separable, 
for example if $\gO=\oi$,
then $B$ and, as a consequence, $\PPsi$ are compact metric spaces
\cite[Theorem 3.15]{Rudin}.

The set $\cWW=\cWW_\kk$ of all $\kk$-coloured graphons on $\OFmu$ is
by definition 
$\cWo\times\gL_\kk$, where $\cWo$ is the set of all
uncoloured graphons on $\OFmu$. We give $\cWW=\cWo\times\gL$ the product
topology, where 
$\cWo$ has the topology given by the cut norm in \eqref{cutnorm1}
and $\gL$ has the \wxt{} just defined. 

\begin{lemma}\label{LA1}
  Let\/ $W_1$ and $W_2$ be two uncoloured graphons, and let $\FF\in\cL$. 
Then, for any $\Psi\in\PPsi_\kk$,
  \begin{align}\label{la1}
\bigabs{t(\FF,(W_1,\Psi))-t(\FF,(W_2,\Psi))}
    \le
e(\FF)\cnorm{W_1-W_2}.
  \end{align}
\end{lemma}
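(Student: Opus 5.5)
The plan is the standard telescoping argument from the uncoloured counting lemma, with the colour weights $\psi_{\ell_i}$ absorbed into the test functions of the cut norm. Let $n:=|\FF|$ and enumerate the edges of $\FF$ as $e_1,\dots,e_m$ with $m=e(\FF)$, writing $e_r=i_rj_r$. For $0\le r\le m$ define the hybrid integral
\begin{align*}
T_r:=\int_{\gO^n}\prodin\psi_{\ell_i}(x_i)\prod_{s\le r}W_1(x_{i_s},x_{j_s})\prod_{s>r}W_2(x_{i_s},x_{j_s})\dd\mu(x_1)\dotsm\dd\mu(x_n).
\end{align*}
By \eqref{pa1b}, $T_m=t(\FF,(W_1,\Psi))$ and $T_0=t(\FF,(W_2,\Psi))$. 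Hence
\begin{align*}
t(\FF,(W_1,\Psi))-t(\FF,(W_2,\Psi))=\sum_{r=1}^m(T_r-T_{r-1}),
\end{align*}
and it suffices to show $|T_r-T_{r-1}|\le\cnorm{W_1-W_2}$ for each $r$.

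The difference $T_r-T_{r-1}$ is the same integral with the factor for edge $e_r$ replaced by $(W_1-W_2)(x_{i_r},x_{j_r})$. Fix all variables other than $x:=x_{i_r}$ and $y:=x_{j_r}$. The remaining integrand then splits as $a(x)b(y)$. Here $a(x)$ is the product of $\psi_{\ell_{i_r}}(x)$ and the factors $W_1$ or $W_2$ of the other edges incident to $i_r$. Similarly, $b(y)$ is the product of $\psi_{\ell_{j_r}}(y)$ and the other edge factors incident to $j_r$. There is no other factor involving both $x$ and $y$, since $\FF$ is simple. Every factor takes values in $\oi$, so $a,b:\gO\to\oi$ are measurable.

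So, by Fubini's theorem, the inner double integral is
\begin{align*}
\int_{\gO^2}(W_1-W_2)(x,y)\,a(x)b(y)\dd\mu(x)\dd\mu(y),
\end{align*}
multiplied by an outer factor in $\oi$ that depends only on the fixed variables. By the standard equivalent form of the cut norm for functions with values in $\oi$ (see the appendix), this inner integral is bounded in absolute value by $\cnorm{W_1-W_2}$. If the paper's cut norm \eqref{cutnorm1} is defined with indicators of sets rather than $\oi$-valued functions, one first passes from indicators to $\oi$-valued $a,b$ by bilinearity, since $\oi$-valued functions are convex combinations of indicators (layer-cake). Integrating over the remaining $n-2$ variables, whose measure is $1$, gives $|T_r-T_{r-1}|\le\cnorm{W_1-W_2}$.

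Summing over $r$ yields \eqref{la1}. The only point needing care is the identification of the cut norm with the supremum over $\oi$-valued test functions, which I take from the appendix. Apart from that, the colours play no role beyond being absorbed into $a$ and $b$.
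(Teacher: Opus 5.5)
Your proposal is correct and is essentially the paper's proof: the same telescoping sum over the hybrid integrals $T_r$ (the paper's $t_r$), the same observation that only the factor for the edge $i_rj_r$ involves both $x_{i_r}$ and $x_{j_r}$, and the same appeal to the $\oi$-valued form \eqref{cutnorm2} of the cut norm before integrating out the remaining variables. The only cosmetic difference is that the paper absorbs the factors not involving $x_{i_r}$ into the second test function $h(x_{j_r},\bx')$ rather than splitting off a separate outer factor, which changes nothing.
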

\begin{proof}
  Recall the definition \eqref{pa1a}--\eqref{pa1b}. 
Let $\nu:=|\FF|$, $m:=e(\FF)$, and 
number the edges of $\FF$ as $i_1j_1,\dots,i_mj_m$; let $\ell_i$ be the
colour of vertex $i$ in $\FF$. Then, with $\Psi=(\psi_\ell)_1^\kk$, for $k=1,2$,
\begin{align}\label{ao3}
  t\bigpar{\FF,(W_k,\Psi)}&
=\int_{\gO^\nu} \prodinu \psi_{\ell_i}(x_i) \prod_{s=1}^m W_k(x_{i_s},x_{j_s})
\dd \mu(x_1)\dotsm \dd \mu(x_\nu)
.\end{align}
We define for $0\le r\le m$
\begin{align}\label{ao4}
t_r:=
\int_{\gO^\nu} \prodinu \psi_{\ell_i}(x_i) \prod_{s=1}^r
  W_1(x_{i_s},x_{j_s})
\prod_{s=r+1}^m W_2(x_{i_s},x_{j_s})
\dd \mu(x_1)\dotsm \dd \mu(x_\nu)
.\end{align}
Then, with a telescoping sum,
\begin{align}\label{ao5}
  t\bigpar{\FF,(W_1,\Psi)} - t\bigpar{\FF,(W_2,\Psi)}
=t_m-t_0
=\sum_{r=1}^m (t_r-t_{r-1}).
\end{align}
Fix  $r\in[m]$. Comparing \eqref{ao4} for $r$ and $r-1$ we see that in the
integrand, only the factor for $s=r$ differs. Furthermore, no other factor
depends on both $x_{i_r}$ and $x_{j_r}$. Hence, if $\bx'$ denotes
$(x_i)_{i\neq i_r,j_r}$, then we can combine all factors containing
$x_{i_r}$ as $g(x_{i_r},\bx')$ and all others as $h(x_{j_r},\bx')$; 
thus, with $x=x_{i_r}$ and $y=x_{i_s}$,
\begin{align}\label{ao6}
  t_r-t_{r-1}=\int_{\gO^\nu} \bigpar{W_1(x,y)-W_2(x,y)}g(x,\bx')h(y,\bx')
\dd \mu(x)\dd \mu(y)\dd\mu^{\nu-2}(\bx').
\end{align}
Note that $0\le g\le 1$ and $0\le h\le 1$. Hence, for every $\bx'$,
by \eqref{cutnorm2},
\begin{align}\label{ao7}
\Bigabs{\int_{\gO^2} \bigpar{W_1(x,y)-W_2(x,y)}g(x,\bx')h(y,\bx')
\dd \mu(x)\dd \mu(y)}
\le \cnorm{W_1-W_2},
\end{align}
and integrating over $\bx'$ yields 
$|t_r-t_{r-1}|\le\cnorm{W_1-W_2}$. Consequently, by \eqref{ao5},
\begin{align}\label{ao8}
\bigabs{  t\bigpar{\FF,(W_1,\Psi)} - t\bigpar{\FF,(W_2,\Psi)}}
\le\sum_{r=1}^m \bigabs{t_r-t_{r-1}}
\le m\cnorm{W_1-W_2}.
\end{align}
\end{proof}

The next lemma can be seen as a coloured version of the Counting lemma, for
the topology that we just have defined. See also
\refL{Lcounting} below for another version.

\begin{lemma}\label{LA2}
  For any fixed  $\FF\in\cU$,
the functional
$(W,\Psi)\mapsto t(\FF,(W,\Psi))$ is continuous on $\cWo\times\PPsi$
with the topologies given above (cut norm and \wx).
\end{lemma}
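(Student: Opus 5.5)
We will prove joint continuity by splitting the perturbation into a graphon part and a colouring part. The product topology on $\cWo\times\PPsi$ need not be metrizable, so we work with nets. Let $(W_\alpha,\Psi_\alpha)\to(W,\Psi)$, meaning $\cnorm{W_\alpha-W}\to0$ and $\Psi_\alpha\to\Psi$ weak-$*$. Then
\begin{align*}
t(\FF,(W_\alpha,\Psi_\alpha))-t(\FF,(W,\Psi))
={}&\bigpar{t(\FF,(W_\alpha,\Psi_\alpha))-t(\FF,(W,\Psi_\alpha))}
\\&+\bigpar{t(\FF,(W,\Psi_\alpha))-t(\FF,(W,\Psi))}.
\end{align*}
By \refL{LA1}, the first difference is bounded by $e(\FF)\cnorm{W_\alpha-W}\to0$. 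This bound is uniform in the colouring $\Psi_\alpha$, which is exactly what lets the graphon part be disposed of. It therefore remains to show that, for a \emph{fixed} graphon $W$, the map $\Psi\mapsto t(\FF,(W,\Psi))$ is weak-$*$ continuous on $\PPsi$.

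For the fixed-$W$ step, write $t(\FF,(W,\Psi))=\gU(\psi_{\ell_1},\dots,\psi_{\ell_n})$ as in \eqref{tid1}, now on $\OFmu$. This is multilinear in $(h_1,\dots,h_n)$, with all $h_i$ ranging over functions with values in $\oi$. We telescope once more, replacing one coordinate at a time:
\begin{align*}
\gU(h^\alpha_1,\dots,h^\alpha_n)-\gU(h_1,\dots,h_n)
=\sum_{i=1}^n\Bigpar{&\gU(h_1,\dots,h_{i-1},h^\alpha_i,h^\alpha_{i+1},\dots,h^\alpha_n)
\\&-\gU(h_1,\dots,h_{i-1},h_i,h^\alpha_{i+1},\dots,h^\alpha_n)}.
\end{align*}
In the $i$th term, Fubini (as in \eqref{tid3}) gives the form $\int (h^\alpha_i-h_i)(x)\,G_\alpha(x)\dd\mu(x)$. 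Here $G_\alpha(x)$ integrates over the other variables the product of the $W$-factors and the functions $h_j$ for $j<i$ and $h^\alpha_j$ for $j>i$. The difficulty, and the main obstacle of the proof, is that $G_\alpha$ itself depends on $\alpha$. Weak-$*$ convergence of $h^\alpha_i$ only controls pairings against a \emph{fixed} $L^1$ function, so a naive telescoping does not close.

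To overcome this, we argue by induction on the number of vertices of $\FF$ carrying ``moving'' colour functions, using compactness in a stronger topology. The key observation is that the kernel operator $T_W f(x):=\int W(x,y)f(y)\dd\mu(y)$ is a Hilbert–Schmidt (hence compact) operator on $L^2\Omu$. It therefore maps weak-$*$ (equivalently, on bounded sets, weakly in $L^2$) convergent bounded nets into norm-convergent ones.

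Concretely, we handle one variable $x_i$ at a time. Integrating out a variable $x_j$ with a moving weight $h^\alpha_j$ produces functions of the remaining variables built from $T_W$-type integrals. Since these depend on several variables, we first approximate $W$ in $L^1(\gO^2)$ (and hence in $L^2$, as it is bounded) by finite sums $\sum_k a_k(x)b_k(y)$. Lemma \ref{LA1}, or simply the $L^1$ bound, shows that the error is uniform in all colourings: replacing each $W$-factor by its approximant changes $\gU$ by at most $e(\FF)$ times the $L^1$ error, and this holds irrespective of the $h$'s.

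For $W$ a finite sum of products, $\gU(h_1,\dots,h_n)$ expands into a finite sum of products over vertices $i$ of integrals $\int h_i(x)\,c(x)\dd\mu(x)$, where each $c$ is a product of finitely many bounded functions $a_k,b_k$, and thus lies in $L^1$. Each such integral is weak-$*$ continuous in $h_i$ by definition. Each product of such integrals is a product of continuous bounded scalar functions, and is therefore continuous. A uniform limit of continuous functions is continuous, so letting the approximation error tend to zero gives the continuity of $\Psi\mapsto t(\FF,(W,\Psi))$, and, combined with the uniform graphon estimate above, the lemma.

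If the approximants $a_k,b_k$ are not bounded, we truncate them so that they take values in a bounded interval, and to preserve $0\le W\le1$ in the estimates we approximate by simple step functions on rectangles. The telescoping estimate of \refL{LA1}, applied with the $L^1$ norm in place of the cut norm, tolerates the loss of the bound $0\le W\le 1$, provided we instead bound the other factors by their sup norms.
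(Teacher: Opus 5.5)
Your proof is correct and follows the paper's route: Lemma~\ref{LA1} disposes of the graphon perturbation uniformly in the colouring. Continuity in $\Psi$ for fixed $W$ then comes from approximating by finite sums of product functions and passing to a uniform limit. The only difference is where you approximate: you replace each factor $W$ by a rectangle step function in $L^1(\gO^2)$ and telescope, whereas the paper's Lemma~\ref{LA3} approximates the whole integrand in $L^1(\gO^\nu)$ directly. The Hilbert--Schmidt and induction paragraph is never actually used by your final argument and can simply be dropped.
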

\begin{proof}
  First we show that for a fixed $W$, the map $\Psi\mapsto t(\FF,(W,\Psi))$
is continuous. By the definition \eqref{pa1a}--\eqref{pa1b}, 
this follows from the more general \refL{LA3} below.

Consequently, for every $(W,\Psi)\in\cWo\times\PPsi$ and every $\eps>0$,
there exists a neighbourhood $U$ of $\Psi$ such that $\Psi'\in U$ implies
\begin{align}\label{ar6}
  \bigabs{t(\FF,(W,\Psi'))-t(\FF,(W,\Psi))} <\eps/2.
\end{align}
Furthermore, \refL{LA1} shows that for every $W'\in\cWo$ with
$\cnorm{W'-W}<\eps/(2e(\FF))$, we have for all $\Psi'$
\begin{align}\label{ar7}
  \bigabs{t(\FF,(W',\Psi'))-t(\FF,(W,\Psi'))} <\eps/2.
\end{align}
Combining \eqref{ar6} and \eqref{ar7} yields 
$  \bigabs{t(\FF,(W',\Psi'))-t(\FF,(W,\Psi))} <\eps$ for $(W',\Psi')$ in a
suitable neighbourhood of $(W,\Psi)$, which completes the proof.
\end{proof}

The proof used the following simple general lemma.

\begin{lemma}\label{LA3}
Let $B$ denote the unit  ball of $L^\infty\Omu$,
equipped with the \wxt,
and   let $F(x_1,\dots,x_\nu)\in L^1(\gO^\nu,\mu^\nu)$.
Then, the map
  $B^\nu\to\bbR$ given by
  \begin{align}\label{ar3}
    (g_i)_1^\nu \mapsto 
\int_{\gO^\nu}\prodinu g_i(x_i) F(x_1,\dots,x_\nu)\dd\mu(x_1)\dotsm\dd\mu(x_\nu).
  \end{align}
is continuous.
\end{lemma}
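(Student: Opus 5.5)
The plan is to prove joint continuity of the map \eqref{ar3} by a telescoping argument combined with approximation of $F$ by finite sums of product functions. The key point is that every $g_i$ lies in $B$, so $\norm{g_i}_{L^\infty}\le1$, which gives uniform control once $F$ is replaced by something close to it in $L^1$.

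Denote the map in \eqref{ar3} by $T_F$. By linearity, for all $(g_i)_1^\nu\in B^\nu$,
\begin{align}
\bigabs{T_F((g_i)_1^\nu)-T_{F'}((g_i)_1^\nu)}\le \norm{F-F'}_{L^1(\gO^\nu)} .
\end{align}
So $T_{F'}\to T_F$ uniformly on $B^\nu$ when $F'\to F$ in $L^1$. A uniform limit of continuous functions is continuous. It therefore suffices to prove continuity for $F$ in a dense subset of $L^1(\gO^\nu,\mu^\nu)$.

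For the dense class, take the linear span $L$ of products $h_1(x_1)\dotsm h_\nu(x_\nu)$ with $h_i\in L^1\Omu$. The density $\overline L=L^1(\gO^\nu)$ is the same fact used in the proof of \refL{LTD}. It follows from the monotone class theorem: indicators of measurable rectangles generate the product \gsf{}, and finite unions of rectangles approximate any set of finite measure. That argument did not use $\gO=\oi$. By linearity it is then enough to treat a single product $F=\prod_i h_i(x_i)$. By Fubini,
\begin{align}
T_F((g_i)_1^\nu)=\prod_{i=1}^\nu \int_\gO g_i h_i\dd\mu .
\end{align}
Each factor $g_i\mapsto\int g_ih_i\dd\mu$ is continuous on $B$ by the definition of the \wxt{}. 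A finite product of these factors is therefore continuous on $B^\nu$ with the product topology.

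The only step needing care is the density of $L$ in $L^1(\gO^\nu)$ for a general probability space, rather than just for $\oi$. I expect it to go through by the monotone class argument just described. Note also that weak-$*$ topologies need not be metrizable here, so continuity should be argued directly via topologies (uniform limits, products) rather than via sequences. Every step above does exactly that.
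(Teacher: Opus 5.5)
Your proof is correct and follows essentially the same route as the paper: continuity for a single product $\prod_i h_i(x_i)$ via the weak-$*$ continuity of each factor $g_i\mapsto\int_\Omega g_ih_i\,\mathrm{d}\mu$, then linearity, then passage to the $L^1$-closure using the uniform bound $|T_F-T_{F'}|\le\|F-F'\|_{L^1}$ on $B^\nu$ (from $\|g_i\|_\infty\le1$), and finally density of the span of products in $L^1(\Omega^\nu,\mu^\nu)$. The ``telescoping argument'' announced in your first sentence never actually appears and is not needed.
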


\begin{proof}
This is similar to the proof of \refL{LTD}.
Denote the functional defined by \eqref{ar3} by $\Phi_F((g_i)_1^\nu)$.

First, if $F$ factorizes as
  \begin{align}\label{ar4}
    F(x_1,\dots,x_\nu)=f_1(x_1)\dotsm f_\nu(x_\nu)
  \end{align}
for some $f_1,\dots,f_\nu\in L^1\Omu$, then the \rhs{} of \eqref{ar3}
equals the product
$\prodinu \int_\gO g_i f_i\dd\mu$, where each factor is a continuous
functional of $(g_i)_1^\nu$ by the definition of the \wxt.
Hence, $\Phi_F$ is continuous and the  lemma holds for every such $F$.

Next, it follows by linearity
that the lemma holds for every $F$ that is a linear
combination of such products \eqref{ar4}. Let $L$ be the subspace of 
$L^1=L^1(\gO^\nu,\mu^\nu)$ consisting of such linear combinations.
If $F\in \bL$ (the closure of $L$ in $L^1$), then there is a sequence
$F_n\in L$ such that $\norm{F_n-F}_{L^1}\to0$. It follows from \eqref{ar3}
that, for all $(g_i)_1^\nu\in B^\nu$,
\begin{align}
  \bigabs{\Phi_{F_n}\bigpar{(g_i)_1^\nu}-\Phi_{F}\bigpar{(g_i)_1^\nu}}
\le \linorm{F_n-F}.
\end{align}
Hence, 
$\Phi_{F_n}\to\Phi_F$ uniformly on $B^\nu$, and since every $\Phi_{F_n}$ is
continuous, so is $\Phi_F$.

Finally, as said in the proof of \refL{LTD},
it is well-known that $\bL=L^1$, so the last
case covers all  $F\in L^1(\gO^\nu,\mu^\nu)$.
\end{proof}

This leads to our main result in this subsection.

\begin{theorem}\label{TQ1}
  Fix a probability space $\OFmu$ and give the space $\cWW=\cWW\OFmu$
of all $\kk$-coloured  graphons on $\OFmu$ the topology defined above.
(I.e., the product of the cut norm topology on $\cWo$ and 
the \wxt{} on $\gL_\kk\OFmu$.)
Give the space $\hcWW$ of coloured graph limits the topology defined in
\refSS{SSSlim}.
Then the mapping $\Pi:\WW\mapsto\hWW$ mapping a coloured
graphon to the corresponding coloured graph limit 
is  continuous.

Moreover, in the case $\OFmu=\oi$, 
$\Pi$
is a  quotient mapping.
In other words, 
for any topological space $Y$,
a mapping $f:\hcWW\to Y$ of coloured graph limits into $Y$ is continuous
if and only if the corresponding mapping $f\circ\Pi: \cWW\to Y$ of graphons is.
\end{theorem}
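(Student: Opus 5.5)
Continuity of $\Pi$ comes straight from \refL{LA2}. The topology on $\hcWW$ is the one inherited from $\oi^\cU$ via $\tau$, so it is the product topology. A map into a product is continuous exactly when each coordinate is continuous. The coordinates of $\tau\circ\Pi$ are the maps $\WW\mapsto t(\FF,\WW)$, and \refL{LA2} says each of these is continuous on $\cWo\times\PPsi$. This part works for any probability space $\OFmu$.

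For the quotient statement, take $\OFmu=\oi$. Since $\Pi$ is continuous, it suffices to show that $\Pi$ is surjective and that a set $A\subseteq\hcWW$ is closed whenever $\Pi^{-1}(A)$ is closed. Surjectivity is \refT{TW}\ref{TW1}, which gives a representing graphon on $\oi$ for every graph limit. It is enough to prove the closedness condition for sequences, because $\hcWW$ is a compact metric space by \refL{LC}.

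So suppose $\Pi^{-1}(A)$ is closed, take $\hWW_n\in A$ with $\hWW_n\to\hWW$, and aim to show $\hWW\in A$. Choose graphons $\WW_n=(W_n,\Psi_n)$ on $\oi$ with $\Pi(\WW_n)=\hWW_n$. We would like a subsequence and measure-preserving maps $\gf_n$ such that $\WW_n^{\gf_n}$ converges in $\cWW$ to some $\WW$. Then:
\begin{itemize}
\item $\WW_n^{\gf_n}\in\Pi^{-1}(A)$, because pullbacks are equivalent to the original by \eqref{ar77};
\item $\WW\in\Pi^{-1}(A)$, since $\Pi^{-1}(A)$ is closed;
\item $\Pi(\WW)=\lim\hWW_n=\hWW$, by continuity of $\Pi$.
\end{itemize}
Hence $\hWW\in A$.

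The main obstacle is producing this convergent rearranged subsequence. The uncoloured compactness theorem does not suffice on its own, because it gives convergence only up to pullbacks. We also need the colourings to converge weak-$*$ along the same rearrangement, and the colour limit must be compatible with the graphon limit.

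The plan is to run the usual regularity-lemma compactness argument on coloured data. For each $k$, take a weak regularity partition of $W_n$ into $m_k$ parts that refines the previous level. Refine it further so that the averages of the $\psi_\ell$ are also recorded; stepping $\Psi_n$ on a partition keeps $\Psi_n$ close in a weak sense. Rearrange by a measure-preserving map so that the parts become intervals, and pass to a diagonal subsequence along which the step graphons and step colourings converge for every $k$. The limits form martingales in $k$. The graphon part converges in $L^1$, hence in cut norm, and the colour part converges in $L^1$, hence weak-$*$.

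The delicate point is the last step. The $\gf_n$ only make the step approximations converge, so we must still show that $\WW_n^{\gf_n}$ itself converges in $\cWW$ and not merely its steppings. For the graphon coordinate this is the standard $\eps/3$ argument in cut norm. For the colour coordinate, one has to check that $\Psi_n^{\gf_n}$ minus its stepping tends to $0$ weak-$*$ once tested against each fixed $g\in L^1$. For this, approximate $g$ by step functions on the level-$k$ partition and use uniform boundedness.

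If this turns out to be too heavy, the fallback is to assume \refL{Lmix} and \refT{Tunique1} together with a coloured cut-distance compactness result for pure colourings (as in \cite{GrebikP,Diao}). One would then still have to transfer that convergence to weak-$*$ convergence of the colourings.
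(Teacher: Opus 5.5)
Your reduction is correct and is essentially the paper's. You get continuity from \refL{LA2}, surjectivity from \refT{TW}\ref{TW1}, you use sequential closedness in the metric space $\hcWW$, and you finish with your three bullets; the paper phrases the same argument as a contradiction with an open set $U$. The gap is at the step you call the main obstacle. It is not an obstacle, and the ``compatibility'' you ask for is never needed.

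Here is the missing argument. Let $(W,\Psi)$ be any fixed representative of $\hWW$. By \refT{Tuncolour}, $\hWW_n\to\hWW$ implies $W_n\to W$ in $\hcWo$, so $\dcut(W_n,W)\to0$. Then \eqref{dcut1} gives \mpp{} bijections $\gf_n$ with $\cnorm{W_n^{\gf_n}-W}\to0$. The colourings $\Psi_n^{\gf_n}$ lie in $\Lambda$, which for $\oi$ is a compact metric space in the \wxt{} (Banach--Alaoglu and separability of $L^1\oi$, as noted in \refSS{SStop}). So a subsequence converges \wx{} to some $\Psi^*$, and then $(W_n^{\gf_n},\Psi_n^{\gf_n})\to(W,\Psi^*)$ in $\cWW$. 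You do not need $\Psi^*$ to be related to $\Psi$: your third bullet already gives $\Pi(W,\Psi^*)=\hWW$. This is exactly the paper's argument; the \wxt{} was chosen so that this compactness comes for free.

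Your regularity route, as written, fails at the step you flag as delicate. Approximating $g\in L^1$ by step functions on the level-$k$ partitions requires those partitions to become fine. Partitions chosen for weak regularity of $W_n$ need not do so. For example, if $W_n\equiv1$, the trivial partition serves at every level and the stepping of $\Psi_n$ is its constant mean. With $\psi_{n,1}=\indic{x>\frac12}$ and $\gf_n$ the identity, $\Psi_n^{\gf_n}$ minus its stepping is then a fixed nonzero function. (Also, recording the averages requires no refinement at all.)

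You could repair this by refining along the quantised level sets $\set{\psi_{n,\ell}\in[j/k,(j+1)/k)}$. That makes the stepping uniformly $1/k$-close and even yields $L^1$ convergence of the rearranged colourings, which is far more than needed.

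Your fallback does work:
\begin{itemize}
\item Pure representatives on $\oi$ come from \refT{TW}\ref{TW1}; \refT{Tunique1} is not needed.
\item Compactness of the pure coloured cut-metric space \cite{Diao,GrebikP}, together with invariance of \eqref{ps1} under \mpp{} bijections, gives $\cnorm{\WW_n^{\gf_n}-\WW}\to0$ along a subsequence.
\item The passage to the product topology is then immediate from \refL{LPS1}.
\end{itemize}
However, it imports an external compactness theorem, whereas the paper needs only the uncoloured \eqref{dcut1} and Banach--Alaoglu.
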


Hence, $\hcWW$ is a quotient space of $\cWW(\oi)$ also in the topological
sense.
(See \cite[Section 2.4]{Engelking} for more on quotient spaces and quotient
mappings.)

\begin{proof}
By the definition of the topology on $\hW_\kk$ in \refSS{SSSlim}, 
the continuity of $\Pi$ is equivalent to the continuity of the mapping
$\WW\mapsto t(\FF,\WW)$ for every coloured graph $\FF$,
which is exactly \refL{LA2}.

Now suppose that $\OFmu=\oi$.
Then $\Pi$ maps $\cWW$ onto $\hcWW$ by \refT{TWB}.
To show that $\Pi$ is a quotient mapping, it is 
sufficient to show that if $U\subseteq\hcWW$ is such that $\Pi\qw(U)$ is open in
$\cWW$, then $U$ is open in $\hcWW$.
(See \cite[Proposition 2.4.3]{Engelking}, and note that the converse
implication follows by continuity.)
Suppose, to obtain a contradiction, that this fails for some $U$.
Then $U$ is not open in $\hcWW$, and since $\hcWW$ is a metric space (\refL{LC}),
it follows that there exists some coloured graph limit $\hWW$ and a sequence of
coloured graph limits $\hWW_n$ 
with $\hWW_n\to\hWW$ in $\hcWW$ such that
\begin{align}\label{nt0}
  \hWW \in U \qquad\text{and}\qquad 
\hWW_n\notin U, \; n\ge1.
\end{align}
By \refT{TWB}, we can represent the graph limits $\hWW$ and $\hWW_n$ by 
coloured graphons $(W,\Psi)$ and $(W_n,\Psi_n)$. 
We regard $W$ and $W_n$ also as (uncoloured) 
graph limits, by standard abuse of notation.
Then,  the assumption $\hWW_n\to\hWW$ and \refT{Tuncolour} imply
$W_n\to W$ in $\hcWo$, which as is well-known (see
\refApp{app:uncoloured-pullback}) is equivalent to
\begin{align}\label{nt1}
  \dcut(W_n,W)\to 0.
\end{align}
By \eqref{dcut1}, this implies the existence of \mpp{} bijections
$\gf_n:\oi\to\oi$ such that
\begin{align}\label{nt2}
  \cnorm{W_n^{\gf_n}-W}\to 0.
\end{align}
We have $(W_n^{\gf_n},\Psi_n^{\gf_n})\cong  (W_n,\Psi_n)$,
and thus
the graph limit $\hWW_n$ is also represented by $(W_n^{\gf_n},\Psi_n^{\gf_n})$.
We noted above that (since now $\gO=\oi$) $\gL$ is a compact metric space.
Hence, by selecting a subsequence, we may assume that the sequence
$\Psi_n^{\gf_n}$ in $\gL$ converges, and thus there exists $\Psix\in\gL$
such that 
$\Psi_n^{\gf_n}\to\Psix$.
This together with \eqref{nt2} means that 
\begin{align}\label{nt4}
(W_n^{\gf_n},\Psi_n^{\gf_n})\to(W,\Psix)  
\qquad\text{in $\cWW=\cWo\times\gL$}.
\end{align}
Hence the continuity of $\Pi$ shows that
\begin{align}\label{nt3}
  \hWW_n=\Pi(W_n^{\gf_n},\Psi_n^{\gf_n})\to\Pi(W,\Psix).
\end{align}
Since we also have $\hWW_n\to\hWW$, this implies 
$\Pi(W,\Psix)=\hWW$, and thus, using \eqref{nt0}, $(W,\Psix)\in\Pi\qw(U)$.

On the other hand, 
\begin{align}\label{nt5}
 \Pi(W_n^{\gf_n},\Psi_n^{\gf_n})
=\Pi(W_n,\Psi_n)
=\hWW_n\notin U,
\end{align}
and thus $(W_n^{\gf_n},\Psi_n^{\gf_n})\notin\Pi\qw(U)$.
We have assumed that $\Pi\qw(U)$ is open, and thus its complement is closed.
Consequently, \eqref{nt5} and \eqref{nt4} imply $(W,\Psix)\notin \Pi\qw(U)$.

This contradiction shows that if $\Pi\qw(U)$ is open in $\cWW$, then $U$ is
open in $\hcWW$, which completes the proof.
\end{proof}

In the uncoloured case, it is well-known that the topology on graph limits
is given by the cut distance for graphons, which is defined 
as an infimum over the distance in cut norm 
between \mpp{} rearrangements of the graphons,
see \eqref{dcut} and \eqref{dcut1}.
This is extremely useful, so it is natural to try to extend this to coloured
graphons. It turns out that if we consider only pure coloured graphons, then
this is possible by a natural generalisation of the cut norm and cut distance
\cite{Diao,GrebikP,AHR26}; 
however, it is not possible to extend this to 
all mixed coloured graphons by defining a suitable invariant metric.
See the following \refSS{SSmetrics} for the technical details.

\subsection{Cut norm for coloured graphons}
\label{SSmetrics}

We consider in this subsection
for simplicity only graphons defined on $\oi$.
(Many results extend to general probability spaces,
but some require modifications; we leave this to the reader.)
We address the problem of extending the cut norm and cut distance
to coloured graphons.
There is a straightforward extension of the standard definition, 
see \eqref{ps1}--\eqref{pscut};
however, we will see below that it
works well and yield desired results for pure coloured graphons, but not
for general mixed coloured graphons. Furthermore, it seems that there is no
way to modify the definition and obtain a  definition for mixed
coloured graphons that satisfies all our wishes.

We consider the following extension of the cut norm and cut distance
to coloured graphons. 
For pure coloured graphons, this was introduced
 by \cite{Diao}
(in the form \eqref{ps5} below), and (independently) by 
\cite{GrebikP} and \cite{AHR26}.%
\footnote{In the current arXiv version of \cite{AHR26},
these are defined  for mixed colourings, but then their Lemma 2.1 is wrong. 
We have been informed by the authors that this will be corrected in a
forthcoming update, with the cut norm and distance considered only for
pure colourings.}
(The versions of cut norm
in \cite{GrebikP} and \cite{AHR26} are defined slightly differently, but
it is easy to see that they are equivalent to \eqref{ps1}
within constant factors; we omit the details.)

Let,
for coloured graphons $\WW_j=(W_j,\Psi_j)$, 
\begin{align}\label{ps1}
  \cnorm{{(W_1,\Psi_1)-(W_2,\Psi_2)}}
&:=\cnorm{W_1-W_2}+\linorm{\Psi_1-\Psi_2}
\end{align}
where as usual $\Psi_j=(\psi_{j,i})_{i=1}^\kk$ and we let
$  \linorm{\Psi_1-\Psi_2}
:=\sum_{i=1}^\kk\linorm{\psi_{1,i}-\psi_{2,i}}
$.
Then define,  
\begin{equation}\label{pscut}
 \dcut(\WW_1,\WW_2) = \inf_{\gf_1,\gf_2} \cnorm{\WW_1^{\gf_1} - \WW_2^{\gf_2}},
\end{equation}
taking the infimum over all \mpp{} bijections $\gf_1,\gf_2:\oi\to\oi$.

The cut norm distance \eqref{ps1} is a metric on the set of all coloured
graphons 
(with graphons that are a.e.\ equal identified as usual),
and it is invariant under \mpp{} bijections $\gf:\oi\to\oi$ in the sense
that
\begin{align}
\cnorm{{(W_1,\Psi_1)^\gf-(W_2,\Psi_2)^\gf}}
=  \cnorm{{(W_1,\Psi_1)-(W_2,\Psi_2)}}.  
\end{align}
Hence, \eqref{pscut} defines a pseudometric $\dcut$. 
(I.e., $\dcut$ is symmetric and satisfies the triangle inequality, but it
might be 0 also for distinct graphons.)
Furthermore,
as in the uncoloured case in \refApp{app:uncoloured-pullback}, 
there are several related versions of \eqref{pscut} that yield the same result.
For example, 
\cite[Theorem 6.9 and the preceding lemmas]{SJ249} 
hold also for coloured graphons (defined on general probability spaces) 
with the definition \eqref{ps1};
this follows easily by obvious modifications of the proofs in \cite{SJ249}.
(See also \cite[Lemma 3.5]{BCLSV1} and \cite[Theorem 8.13]{Lovasz}.)
For example, it follows that we can allow $\gf_1,\gf_2$ in \eqref{pscut}
to be any \mpp{} maps $\oi\to\oi$ (not necessarily bijective). 
In the opposite direction, we may assume that $\gf_2$ is the identity.

In particular, this shows that (as in the uncoloured case)
\begin{align}  \label{ps4}
\dcut(\WW,\WW^\gf)=0
\end{align}
for every $\WW$ and every \mpp{} $\gf:\oi\to\oi$. 

We note some simple facts.

\begin{lemma}\label{LPS1}
  
The cut norm in \eqref{ps1} defines a metric topology on the space
$\cWW=\cWo\times\gL_k$ of all coloured graphons on \oi.
This topology is stronger than the product topology in \refSS{SStop}.
Consequently, if\/ $\WW$ and $\WW_n$ are coloured graphons on \oi, then,
for any coloured graph $\FF$, as \ntoo,
\begin{align}\label{lco1}
  \cnorm{\WW_n-\WW}\to 0
\implies 
t(\FF,\WW_n)\to
t(\FF,\WW).
\end{align}
Equivalently, letting $\WW$ and $\WW_n$ denote also the corresponding graph
limits in $\hcWW$,
\begin{align}\label{lco2}
  \cnorm{\WW_n-\WW}\to 0
\implies 
\WW_n\to\WW
\quad\text{in }\hcWW.
\end{align}
\end{lemma}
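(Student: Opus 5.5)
The plan is to check the three assertions of \refL{LPS1} in turn, each reducing to facts already established.

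\emph{Metric.} The quantity \eqref{ps1} is the sum of the cut norm on $\cWo$ and the $L^1$ norm on $(L^\infty)^\kk\supseteq\gL_\kk$. Both are norms on the relevant difference spaces once a.e.\ equal functions are identified. Hence \eqref{ps1} is the restriction to $\cWo\times\gL_\kk$ of a norm on a product of vector spaces, and so it is a metric. Symmetry and the triangle inequality are immediate. Definiteness holds because $\cnorm{W_1-W_2}=0$ forces $W_1=W_2$ a.e., and $\linorm{\Psi_1-\Psi_2}=0$ forces $\Psi_1=\Psi_2$ a.e.

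\emph{Comparison with the product topology.} Since \eqref{ps1} is a sum of a metric on each factor, the metric topology it defines is exactly the product of the cut norm topology on $\cWo$ and the $L^1$-norm topology on $\gL_\kk$. It therefore suffices to show that the $L^1$-norm topology on $\gL_\kk$ is stronger than the \wxt. Take a \wx{} basic neighbourhood of $\Psi$, determined by finitely many $g_1,\dots,g_r\in L^1\oi$ and $\eps>0$. It is convenient to use first metric convergence, since $\gL$ is metrisable for both topologies on $\oi$, but a direct argument works as well.

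The main step is an approximation. Given $g\in L^1$ and $\eps$, choose $g'\in L^\infty$ with $\linorm{g-g'}<\eps/4$. Using $|\psi_\ell|,|\psi'_\ell|\le1$, we get
\[
\Bigl|\int(\psi_\ell-\psi'_\ell)g\Bigr|\le \tfrac{\eps}{2}+\norm{g'}_\infty\linorm{\psi_\ell-\psi'_\ell}.
\]
Hence an $L^1$-ball of radius $\eps/(2\max_j\norm{g'_j}_\infty+1)$ around $\Psi$ lies inside the \wx{} neighbourhood. This step, approximating $L^1$ test functions by bounded ones while exploiting the uniform bound on $\gL$, is the only place needing care; everything else is formal.

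\emph{Convergence of densities and of graph limits.} If $\cnorm{\WW_n-\WW}\to0$, then $\WW_n\to\WW$ in the product topology by the previous step. Then \refL{LA2}, the continuity of $(W,\Psi)\mapsto t(\FF,(W,\Psi))$, gives \eqref{lco1}. Alternatively, one can combine \refL{LA1} for the $W$-part with the $L^1$ bound $|t(\FF,(W,\Psi_n))-t(\FF,(W,\Psi))|\le\sum_i\linorm{\psi_{n,\ell_i}-\psi_{\ell_i}}$, obtained by telescoping over the vertices in \eqref{pa1b}.

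Finally, \eqref{lco2} follows from \eqref{lco1} together with \refC{CTWBb}, or equivalently from the continuity of $\Pi$ in \refT{TQ1}.
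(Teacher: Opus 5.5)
Your proposal is correct and follows essentially the same route as the paper. The metric \eqref{ps1} induces the product of the cut-norm topology and the $L^1$ topology on $\gL_\kk$, and the $L^1$ topology is stronger than the \wxt{}; the paper simply asserts this, and you verify it by approximating $L^1$ test functions with bounded ones. Then \eqref{lco1} and \eqref{lco2} follow from the continuity in \refL{LA2} (equivalently the continuity of $\Pi$ in \refT{TQ1}) together with \refC{CTWBb}.
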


\begin{proof}
    The cut norm \eqref{ps1} defines a product topology on
  $\cWW=\cWo\times\gL$,
with the usual uncoloured cut norm topology on $\cWo$ and the $L^1$ topology
on $\gL$. The $L^1$ topology is stronger than the \wx{} topology, so the
first claim follows.

Consequently, 
\eqref{lco2} follows by \refT{TQ1}, and \eqref{lco1} is equivalent to
\eqref{lco2} by \refC{CTWBb}.
\end{proof}

We can also give a quantitative version of \eqref{lco2}.
This is a coloured version of the counting lemma; 
the case of pure coloured
graphons is given by \cite{Diao} and \cite{AHR26}
(both with a minor error corrected here). (See \refL{LA2} for another version with a weaker topology.)

\begin{lemma}\label{Lcounting}
If $\WW_1$ and $\WW_2$ are coloured graphons on \oi, then,
for any coloured graph $\FF$,
  \begin{align}\label{lco3}
\bigabs{t(\FF,\WW_1)-t(\FF,\WW_2)}
    \le
(e(\FF)+v(\FF))\cnorm{\WW_1-\WW_2},
  \end{align}
and thus also
  \begin{align}\label{lco4}
\bigabs{t(\FF,\WW_1)-t(\FF,\WW_2)}
    \le
(e(\FF)+v(\FF))\dcut(\WW_1,\WW_2),
  \end{align}
\end{lemma}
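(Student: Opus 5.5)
We split the difference into a graphon part and a colouring part through the intermediate coloured graphon $(W_2,\Psi_1)$, using the triangle inequality
\begin{align*}
\bigabs{t(\FF,\WW_1)-t(\FF,\WW_2)}
\le \bigabs{t(\FF,(W_1,\Psi_1))-t(\FF,(W_2,\Psi_1))}
+\bigabs{t(\FF,(W_2,\Psi_1))-t(\FF,(W_2,\Psi_2))}.
\end{align*}
The first term is handled directly by \refL{LA1}, which gives the bound $e(\FF)\cnorm{W_1-W_2}$ for the fixed colouring $\Psi_1$.

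For the second term, with $\nu:=v(\FF)$ and $\ell_i$ the colour of vertex $i$, we telescope over the vertices, in analogy with the proof of \refL{LA1}. For $0\le r\le\nu$, let $s_r$ be the integral \eqref{pa1b} with graphon $W_2$ in which the factors $\psi_{1,\ell_i}(x_i)$ are used for $i\le r$ and $\psi_{2,\ell_i}(x_i)$ for $i>r$. Then $s_r-s_{r-1}$ has integrand
\begin{align*}
\bigpar{\psi_{1,\ell_r}(x_r)-\psi_{2,\ell_r}(x_r)}\cdot R(x_1,\dots,x_\nu),
\end{align*}
where $R$ is a product of factors with values in $\oi$, so $0\le R\le1$. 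Integrating out the other variables gives $|s_r-s_{r-1}|\le\linorm{\psi_{1,\ell_r}-\psi_{2,\ell_r}}\le\linorm{\Psi_1-\Psi_2}$, where the last step uses the sum over all colours in the definition following \eqref{ps1}. Summing over $r\in[\nu]$ bounds the second term by $v(\FF)\linorm{\Psi_1-\Psi_2}$.

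Combining the two estimates,
\begin{align*}
\bigabs{t(\FF,\WW_1)-t(\FF,\WW_2)}
\le e(\FF)\cnorm{W_1-W_2}+v(\FF)\linorm{\Psi_1-\Psi_2}
\le (e(\FF)+v(\FF))\cnorm{\WW_1-\WW_2},
\end{align*}
which is \eqref{lco3}.

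For \eqref{lco4}, by \eqref{ar8} we have $t(\FF,\WW_j^{\gf_j})=t(\FF,\WW_j)$ for any \mpp{} bijections $\gf_1,\gf_2$. Applying \eqref{lco3} to the pair $\WW_1^{\gf_1},\WW_2^{\gf_2}$ and taking the infimum over $\gf_1,\gf_2$ as in \eqref{pscut} yields \eqref{lco4}.

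The only step needing care is the vertex telescoping. The error in the literature presumably came from bounding the colour difference by a cut-type quantity, or from losing the factor $v(\FF)$, which arises when several vertices share a colour. Each telescoping step costs only the $L^1$ difference of one colour function, so the factor $v(\FF)$, rather than the number of colours, is the correct one.
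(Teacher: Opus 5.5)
Your proposal is correct and follows essentially the same route as the paper. You split through $(W_2,\Psi_1)$, apply \refL{LA1} to the graphon part, and bound the colouring part by $v(\FF)\linorm{\Psi_1-\Psi_2}$; your vertex-by-vertex telescoping simply makes explicit the paper's one-line bound on $\int\bigabs{\prod_i\psi_{1,\ell_i}(x_i)-\prod_i\psi_{2,\ell_i}(x_i)}$, and you then deduce \eqref{lco4} from pullback invariance \eqref{ar8} and the infimum in \eqref{pscut}, exactly as the paper does.
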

\begin{proof}
If $\WW_j=(W_j,\Psi_j)$, then, using \refL{LA1} and \eqref{pa1b},
\begin{align}
&  \bigabs{t(\FF,\WW_1)-t(\FF,\WW_2)}
\notag\\&
\le
\bigabs{t(\FF,(W_1,\Psi_1))-t(\FF,(W_2,\Psi_1))}
+
\bigabs{t(\FF,(W_2,\Psi_1))-t(\FF,(W_2,\Psi_2))}
\notag\\&
\le e(\FF)\cnorm{W_1-W_2}
+ 
\int_{\oi^n} \biggabs{\prod_{i=1}^{v(\FF)} \psi_{1,\ell_i}(x_i) - 
\prod_{i=1}^{v(\FF)} \psi_{2,\ell_i}(x_i)}
\notag\\&
\le e(\FF)\cnorm{W_1-W_2}
+ 
v(\FF)\sumjkk \linorm{\psi_{1,j}-\psi_{2,j}},
\end{align}
which yields \eqref{lco3}. Then \eqref{lco4} follows from \eqref{lco3},
\eqref{pscut}, and the invariance \eqref{ar8} of $t(\FF,\WW)$ under pullbacks.
\end{proof}

\subsubsection{Pure colourings}\label{SSSmetrics-pure}

Consider now  the case of pure colourings.
(Recall that this is the only case studied in 
\cite{Diao,GrebikP}, and that~\cite{AHR26} 
(in a forthcoming corrected version, see above)
consider the cut distance only
for pure coloured graphons.)
Note that in  the pure coloured case, when $\Psi_j$ are given by colour maps 
$\chi_j:\gO\to[\kk]$, we can also write \eqref{ps1} as 
\begin{align}\label{ps5}
\cnorm{{(W_1,\Psi_1)-(W_2,\Psi_2)}}
&=\cnorm{W_1-W_2}+\sumikk\gl\bigpar{\chi_1\qw(i)\Delta\chi_2\qw(i)}
\notag\\&\hskip-2em
=\cnorm{W_1-W_2}+2\gl\set{x\in\oi:\chi_1(x)\neq\chi_2(x)}.
\end{align}

We have the following result, 
which shows that if we use only pure coloured graphons,
then the cut metric \eqref{pscut} yields (by the natural quotient space
construction) the compact topology defined earlier on the space $\cWW$ of
coloured graph limits. 
Various versions have earlier been proved in
\cite{Diao,GrebikP,AHR26}.

\begin{theorem}\label{TMpure}
  \begin{thmenumerate}
  \item\label{TMpure1}
On the set $\cWW\wpure$ of all pure coloured graphons on \oi,
the metric topology defined by the cut norm in \eqref{ps1} 
coincides with the restriction to $\cWW\wpure\subset\cWW=\cWo\times\gL_k$ of 
the product topology in \refSS{SStop}.

\item \label{TMpure2}
 Let $\WW_1$ and $\WW_2$ be two pure coloured graphons on $\oi$.
Then 
\begin{align}  \label{tmp2}
\WW_1\cong\WW_2 \iff \dcut(\WW_1,\WW_2)=0.
\end{align}

\item\label{TMpure3}
Let $\WW,\WW_1,\WW_2,\dots$ be pure coloured graphons on $\oi$, and
let $\WW$ and $\WW_n$ denote also the corresponding graph
limits in $\hcWW$.
Then, as \ntoo,
\begin{align}  \label{tmp3}
\WW_n\to\WW
\enspace\text{in }\hcWW
\iff \dcut(\WW_n,\WW)\to0.
\end{align}
  \end{thmenumerate}
\end{theorem}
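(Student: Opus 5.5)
For \ref{TMpure1}, one direction is \refL{LPS1}: the cut-norm topology is at least as strong as the product topology. For the converse it suffices, since both topologies on $\cWW\wpure$ are first countable (the weak-$*$ topology on $\gL$ is metrizable on $\oi$), to show the following. If $W_n\to W$ in cut norm and $\Psi_n\to\Psi$ weak-$*$, with all colourings pure, then $\linorm{\Psi_n-\Psi}\to0$. Write $\Psi=(\etta_{A_\ell})_\ell$ and $\Psi_n=(\etta_{A_{n,\ell}})_\ell$. Weak-$*$ convergence tested against $g=\etta_{A_\ell}$ gives $\gl(A_{n,\ell}\cap A_\ell)\to\gl(A_\ell)$. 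Tested against $g=1$ it gives $\gl(A_{n,\ell})\to\gl(A_\ell)$. Hence
\[
\gl(A_{n,\ell}\gD A_\ell)=\gl(A_{n,\ell})+\gl(A_\ell)-2\gl(A_{n,\ell}\cap A_\ell)\to0 .
\]
So the weak-$*$ and $L^1$ topologies coincide on the pure colourings, since the indicators are the extreme points. This settles \ref{TMpure1}.

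For \ref{TMpure2}, the direction $\Leftarrow$ follows from \eqref{lco4}: if $\dcut=0$ then all $t(\FF,\cdot)$ agree, so $\WW_1\cong\WW_2$. For $\Rightarrow$, \refT{Tunique1} gives \mpp{} maps $\gf,\tgf$ with $\WW_1^{\gf}=\WW_2^{\tgf}$ a.e. Then \eqref{ps4} and the triangle inequality give
\[
\dcut(\WW_1,\WW_2)\le\dcut(\WW_1,\WW_1^\gf)+\dcut(\WW_1^\gf,\WW_2^{\tgf})+\dcut(\WW_2^{\tgf},\WW_2)=0 .
\]
Here we use that \eqref{pscut} may equivalently be taken over non-bijective \mpp{} maps, as noted after \eqref{pscut}.

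For \ref{TMpure3}, the direction $\Leftarrow$ is again \eqref{lco4} combined with \refC{CTWBb}. For $\Rightarrow$, assume $\WW_n\to\WW$ in $\hcWW$ and argue by subsequences: it suffices to show that every subsequence has a further subsequence along which $\dcut\to0$. I would mimic the proof of \refT{TQ1}. By \refT{Tuncolour} and \eqref{dcut1}, choose \mpp{} bijections $\gf_n$ with $\cnorm{W_n^{\gf_n}-W}\to0$. By compactness of $\gL$, pass to a subsequence with $\Psi_n^{\gf_n}\to\Psix$ weak-$*$.

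The main obstacle is that $\Psix$ may be mixed, not pure. Continuity of $\Pi$ (\refT{TQ1}) still gives $(W,\Psix)\cong\WW$, but \ref{TMpure1} cannot be applied directly. My plan is to remove this obstacle by the trick of \refT{TBWW} and \refL{LS2}, with a more hands-on argument.

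Since $(W,\Psix)\cong(W,\Psi)$ with $\Psi$ pure, the one-vertex coloured graphs give $\int\psi^*_\ell=\int\psi_\ell$. The key step is to show that $\Psix$ is necessarily pure. For this, use the coloured graph $\FF$ consisting of two vertices with the same colour $\ell$ and no edge; this alone yields only $(\int\psi^*_\ell)^2$, which is uninformative. Instead I would consider two copies glued along degree-type statistics. Since $t(\FF,(W,\Psix))$ agrees with $t(\FF,\WW)$ for all $\FF$, both $(W,\Psix)$ and $(W,\Psi)$ then admit a common pure pullback representation via \refT{Tunique1} applied to the pure representative from \refL{Lmix}.

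If purity cannot be forced, I would instead work with $\WW'$ from \refL{Lmix} on $\oi\times[\kk]$. Its coloured graphon, pulled back by suitable maps, is pure and equivalent to $\WW$. The aim is to construct bijections $\gf_n'$ that split each point according to the colours of $\Psi_n^{\gf_n}$, so that the colourings converge in $L^1$ while the graphons still converge in cut norm; this is essentially a regularity-lemma step-function approximation. Concretely, I would approximate $\WW$ by a pure coloured step graphon, on a partition refining the colour classes, to within $\eps$ in $\dcut$. I would then use the weak regularity lemma for the $\WW_n$ together with convergence of the finitely many coloured step densities to match parts up to $O(\eps)$ in \eqref{ps1}. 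This final matching step is where I expect the real work to lie.
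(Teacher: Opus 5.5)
Your arguments for \ref{TMpure1}, for \ref{TMpure2}, and for the direction $\Leftarrow$ of \ref{TMpure3} are correct and essentially match the paper's; your computation of $\gl(A_{n,\ell}\gD A_\ell)$ is just \refL{Lweak*}. The direction $\Rightarrow$ of \ref{TMpure3}, however, is not proved.

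Your proposed key step, that the \wx{} limit $\Psix$ of $\Psi_n^{\gf_n}$ must be pure, is false. In \refE{EB3} the graphons $\WW_n$ are pure, have $W_n=W\equiv1$, and are all equivalent to the pure graphon $\WW_1$, so $\WW_n\to\WW_1$ in $\hcWW$. Since $W_n=W$, your construction allows every $\gf_n$ to be the identity. Then $\Psi_n^{\gf_n}=\Psi_n\to(\frac12,\frac12)$ \wx, which is mixed.

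Nor can the equivalence $(W,\Psix)\cong(W,\Psi)$ with $\Psi$ pure force $\Psix$ to be pure, whatever test graphs $\FF$ you use: \refE{EB1} exhibits exactly such an equivalence with $\Psix=(\frac12,\frac12)$. The sentence about ``two copies glued along degree-type statistics'' does not amount to an argument. The underlying problem is that the rearrangements $\gf_n$ must be chosen to align the colourings as well as the graphons, and \wx{} compactness of $\gL$ gives no control over this. Your fallback plan (weak regularity plus matching of coloured step densities) is a plausible outline, but it is precisely the nontrivial step, and you leave it undone.

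The ingredient the paper uses instead is the compactness of the metric space $(\cWWX,\dcut)$ of pure coloured graphons modulo $\dcut=0$. This is proved in \cite[Theorem 3.7]{Diao} and \cite[Theorem 7.1]{GrebikP} by extending the uncoloured compactness proof. Given this, the natural map $\iota:(\cWWX,\dcut)\to\hcWW$ is:
\begin{romenumerate}
\item injective by \ref{TMpure2};
\item surjective by \refT{TW}\ref{TW1}, since every coloured graph limit is represented by a pure coloured graphon on $\oi$;
\item continuous by the direction $\Leftarrow$ that you have already proved.
\end{romenumerate}
A continuous bijection from a compact space onto the Hausdorff space $\hcWW$ is a homeomorphism, and this is exactly \eqref{tmp3}. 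If you prefer not to cite compactness, you would have to carry out your matching argument in full, for instance through a coloured version of the second sampling lemma, which amounts to reproving it.
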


\begin{proof}
\pfitemref{TMpure1}
Let $\WW=(W,\Psi)$ and $\WW_n=(W_n,\Psi_n)$ be pure coloured graphons.
Then, as \ntoo,
\begin{align}\label{tmp4}
  \cnorm{\WW_n-\WW}\to0
\iff
  \cnorm{W_n-W}\to0
\quad\text{and}\quad
\linorm{\psi_{n,i}-\psi_i}\to0\;\forall i\in[\kk].
\end{align}
The only difference from the  topology in \refSS{SStop} is that there
we require $\psi_{n,i}\to\psi_i$ in the \wx{} topology instead of in $L^1$.
However, for a pure coloured graphon $\Psi$, the functions $\psi_i$ are
indicator functions taking
only the values 0 and 1, and thus the result follows from \refL{Lweak*} below.

\pfitemref{TMpure2}
If $\WW_1\cong\WW_2$, then 
\refT{Tunique1} shows that 
$\WW_1^{\gf_1} = \WW_2^{\gf_2}$ for some \mpp{} mappings 
$\gf_1,\gf_2:\oi\to\oi$, and thus \eqref{ps4} implies
\begin{align}\label{tmp5}
  \dcut(\WW_1,\WW_2)
\le  \dcut(\WW_1,\WW_1^{\gf_1}) + \dcut(\WW_1^{\gf_1},\WW_2^{\gf_2}) 
+ \dcut(\WW_2^{\gf_2},\WW_2)
=0.
\end{align}

Conversely,
if $ \dcut(\WW_1,\WW_2)=0$, then \eqref{lco4} yields
$t(\FF,\WW_1)-t(\FF,\WW_2)=0$ for every coloured graph $\FF$, and thus
$\WW_1\cong\WW_2$ by \refD{D=}.

\pfitemref{TMpure3}
If $\dcut(\WW_n,\WW)\to0$, then \eqref{lco4} yields
$t(\FF,\WW_n)-t(\FF,\WW)\to0$ for every coloured graph $\FF$, and thus
$\WW_n\to\WW$ by \refC{CTWBb}. This proves
\begin{align}\label{tmp6}
\dcut(\WW_n,\WW)\to0\implies \WW_n\to\WW \enspace\text{in }\hcWW  
\end{align}

Let $\cWWX$ be the quotient space obtained from the space
of pure coloured graphons by identifying any  
pure coloured graphons $\WW_1$ and $\WW_2$ with
$\dcut(\WW_1,\WW_2)=0$. Thus $(\cWWX,\dcut)$ is a metric space. Note that
\cite[Theorem 3.7]{Diao} and \cite[Theorem 7.1]{GrebikP}
both show that this metric space is compact,
by an extension of the proof in the uncoloured case 
\cite[Theorem 9.23]{Lovasz}
(see also \cite[Lemma 8.2 and Remark 8.3]{ADW25}).
Furthermore, by \eqref{tmp2}, this quotient construction of $\cWWX$
means that we identify two graphons precisely when they are equivalent,
so by \refT{TWB}, we have a natural injective map 
$\iota:(\cWWX,\dcut)\to\hcWW$ into the set of coloured graph limits.
Moreover, by \refT{TW}\ref{TW1}, this map $\iota$ is onto,
and \eqref{tmp6} shows that it is continuous.
Hence, $\iota$ is a continuous bijection, and since $\cWWX$ is compact,
$\iota$ is a homeomorphism, which means \eqref{tmp3}.
\end{proof}

We used above the following simple lemma.

\begin{lemma}\label{Lweak*}
  Let $\psi=\sindic{A}$ and 
$\psi_n=\sindic{A_n}$ be indicator functions defined on some probability  space
$\OFmu$. Then, as \ntoo, $\psi_n\to \psi$ in the \wx{} topology on $L^\infty$
if and only if $\psi_n\to \psi$ in $L^1$.
\end{lemma}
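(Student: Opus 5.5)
One direction is immediate. Suppose $\psi_n\to\psi$ in $L^1\Omu$. Then for every $g\in L^\infty\Omu$ we have
\begin{align*}
\Bigabs{\int(\psi_n-\psi)g\dd\mu}\le\norm{g}_{L^\infty}\linorm{\psi_n-\psi}\to0 .
\end{align*}
In particular this holds for $g\in L^1\Omu$ after truncation. More simply, since $|\psi_n-\psi|\le1$, for $g\in L^1$ and any $M$ one gets
\begin{align*}
\Bigabs{\int(\psi_n-\psi)g\dd\mu}\le M\linorm{\psi_n-\psi}+\int_{|g|>M}|g|\dd\mu ,
\end{align*}
which can be made small by first choosing $M$ large and then $n$ large. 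This gives \wx{} convergence.

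For the converse, the key identity for indicator functions is
\begin{align*}
|\sindic{A_n}-\sindic{A}| = \sindic{A_n}(1-\sindic{A})+\sindic{A}(1-\sindic{A_n}),
\end{align*}
which follows since both sides equal $\sindic{A_n\gD A}$. Integrating gives
\begin{align*}
\linorm{\psi_n-\psi}=\int\psi_n(1-\psi)\dd\mu+\int\psi(1-\psi_n)\dd\mu
=\int\psi_n g_1\dd\mu+\int(1-\psi_n)g_2\dd\mu,
\end{align*}
with the fixed functions $g_1:=1-\psi$ and $g_2:=\psi$. Both lie in $L^1\Omu$ because $\mu$ is a probability measure.

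Now apply \wx{} convergence to these two test functions:
\begin{align*}
\int\psi_n(1-\psi)\dd\mu&\to\int\psi(1-\psi)\dd\mu=0,\\
\int\psi(1-\psi_n)\dd\mu&\to\int\psi(1-\psi)\dd\mu=0,
\end{align*}
where the second uses the constant test function $1\in L^1$ together with $\psi$. Both limits vanish because $\psi^2=\psi$. Hence $\linorm{\psi_n-\psi}\to0$.

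There is no real obstacle. The only point to watch is that the argument uses that the limit $\psi$ is itself an indicator, so that $\psi(1-\psi)=0$, and that the test functions $\psi$, $1-\psi$ and $1$ belong to $L^1$, which holds since $\mu$ is finite. The same reasoning works for nets, but only sequences are needed here.
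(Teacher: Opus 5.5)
Your proof is correct and is essentially the paper's argument. The converse direction uses the same split $\linorm{\psi_n-\psi}=\int_{A\comp}\psi_n\dd\mu+\int_A(1-\psi_n)\dd\mu$, tested against $\sindic{A\comp}$ and $\sindic{A}$, and your truncation bound is an inline proof of the paper's step $\int_{A_n\gD A}|g|\dd\mu\to0$. One small point: your first display tests against $g\in L^\infty$, which is irrelevant for the \wx{} topology, and ``in particular'' does not follow since $L^1\not\subseteq L^\infty$. The truncation argument you give right after is what actually proves that direction, so you can drop the first display.
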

\begin{proof}
Suppose that $\psi_n\to\psi$ \wx.
On $A$, we have $\psi=1\ge\psi_n$, and thus
\begin{align}
      \int_A|\psi_n-\psi|\dd\mu
= \int_A(\psi- \psi_n)\dd\mu
=\int_\gO \sindic{A}\cdot(\psi - \psi_n)\dd\mu
\to 0,
\end{align}
by the definition of the \wx{} toplogy.
Similarly, 
$\int_{A^{\textsf c}}|\psi_n-\psi|\to0$.
Thus $\psi_n\to\psi$ also in $L^1$. 

Conversely, suppose that $\psi_n\to\psi$ in $L^1$; then
$\mu(A_n\Delta A)=\linorm{\psi_n-\psi}\to0$. Hence, for any function $g\in L^1$,
\begin{align}
\Bigabs{  \int_\gO (\psi_n-\psi)g \dd\mu}
\le\int_\gO |\psi_n-\psi|\,|g| \dd\mu
=\int_{A_n\Delta A} |g|\dd\mu
\to0,
\end{align}
which shows that $\psi_n\to\psi$ pwx{} in $L^\infty$.
\end{proof}

\subsubsection{Mixed colourings}\label{SSSmetrics-mixed}

Unfortunately, for general (i.e.,  mixed) coloured graphons, there are
serious problems with $\dcut$. 
We show this by some examples, where we let $\kk=2$ and 
 so a colouring  $\Psi=(\psi_1,1-\psi_1)$ is determined by $\psi_1:\oi\to\oi$.
For simplicity we consider graphons $(W,\Psi)$ where
$W$ is the constant graphon $W:=1$.
(Taking $W$ to be another constant in $(0,1]$ would yield the same results
with trivial modifications.)
Then  the definition \eqref{pa1a}--\eqref{pa1b} yields, for any coloured graph
$\FF$ with vertices coloured $\ell_1,\dots,\ell_\nu$,
\begin{align}\label{tmm1}
  t\bigpar{\FF,(W,\Psi)}
=\int_{\oi^\nu} \prodinu \psi_{\ell_i}(x_i) \dd x_1 \dotsm \dd x_n
=\prodinu a_{\ell_i},
\end{align}
where $a_\ell:=\intoi\psi_\ell(x)\dd x$. Hence $a_2=1-a_1$, and 
$t\bigpar{\FF,(W,\Psi)}$ depends only on $a_1=\intoi\psi_1$.

\begin{example}\label{EB1}
Let
$\WW:=(W,\Psi)$ and $\WW_1=(W,\Psi_1)$, where 
$W=1$, and  
$\Psi$ and $\Psi_1$ are
determined by $\psi_1:=\frac12$ and 
$\psi_{1,1}:=\indic{x>\frac12}$.
Then $\intoi\psi_1=\intoi\psi_{1,1}$,
and consequently, by \eqref{tmm1},
$t\bigpar{\FF,\WW}=t\bigpar{\FF,\WW_1}$ for every
coloured graph $\FF$. 
Hence, $\WW\cong\WW_1$ by \refD{D=}.
However, $\linorm{\psi_{1}^{\gf_1}-\psi_{1,1}^{\gf_2}}=\frac12$ for any
\mpp{} mappings $\gf_1$ and $\gf_2$; hence \eqref{ps1}--\eqref{pscut} yield
$\dcut(\WW,\WW_1)\ge\frac12\neq0$. 
This shows that equivalence in \refT{TMpure}\ref{TMpure2} does not hold for
mixed coloured graphons.
\end{example}

\begin{example}\label{EB2}
Also the equivalence in \refT{TMpure}\ref{TMpure3} does not hold for
mixed coloured graphons.
One example is to take $\WW$ and $\WW_1$ as in \refE{EB1} and
simply let $\WW_n:=\WW_1$ for all $n$; then $\WW_n\cong\WW$ 
and thus $\WW_n=\WW$ in  $\hcWW$ (i.e. regarded as graph limits),
and therefore trivially $\WW_n\to\WW$,
but $\dcut(\WW_n,\WW)\ge\frac12$ for every $n$.
\end{example}

\begin{example}\label{EB3}
Another, perhaps more instructive, example showing the same thing is
obtained by taking $\WW$ and $\WW_1$ as in \refE{EB1}, and further define
$\WW_n:=(W,(\psi_{n,1},1-\psi_{n,1}))$ with 
$\psi_{n,1}(x):=\psi_1(\frax{2^{n-1}x})$
(where $\frax\cdot$ denotes the fractional part).
Then $\WW_n=\WW_1^{\gf_{n}}$ where $\gf_n$ is the \mpp{} mapping
$x\mapsto\frax{2^{n-1}x}$, and hence $\WW_n\cong\WW_1\cong\WW$,
so again $\WW_n\to\WW$ in $\hcWW$ (trivially), but 
$\dcut(\WW_n,\WW)=\dcut(\WW_1,\WW)\ge\frac12$.

Note that in this example, $\psi_{n,1}\to\psi_1$ in the \wx{} topology on
$L^\infty$, and thus $\WW_n\to\WW$ in the product
topology defined in \refSS{SStop}.
Since we have shown that $\cnorm{\WW_n-\WW}\ge\frac12$,
this shows that the topology defined by the cut metric on the set $\cWW$ of
all coloured graphons is strictly stronger
than the topology in \refSS{SStop}. In other words, 
also the equivalence in \refT{TMpure}\ref{TMpure1} does not hold
for mixed coloured graphons.
\end{example}

\begin{remark}
One might hope that it would be possible to modify the definition
\eqref{ps1} of cutnorm by replacing the second term by 
some metric $d(\Psi_1,\Psi_2)$ on $\gL$ that defines the \wx{} topology.
(Recall that $\gL$ is a metric compact space since we assume $\gO=\oi$.)
However, this seems impossible, since it seems that we need the metric to be
invariant under \mpp{} bijections in order to show that \eqref{pscut}
defines a metric, and no such metric exists by the following example.
\end{remark}

\begin{example}
  Let again $\kk=2$ and let $\Psi$ and $\Psi_n$ be as in \refE{EB3}.
Then, as noted there, $\Psi_n\to\Psi$ in $\gL$.
Let $d$ be a metric on $\gL$ defining the \wx{} topology.
Then thus $d(\Psi_n,\Psi)\to0$.

Suppose now that $d$ is invariant under \mpp{} bijections $\oi\to\oi$.
It is easy to construct such bijections $\gf_n$ such that
$\Psi_n=\Psi_1^{\gf_n}$. (The mappings $\gf_n$ in \refE{EB3} are not
bijections, but it is easy to construct $\gf_n$ as permutations of intervals
of length $2^{-n}$.)
Then, since $\Psi^{\gf_n}=\Psi$, we have 
\begin{align}
d(\Psi_n,\Psi)=d(\Psi_1^{\gf_n},\Psi^{\gf_n})=d(\Psi_1,\Psi)>0,  
\end{align}
(since $\Psi_1\neq\Psi$). This is a contradiction, and shows that no such
metric $d$ exists.
\end{example}

\section{Graph Partition Parameters via Graphon Colourings}\label{sec:graphon-partition}

Many graph parameters are obtained by optimising an objective over vertex partitions. The graphon-colouring framework gives a continuum version of this setup: the graph is represented by a graphon $W$, while the partition is represented by either a pure or mixed colouring of the underlying probability space. Thus finite optimisation over vertex partitions becomes optimisation over pure or mixed colourings in the graphon case. 

We first illustrate this principle with maximum cut in Section~\ref{SSSmax}. Then in Section~\ref{sec:cheeger-constant} we turn to the Cheeger constant, which presents a difficulty because it is a ratio of the crossing-edge mass to the smaller degree volume, whose denominator may degenerate under cut convergence. We see that bounding the minimum degree gives
the control needed to address this issue. Next, minimum $k$-section illustrates how balance constraints enter the colouring framework. The last graph partition problem we consider is modularity, whose objective subtracts a within-class degree-product contribution from the
within-class edge mass and allows the number of parts to vary. The relation between the modularity of graphs and graphons is more delicate, and we defer it to Section~\ref{sec3:modularity}.

We note that a family of paritition problems, which includes maximum cut and minimum $k$-section, was 
studied in \cite{BCLSV2}.  We include maximum cut and minimum $k$-section mostly to illustrate how our coloured graphon framework applies; we obtain shorter proofs and some new results -- see Remark~\ref{rem:conn-bclsv}.

\smallskip
\paragraph{Notation and setup.}
Note that in this section, 
graphons are generally uncoloured, unless they are explicitly said to be coloured.

For an uncoloured graphon $W$ on a probability space
$(\gO,\cF,\mu)$, 
recall that the degree function is defined as
\begin{align}\label{m1}
  k(x)=k_W(x):=\int_{\gO}W(x,y)\dd\mu(y),
\qquad x\in\gO,
\end{align}
and the edge density is
\begin{align}\label{m2}
\rho=\rho_W:=\iint_{\gO^2}W(x,y)\dd\mu(x)\dd\mu(y) =\int_\gO k(x)\dd\mu(x).
\end{align}
It is well known 
that $\rho_{W_1}=\rho_{W_2}$ when $W_1$ and $W_2$ are equivalent.
(This follows for example from \refT{Tunique0}, or from $\rho=t(K_2,W)$.) 
Furthermore, if $W_1$ and $W_2$ are any uncoloured graphons defined on the
same probability space $\gO$, then taking $S=T=\gO$ in \eqref{cutnorm1} yields
\begin{align}\label{m30}
  \bigabs{\rho_{W_1}-\rho_{W_2}}\le \cnorm{W_1-W_2}.
\end{align}

\subsection{Maximum cut}\label{SSSmax}
We begin by considering the maximum cut in graphs.
It is straightforward  to 
define the corresponding graphon version, and to formulate this in our
coloured graphon setup.
We will study this simple example in some detail, partly as an
introductory example, and partly to illustrate in a simple case
some features that will reappear in later applications, 
in particular the relation
between pure and mixed colourings.

For a graph $G$ on vertex set $[n]$, define the normalised maximum cut value by
\begin{align}\label{MC-G}
\MC(G)=\frac{1}{n^2}\max_{S \subseteq V(G)}e_G(S,S\comp),  
\end{align}
where $e_G(S,S\comp)$ is the number of unordered edges crossing the cut. 
We define
the graphon analogue by, 
for an uncoloured graphon $W$ defined on a general probability space $\OFmu$,
\begin{equation}\label{eq:graphon-MC}
\MC_\pure(W)=\sup_{S\subseteq \gO}\iint_{S \times S\comp} W(x,y) 
\dd\mu(x) \dd\mu(y),
\end{equation}
where the supremum is over all measurable sets $S \subseteq \gO$. 
We can express these as maxima over pure 2-colourings of $G$ or $W$.
A pure 2-colouring of $W$ is a measurable map $\chi:\gO\to\{1,2\}$ 
that
assigns each point of the graphon domain $\gO$
one of the two colours 1 and 2. We can identify this map with the
cut defined by the colour classes
$S=\chi^{-1}(1)$ and $S\comp=\chi^{-1}(2)$.
(Thus the indicator function $\sindic{S}=2-\chi$. 
We could use indicator functions and the colours $1$ and $0$,  
but we prefer to keep the colours $[\kk]$ as in the rest of the paper.) 
We can do the same with a colouring $\chi:V(G)\to\set{1,2}$ of $G$.
Hence, we have: 
\begin{align}\label{MC1pG}
  \MC(G)&:=\max_{\chi:V(G)\to\set{1,2}}\C_2(G,\chi),
\\\label{MC1pW}
  \MC_\pure(W)&:=\sup_{\chi:\gO\to\set{1,2}}\C_2(W,\chi),
\end{align}
where the cut values for pure $k$-colourings are defined by  (for $k=2$ we usually drop the subscript)
\begin{align}\label{MC2G}
  \C_k(G,\chi)&
  :=\frac{1}{n^2} \sum_{uv \in E(G)} \indic{\chi(u)\neq \chi(v)},
\\\label{MC2W}
\C_k(W,\chi)&:=
\frac{1}{2}\iint_{\gO^2} W(x,y)\indic{\chi(x)\neq \chi(y)}\dd\mu(x)\dd\mu(y) 
.\end{align}
(The factor $1/2$ appears because the double integral counts both ordered
regions $S\times S\comp$ and $S\comp\times S$, while the cut value counts the edges and edge
density across the bipartition only once.)

We have used pure colourings above. We may also
use mixed colourings of graphons, and the corresponding fractional cuts of a graph $G$.
For a mixed $\kappa$-colouring $\Psi=(\psi_1, \ldots, \psi_\kappa)$ of a graph or graphon,
define the corresponding cut values by
\begin{align}\label{MCGx}
\C_{\kappa}(G,\Psi)&:= 
\frac{1}{n^2} \sum_{1\leq i<j\leq \kappa} \sum_{uv\in E(G)} \psi_i(u)\psi_j(v).\\
\C_\kappa(W,\Psi)&:=  \sum_{1\leq i<j\leq \kappa}
\iint_{\gO^2} W(x,y) \psi_i(x)\psi_j(y)\dd\mu(x)\dd\mu(y).\label{MCWx2}
\end{align}
For the maximum cut and Cheeger constant we are interested only in $\kappa=2$.
Here and below, we may identify
a mixed 2-colouring $\Psi=(\psi,1-\psi)$ with its first component $\psi$,
since every 2-colouring is of this type.  Hence, for the case of two colours the definitions above simplify to in the graph case,
\begin{align}\label{MCG2mx}
\C(G,\Psi)= \C(G,\psi):= 
\frac{1}{n^2} \sum_{u,v\in V(G)}\sindic{ uv\in E(G)} \psi(u)(1-\psi(v)).
\end{align}
Similarly, for a graphon $W$ 
and a mixed 2-colouring
$\Psi=(\psi,1-\psi)$ of it,
\begin{align}\label{MCW2x}
\C(W,\Psi)= \C(W, \psi) :=   
\iint_{\gO^2} W(x,y) \psi(x)(1-\psi(y))\dd\mu(x)\dd\mu(y).
\end{align}
(For $\kappa=2$ we drop the subscript.) $\C(G,\psi)$ and $\C(W,\psi)$
are thus functionals of mixed 2-coloured graphs and graphons. 
Note that for pure cuts or colourings, \eqref{MCG2mx} and \eqref{MCW2x}
reduce to \eqref{MC2G} and \eqref{MC2W}, 
using our standard identification of
a pure colouring and the corresponding mixed colouring.
We take the supremum over all mixed colourings and define
\begin{align}\label{MCGmx}
\MC_\fra(G)&:=\sup_{\psi:V(G)\to\oi}\C(G,\psi),
\\\label{MCWmx}
   \MC_\mix(W)&:=\sup_{\psi:\gO\to\oi}\C(W,\psi)
.\end{align}
Since every pure colouring can be seen as a mixed one, we trivially have
\begin{align}\label{MCG+}
  \MC_\fra(G)&\ge \MC(G),
\\\label{MCW+}
  \MC_\mix(W)&\ge \MC_\pure(W)
.\end{align}
We can also express these functionals using coloured homomorphism densities.
Let $\KK_{2;\ell\glm}$ be the coloured graph with 
one edge and two vertices coloured $\ell$ and $\glm$.
 (We will consider general $\kappa$ in Section~\ref{subsec.min_k}, so it useful to state the lemma for general $\kappa$.)
\begin{lemma}\label{LMC3}
For every $\kappa$-coloured graphon $(W,\Psi)$, we have
\begin{align}\label{MC3}
  \C_\kappa(W,\Psi)=
\sum_{1\leq i < j \leq \kappa} t\bigpar{\KK_{2;ij},(W,\Psi)}.
\end{align}
\end{lemma}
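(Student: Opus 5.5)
This is a direct unwinding of definitions. The plan is to compute $t\bigpar{\KK_{2;ij},(W,\Psi)}$ from \eqref{pa1b} and compare it termwise with \eqref{MCWx2}. The coloured graph $\KK_{2;ij}$ has two vertices, coloured $\ell_1=i$ and $\ell_2=j$, and a single edge between them. Taking $n=2$ in \eqref{pa1b} gives
\begin{align*}
t\bigpar{\KK_{2;ij},(W,\Psi)}
=\iint_{\gO^2}\psi_i(x_1)\psi_j(x_2)W(x_1,x_2)\dd\mu(x_1)\dd\mu(x_2).
\end{align*}
This is exactly the $(i,j)$ summand in \eqref{MCWx2}. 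Summing over $1\le i<j\le\kk$ yields \eqref{MC3}. All integrands are bounded and measurable, so nothing is needed beyond Fubini to make sense of the double integrals.

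The only point requiring care is that $t(\FF,\WW)$ was defined for unlabelled coloured graphs, while \eqref{pa1b} uses a labelling. Relabelling the two vertices of $\KK_{2;ij}$ gives $\KK_{2;ji}$. Since $W$ is symmetric, the substitution $x_1\leftrightarrow x_2$ shows that the two labellings give the same value, as already guaranteed by exchangeability. So the formula is unambiguous, and the sum over unordered pairs $i<j$ in \eqref{MC3} matches the sum in \eqref{MCWx2}.

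As a consistency check for $\kk=2$: symmetry of $W$ gives $\C(W,\Psi)=\iint W\psi_1(x)\psi_2(y)$, which agrees with \eqref{MCW2x}. I expect no real obstacle in this argument.
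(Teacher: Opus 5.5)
Your proposal is correct and is essentially the paper's own argument: the paper simply notes that the identity follows by comparing the definition \eqref{pa1a}--\eqref{pa1b} of $t(\FF,\WW)$ with the definition of the cut value, which is exactly the termwise identification you carry out. One small point: your $\kk=2$ check does not actually need symmetry of $W$, since with the single pair $(1,2)$ it is immediate from \eqref{MCWx2}. Symmetry only enters when matching against the pure form \eqref{MC2W}.
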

\begin{proof}
This follows directly by comparing the definitions
\eqref{pa1a}--\eqref{pa1b} and \eqref{MCW2x}.
\end{proof}
Hence, $\MC_\pure(W)$ and $\MC_\mix(W)$ are obtained 
as $\sup t\bigpar{\KK_{2;12},(W,\Psi)}$
over all pure or mixed colourings of $W$, respectively. 

We next formalise the  correspondence between the finite maximum-cut problem
and its graphon formulation. We first show that 
the various definitions above yield the same result
for a graph and the corresponding graphons, and also that
the pure and mixed versions yield the same maximum.
Recall that we in \refE{EG1} have defined two (equivalent)
graphons $W'_G$ and $W_G$
corresponding to a graph $G$, one on the atomic space $V(G)$ and one on
the continuous space $\oi$.

\begin{theorem}\label{TMC1}
  Let $G$ be a graph and let $W'_G$ and $W_G$ be the corresponding atomic and continuous
graphons defined in \refE{EG1}.
Then
\begin{align}\label{tmc1}
  \MC(G)=\MC_\fra(G)
=
  \MC_\pure(W'_G)=\MC_\mix(W'_G)
=
  \MC_\pure(W_G)=\MC_\mix(W_G).
\end{align}
\end{theorem}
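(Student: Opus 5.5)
Our plan is to establish the chain of equalities in \eqref{tmc1} through a cycle of inequalities. We build it from three facts: the graph quantities agree with the atomic graphon quantities, a multilinearity argument collapses mixed to pure, and pullback relates the atomic and continuous graphons.

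\emph{Step 1: graph versus atomic graphon.} On $\gO=V(G)$ with the uniform measure, a mixed colouring of $W'_G$ is just a function $\psi:V(G)\to\oi$. Comparing \eqref{MCG2mx} with \eqref{MCW2x} gives $\C(W'_G,\psi)=\C(G,\psi)$. Similarly, comparing \eqref{MC2G} with \eqref{MC2W} handles pure colourings: the factor $\frac12$ compensates for counting ordered pairs, and $\mu\times\mu$ gives weight $n^{-2}$. Hence $\MC(G)=\MC_\pure(W'_G)$ and $\MC_\fra(G)=\MC_\mix(W'_G)$.

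\emph{Step 2: fractional equals pure for graphs.} The map $\psi\mapsto\C(G,\psi)$ is affine in each coordinate $\psi(v)$ separately, because $G$ has no loops: the diagonal terms vanish since $\sindic{vv\in E(G)}=0$. So we can fix all other coordinates and move $\psi(v)$ to whichever endpoint $0$ or $1$ does not decrease $\C$. Doing this vertex by vertex turns any $\psi$ into a $\{0,1\}$-valued one with at least the same value. The supremum in \eqref{MCGmx} is over the compact cube $\oi^{V(G)}$ and is therefore attained, so $\MC_\fra(G)\le\MC(G)$. Combined with \eqref{MCG+}, this gives equality. This is the step that uses simplicity of $G$, and it is the place to be careful.

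\emph{Step 3: atomic versus continuous graphon.} Let $\gf:\oi\to V(G)$ map $I_v$ to $v$. Then $W_G=(W'_G)^\gf$, and for any colouring $\psi$ on $V(G)$ the pullback $\psi^\gf$ is a colouring of $W_G$. By \eqref{ar8} together with \refL{LMC3}, $\C(W_G,\psi^\gf)=\C(W'_G,\psi)$, and pure colourings pull back to pure colourings. This gives $\MC_\pure(W_G)\ge\MC_\pure(W'_G)$ and $\MC_\mix(W_G)\ge\MC_\mix(W'_G)$. For the reverse direction, take any mixed $\psi$ on $\oi$ and average it on each interval: set $\bar\psi(v):=|G|\int_{I_v}\psi$. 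Since $W_G$ is constant on each $I_u\times I_v$, we have $\C(W_G,\psi)=\C(W'_G,\bar\psi)$. Hence $\MC_\mix(W_G)\le\MC_\mix(W'_G)$.

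\emph{Assembling.} The chain is
\begin{align*}
\MC_\mix(W_G)\le\MC_\mix(W'_G)=\MC_\fra(G)=\MC(G)=\MC_\pure(W'_G)\le\MC_\pure(W_G)\le\MC_\mix(W_G),
\end{align*}
where the last inequality is \eqref{MCW+}. The chain closes, so all the quantities in \eqref{tmc1} are equal. The only step that is more than direct bookkeeping is the multilinearity rounding in Step 2, which needs the absence of loops. The averaging in Step 3 is the other point to write out carefully, checking that $W_G$ is indeed constant on the blocks $I_u\times I_v$.
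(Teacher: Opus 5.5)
Your proof is correct and follows essentially the same route as the paper. Both identify the graph quantities directly with those of $W'_G$, use a rounding step that relies on $G$ having no loops, and handle $W_G$ by pullback combined with averaging over the intervals $I_v$, closing with the same cycle of inequalities. The differences are cosmetic: your vertex-by-vertex rounding is a derandomised version of the paper's independent random rounding, which shows $\E\,\C(G,\chi^*)=\C(G,\psi_1)$, and your interval averaging is the special case of the conditional-expectation argument in \refL{LMC} (the paper also mentions the direct averaging as an alternative).
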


\begin{proof}
  We note first that 
  \begin{align}\label{tmc12}
\MC(G)=\MC_\pure(W'_G) 
\qquad\text{and}\qquad
\MC_\fra(G)=\MC_\mix(W'_G),    
  \end{align}
since in these cases, the definitions in \eqref{MC2G}--\eqref{MC2W} 
and \eqref{MCG2mx}--\eqref{MCW2x} 
for $G$ and $W'_G$ 
are the same up to notational differences.

Next, if $G$ is a graph with vertex set $[n]$
and a mixed 2-colouring $\Psi=(\psi_1,\psi_2)$, 
then
we define a random pure 2-colouring $\CHI:[n]\to[2]$ of $G$ by assigning each
vertex $i$ colour $j$ with probability $\psi_j(i)$, independently for
different vertices. Then, by \eqref{MC2G} and \eqref{MCG2mx},
the corresponding cut value $\C(G,\CHI)$ has the expectation
\begin{align}\label{tmc10}
  \E \bigsqpar{\C(G,\CHI)}
&= 
\frac{1}{n^2}
\E\sum_{u,v=1}^n\sindic{uv\in E(G)}\indic{\chi(u)=1 \text{ and }\chi(v)=2}
\notag\\&= 
\frac{1}{n^2}\sum_{u,v=1}^n\sindic{uv\in E(G)}
\P\bigpar{\chi(u)=1 \text{ and }\chi(v)=2}
\notag\\&= 
\frac{1}{n^2}\sum_{u,v=1}^n\sindic{uv\in E(G)}\psi_1(u)\bigpar{1-\psi_1(v)}
\notag\\&
=\C(G,\psi_1).
\end{align}
This implies that there exists a realization $\chi$ of $\CHI$ 
such that
$\C(G,\chi)\ge \C(G,\psi_1)$,
and thus $\MC(G)\ge \C(G,\psi_1)$.
Hence, by taking the supremum over all $\psi_1$, we have
$\MC(G)\ge \MC_\fra(G)$.
The opposite inequality is given by \eqref{MCG+}, and thus
$\MC(G)= \MC_\fra(G)$, which together with
\eqref{tmc12}  shows 
the equality of the
first four quantities in \eqref{tmc1}.

It remains to consider $W_G$.
We may show the remaining equalities directly by noting that,
with the intervals $I_v$ as in \refE{EG1}, $W_G$ is constant on each
$I_i\times I_j$, and thus $\C(W,\psi)$ in \eqref{MCW2x} remains the
same if we replace $\psi$  by its average $\bpsi$ on each $I_i$; this
average $\bpsi$ can then be regarded as a mixed colouring of $[n]$ and thus
of $W'_G$. We omit the details, 
and use instead the more general \refL{LMC} below which is based on 
a more general version of this argument.

We thus apply \refL{LMC} below to $W'_G$ and $W_G$, recalling 
from \refE{EG1} that $W_G$ is a pullback of $W'_G$.
This
yields $\MC_\pure(W'_G) \le \MC_\pure(W_G)$ and
$\MC_\mix(W'_G) = \MC_\mix(W_G)$. 
Using also \eqref{MCW+}, we thus have
\begin{align}\label{tmc5}
  \MC_\pure(W'_G) 
\le \MC_\pure(W_G)
\le \MC_\mix(W_G)
= \MC_\mix(W'_G).
\end{align}
We have already shown that the left-hand side of \eqref{tmc5} equals the
right-hand side. Hence all quantities in \eqref{tmc5} are equal, which
completes the proof.
\end{proof}

We next give the lemma used above, in a general form that is used later.
Recall the definition of pullback from \refApp{app:uncoloured-pullback}.

\begin{lemma}\label{LMC}
Let $W$ be a graphon defined on a probability space $\gO_2$, and let
 $W^\gf$ be the pullback of $W$ by
a \mpp{} map $\gf:(\gO_1,\cF_1,\mu_1)\to(\gO_2,\cF_2,\mu_2)$.
Then
\begin{align}\label{lmc1pure}
  \MC_\pure(W) &\le \MC_\pure(W^\gf),
\\\label{lmc1mix}
  \MC_\mix(W) &= \MC_\mix(W^\gf),
\end{align}
where in general there might be strict inequality in \eqref{lmc1pure}.
\end{lemma}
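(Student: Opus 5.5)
The plan is to treat the two inequality directions separately. The easy direction pulls colourings back along $\gf$. The harder direction pushes a colouring of $\gO_1$ down to $\gO_2$ by conditional expectation, which is possible only for mixed colourings.

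\emph{Pullback direction.} Let $\psi:\gO_2\to\oi$ be any mixed 2-colouring of $W$. Then $\psi^\gf:=\psi\circ\gf$ is a mixed 2-colouring of $W^\gf$. Since $\gf$ is \mpp, the change of variables formula applied to $\gf\times\gf$, which is \mpp{} from $\gO_1^2$ to $\gO_2^2$, gives
\begin{align*}
\C(W^\gf,\psi^\gf)=\iint_{\gO_1^2}W(\gf(x),\gf(y))\psi(\gf(x))\bigpar{1-\psi(\gf(y))}\dd\mu_1(x)\dd\mu_1(y)=\C(W,\psi).
\end{align*}
If $\psi$ is pure, i.e.\ $\setoi$-valued, then so is $\psi^\gf$. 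Taking suprema over $\psi$ therefore yields \eqref{lmc1pure}, and also the inequality $\MC_\mix(W)\le\MC_\mix(W^\gf)$.

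\emph{Reverse inequality for mixed colourings.} Let $\psi$ be a mixed 2-colouring of $W^\gf$ on $\gO_1$. Consider the sub-\gsf{} $\cG:=\gf\qw(\cF_2)$ and set $\bpsi:=\E(\psi\mid\cG)$. By the Doob--Dynkin lemma, we can write $\bpsi=h\circ\gf$ for some measurable $h:\gO_2\to\oi$; after modifying $h$ on a null set we may assume $h$ takes values in $\oi$. Expanding the cut value gives
\begin{align*}
\C(W^\gf,\psi)=\iint W^\gf\psi\otimes 1-\iint W^\gf\psi\otimes\psi.
\end{align*}
The function $W^\gf$ is $\cG\otimes\cG$-measurable. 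So, by conditioning on $\cG$ in each variable separately (Fubini combined with conditional expectation, exactly as in \eqref{tid4}), we may replace $\psi$ by $\bpsi$ in each factor without changing either integral. This works even in the quadratic term: we first condition in $x$ with $y$ fixed, and then condition in $y$. Hence $\C(W^\gf,\psi)=\C(W^\gf,h\circ\gf)=\C(W,h)$ by the pullback identity above, and so $\C(W^\gf,\psi)\le\MC_\mix(W)$. This proves \eqref{lmc1mix}.

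\emph{Main obstacle.} The main technical point is justifying the one-variable-at-a-time conditioning for the bilinear term. Concretely, for fixed $y$ the function $x\mapsto\int W^\gf(x,y)\psi(y)\dd\mu_1(y)$, and the analogous partial integrals, must be $\cG$-measurable. Since $W^\gf(x,y)=W(\gf(x),\gf(y))$, measurability in $x$ through $\gf$ is immediate for each fixed $y$. Integrating in $y$ preserves $\cG$-measurability, by a monotone-class argument or by \refL{LTD}-style reasoning.

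\emph{Strictness in \eqref{lmc1pure}.} Pure colourings do not survive conditional expectation, so \eqref{lmc1pure} can fail to be an equality. An example: take $\gO_2$ a single atom and $W=1$. Then $\MC_\pure(W)=0$, whereas on $\gO_1=\oi$ the pullback is the constant graphon $1$, for which the pure maximum cut is $1/4$.
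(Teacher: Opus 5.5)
Your proposal is correct and follows essentially the same route as the paper: change of variables for $\C(W^\gf,\psi^\gf)=\C(W,\psi)$, then conditioning on the $\sigma$-field generated by $\gf$ one variable at a time (using that $W^\gf$ is measurable for it in both variables), the Doob--Dynkin factorisation $\bpsi=h\circ\gf$, and the single-atom graphon $W=1$ of \refE{EMCbad} to show strictness. Splitting the cut value into a linear and a bilinear term is only a different bookkeeping of the paper's two successive replacements $\psi(x)\to\bpsi(x)$ and $1-\psi(y)\to1-\bpsi(y)$.
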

\begin{proof}
  First, if $\Psi=(\psi,1-\psi)$ is a pure or mixed 2-colouring of $W$, then
  $\Psi^\gf$ is  
a  2-colouring of $W^\gf$, and a change of variables yields
\begin{align}\label{lmc2}
  \C(W^\gf,\psi^\gf)
&=\iint_{\gO_1^2}W(\gf(x),\gf(y))\psi(\gf(x))\bigpar{1-\psi(\gf(y))}
\dd\mu_1(x)\dd\mu_1(y)
\notag\\&
=\iint_{\gO_2^2}W(x',y')\psi(x')\bigpar{1-\psi(y')}
\dd\mu_2(x')\dd\mu_2(y')
\notag\\&
=\C(W,\psi).
\end{align}
Taking the supremum over all mixed 2-colourings of $W$ yields
$\MC_\mix(W^\gf)\ge \MC_\mix(W)$, and taking the supremum over pure
2-colourings only yields (since then $W^\gf$ too is pure)
$\MC_\pure(W^\gf)\ge \MC_\pure(W)$, which is \eqref{lmc1pure}.

To complete the proof of \eqref{lmc1mix}, we will show that the supremum in
the definition of $\MC_\mix(W^\gf)$ is attained by a pullback colouring.
In fact, let $\psi:\gO_2\to\oi$ be a 2-colouring of $W^\gf$.
Let $\cF_\gf$ be the \gsf{} on $\gO_1$ generated by $\gf:\gO_1\to\gO_2$,
i.e., $\cF_\gf=\set{\gf\qw(B):B\in\cF_2}$. We have $\cF_\gf\subseteq\cF_1$
since $\gf$ is measurable.
Let $\bpsi:\gO_1\to\oi$ be the conditional expectation
$\bpsi:=\E(\psi|\cF_\gf)$.
Then, since $W^\gf(x,y)$ is $\cF_\gf$-measurable in both variables,
we obtain, using Fubini's theorem and integrating one variable at a time, in
both orders,
\begin{align}\label{lmc3}
  \C(W^\gf,\psi)
&
=\iint_{\gO^2} W^\gf(x,y) \psi(x)(1-\psi(y))\dd\mu_1(x)\dd\mu_1(y)
\notag\\&
=\iint_{\gO^2} W^\gf(x,y) \bpsi(x)(1-\psi(y))\dd\mu_1(x)\dd\mu_1(y)
\notag\\&
=\iint_{\gO^2} W^\gf(x,y) \bpsi(x)(1-\bpsi(y))\dd\mu_1(x)\dd\mu_1(y)
\notag\\&
=\C(W^\gf,\bpsi).
\end{align}

Since $\bpsi$ is $\cF_\gf$-measurable, there exists a measurable
function $\psi_2:\gO_2\to\oi$ such that $\bpsi=\psi_2\circ\gf=\psi_2^\gf$.
(See e.g.\ \cite[Lemma 1.13]{Kallenberg}.)
Hence, \eqref{lmc3} and \eqref{lmc2} yield
\begin{align}\label{lmc4}
  \C(W^\gf,\psi)  
=\C(W^\gf,\psi_2^\gf)
=\C(W,\psi_2)
\le\MC_\mix(W),
\end{align}
and thus $\MC_\mix(W^\gf) \le \MC_\mix(W)$,
which together with the opposite inequality shown above yields \eqref{lmc1mix}.

Finally, the \refE{EMCbad} below shows that there may be strict inequality
in \eqref{lmc1pure}.
\end{proof}

\begin{example}\label{EMCbad}
  Let $\gO:=\set{\bullet}$ be a singleton set (equipped with Dirac measure).
and let $W(\bullet,\bullet):=1$.
Then the definitions yield
$\MC_\pure(W):=\max_{\psi\in\set{0,1}}\psi(1-\psi)=0$
and
$\MC_\mix(W):=\max_{\psi\in\oi}\psi(1-\psi)=\frac14$.
Hence 
\begin{align}\label{emcbad}
\MC_\pure(W)<\MC_\mix(W).  
\end{align}
Furthermore, let $\gf$ be the unique map $\gf:\oi\to\bullet$; this map is
trivially \mpp, and $W^\gf$ is the graphon on $\oi$ that is constant 1.
Hence, for any mixed 2-colouring $\Psi=(\psi,1-\psi)$ of $\oi$, 
\eqref{MCW2x} yields
\begin{align}
  \C(W^\gf,\psi)=\intoi\intoi \psi(x)\bigpar{1-\psi(y)}\dd x\dd y
= \intoi \psi(x)\dd x \cdot\Bigpar{1-\intoi\psi(y)\dd y}.
\end{align}
This depends only on $\intoi\psi$, and the supremum over mixed 2-colourings
is obtained when $\intoi\psi=\frac12$, yielding $\MC_\mix(W^\gf)=\frac14$,
in accordance with \eqref{lmc1mix}. Moreover, this supremum is attained by
some pure colourings, for example by $\psi(x):=\sindic{x\in [0,\frac12]}$.
Hence, 
\begin{align}
  \MC_\pure(W^\gf) =   \MC_\mix(W^\gf) = \tfrac14
=\MC_\mix(W) > \MC_\pure(W)=0.
\end{align}
Cf.\ \refL{LMC}, and note that we here have strict inequality in
\eqref{lmc1pure}. 

Note that we did not see these bad behaviours in \refT{TMC1} because the
graph $G$ 
has no loops, and thus $W'_G(i,i)=0$ for all $i$.
\end{example}

\refE{EMCbad} shows also that two equivalent graphons may have different
$\MC_\pure$,
and thus $\MC_\pure$ is not well-defined for equivalence classes of
graphons, i.e., for graph limits. (Recall \refR{Rdef}.)
The next theorem shows that $\MC_\mix$ is better behaved.

\begin{theorem}
  \label{TMC2}
If $W_1$ and $W_2$ are two equivalent graphons (defined on arbitrary
probability spaces), then $\MC_\mix(W_1)=\MC_\mix(W_2)$.
\end{theorem}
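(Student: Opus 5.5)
The plan is to reduce the statement to graphons on $\oi$ and then use \refL{LMC} together with the uncoloured version of the uniqueness theorem, \refT{Tunique0}\ref{Tunique0BR}. Stated for uncoloured graphons on $\oi$, that theorem says: if $W_1\cong W_2$, then there are \mpp{} maps $\gf_1,\gf_2:\oi\to\oi$ with $W_1^{\gf_1}=W_2^{\gf_2}$ a.e.

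\emph{Step 1: reduction to $\oi$.} Given $W_j$ on an arbitrary probability space $\gO_j$, I want a graphon $U_j$ on $\oi$ with $U_j\cong W_j$ and $\MC_\mix(U_j)=\MC_\mix(W_j)$. The natural route is to write $U_j$ as a pullback $W_j^{\gf}$, or $W_j$ as a pullback of $U_j$, via a \mpp{} map, and then apply \eqref{lmc1mix}.

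The general case is not immediate, because a \mpp{} map $\oi\to\gO_j$ need not exist for a non-standard space. The standard device is to take the sub-\gsf{} $\cF_0$ of $\cF_j$ generated by countably many sets. These sets are chosen so that $W_j$ is $\cF_0\times\cF_0$-measurable up to a.e.\ modification; this works because a measurable function on a product is measurable with respect to a countably generated product sub-\gsf. Then $W_j=V^\eta$ for a map $\eta:\gO_j\to[0,1]$ (the generating sequence coded into a point of the Cantor set) and a graphon $V$ on $\oi$ equipped with the image measure $\nu=\mu\circ\eta\qw$. Lemma~\ref{LMC} gives $\MC_\mix(W_j)=\MC_\mix(V)$.

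The space $(\oi,\nu)$ is Borel but may have atoms. To remove them, pull back once more by a \mpp{} map $\gamma:(\oi,\gl)\to(\oi,\nu)$, namely the quantile function of $\nu$. Setting $U_j:=V^\gamma$ on $(\oi,\gl)$ and using \eqref{lmc1mix} again gives $\MC_\mix(U_j)=\MC_\mix(V)=\MC_\mix(W_j)$. By \eqref{ar77}, applied twice, $U_j\cong W_j$.

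\emph{Step 2: graphons on $\oi$.} Now $U_1\cong U_2$ are both on $\oi$. By \refT{Tunique0}\ref{Tunique0BR} there are \mpp{} maps $\gf_1,\gf_2$ with $U_1^{\gf_1}=U_2^{\gf_2}$ a.e. Lemma~\ref{LMC}, together with the fact that $\MC_\mix$ is unchanged by a.e.\ modification of the graphon, then gives
\[\MC_\mix(U_1)=\MC_\mix(U_1^{\gf_1})=\MC_\mix(U_2^{\gf_2})=\MC_\mix(U_2).\]
Combined with Step~1 this proves the theorem.

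\emph{Main obstacle and fallback.} The delicate step is Step~1: making the countably generated reduction rigorous, including that a.e.\ modifications do not affect $\MC_\mix$. If this proves awkward, the fallback is \refL{Loi} in uncoloured form, via the cited \cite[Theorem 7]{SJ210}. Its proof should produce $U_j$ on $\oi$ related to $W_j$ by a pullback through a common space, and that is exactly what \eqref{lmc1mix} requires. Either route needs an explicit pullback relation, not just equivalence, and securing that relation is the crux.
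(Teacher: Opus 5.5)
Your proof is correct, but it takes a different route from the paper's.

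The paper invokes \refT{Tunique0}\ref{Tunique0BCL}, which holds on arbitrary probability spaces. It directly supplies one graphon $W$ with $W_1=W^{\gf_1}$ and $W_2=W^{\gf_2}$, so a single use of \eqref{lmc1mix} finishes the proof with no reduction to $\oi$. The paper also mentions \cite[Theorem 8.3]{SJ249}, a chain of four pullbacks, as a more elementary substitute.

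You instead use the common-pullback version \refT{Tunique0}\ref{Tunique0BR}, which requires $(\oi,\gl)$, so you first have to transport each $W_j$ there. Your Step~1 is sound as sketched and is not a real obstacle. After Doob--Dynkin you only need to symmetrise and truncate $V$ so that it is a graphon. The map $\eta$ is \mpp{} by the definition of $\nu$, and a.e.\ modifications are harmless because $\C(W,\psi)$ is an integral.

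The outcome is a chain of six pullbacks: $W_j$ and $U_j$ are both pullbacks of $V_j$, and $U_1,U_2$ have a common pullback. Each link is handled by the two-sided invariance \eqref{lmc1mix}, so this is in effect a hands-on version of the paper's parenthetical alternative.

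What your route buys is that its only deep input is the $\oi$ uniqueness theorem. That theorem is the case $\kk=1$ of \refT{Tunique1}, which the paper proves from Kallenberg's representation theorem, so you avoid the cited result of \cite{BCL:unique}. The price is length. Your route also shows clearly why the mixed version is essential: the quantile pullback from an atomic $\nu$ is exactly the kind of step under which $\MC_\pure$ can change (cf.\ \refE{EMCbad}).
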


\begin{proof}
  By \refT{Tunique0}\ref{Tunique0BCL} 
there exists a third graphon $W$ such that
$W_1=W^{\gf_1}$ and $W_2=W^{\gf_2}$ for some \mpp{} maps
$\gf_1$ and $\gf_2$, and thus
\refL{LMC} yields $\MC_\mix(W_1)=\MC_\mix(W)=\MC_\mix(W_2)$.
(Alternatively, we may avoid the deep \refT{Tunique0}\ref{Tunique0BCL} 
from \cite{BCL:unique}
and instead use the 
more elementary
\cite[Theorem 8.3]{SJ249} which provides a chain of 4 pullbacks between
$W_1$ and $W_2$.)
\end{proof}

We repeat that
\refE{EMCbad}  shows that \refT{TMC2} does not hold for the pure
version $\MC_\pure(W)$. 
We take that as an indication that the natural definition
for graphons is $\MC_\mix$ and not $\MC_\pure$, and we will in the sequel define,
for any graphon~$W$,
\begin{align}\label{MCW0}
  \MC(W):=\MC_\mix(W).
\end{align}
\refT{TMC2} thus shows that $\MC$ is well-defined for equivalence classes of
graphons, i.e., for graph limits.
We next show that $\MC$ is continuous, as a  functional of graphons or graph
limits.

\begin{lemma}\label{Lmaxcut} 
If\/ $W_1$ and $W_2$ are graphons on the same probability space, then 
\begin{align}\label{lmc31}
|\operatorname{MC}(W_1)-\operatorname{MC}(W_2)| \le \|W_1- W_2\|_\square.  
\end{align}
\end{lemma}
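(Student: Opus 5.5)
Since $\MC(W)=\MC_\mix(W)$ is a supremum over mixed 2-colourings, the plan is to prove a uniform bound on the difference of the cut values and then pass to the supremum. Fix a mixed 2-colouring $\Psi=(\psi,1-\psi)$ of $\gO$. The same $\psi$ is a colouring of both $W_1$ and $W_2$, because they live on the same probability space. By \eqref{MCW2x},
\begin{align*}
\C(W_1,\psi)-\C(W_2,\psi)=\iint_{\gO^2}\bigpar{W_1(x,y)-W_2(x,y)}\psi(x)\bigpar{1-\psi(y)}\dd\mu(x)\dd\mu(y).
\end{align*}
The functions $g:=\psi$ and $h:=1-\psi$ are measurable with values in $\oi$. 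By the functional form of the cut norm \eqref{cutnorm2}, which was already used in the proof of \refL{LA1} (see \eqref{ao7}), the absolute value of this integral is at most $\cnorm{W_1-W_2}$. Equivalently, one can invoke \refL{LMC3} and \refL{LA1} with $\FF=\KK_{2;12}$, a coloured graph with $e(\FF)=1$, which gives $|\C(W_1,\Psi)-\C(W_2,\Psi)|\le\cnorm{W_1-W_2}$ directly.

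Next take suprema. For every $\psi$ we get $\C(W_1,\psi)\le \C(W_2,\psi)+\cnorm{W_1-W_2}\le \MC(W_2)+\cnorm{W_1-W_2}$. Taking the supremum over $\psi$ gives $\MC(W_1)\le\MC(W_2)+\cnorm{W_1-W_2}$. The symmetric argument, with the roles of $W_1$ and $W_2$ swapped, gives the reverse inequality, and together they yield \eqref{lmc31}.

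There is no real obstacle here. The only point that needs care is that the cut norm bound must hold with weights $g,h$ taking values in $\oi$ rather than only indicator weights; this is exactly the equivalence in \eqref{cutnorm2} from the appendix, which the paper has already used. The argument also relies on the fact that the set of admissible colourings is the same for $W_1$ and $W_2$, which holds because both graphons are defined on the same space.
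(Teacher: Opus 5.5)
Your proof is correct and follows essentially the same route as the paper: for each fixed mixed 2-colouring you bound $|\C(W_1,\Psi)-\C(W_2,\Psi)|\le\cnorm{W_1-W_2}$, either via \eqref{MC3} and \refL{LA1} or directly from \eqref{cutnorm2}, and then take suprema over $\Psi$. Your two-sided supremum argument only writes out explicitly the step that the paper leaves implicit.
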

\begin{proof}
  Let $\Psi$ be any mixed 2-colouring of the probability space $\gO$.
Then \eqref{MC3} and \refL{LA1} yield
\begin{align}\label{lmc32}
\bigabs{\C(W_1,\Psi) - \C(W_2 ,\Psi)}
\le \cnorm{W_1-W_2}.
\end{align}
The result follows by taking the suprema over all colourings $\Psi$.
\end{proof}

\begin{theorem}\label{TMC3}
If\/ $W_1$ and $W_2$ are 
graphons, then 
\begin{align}\label{tmc31}
|\operatorname{MC}(W_1)-\operatorname{MC}(W_2)| \le \dcut(W_1,W_2).
\end{align}
Hence, if\/ $W_n\to W$ in cut distance, then $\MC(W_n)\to\MC(W)$.
In other words, $\MC=\MC_\mix$ is a continuous functional
on the set of graphons equipped with cut distance, 
and thus also on the quotient space $\hcWo$ of graph limits.
\end{theorem}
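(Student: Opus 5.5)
The plan is to combine \refL{Lmaxcut} with the invariance of $\MC=\MC_\mix$ under pullbacks (\refL{LMC}) and under equivalence (\refT{TMC2}), and then pass to the infimum in the definition of cut distance.

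Let $W_1,W_2$ be graphons on arbitrary probability spaces. Recall from \refApp{app:uncoloured-pullback} (see \eqref{dcut1}) that $\dcut(W_1,W_2)$ can be written as an infimum of $\cnorm{W_1^{\gf_1}-W_2^{\gf_2}}$. Here $\gf_1,\gf_2$ range over \mpp{} maps from a common probability space into the domains of $W_1$ and $W_2$; in the case of $\oi$ one may take \mpp{} bijections of $\oi$. For general spaces, \refL{Loi} (uncoloured case, $\kk=1$) together with \refT{TMC2} lets us first replace $W_1$ and $W_2$ by equivalent graphons on $\oi$ without changing $\MC$. The cut distance of equivalent graphons is zero, so by the triangle inequality for $\dcut$ nothing changes on the right-hand side either. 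We may therefore assume that both graphons live on $\oi$.

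Now fix \mpp{} maps $\gf_1,\gf_2$ as in \eqref{dcut1}. By \refL{LMC}, specifically \eqref{lmc1mix}, we have $\MC(W_j^{\gf_j})=\MC(W_j)$. Since $W_1^{\gf_1}$ and $W_2^{\gf_2}$ are defined on the same probability space, \refL{Lmaxcut} gives
\begin{align*}
|\MC(W_1)-\MC(W_2)|=|\MC(W_1^{\gf_1})-\MC(W_2^{\gf_2})|\le\cnorm{W_1^{\gf_1}-W_2^{\gf_2}}.
\end{align*}
Taking the infimum over $\gf_1,\gf_2$ yields \eqref{tmc31}.

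The continuity statements follow directly. If $\dcut(W_n,W)\to0$, then \eqref{tmc31} gives $\MC(W_n)\to\MC(W)$. By \refT{TMC2}, $\MC$ is constant on equivalence classes, so it descends to the quotient $\hcWo$. There the topology is given by $\dcut$, as recalled in \refApp{app:uncoloured-pullback}, so the Lipschitz bound makes $\MC$ continuous on $\hcWo$ as well.

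The only delicate step is bookkeeping about which form of the cut distance to use on general probability spaces. Using the pullback form, together with the fact that \eqref{lmc1mix} holds for arbitrary \mpp{} maps rather than only bijections, avoids any difficulty.
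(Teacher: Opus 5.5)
Your proposal is correct and is essentially the paper's proof: combine the pullback invariance \eqref{lmc1mix} from \refL{LMC} with \refL{Lmaxcut} applied on a common space, then take the infimum over pullbacks in the definition \eqref{dcut} of the cut distance. Your preliminary reduction to $\oi$ via \refL{Loi} and \refT{TMC2} is harmless but unnecessary. It also brings in the deep \refT{Tunique0} through \refT{TMC2}, whereas \eqref{dcut} is already an infimum over pullbacks from a common probability space and \refL{LMC} holds for arbitrary \mpp{} maps, so the paper's argument applies directly on arbitrary probability spaces.
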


\begin{proof}
\refLs{LMC} and \ref{Lmaxcut} show that, for any pullbacks $W_1^{\gf_1}$
and $W_2^{\gf_2}$ of $W_1$ and $W_2$, 
\begin{align}
  \bigabs{\MC(W_1)-\MC(W_2)}
=  \bigabs{\MC(W_1^{\gf_1})-\MC(W_2^{\gf_2})}
\le \cnorm{W_1^{\gf_1}-W_2^{\gf_2}},
\end{align}
and thus \eqref{tmc31} follows from the definition \eqref{dcut} of the cut
distance.
\end{proof}

\begin{corollary}\label{CMC3}
  If\/ $G_n$ is a sequence of graphs such that $G_n\to W$ for some graphon
  $W$, then
  \begin{align}\label{cmc3}
    \MC(G_n) \to \MC(W).
  \end{align}
\end{corollary}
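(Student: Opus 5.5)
The plan is to reduce the claim to \refT{TMC3} together with \refT{TMC1}.

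First, I interpret the hypothesis $G_n\to W$ in the standard (uncoloured) sense: the graphs converge to the graph limit represented by $W$, with $|G_n|\to\infty$. By the well-known equivalence recalled in \refApp{app:uncoloured-pullback}, this is the same as $\dcut(W_{G_n},W)\to0$, where $W_{G_n}$ is the continuous step graphon of \refE{EG1}. If the hypothesis is instead stated directly in cut distance, this step is vacuous.

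Second, I identify $\MC(G_n)$ with the graphon functional. By \refT{TMC1},
\begin{align}
\MC(G_n)=\MC_\mix(W_{G_n})=\MC(W_{G_n}),
\end{align}
where the last equality is the convention \eqref{MCW0}.

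Third, \refT{TMC3} gives
\begin{align}
\bigabs{\MC(G_n)-\MC(W)}=\bigabs{\MC(W_{G_n})-\MC(W)}\le \dcut(W_{G_n},W)\to0,
\end{align}
which is exactly \eqref{cmc3}.

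The main potential obstacle is not analytic but a matter of normalisation. Definition \eqref{MC-G} normalises by $n^2$ and counts each unordered edge once. Definition \eqref{MCW2x} integrates $W(x,y)\psi(x)(1-\psi(y))$ over ordered pairs, which in the graph case matches the sum over ordered $u,v$ in \eqref{MCG2mx}. So I would check that \refT{TMC1} indeed equates these conventions, rather than introducing a stray factor of $2$ between \eqref{MC2W} and \eqref{MCW2x}. Since \refT{TMC1} is already established with exactly these definitions, I can simply invoke it.

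No further work is needed. In particular, the issue that pure and mixed maxima may differ for loops, as in \refE{EMCbad}, does not arise. This is because \refT{TMC1} has already handled finite simple graphs, and only $\MC_\mix$ enters the comparison with $W$.
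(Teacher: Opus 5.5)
Your proposal is correct and is the paper's proof: you use \refT{TMC1} to get $\MC(G_n)=\MC(W_{G_n})$, translate $G_n\to W$ into $\dcut(W_{G_n},W)\to0$, and conclude with the bound of \refT{TMC3}. Your normalisation worry is also settled as you suspect: for pure colourings, the factor $\frac12$ in \eqref{MC2W} exactly compensates for integrating over ordered pairs in \eqref{MCW2x}, so no stray factor of $2$ appears.
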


\begin{proof}
  We have $\MC(G_n)=\MC(W_{G_n}) $ by \refT{TMC1}, and the assumption $G_n\to W$
is equivalent to $W_{G_n}\to W$ in cut distance. Hence the result 
\eqref{cmc3} follows from \refT{TMC3}.
\end{proof}

Finally, we return to the relation between $\MC_\pure$ and $\MC_\mix$.
We saw in \refE{EMCbad} that these may differ for a graphon.
However, if we consider only graphons defined on $\oi$, there is no such
problem.
(This extends to graphons defined on any probability space with a non-atomic
measure, by the same proof with only notational changes.)

\begin{theorem}\label{TMC4}
  Let\/ $W$ be a graphon on $\oi$. Then
$\MC_\pure(W)=\MC_\mix(W)$.
\end{theorem}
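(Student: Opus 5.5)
The inequality $\MC_\pure(W)\le\MC_\mix(W)$ is \eqref{MCW+}, so it suffices to show that for every mixed 2-colouring $\psi:\oi\to\oi$ and every $\eps>0$ there is a pure colouring $\chi$ with $\C(W,\chi)\ge\C(W,\psi)-\eps$.

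\textbf{Approach.} The idea is to use the $\wx$ density of pure colourings in mixed ones on the atomless space $\oi$. We then pass to the limit using \refL{LA3}, which gives continuity of the cut functional in the $\wx$ topology.

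\textbf{Step 1: the cut value is $\wx$ continuous.} By \eqref{MCW2x}, we have $\C(W,\psi)=\iint W(x,y)\psi(x)(1-\psi(y))\dd x\dd y$. This is the functional \eqref{ar3} with $\nu=2$, $F=W\in L^1(\oi^2)$, $g_1=\psi$ and $g_2=1-\psi$. Since $\psi\mapsto 1-\psi$ is $\wx$ continuous, \refL{LA3} shows that $\psi\mapsto\C(W,\psi)$ is $\wx$ continuous on $\PPsi_2$. Equivalently, \refL{LMC3} combined with \refL{LA2} gives the same conclusion.

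\textbf{Step 2: pure colourings are $\wx$ dense.} Given $\psi$, we construct indicator functions $\psi_n=\sindic{A_n}$ with $\psi_n\to\psi$ weak-$*$. For each $n$, split $\oi$ into the dyadic intervals $I$ of length $2^{-n}$. Inside each such $I$, further split it into $2^n$ equal subintervals $J$. In each $J$, let $A_n\cap J$ be the initial segment of $J$ of length $\int_J\psi$. Then $\int_J\psi_n=\int_J\psi$ for every cell $J$ of a partition with mesh $4^{-n}$.

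Now take $g\in L^1$ and approximate it in $L^1$ by a step function $h$ on dyadic intervals of some fixed level. For large $n$ we have $\int(\psi_n-\psi)h=0$ exactly, while $\bigabs{\int(\psi_n-\psi)(g-h)}\le\linorm{g-h}$. Hence $\int\psi_n g\to\int\psi g$.

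A simpler option is to let $A_n\cap J$ be an initial segment of length $\int_J\psi$ for the partition into intervals $J$ of length $2^{-n}$ directly; the same argument applies. The only point to check is that $A_n$ is measurable, which is clear since it is a finite union of intervals.

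\textbf{Step 3: conclusion.} Combining Steps 1 and 2 gives $\C(W,\psi_n)\to\C(W,\psi)$. Each $\psi_n$ is pure, so $\MC_\pure(W)\ge\C(W,\psi)$. Taking the supremum over $\psi$ yields $\MC_\pure(W)\ge\MC_\mix(W)$.

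\textbf{Main obstacle.} The only real obstacle is Step 2, the density of pure colourings. This is exactly where atomlessness of $\oi$ enters, as \refE{EMCbad} shows that density fails on an atom. The construction above handles it directly, and the remaining steps are routine applications of the stated lemmas.
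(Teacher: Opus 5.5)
Your proof is correct, but it follows a genuinely different route from the paper's. The paper keeps the colouring fixed and approximates the graphon. For a step graphon $W_N$, the interval-wise mass-matching pure colouring $\chi_N$ of \eqref{tmc4} gives the exact identity $\C(W_N,\Psi)=\C(W_N,\chi_N)$. Applying the cut-norm bound \eqref{lmc32} twice then transfers this to $W$ as $\C(W,\chi_N)\ge\C(W,\Psi)-2\cnorm{W_N-W}$. You instead keep $W$ fixed and approximate the colouring. Your $\psi_n$ are in effect the paper's $\chi_N$ with $N=2^n$, but you prove $\psi_n\to\psi$ \wx{} and then appeal to the \wx{} continuity in \refL{LA3}. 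The proof of \refL{LA3} rests on density of finite sums of products in $L^1([0,1]^2)$, so unwinding it brings you close to the paper's step-approximation estimate; the two arguments are relatives.

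What your decomposition buys is independence from the objective. The \wx{} density of pure colourings on an atomless space does not depend on what is being optimised. So the same proof covers any \wx{} continuous objective, such as the modularity score, or the Cheeger ratio where $D>0$, without the extra $L^1$ bookkeeping for degree terms used in \refT{LX1} and \refT{TCH4}. For minimum $k$-section, the $k$-colour version of your construction (consecutive segments of lengths $\int_J\psi_j$) even preserves each colour mass exactly. What the paper's route buys is an explicit error $2\cnorm{W_N-W}$ that is uniform over all mixed colourings $\Psi$. It uses only the cut-norm estimate together with the standard fact $\linorm{W_N-W}\to0$, with no functional-analytic continuity lemma.
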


\begin{proof}
Suppose first that $W$ is a step graphon in the sense that,
for some $N\ge1$, $W$ is 
constant on every square $I_{i}\times I_{j}$, $i,j\in[N]$,
where $I_{i}:=[(i-1)/N,i/N)$.
Given a mixed $2$-colouring $\Psi$ of $W$, we can find a pure 
$2$-colouring $\chi$
such that for every $i\in[N]$ and $j\in[2]$,
\begin{align}\label{tmc4}
  \gl\set{x\in I_{i}:\chi(x)=j} = \int_{I_{i}}\psi_j(x)\dd x.
\end{align}
(This can be achieved by partitioning each $I_i$ into subintervals of the
lengths \eqref{tmc4}.)
Since $W(x,y)$ is  constant on each square 
$I_{i}\times I_{j}$,
it follows from \eqref{tmc4} (by integrating over each such square separately)
that the  integral in \eqref{MCW2x} is the same for 
the colourings $\Psi$ and $\chi$, and thus
\begin{align}\label{mcx1b}
\C(W,\Psi)=\C(W,\chi).
\end{align}

Consider now a general graphon $W$ and let $\Psi$ be a
mixed $\kk$-colouring of it. 
Let $N\ge1$ and let $W_N$ be the step
graphon 
obtained by averaging $W$ over each square $I_{i}\times I_{j}$,
with $I_i$ as above.
 Let $\chi_N$ be the 
pure $\kk$-colouring constructed by \eqref{tmc4}, so that \eqref{mcx1b} says
\begin{align}\label{tmc40}
  \C(W_N,\Psi)= \C(W_N,\chi_N).
\end{align}
Consequently, using \eqref{lmc32} twice, we find
\begin{align}\label{tmc43}
\MC_\pure(W)&\ge
\C(W,\chi_N) \ge 
\C(W_N,\chi_N)-\cnorm{W_N-W}
\notag\\&
= \C(W_N,\Psi) -\cnorm{W_N-W}
\notag\\&
\ge \C(W,\Psi) -2\cnorm{W_N-W}.
\end{align}
Now let \Ntoo. It is well-known that $\cnorm{W_N-W}\le\linorm{W_N-W}\to0$,
and thus it follows from \eqref{tmc43} that $\MC_\pure(W)\ge \C(W,\Psi)$.
Since $\Psi$ is arbitrary, this implies
$\MC_\pure(W)\ge\MC_\mix(W)$.
As noted in \eqref{MCW+}, the converse inequality is trivial.
\end{proof}

\begin{theorem}
  \label{TMC5}
Let\/ $W$ be a graphon on a probability space $\OFmu$. 
Then the supremum in \eqref{MCWmx} is attained. In other words, there exists
a mixed colouring $\Psi$ of $W$ such that $\MC(W)=\C(W,\Psi)$.
\end{theorem}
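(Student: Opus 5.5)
The plan is a compactness argument in the \wxt{} on the set $\PPsi_2$ of mixed 2-colourings. The point is that $\PPsi_2$ is compact in this topology, and that $\Psi\mapsto\C(W,\Psi)$ is continuous on it for fixed $W$.

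First, compactness. As noted in \refSS{SStop}, $\PPsi=\PPsi_2\OFmu$ is a \wx{} closed subset of $B^2$, where $B$ is the closed unit ball of $L^\infty\Omu$. By Banach--Alaoglu, $\PPsi$ is therefore a compact Hausdorff space, and this holds for an arbitrary probability space.

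Second, continuity. By \refL{LMC3}, $\C(W,\Psi)=t\bigpar{\KK_{2;12},(W,\Psi)}$. By \refL{LA2}, or directly by \refL{LA3} applied with $\nu=2$ and $F(x,y)=W(x,y)\in L^1(\gO^2,\mu^2)$ and the pair $(g_1,g_2)=(\psi_1,\psi_2)$, the map $\Psi\mapsto\C(W,\Psi)$ is continuous on $\PPsi$. \refL{LA3} requires nothing beyond $F\in L^1$ and the density of linear combinations of product functions in $L^1(\gO^2)$. That density holds on a general probability space by the monotone class argument quoted in the proof of \refL{LTD}.

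A continuous real function on a nonempty compact space attains its supremum, so some $\Psi\in\PPsi$ satisfies $\C(W,\Psi)=\sup_{\Psi'}\C(W,\Psi')=\MC_\mix(W)=\MC(W)$.

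The main obstacle is a subtlety: $\PPsi$ need not be metrizable when $L^1\Omu$ is nonseparable, so one cannot argue with sequences. I will therefore phrase the argument purely topologically, using that the image of a compact set under a continuous map is compact, hence closed and bounded in $\bbR$, so it contains its supremum. Alternatively, I could reduce to a countably generated sub-\gsf{} via conditional expectation as in \refL{LMC}, but the topological route avoids this. It remains to check that \refL{LA3} as stated covers general $\OFmu$; it does, since its proof only uses $L^1$ density of products.
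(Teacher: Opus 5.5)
Your proposal is correct and follows the paper's own proof: continuity of $\Psi\mapsto t\bigpar{\KK_{2;12},(W,\Psi)}$ on the space of mixed 2-colourings via \refL{LA2} (equivalently \refL{LA3}), combined with weak-$*$ compactness of that space from Banach--Alaoglu. Your remark that, for non-separable $L^1\Omu$, the argument should avoid sequences and use only that a continuous image of a compact set is compact is a sensible precision that the paper leaves implicit.
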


\begin{proof}
  For a fixed graphon $W$ on a probability space $\OFmu$,  \eqref{MC3} and
  \refL{LA2} show that the mapping 
$\Psi\mapsto \C(W,\Psi)=t\bigpar{\KK_{2;12},(W,\Psi)}$ is continuous on the
set $\gL$ of all 2-colourings of $\gO$ equipped with the \wx{} topology as a
subset 
of $(L^\infty)^2$.
As noted in the beginning of \refSS{SStop}, $\gL$ is a compact space, and
thus the mapping attains its supremum $\MC(W)$.
\end{proof}

\begin{remark}
For pure colourings, we do not know whether the supremum in \eqref{MC1pW}
always is attained, i.e., 
we do not know whether there always
is a pure 2-colouring $\chi$ such that
$\MC_\pure(W)=\C(W,\chi)$. 
We leave this as an open problem.
In particular, this is an interesting problem in the case of graphons $W$ on
$\oi$, when we know that $\MC_\pure(W)=\MC_\mix(W)$ by \refT{TMC4}.

Note that if $W$ is a graphon on $\oi$, then \refT{TMC5} shows the
existence of a mixed colouring $\Psi$ such that 
$\MC(W)=\C(W,\Psi)$, and then \refL{Loi} shows the existence of a pure
coloured graphon $(W',\chi')$ on $\oi$ that is equivalent to $(W,\Psi)$;
hence \refL{LMC3} and \refT{TMC2} show that
$\C(W',\chi')=\C(W,\Psi)=\MC(W)=\MC(W')$.
However, this only shows that there is \emph{another} graphon $W'$
on $\oi$, equivalent to $W$, for which $\MC(W')=\MC(W)$ is attained by a
pure colouring, which does not answer our question for the original graphon
$W$. 
\end{remark}

\subsection{Cheeger constant}\label{sec:cheeger-constant}
The Cheeger constant of a graphon was introduced by~\cite{Khetan}, who established the corresponding Cheeger--Buser inequalities, investigated its relation to finite graphs, and showed that cut-norm convergence alone does not imply convergence of Cheeger constants. We formulate the parameter as an infimum over mixed $2$-colourings, expressed through coloured homomorphism densities, and prove that it is invariant under measure-preserving pullbacks. We then bound the difference between the Cheeger constants of a finite graph and its associated step graphon and prove cut-metric stability for graphons whose degrees are uniformly bounded away from zero. Consequently, we prove in \refT{thm:cheeger-conv} that the Cheeger constants converge along every cut-convergent sequence of dense graphs with uniformly linear minimum degree. We first recall the standard finite-graph definition \cite{alon1985lambda1}. 
\begin{definition}\label{def:cheeger}
Let $G$ be a finite simple graph with no isolated vertices. Its Cheeger constant is 
\begin{equation}\label{che-G}
h(G):= 
\min_{\varnothing\ne A\subsetneq V(G)}
\frac{e_G(A,A\comp)}
{\min\{\operatorname{vol}_G(A),\operatorname{vol}_G(A\comp)\}},
\end{equation}
where, as in \eqref{MC-G}, $e_G(A,A\comp)$  is the number of edges between $A$ $A\comp$, and the volume of a set $A$ is $\vol_G(A)=\sum_{v\in A}d_v = 2e(A)+e(A,A^c)$.
\end{definition}
For a graphon $W$ on a probability space $(\Omega,\mathcal F, \mu)$, recall
the following
\begin{align}\label{che2}
k_W(x):=\int_\Omega W(x,y) \dd \mu(y), \qquad \rho_W:=\int_\Omega k_W(x)\dd\mu(x).  
\end{align}
Let $\Lambda_2$ denote the set of 
mixed $2$-colourings of
$\Omega$. Every $\Psi \in \Lambda_2$ has the form 
$\Psi=(\psi_1,\psi_2)=(\psi_1,1-\psi_1)$ 
for a measurable function $\psi_1:\Omega \to [0,1]$. 

For $a,b \in [2]$, let $\KK_{2;ab}$  denote an edge whose endpoints have colours $a$ and $b$ and write
\begin{equation}\label{eq.tab_defn} 
t_{a,b}=t_{a,b}(W,\Psi):=t(\KK_{2;ab},(W,\Psi)).
\end{equation}
Thus,
\begin{align}\label{tab2}
t_{a,b}=\int_{\Omega^2}W(x,y)\psi_a(x)\psi_b(y) \dd\mu(x)\dd\mu(y).  
\end{align}
By symmetry of $W$ and the identity $\psi_1+\psi_2=1$, we have that 
\begin{align}
t_{1,2}=t_{2,1}, \qquad \sum_{a,b=1}^2 t_{a,b}=\rho_W, \qquad \sum_{b=1}^2
t_{a,b}=\int_\Omega k_W(x) \psi_a(x) \dd\mu(x).
\end{align}

In particular, the crossing-edge mass and the degree volumes of the two colour classes are determined by coloured homomorphism densities. For $\Psi=(\psi,1-\psi)$, recalling the definition from~\eqref{MCW2x} and noting~\eqref{tab2}, 
\begin{align}\label{chC}
\C(W,\psi)&=\iint_{\Omega^2} W(x,y)\psi(x)(1-\psi(y)) \dd\mu(x)\dd\mu(y)=t_{1,2}.
\end{align}
We also define
\begin{align}
m(W,\psi)\label{chm}
&:=
\int_\Omega k_W(x)\psi(x)\,\dd\mu(x)
=t_{1,1}+t_{1,2},\\\label{chD}
D(W,\psi)
&:=
\min\{m(W,\psi),\rho_W-m(W,\psi)\}=
\min\{t_{1,1}+t_{1,2},t_{2,2}+t_{1,2}\}.
\end{align}
Whenever $D(W,\psi)>0$, define the coloured Cheeger ratio by
\begin{equation}\label{cheh}
H(W,\Psi)
:=
\frac{\C(W,\psi)}{D(W,\psi)}
=
\frac{t_{1,2}}
{\min\{t_{1,1}+t_{1,2},t_{2,2}+t_{1,2}\}}.
\end{equation}
When $\rho_W>0$, we define the \emph{(mixed) Cheeger constant} 
of $W$ by
\begin{equation}\label{che3}
h(W)
:=
\inf_{\substack{
\Psi=(\psi,1-\psi)\in\Lambda_2\\
D(W,\psi)>0}}
H(W,\Psi).
\end{equation}
We similarly define the \emph{pure Cheeger constant} 
$h_\pure(W)$ by taking the infimum in \eqref{che3} over pure colourings
$\Psi$ only. This version is
the Cheeger constant for graphons as defined by \cite{Khetan} 
(who consider only graphons on $\oi$).
We will mainly used the mixed version; we may add the subscript ${}_\mix$ for
emphasis, but we usually omit it.
We will see in \refT{TCH4} that for graphons defined on $\oi$, the pure and
mixed Cheeger constants are the same; in particular, our definition
\eqref{che3} yields the same result as \cite{Khetan} for the graphons that
they consider.

Comparing the definitions above, immediately gives $h(G)=h_\pure(W'_G)$, where $W'_G$ is the corresponding atomic graphon; see \refE{EG1}.
Moreover, the \emph{fractional Cheeger constant} of a finite
graph $G$ was defined in \cite{Khetan} as follows
(using our notation).
We may assume $V(G)=[n]$; let $(A_{uv})$ be the adjacency matrix of $G$, recall that $d_v$ denotes the degree of vertex $v$ and write 
$d_{\rm max}:=\max_{v \in [n]}d_v$. Also define,
for $p=(p_1,\ldots,p_n)\in[0,1]^n$, 
\begin{align}\label{che5}
F(p):=\sum_{u,v=1}^n A_{uv}p_u(1-p_v),
\qquad
L(p):=\sum_{v=1}^n d_v p_v.\end{align}
Then, the fractional Cheeger constant is defined by \cite{Khetan} as 
\begin{equation}\label{eq:frac-cheeger-khetan}
\widetilde h(G):=
\inf_{\substack{p\in[0,1]^n\\0<L(p)<T}}
\frac{F(p)}{\min\{L(p),\vol(G)-L(p)\}}.
\end{equation}
It is easily seen that this equals $h_\mix(W'_G)$.
More generally,
we have the following relations between these notions for a graph and 
corresponding graphons. 
Note the similarity to \refT{TMC1}, but also the
difference, since the Cheeger constant $h(G)$ may differ from the fractional
version $\thx(G)$ for a finite graph. (A simple example is $G=K_3$: we have
$h(K_3)=1$ and $\thx(K_3)=1/2$.)

\begin{theorem}\label{TCH1}
  Let $G$ be a graph and let $W'_G$ and $W_G$ be the corresponding
atomic and continuous
graphons defined in \refE{EG1}.
Then
\begin{align}\label{tch1p}
  h(G)&=h_\pure(W'_G),
\\\label{tch1m}
\thx(G)
&=h_\mix(W'_G)
=h_\pure(W_G)=h_\mix(W_G).
\end{align}
Furthermore, $\thx(G)\le h(G)$, where we may have strict inequality.
\end{theorem}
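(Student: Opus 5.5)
The plan follows the proof of \refT{TMC1}, with a Cheeger analogue of \refL{LMC} and a step-graphon rounding argument as in \refT{TMC4}.

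\textbf{Direct identifications.} First, \eqref{tch1p} and $\thx(G)=h_\mix(W'_G)$ follow by comparing definitions. For $W'_G$ on $V(G)=[n]$ with uniform measure, and $\psi=p$, we have
\begin{align*}
\C(W'_G,p)=F(p)/n^2,\qquad m(W'_G,p)=L(p)/n^2,\qquad \rho_{W'_G}=\vol(G)/n^2 .
\end{align*}
Hence $H(W'_G,p)$ equals the ratio in \eqref{eq:frac-cheeger-khetan}. The condition $D>0$ corresponds to $0<L(p)<\vol(G)$, so $T=\vol(G)$. For pure $p=\sindic{A}$, the ratio is exactly the one in \eqref{che-G}; note that $D>0$ forces $A\ne\varnothing,V(G)$, since $G$ has no isolated vertices.

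\textbf{Pullback lemma.} Next I would prove the Cheeger analogue of \refL{LMC}: for a \mpp{} map $\gf$, $h_\mix(W^\gf)=h_\mix(W)$. The change of variables \eqref{lmc2} gives $t_{a,b}(W^\gf,\Psi^\gf)=t_{a,b}(W,\Psi)$, so $H$ and $D$ are preserved. This yields $h_\mix(W^\gf)\le h_\mix(W)$ and $h_\pure(W^\gf)\le h_\pure(W)$.

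For the converse, replace $\psi$ by $\bpsi=\E(\psi\mid\cF_\gf)$. The argument in \eqref{lmc3} shows $t_{1,2}$ is unchanged. The same one-variable-at-a-time argument shows $m(W^\gf,\psi)=\int k_{W^\gf}\psi$ is unchanged, because $k_{W^\gf}$ is $\cF_\gf$-measurable. Hence $D$ and $H$ are unchanged, and writing $\bpsi=\psi_2\circ\gf$ gives $h_\mix(W^\gf)\ge h_\mix(W)$. Since $W_G=(W'_G)^\gf$, this gives $h_\mix(W_G)=h_\mix(W'_G)$. It also gives the chain
\begin{align*}
h_\mix(W_G)\le h_\pure(W_G)\le h_\pure(W'_G)=h(G).
\end{align*}

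\textbf{Step-graphon rounding (main step).} The remaining point is $h_\pure(W_G)\le h_\mix(W_G)$. Since $W_G$ is a step graphon on the intervals $I_v$, proceed as in \refT{TMC4'}s step-graphon case. Given a mixed $\psi$ with $D>0$, choose a pure $\chi$ such that $\gl(\{x\in I_v:\chi(x)=1\})=\int_{I_v}\psi$ for each $v$. Because $W_G$ and $k_{W_G}$ are constant on the blocks, integrating blockwise shows every $t_{a,b}$ is identical for $\psi$ and $\chi$. So $D$ and $H$ agree, and no approximation error arises. This is where the argument differs from the atomic case, and it is the step I expect to be the crux; it is, however, exact here. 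Together with the chain above, this gives \eqref{tch1m}.

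\textbf{Inequality and strictness.} The claim $\thx(G)\le h(G)$ follows from \eqref{tch1p}, \eqref{tch1m} and $h_\mix\le h_\pure$. For strictness, take $G=K_3$. Every nontrivial cut has $e=2$ and minimum volume $2$, so $h=1$. With $p=(1/2,1/2,1/2)$, each of the $6$ ordered adjacent pairs contributes $1/4$, so $F=3/2$; also $L=3$ and $\vol=6$, giving ratio $1/2$. Hence $\thx(K_3)\le 1/2<1=h(K_3)$.
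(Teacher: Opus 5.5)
Your proposal is correct and follows the paper's own proof: the direct identification of $h(G)$ and $\thx(G)$ with $h_\pure(W'_G)$ and $h_\mix(W'_G)$, then an averaging argument giving $h_\mix(W'_G)=h_\mix(W_G)$, then the exact block rounding via \eqref{tmc4} giving $h_\pure(W_G)=h_\mix(W_G)$, and $K_3$ for strictness. The only difference is that the paper does the averaging by hand on the intervals $I_v$, whereas you prove the general pullback statement. The paper notes that this general statement, \refL{lem:cheeger-pullback}, also gives the result.
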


\begin{remark}
The equality $\thx(G)=h_\pure(W_G)$ was noted by \cite[Section 4.1]{Khetan}.
Furthermore, \cite{Khetan} more generally consider edge-weighted graphs;
\refT{TCH1} extends to this setting, with the natural definitions of $W'_G$
and $W_G$.
\end{remark}

\begin{proof}
  This is similar to the proof of \refT{TMC1}.
As noted above, \eqref{tch1p}  and $\thx(G)=h_\mix(W'_G)$
follow immediately from the definitions.

It remains to consider $W_G$.
With the intervals $I_v$ as in \refE{EG1}, $W_G$ is constant on each
$I_i\times I_j$, and thus $\C(W,\psi)$ and $m(W,\psi)$, 
and therefore also $H(W,\Psi)$, 
remain the same if we replace $\psi$  by its average $\bpsi$ on each $I_i$; this
average $\bpsi$ can then be regarded as a mixed colouring of $[n]$ and thus
of $W'_G$; conversely any mixed colouring of $W'_G$  
can be regarded as a colouring of $W_G$ that is constant on each interval
$I_i$. It follows that $h(W'_G)=h(W_G)$.
(This also follows from the more general \refL{lem:cheeger-pullback} below.)
Similarly, given any mixed colouring $\Psi$ of $W_G$ we may replace it with
a pure colouring $\chi$ satisfying \eqref{tmc4}, and then 
$H(W_G,\Psi)=H(W_G,\chi)$;
it follows that $h_\pure(W_G)\le H(W_G,\Psi)$. Hence,
$h_\pure(W_G)\le h_\mix(W_G)$, and the opposite inequality is obvious.
(This also follows from the more general \refT{TCH4} below.)

Finally, it is obvious from the definition that $\thx(G)\le h(G)$, and we
may have strict inequality as remarked before the theorem.
\end{proof}

We note another simple result.
\begin{lemma}  \label{LCH0}
For every graphon $W$ with $\rho_W>0$,
\begin{align}
  0\le h(W)\le\tfrac12.
\end{align}
\end{lemma}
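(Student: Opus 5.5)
The lower bound is immediate: for every admissible $\Psi=(\psi,1-\psi)$ the crossing mass $\C(W,\psi)=t_{1,2}\ge0$ and $D(W,\psi)>0$, so $H(W,\Psi)\ge0$ and hence $h(W)\ge0$. Before discussing the upper bound, I first check that the infimum in \eqref{che3} is over a nonempty set, since otherwise it would be $+\infty$. The constant mixed colouring $\psi\equiv\frac12$ gives $m(W,\psi)=\frac12\rho_W$, so $D(W,\psi)=\frac12\rho_W>0$ because $\rho_W>0$.

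For the upper bound I simply evaluate $H$ at this colouring $\psi\equiv\frac12$, which is the natural candidate because it is available on every probability space, including atomic ones. By \eqref{chC},
\begin{align*}
\C(W,\tfrac12)=\iint_{\gO^2}W(x,y)\cdot\tfrac12\cdot\tfrac12\dd\mu(x)\dd\mu(y)=\tfrac14\rho_W,
\end{align*}
and $D(W,\tfrac12)=\min\{\frac12\rho_W,\rho_W-\frac12\rho_W\}=\frac12\rho_W$. Hence $H(W,\Psi)=\frac{\rho_W/4}{\rho_W/2}=\frac12$, and therefore $h(W)\le\frac12$.

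There is no real obstacle here. The only point to note is that for the pure version $h_\pure$ this argument would not apply on atomic spaces: a single atom with $W=1$ has no admissible pure colouring at all. This is precisely why the mixed definition is used, and the constant mixed colouring disposes of the upper bound in one line. I would also remark that the bound is sharp, since for $W\equiv1$ every colouring gives $H=\frac12$ exactly when the two colour classes have equal mass. Indeed, with $a=\int\psi$ we get $H=a(1-a)/\min\{a,1-a\}=\max\{a,1-a\}\ge\frac12$.
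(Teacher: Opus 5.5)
Your proof is correct and matches the paper's argument: the lower bound comes from nonnegativity of $\C(W,\psi)$, and the upper bound from evaluating $H$ at the constant colouring $\psi\equiv\frac12$, which gives $\C(W,\psi)=\rho_W/4$ and $D(W,\psi)=\rho_W/2$. Your side remarks are also correct: the constant colouring is admissible since $\rho_W>0$, a single atom with $W=1$ has no admissible pure colouring, and the bound is sharp for $W\equiv1$.
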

\begin{proof}
The lower bound is obvious, and the upper follows because the 
constant colouring $\psi(x)=1/2$ yields $\C(W,\psi)=\rho_W/4$,
$m(W,\psi)=\rho_W/2$, and thus $H(W,\psi)=1/2$.
\end{proof}
For graphons $W$ defined on $\oi$,
\cite[Lemma 3.2]{Khetan} shows $h_\pure(W)\le\frac12$
with a more complicated argument; this also follows from \refL{LCH0} and
\refT{TCH4} below.

By the definition \eqref{che3},
$h(W)$ is obtained by optimising a function of coloured
homomorphism densities over mixed $2$-colourings.
The continuity of coloured homomorphism densities implies
that $H(W,\Psi)$ is continuous 
(in both $W$ and $\Psi$)
wherever its denominator is
positive.  This does not immediately imply continuity of
$h(W)$, since the denominators associated with
near-minimizing colourings may approach zero. A uniform lower-bound on the degree function will provide the required control.

For a connected graph $G$ on $n$ vertices, \cite[Lemma~4.4]{Khetan} prove that
\begin{align}\label{kh1}
1 \ge \frac{h(W_G)}{h(G)} =
\frac{\widetilde h(G)}{h(G)}\ge
(1-\varepsilon)\left(1-\frac{2\gamma}{n\varepsilon^2}\right),
\qquad 0<\varepsilon<1,
\end{align}
where 
$\gamma=d_{\max}(G)/d_{\min}(G)$.
(Their Equation~(1.2) is the regular case, for which
$\gamma=1$.)
Their proof of \eqref{kh1} uses independent Bernoulli rounding
and Chebyshev's inequality to control deviations of the rounded
degree volume from half the total volume; for regular graphs,
they also give an improvement using Azuma's inequality
\cite[Remark~4.7]{Khetan}. We use the same rounding construction, but apply Cauchy--Schwarz directly to the mean absolute deviation. This avoids introducing a deviation threshold and yields the following estimates. In particular, for regular graphs the relative error is at most $n^{-1/2}$.

\begin{lemma}\label{lem:cheeger-graph-graphon}
Let $G$ be a finite simple graph with no isolated vertices.
Then,
\begin{align}\label{kh2}
  \frac{h(W_G)}{h(G)}
=  \frac{\widetilde h(G)}{h(G)}
\ge
1-\frac{\sqrt{\sum_{v}d_v^2}}{\sum_{v} d_v }.
\end{align}
and
\begin{equation}\label{chg}
0\le h(G)-h(W_G)
\le
\frac{\sqrt{\sum_v d_v^2}}{\vol(G)}
\le
\sqrt{\frac{ d_{\rm max}
}{
{ \vol(G)}}}.
\end{equation}
\end{lemma}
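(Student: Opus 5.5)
By \refT{TCH1} we have $h(W_G)=\thx(G)$, so the equality in \eqref{kh2} is already available, and it remains to compare $\thx(G)$ with $h(G)$. The lower bound $0\le h(G)-h(W_G)$ is $\thx(G)\le h(G)$, also from \refT{TCH1}. The last inequality in \eqref{chg} is elementary: $\sum_v d_v^2\le d_{\max}\sum_v d_v=d_{\max}\vol(G)$. And \eqref{chg} follows from \eqref{kh2} together with $h(G)\le 1$, since $e(A,A\comp)\le\min\{\vol(A),\vol(A\comp)\}$. So everything reduces to proving
\begin{align*}
\thx(G)\ge h(G)\Bigl(1-\sqrt{\textstyle\sum_v d_v^2}\big/T\Bigr),\qquad T:=\vol(G).
\end{align*}

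Fix $p\in[0,1]^n$ with $0<L(p)<T$, and write $D(p):=\min\{L(p),T-L(p)\}$. I would round $p$ as in \refT{TMC1}: let $A$ be the random set containing each $v$ independently with probability $p_v$. Then $\E e(A,A\comp)=F(p)$, since each ordered pair $(u,v)$ with $u\in A$, $v\notin A$ occurs with probability $p_u(1-p_v)$. Also $X:=\vol(A)$ has mean $L(p)$ and variance $\sum_v d_v^2p_v(1-p_v)\le\frac14\sum_v d_v^2$.

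By the definition of $h(G)$, the deterministic inequality $h(G)\min\{X,T-X\}\le e(A,A\comp)$ holds for every $A$, including $A=\varnothing$ and $A=V$, where both sides vanish. Since $x\mapsto\min\{x,T-x\}$ is $1$-Lipschitz, $\min\{X,T-X\}\ge D(p)-|X-L(p)|$. Taking expectations and applying Cauchy--Schwarz to the mean absolute deviation, instead of Chebyshev as in \cite{Khetan}, gives
\begin{align*}
h(G)\Bigl(D(p)-\tfrac12\sqrt{\textstyle\sum_v d_v^2}\Bigr)\le F(p).
\end{align*}
Dividing by $D(p)$ yields $F(p)/D(p)\ge h(G)\bigl(1-\sqrt{\sum_v d_v^2}/(2D(p))\bigr)$. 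When $D(p)$ is close to $T/2$ this is exactly the claimed bound, because then $2D(p)\approx T$.

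The main obstacle is near-minimising $p$ with small $D(p)$, where the relative error $\sqrt{\sum_v d_v^2}/(2D(p))$ is not controlled by $\sqrt{\sum_v d_v^2}/T$. I would attack this in two ways.

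First, I would use the sharper variance bound $\sum_v d_v^2p_v(1-p_v)\le\min\bigl\{\sum_v d_v^2p_v,\ \sum_v d_v^2(1-p_v)\bigr\}$. By the symmetry $p\leftrightarrow 1-p$ we may assume $L(p)\le T/2$, so $D(p)=L(p)$. The aim is then a weighted Cauchy--Schwarz estimate of $\sqrt{\sum_v d_v^2p_v}/\sum_v d_vp_v$ against $\sqrt{\sum_v d_v^2}/T$.

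Second, if that estimate does not close, I would try to show directly that the infimum defining $\thx(G)$ may be restricted to $p$ with $D(p)\ge cT$. I would do this by moving $p$ along a segment toward a balanced vector such as $\tfrac12\tone$, using that $F$ is a concave quadratic in $p$ and that the ratio $F/D$ is quasi-concave along such segments.

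I expect this case analysis for small $D(p)$ to be the real difficulty. The rounding argument itself, and the deduction of \eqref{chg} from \eqref{kh2}, are routine.
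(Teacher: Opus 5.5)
Your reductions are correct. You get $h(W_G)=\thx(G)\le h(G)$ from \refT{TCH1}, and you deduce \eqref{chg} from \eqref{kh2} using $h(G)\le1$ and $\sum_v d_v^2\le d_{\rm max}\vol(G)$. Your rounding-plus-Cauchy--Schwarz estimate is also exactly the paper's argument once $L(p)=T/2$.

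\textbf{The gap.} The unbalanced case, which you rightly call the real difficulty, is left open, and neither of your remedies works.

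\emph{First route.} The weighted estimate you aim for, $\sqrt{\sum_v d_v^2p_v}/\sum_v d_vp_v\lesssim\sqrt{\sum_v d_v^2}/T$, is false. Take $p$ supported on a single vertex $v$ of a regular graph on $n$ vertices: the left side is $p_v^{-1/2}\ge1$ and the right side is $n^{-1/2}$. Random rounding has relative error of order one when $D(p)$ is comparable to $d_{\rm max}$.

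\emph{Second route.} It starts from a false premise: $F(p)=L(p)-p^{\mathsf T}Ap$ has Hessian $-2A$, which is indefinite because $\operatorname{tr}A=0$, so $F$ is not concave. Its goal is also false. Take two cliques $K_a,K_b$ with $1\ll a\ll b$ joined by one edge. Then $p=\tone_{K_a}$ has ratio $1/(a(a-1)+1)\to0$. By contrast, every $p$ with $D(p)\ge cT$ must split the big clique, so its ratio is at least about $2c(1-c)$. Hence near-optimisers cannot in general be moved toward balance.

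\textbf{The missing idea.} It is a dichotomy: either $\thx(G)=h(G)$ and there is nothing to prove, or there is an optimiser with $L(p)=e(G)=T/2$ exactly. The paper takes this from \cite[Lemmas~4.1 and 4.3]{Khetan} and applies your rounding argument to that balanced optimiser. (The two-clique example falls in the first case.)

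You can also prove the dichotomy directly, without needing an optimiser.
\begin{romenumerate}
\item Because $A_{vv}=0$, $F$ is affine in each coordinate $p_v$ separately, and $L$ is linear. So on $\{0<L(p)<T/2\}$ the ratio $F/L$ is linear-fractional, hence monotone, in each $p_v$.
\item Given $p$ with $L(p)<T/2$ (otherwise replace $p$ by $\tone-p$, which preserves $F$ and $D$), push the fractional coordinates one at a time toward $0$ or $1$, in a direction that does not increase the ratio. Stop if $L$ reaches $T/2$. The only degenerate case is $p$ supported on one vertex, where the ratio is $1\ge h(G)$.
\item You end in one of two ways. Either you reach an integral $p'=\tone_S$, so that $F(p)/D(p)\ge e(S,S\comp)/\vol(S)\ge h(G)$. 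Or you reach a $p'$ with $L(p')=T/2$ and $F(p')/D(p')\le F(p)/D(p)$, and your balanced bound applies to $p'$.
\end{romenumerate}
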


\begin{proof}
Recall that $h(W_G)=\thx(G)$ by \eqref{tch1m}.
If $\widetilde h(G)=h(G)$, the desired bound is immediate. Suppose therefore
that $\widetilde h(G)< h(G)$. Since $\widetilde h(G) \ge 0$, we have
$h(G)>0$. As $G$ has no isolated vertices, this implies that $G$ is
connected. By \cite[Lemmas~4.1 and 4.3]{Khetan}, there exists an optimiser
$p \in [0,1]^n$ such that, using the notation in \eqref{che5},
$L(p)= e(G)$ and $F(p)=\widetilde h(G) e(G).$ 

Independently sample $R_v\sim\operatorname{Bernoulli}(p_v)$ for $v=1,\ldots,n$,
and define
\begin{align}\label{chg1}
Z:=\sum_{v=1}^n d_v R_v,
\qquad
Y:=\sum_{u,v=1}^n A_{uv}R_u(1-R_v).
\end{align}
Thus, $Z$ is the degree volume of the selected vertices, and $Y$ is their crossing-edge count. Since $G$ is loopless, independence gives
\begin{equation}\label{chg2}
\E[Z]=L(p)
=e(G), \qquad \E[Y]=F(p)=\widetilde h(G)\, e(G)
.
\end{equation}
For every realization, the definition of $h(G)$ gives
\begin{align}\label{chg3}
Y\ge h(G)\min\{Z, \vol(G)-Z\}.
\end{align}
This also holds for the empty and full cuts, when both sides are zero.
Using $\min\{Z, \vol(G)-Z\}
=e(G)-\left|Z-e(G)\right|$
and taking expectations, we obtain
\begin{equation}
\widetilde h(G)\,e(G)
\ge
h(G)\bigl(e(G)-\mathbb E\left|Z-e(G)\right|
\bigr).
\label{eq:cheeger-rounding-inequality}
\end{equation}
By Cauchy--Schwarz and independence,
\begin{align}\label{chg5}
\mathbb E\left|Z-
 e(G)
\right|
&\le \sqrt{\operatorname{Var}(Z)}=\sqrt{\sum_{v=1}^n d_v^2p_v(1-p_v)}\le \frac12\sqrt{\sum_{v=1}^n d_v^2}.
\end{align}
Substituting into \eqref{eq:cheeger-rounding-inequality} yields
\begin{align}\label{chg6}
\widetilde h(G)
\ge
h(G)\left(
1-\frac{\sqrt{\sum_{v=1}^n d_v^2}}{
\vol(G)}
\right),\end{align}
which yields \eqref{kh2}. Moreover,
since $h(G)\le1$, it follows that
\begin{align}\label{chg7}
0\le h(G)-\widetilde h(G)
\le
\frac{\sqrt{\sum_{v=1}^n d_v^2}}{
 \vol(G)}.
\end{align}
Finally,
$\sum_{v}
d_v^2
\le  d_{\rm max}\sum_{v}
d_v
= d_{\rm max}\vol(G)$.
Together with \eqref{tch1m}, this gives
\begin{align}\label{chg9}
0 \le h(G)-h(W_G) \le \frac{\sqrt{\sum_{v=1}^n d_v^2}}{\vol(G)} \le \sqrt{\frac{ d_{\rm max}}{\vol(G)}},
\end{align}
as required.
\end{proof}

\begin{lemma}\label{lem:cheeger-cut-three}
Let $W_1$ and $W_2$ be graphons on the same probability space. Then, for every measurable function $\psi:\Omega\to [0,1]$,
\begin{align}
|\C(W_1,\psi)-\C(W_2,\psi)|&\le \|W_1-W_2\|_\square,\label{chfc}\\
|D(W_1,(\psi)-D(W_2,\psi)|&\le \|W_1-W_2\|_\square,\label{chfd}\\
|m(W_1,\psi)-m(W_2,\psi)|&\le\|W_1-W_2\|_\square.\label{chfm}
\end{align}
\end{lemma}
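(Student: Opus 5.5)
The plan is to express each of the three quantities through the coloured homomorphism densities $t_{a,b}$ of \eqref{tab2}, or directly as integrals of $W$ against products of functions bounded in $[0,1]$. Then the cut-norm bound \eqref{cutnorm2} controls each difference with constant $1$.

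For \eqref{chfc}, $\C(W,\psi)=t_{1,2}=t(\KK_{2;12},(W,\Psi))$ by \eqref{chC}. Since $e(\KK_{2;12})=1$, \refL{LA1} applies directly with the common colouring $\Psi=(\psi,1-\psi)$, giving the bound $\cnorm{W_1-W_2}$. Equivalently, this is just \eqref{lmc32}.

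For \eqref{chfm}, write
\begin{align*}
m(W_j,\psi)=\iint_{\gO^2}W_j(x,y)\psi(x)\cdot 1\dd\mu(x)\dd\mu(y).
\end{align*}
The difference is then $\iint (W_1-W_2)(x,y)\,g(x)h(y)$ with $g=\psi$ and $h\equiv1$, both taking values in $\oi$. By \eqref{cutnorm2} its absolute value is at most $\cnorm{W_1-W_2}$. Alternatively, $m=t_{1,1}+t_{1,2}$, but bounding the two terms separately would give the constant $2$. The direct single-integral representation is needed to get constant $1$, and this is the only point requiring a little care.

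For \eqref{chfd}, $D(W,\psi)=\min\{m(W,\psi),\rho_W-m(W,\psi)\}$. The map $(a,b)\mapsto\min\{a,b\}$ is $1$-Lipschitz in the sup norm, so
\begin{align*}
|D(W_1,\psi)-D(W_2,\psi)|\le\max\bigl\{|m_1-m_2|,\;|(\rho_{W_1}-m_1)-(\rho_{W_2}-m_2)|\bigr\},
\end{align*}
where $m_j:=m(W_j,\psi)$. The first term is handled by \eqref{chfm}. For the second, note that
\begin{align*}
\rho_{W_j}-m_j=\iint W_j(x,y)\,(1-\psi(x))\dd\mu(x)\dd\mu(y).
\end{align*}
This is again an integral of $W_j$ against $g(x)h(y)$ with $g=1-\psi$ and $h\equiv1$ in $\oi$, so the same cut-norm bound gives at most $\cnorm{W_1-W_2}$. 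This avoids the lossy estimate $|\rho_{W_1}-\rho_{W_2}|+|m_1-m_2|\le 2\cnorm{W_1-W_2}$ obtained from \eqref{m30}. I do not expect any real obstacle: the only point is to use a single product-form representation for each quantity so that the constant stays $1$.
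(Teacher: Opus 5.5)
Your proposal is correct and matches the paper's proof. The paper likewise applies the cut-norm bound \eqref{chf1} with $(a,b)=(\psi,1-\psi)$, $(\psi,1)$ and $(1-\psi,1)$, and then uses $|\min\{r,s\}-\min\{r',s'\}|\le\max\{|r-r'|,|s-s'|\}$ to obtain \eqref{chfd}.
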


\begin{proof}
Recall that for every measurable kernel $K:\Omega^2\to[-1,1]$ and
measurable functions $a,b:\Omega\to[0,1]$, we have,
see \eqref{cutnorm2},
\begin{align}\label{chf1}
\left|\iint_{\Omega^2}K(x,y)a(x)b(y)\dd\mu(x)\dd\mu(y)\right|\le \|K\|_\square,
\end{align}
Set $K:=W_1-W_2$ and $\varepsilon:=\|W_1-W_2\|_\square$. Applying this bound with 
\begin{equation}
(a,b)=(\psi,1-\psi), \qquad (a,b)=(\psi,1), \qquad (a,b)=(1-\psi,1)
\end{equation}
gives, respectively,
\begin{align}
|\C(W_1,\psi)-\C(W_2,\psi)|&\le \varepsilon,\\
|m(W_1,\psi)-m(W_2,\psi)|&\le \varepsilon,\\
|(\rho_{W_1}-m(W_1,\psi))-(\rho_{W_2}-m(W_2,\psi))|&\le \varepsilon.
\end{align}
Finally, using $|\min\{r,s\}-\min\{r',s'\}|\le \max \{|r-r'|,|s-s'|\}$, we obtain 
\begin{align}|D(W_1,\psi)-D(W_2,\psi)|\le \varepsilon.\end{align}
\end{proof}

The next lemma gives the uniform denominator bound needed for stability.
\begin{lemma}\label{lem:cheeger-denominator}
 Let $W$ be a graphon satisfying $k_W \ge c>0$ a.e. 
Every measurable mixed colouring $\Psi=(\psi,1-\psi)$ satisfying
$D(W,\psi)>0$ and $H(W,\Psi) \le \frac12$
also satisfies
\begin{align}\label{chh1}
D(W,\psi) \ge \frac{c^2}{2}.
\end{align}

Furthermore, for every $\eta>0$, there exists a colouring such that \eqref{chh1} holds and
\begin{align}\label{chh2}
H(W,\Psi)\le h(W)+\eta.
\end{align}
\end{lemma}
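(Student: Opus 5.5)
We argue by comparing the crossing mass with the volume of the smaller side. By symmetry (swapping $\psi$ and $1-\psi$ leaves $\C$, $D$ and $H$ unchanged), we may assume $D(W,\psi)=m(W,\psi)=t_{1,1}+t_{1,2}$. Set $a:=\int_\Omega\psi\dd\mu$.

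The first step is a lower bound on the crossing mass in terms of $a$. Since $k_W\ge c$ a.e., we have $\int_\Omega W(x,y)(1-\psi(y))\dd\mu(y)= k_W(x)-\int_\Omega W(x,y)\psi(y)\dd\mu(y)\ge c-a$, using $W\le1$. Multiplying by $\psi(x)$ and integrating gives
\begin{align*}
t_{1,2}=\C(W,\psi)\ge a(c-a).
\end{align*}
Also $D\le m(W,\psi)=\int_\Omega k_W\psi\dd\mu\le a$, since $k_W\le 1$. Combining these with the hypothesis $H(W,\Psi)\le\frac12$ yields
\begin{align*}
a(c-a)\le t_{1,2}\le \tfrac12 D\le \tfrac12 a.
\end{align*}
Since $D>0$ forces $a>0$, we may divide by $a$ to get $a\ge c-\frac12$. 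This only helps when $c>\frac12$, so this direct bound is too weak for small $c$.

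The better route is through $t_{1,1}$. We have $t_{1,1}=\iint W\psi\psi\le a^2$, so $D=t_{1,1}+t_{1,2}\le a^2+t_{1,2}$. Hence $H\le\frac12$ gives $t_{1,2}\le\frac12(a^2+t_{1,2})$, that is, $t_{1,2}\le a^2$. On the other hand, $D\ge\int_\Omega k_W\psi\dd\mu\ge ca$. Therefore
\begin{align*}
ca\le D=t_{1,1}+t_{1,2}\le 2a^2,
\end{align*}
so $a\ge c/2$, and consequently $D\ge ca\ge c^2/2$. This proves \eqref{chh1}. The main obstacle was finding the right chain of inequalities, and it is settled by this $t_{1,1}\le a^2$ argument.

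For the second part, the case $\eta\ge\frac12$ is handled by the constant colouring $\psi\equiv\frac12$. It has $H=\frac12\le h(W)+\eta$, and it satisfies \eqref{chh1} by the first part. For general $\eta>0$, the definition \eqref{che3} provides $\Psi$ with $D>0$ and $H(W,\Psi)\le h(W)+\min\{\eta,\ldots\}$. If $H(W,\Psi)\le\frac12$, then the first part gives \eqref{chh1} and we are done. Otherwise $H(W,\Psi)>\frac12$, and we use the constant colouring instead: it has $H=\frac12< H(W,\Psi)\le h(W)+\eta$ and satisfies \eqref{chh1}. In either case we obtain a colouring meeting both requirements. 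We also use $\rho_W\ge c>0$, which ensures that $h(W)$ is defined.
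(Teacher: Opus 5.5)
Your proof is correct and follows essentially the paper's route: reduce to $D=m(W,\psi)$, then combine $m\ge ca$, $t_{1,1}\le a^2$ and $H\le\frac12$ to get $ca\le D\le 2a^2$. For the second part you, like the paper, fall back on the constant colouring $\psi\equiv\frac12$; the differences are cosmetic. You bound $t_{1,2}\le a^2$ instead of $t_{1,2}\le t_{1,1}$, and you split on whether the near-optimal colouring has $H\le\frac12$ rather than on whether $h(W)<\frac12$, which makes the unspecified $\min\{\eta,\ldots\}$ and the separate case $\eta\ge\frac12$ unnecessary.
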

\begin{proof}
Let $\Psi=(\psi,1-\psi)$ satisfy $D(W,\psi)>0$ and $H(W,\Psi)\le1/2$. By symmetry of~$W$,
\begin{align}
\C(W,1-\psi)=\C(W,\psi),
\qquad
D(W,1-\psi)=D(W,\psi).
\end{align}
Thus, replacing $\psi$ by $1-\psi$ if necessary, we may assume that
\begin{align}
D(W,\psi)=m(W,\psi)\le\frac{\rho_W}{2}.
\end{align}
Set $a:=\int_\Omega \psi(x)\,\dd\mu(x)$.
Since $m(W,\psi)>0$, we have $a>0$.
The degree lower bound gives $m(W,\psi)\ge ca$.
Moreover,
\begin{align}
\C(W,\psi)=m(W,\psi)-\iint_{\Omega^2}
W(x,y)\psi(x)\psi(y)\,\dd\mu(x)\dd\mu(y).
\end{align}
Since $\C(W,\psi)=H(W,\psi)D(W,\psi)\le D(W,\psi)/2=m(W,\psi)/2$, it follows that
\begin{align}
\frac{m(W,\psi)}2 \le \iint_{\Omega^2}
W(x,y)\psi(x)\psi(y)\,\dd\mu(x)\dd\mu(y).
\end{align}
Using $0\le W\le1$, we obtain
\begin{align}
\frac{ca}{2}
\le \frac{m(W,\psi)}2
\le \iint_{\Omega^2}\psi(x)\psi(y)\,\dd\mu(x)\dd\mu(y)=a^2.
\end{align}
Dividing by $a>0$ yields $a\ge c/2$, and therefore
\begin{align}
D(W,\psi)=m(W,\psi)\ge ca\ge\frac{c^2}{2}.
\end{align}
Since replacing $\psi$ by $1-\psi$ leaves $D(W,\psi)$ unchanged,
this proves the bound for the original colouring as well.

Finally, fix $\eta>0$. 
If $h(W)<1/2$, then
by the definition of $h(W)$, there exists an admissible colouring
$\Psi=(\psi,1-\psi)$ such that $H(W,\Psi)<\min\set{h(W)+\eta,1/2}$. 
This colouring has the required properties by the first part of the proof.

Otherwise, if $h(W)\ge1/2$, take the constant colouring
$\Psi:=(\frac12,\frac12)$. We then have, 
as in the proof of \refL{LCH0},
$\C(W,\psi)=\rho_W/4$, $D(W,\psi)=m(W,\psi)=\rho_W/2$, and thus
\begin{align}
H(W,\Psi)&=\frac12<h(W)+\eta,
\\
D(W,\psi)& =\int_\gO\tfrac12  k_W(x)\dd\mu(x)\ge \frac{c}{2} \ge\frac{c^2}{2}.
\end{align}
since $c\le k_W\le1$ a.e.
\end{proof}

\begin{lemma}\label{lem:cheeger-pullback}
Let\/ $W$ be a graphon on $(\Omega_2,\mathcal F_2,\mu_2)$ with
$\rho_W>0$, and let $\varphi: (\Omega_1,\mathcal F_1,\mu_1) \to
(\Omega_2,\mathcal F_2,\mu_2)$ be a measure-preserving function. Define
$W^\varphi(x,y):=W(\varphi(x),\varphi(y))$ as in \eqref{Wpull}. 
Then,  
\begin{align}\label{chpu6}
h(W^\varphi)=h(W).\end{align}
Moreover,
\begin{align}\label{chpu7}
k_{W^\varphi}=k_W \circ\varphi \quad \text{a.e., and}\quad \rho_{W^\varphi}=\rho_W.\end{align}
\end{lemma}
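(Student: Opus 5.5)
We follow the proof of \refL{LMC}, which treats the mixed maximum cut in the same way. First we establish \eqref{chpu7}. For $x\in\gO_1$, the change of variables $y'=\gf(y)$, which is valid because $\gf$ is \mpp, gives
\begin{align*}
k_{W^\gf}(x)=\int_{\gO_1}W(\gf(x),\gf(y))\dd\mu_1(y)=\int_{\gO_2}W(\gf(x),y')\dd\mu_2(y')=k_W(\gf(x)).
\end{align*}
Integrating this identity once more, and using again that $\gf$ is \mpp, yields $\rho_{W^\gf}=\rho_W$. In particular $\rho_{W^\gf}>0$, so $h(W^\gf)$ is defined.

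\emph{The inequality $h(W^\gf)\le h(W)$.} Let $\Psi=(\psi,1-\psi)$ be a mixed colouring of $\gO_2$ with $D(W,\psi)>0$. Its pullback $\Psi^\gf=(\psi^\gf,1-\psi^\gf)$ is a mixed colouring of $\gO_1$. The same change of variables as in \eqref{lmc2} shows that
\begin{align*}
\C(W^\gf,\psi^\gf)=\C(W,\psi), \qquad m(W^\gf,\psi^\gf)=m(W,\psi).
\end{align*}
Since also $\rho_{W^\gf}=\rho_W$, we get $D(W^\gf,\psi^\gf)=D(W,\psi)>0$. Hence $H(W^\gf,\Psi^\gf)=H(W,\Psi)$, so $\Psi^\gf$ is admissible for $W^\gf$. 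Taking the infimum over $\Psi$ gives $h(W^\gf)\le h(W)$.

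\emph{The inequality $h(W^\gf)\ge h(W)$.} Let $\psi:\gO_1\to\oi$ be a colouring with $D(W^\gf,\psi)>0$. Put $\cF_\gf:=\set{\gf\qw(B):B\in\cF_2}$ and $\bpsi:=\E(\psi\mid\cF_\gf)$; then $0\le\bpsi\le1$.
\begin{itemize}
\item The argument in \eqref{lmc3} uses only that $W^\gf(x,y)$ is $\cF_\gf$-measurable in each variable. Integrating one variable at a time, it gives $\C(W^\gf,\psi)=\C(W^\gf,\bpsi)$.
\item Since $k_{W^\gf}=k_W\circ\gf$ is $\cF_\gf$-measurable, we have $m(W^\gf,\psi)=\int k_{W^\gf}\psi\dd\mu_1=\int k_{W^\gf}\bpsi\dd\mu_1=m(W^\gf,\bpsi)$. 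Therefore $D(W^\gf,\bpsi)=D(W^\gf,\psi)>0$.
\item By \cite[Lemma 1.13]{Kallenberg}, $\bpsi=\psi_2\circ\gf$ for some measurable $\psi_2:\gO_2\to\oi$.
\end{itemize}
Applying the first direction to $\psi_2$ then gives
\begin{align*}
H(W^\gf,\psi)=H(W^\gf,\psi_2^\gf)=H(W,\psi_2)\ge h(W).
\end{align*}
Here $\psi_2$ is admissible for $W$ because $D(W,\psi_2)=D(W^\gf,\psi_2^\gf)>0$. Taking the infimum over $\psi$ gives $h(W^\gf)\ge h(W)$, and together with the first direction this proves \eqref{chpu6}.

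The only point needing care is the conditional-expectation step. The denominator is a minimum of two linear functionals, and we need both to be unchanged under $\psi\mapsto\bpsi$. This holds because both are linear in $\psi$ with $\cF_\gf$-measurable weights: the weights are $k_{W^\gf}$ for $m$ and $\rho_{W^\gf}-m$ is handled by the same identity. The remaining steps are routine changes of variables.
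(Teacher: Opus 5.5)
Your proof is correct and is essentially the paper's own argument. The change of variables gives \eqref{chpu7} and the identities $\C(W^\gf,\psi^\gf)=\C(W,\psi)$ and $m(W^\gf,\psi^\gf)=m(W,\psi)$, hence $h(W^\gf)\le h(W)$; the reverse inequality comes from replacing $\psi$ by $\bpsi=\E(\psi\mid\cF_\gf)=\psi_2\circ\gf$, which preserves both $\C$ and $m$ because $W^\gf$ and $k_{W^\gf}$ are $\cF_\gf$-measurable, exactly as in the paper (your explicit note that $\rho_{W^\gf}=\rho_W>0$ makes $h(W^\gf)$ well defined is a small point the paper leaves implicit).
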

\begin{proof}
We follow the conditional-expectation argument used in the
proof of Lemma~\ref{LMC}.

Since $\varphi$ is measure-preserving,
\begin{align}\label{chpu8}
k_{W^\varphi}(x)
=\int_{\Omega_1}
W(\varphi(x),\varphi(y))\,\dd\mu_1(y)=k_W(\varphi(x)) \quad \text{a.e}.
\end{align}
Integrating this identity gives
$\rho_{W^\varphi}=\rho_W$. For every measurable $g:\Omega_2\to[0,1]$, change of
variables gives, see \eqref{lmc2} 
\begin{align}\label{chpu9}
\C(W^\varphi,\, g\circ\varphi)=\C(W,g),
\qquad
m({W^\varphi},\, g\circ\varphi)=m(W,g).
\end{align}
Consequently,
\begin{align}\label{chpu10}
D(W^\varphi,\, g\circ\varphi)=D(W,g).
\end{align}
Thus every admissible mixed colouring of $W$ pulls back to
a mixed colouring of $W^\varphi$ with the same Cheeger ratio. Taking infima yields
\begin{align}\label{chpu11}
h(W^\varphi)\le h(W).
\end{align}

For the reverse inequality, let $\psi:\Omega_1\to[0,1]$ be
measurable with $D(W^\varphi,\psi)>0$. 
As in the proof of Lemma~\ref{LMC}, let
\begin{align}\label{chpu12}
\bpsi:=\mathbb E_{\mu_1}[\psi\mid\mathcal F_\varphi],
\end{align}
where $\mathcal F_\varphi$ is the $\sigma$-field generated by $\varphi$. The
same argument gives a measurable $g:\Omega_2 \to [0,1]$ s.t. $\bpsi=g
\circ\varphi$ and 
\begin{align}\label{chpu13}
\C(W^\varphi,\,\psi)
=\C(W^\varphi,\, \bpsi)= \C(W,g).\end{align}
Moreover, since $k_{W^\varphi}=k_W\circ\varphi$ is
$\mathcal F_\varphi$-measurable, the defining property of conditional expectation and change of variables give
\begin{align}\label{chpu14}
m(W^\varphi,\psi)
&=\int_{\Omega_1}k_{W^\varphi}(x)\psi(x)\,\dd\mu_1(x)=\int_{\Omega_1}k_{W^\varphi}(x)\bpsi(x)\,\dd\mu_1(x)=m(W,g).
\end{align}
Together with $\rho_{W^\varphi}=\rho_W$, this implies $D(W^\varphi,\psi)=D(W,g)>0$. Consequently,
\begin{align}
H(W^\varphi,(\psi,1-\psi))
=H(W,(g,1-g))
\ge h(W).
\end{align}
Taking the infimum over all admissible $\psi$ gives $h(W^\varphi)\ge h(W)$. Combining this with the opposite inequality proves the claim.
\end{proof}

\begin{proposition}\label{prop:cheeger-stability}
Let $W_1$ and $W_2$ be graphons on possibly different probability spaces. 
Suppose that $k_{W_1},k_{W_2} \ge c$ a.e.
Then, 
\begin{align}|h(W_1)-h(W_2)|\le \frac{4}{c^2}\delta_\square(W_1,W_2).\end{align}
\end{proposition}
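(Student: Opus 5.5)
First reduce to graphons on a common probability space and cut-norm distance. By \refL{lem:cheeger-pullback}, $h$, the degree functions (via $k_{W^\gf}=k_W\circ\gf$) and hence the lower bound $k\ge c$ are all preserved under \mpp{} pullbacks. Using the standard facts on cut distance from \refApp{app:uncoloured-pullback} (cf.\ \cite[Theorem 6.9]{SJ249}), for every $\eps>0$ there are pullbacks $W_1^{\gf_1},W_2^{\gf_2}$ on a common space with $\cnorm{W_1^{\gf_1}-W_2^{\gf_2}}<\dcut(W_1,W_2)+\eps$; for instance, use the product of the two spaces with the projections, so that the pullbacks are genuinely \mpp{} maps. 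It therefore suffices to show, for graphons $W_1,W_2$ on one space $\gO$ with $k_{W_1},k_{W_2}\ge c$ a.e., that
\begin{align}
|h(W_1)-h(W_2)|\le\frac{4}{c^2}\cnorm{W_1-W_2}.
\end{align}
We then let $\eps\to0$. We also need $\rho_{W_j}\ge c>0$, so that $h$ is defined; this is immediate from the degree bound.

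By symmetry it suffices to bound $h(W_2)-h(W_1)$. Write $\eps:=\cnorm{W_1-W_2}$ and fix $\eta>0$. By \refL{lem:cheeger-denominator}, choose $\psi$ with $H(W_1,\psi)\le h(W_1)+\eta$ and $D_1:=D(W_1,\psi)\ge c^2/2$. We may assume $\eps<c^2/2$; otherwise $\frac4{c^2}\eps\ge2$, and the claim is trivial since $0\le h\le\frac12$ by \refL{LCH0}. Then \refL{lem:cheeger-cut-three} gives $D_2:=D(W_2,\psi)\ge D_1-\eps>0$, so $\psi$ is admissible for $W_2$ and $h(W_2)\le C_2/D_2$, where $C_j=\C(W_j,\psi)$. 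Now estimate
\begin{align}
\frac{C_2}{D_2}-\frac{C_1}{D_1}=\frac{C_2-C_1}{D_2}+C_1\Bigl(\frac1{D_2}-\frac1{D_1}\Bigr)
\le\frac{\eps}{D_2}+\frac{C_1}{D_1}\cdot\frac{\eps}{D_2}.
\end{align}
Since we may take $\psi$ with $H(W_1,\psi)\le\frac12$, we have $C_1/D_1\le\frac12$, and so the difference is at most $\frac32\eps/D_2$.

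The main obstacle is controlling $D_2$ from below with the right constant. The bound $D_2\ge D_1-\eps\ge c^2/2-\eps$ degenerates as $\eps$ approaches $c^2/2$. I would split into cases. If $\eps\le c^2/4$, then $D_2\ge c^2/4$, and the difference is at most $6\eps/c^2$. This is slightly worse than $4/c^2$, so I would sharpen the argument by applying the first part of \refL{lem:cheeger-denominator} directly to $W_2$. If $C_2/D_2\le\frac12$, that lemma gives $D_2\ge c^2/2$, and the difference is at most $\frac32\eps\cdot\frac2{c^2}=3\eps/c^2\le4\eps/c^2$. If instead $C_2/D_2>\frac12$, we use $h(W_2)\le\frac12$ by \refL{LCH0}, so $h(W_2)-h(W_1)\le\frac12-h(W_1)$. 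Here we bound $\frac12-h(W_1)\le\frac12-C_1/D_1+\eta\le C_2/D_2-C_1/D_1+\eta$, and control this with $D_2\ge D_1-\eps$. In the case $\eps>c^2/4$ the claimed bound exceeds $1\ge\frac12$ and holds trivially. Letting $\eta\to0$ and exchanging the roles of $W_1$ and $W_2$ completes the proof. The constant bookkeeping in the mixed case ($C_2/D_2>\frac12$) is the delicate point, and I expect $4/c^2$ to come out after using $\eps\le c^2/4$ there.
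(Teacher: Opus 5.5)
Your overall strategy is the right one: reduce to a common space, take a near-optimal colouring $\psi$ for $W_1$ with $D_1=D(W_1,\psi)\ge c^2/2$ from \refL{lem:cheeger-denominator}, and perturb using \refL{lem:cheeger-cut-three}. But as written the proof does not reach the constant $4/c^2$.

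The trouble is that you divide by $D_2=D(W_2,\psi)$, the denominator you do \emph{not} control. Consider the remaining case $\eps\le c^2/4$, $C_2/D_2>\frac12$. There you bound $\frac12-C_1/D_1$ by $C_2/D_2-C_1/D_1$ and then use $D_2\ge D_1-\eps\ge c^2/4$. This puts you back to bounding the same quantity with the same lower bound on $D_2$, so it again gives only $\frac32\eps/D_2\le 6\eps/c^2$, which is exactly the estimate you were trying to improve. So the final step is an unproved expectation, and the constant in the proposition is not established.

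The missing idea is to divide by the denominator you do control. By symmetry of $W_2$ one has $C_2\le D_2$, i.e.\ $H(W_2,\psi)\le 1$, and
\[
H(W_2,\psi)-H(W_1,\psi)=\frac{C_2-C_1+H(W_2,\psi)\,(D_1-D_2)}{D_1}\le\frac{2\eps}{D_1}\le\frac{4\eps}{c^2}.
\]
This needs no case analysis and no smallness assumption on $\eps$. Admissibility of $\psi$ for $W_2$ follows from $k_{W_2}\ge c$: $D_1>0$ forces $\int\psi\dd\mu>0$ and $\int(1-\psi)\dd\mu>0$. This is the paper's argument, with the roles of $W_1$ and $W_2$ interchanged.

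Alternatively, your own bound $\frac32\eps/D_2$ already suffices if you place the trivial case at $\eps\ge c^2/8$ instead of $c^2/4$. There $4\eps/c^2\ge\frac12\ge|h(W_1)-h(W_2)|$ by \refL{LCH0}. For $\eps<c^2/8$ you get $D_2\ge 3c^2/8$, and hence $\frac32\eps/D_2\le 4\eps/c^2$.

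One small point in the reduction: if you realise the common space as $\gO_1\times\gO_2$ with the projections, it must carry a near-optimal coupling of $\mu_1$ and $\mu_2$, not the product measure. It is simplest to take the space and maps supplied directly by \eqref{dcut}.
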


\begin{proof}
First, assume that $W_1$ and $W_2$ are defined on the same probability space $(\Omega,\mathcal F,\mu)$, and let $\varepsilon:=\|W_1-W_2\|_\square$. For $\eta>0$, Lemma \ref{lem:cheeger-denominator} provides a mixed colouring
$\Psi=(\psi,1-\psi)$ such that
\begin{align}H(W_2,\Psi)\le h(W_2)+\eta,
\qquad
D(W_2,\psi)\ge\frac{c^2}{2}.
\end{align} 
Since $D(W_2,\psi)>0$, both $\psi$ and $1-\psi$ have positive integrals.
The assumption $k_{W_1}\ge c$ therefore gives $D(W_1,\psi)>0$.
Also, by symmetry, $0\le \C(W_1,\psi)\le D(W_1,\psi)$. Using \refL{lem:cheeger-cut-three}, we obtain
\begin{align}
H(W_1,\Psi) - H(W_2,\Psi)&=\frac{ \C(W_1,\psi)-\C(W_2,\psi) + H(W_1,\Psi)\bigl(D(W_2,\psi)-D(W_1,\psi)\bigr)}
     {D(W_2,\psi)}\notag\\
     &\le \frac{2\varepsilon}{D(W_2,\psi)}
\le \frac{4\varepsilon}{c^2}.
\end{align}
Consequently,
\begin{align}
h(W_1)
\le H(W_1,\Psi)
\le H(W_2,\Psi)+\frac{4\varepsilon}{c^2}
\le h(W_2)+\eta+\frac{4\varepsilon}{c^2}.
\end{align}
Letting $\eta\to 0$ and repeating the argument with
$W_1$ and $W_2$ interchanged, we obtain
\begin{align}
|h(W_1)-h(W_2)|
\le \frac{4\varepsilon}{c^2}
= \frac{4}{c^2}\|W_1-W_2\|_{\square}.
\end{align}
For graphons on possibly different probability spaces, apply the
preceding inequality to any common pullbacks $W_1^{\varphi_1}$ and
$W_2^{\varphi_2}$, see \eqref{dcut1}.
Lemma \ref{lem:cheeger-pullback} preserves their Cheeger constants and degree lower bounds, so
\begin{align}|h(W_1)-h(W_2)|\le \frac{4}{c^2}\|W_1^{\varphi_1}-W_2^{\varphi_2}\|_\square.\end{align}
Taking the infimum over all such pullbacks proves the result.
\end{proof}

In order to show continuity of the Cheeger constant, we impose a lower bound
on the degrees for both graphs and graphons.

\begin{definition}\label{Def:cWc}
  For $c\in\oi$, let $\cWc$ be the set of all graphons
(on any probability space)
such that $k_W\ge c$ a.e.
\end{definition}
\noindent
(In formal set theory sense, this is strictly speaking not a set but a
class; this makes no difference for our purposes.)
Note that for a graphon $W$ on a probability space $\OFmu$,
we have $W\in\cWc$ if and only if, recalling \eqref{chm},
\begin{align}\label{chc}
m(W,\etta_A):= \int_A k_{W}(x)\dd\mu(x)
\ge c\mu(A)  
\end{align}
for every (measurable) $A\subseteq\gO$.
Furthermore, 
for a graph $G$ with $|G|=n$, 
we have $W_G\in\cWc$ if and only if 
\begin{align}\label{chc1}
 d_{\rm min}(G):=\min_{1\le v\le n} d_v
\ge cn.  
\end{align}
We note next that
this definition is invariant under equivalence of graphons.

\begin{lemma}\label{LCWc}
  Let $W_1$ and $W_2$ be two graphons, possibly on different probability
  spaces, and let $c\in\oi$.
If\/ $W_1\cong W_2$, then $W_1\in\cWc \iff W_2\in\cWc$.
\end{lemma}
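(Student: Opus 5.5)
The plan is to use the pullback machinery: show the property $k_W\ge c$ a.e.\ is preserved in both directions under a measure-preserving pullback, and then connect two equivalent graphons by a chain of pullbacks.

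\emph{Step 1 (pullbacks).} Let $\gf:(\gO_1,\mu_1)\to(\gO_2,\mu_2)$ be measure-preserving and $W$ a graphon on $\gO_2$. By \eqref{chpu7} in \refL{lem:cheeger-pullback}, $k_{W^\gf}=k_W\circ\gf$ a.e. (That computation does not use $\rho_W>0$, only Fubini and measure preservation.) Hence
\begin{align*}
\mu_1\set{x:k_{W^\gf}(x)<c}=\mu_1\bigpar{\gf\qw\set{y:k_W(y)<c}}=\mu_2\set{y:k_W(y)<c}.
\end{align*}
The last equality holds because $\gf$ is measure-preserving. So the left side vanishes exactly when the right side does, i.e.\ $W^\gf\in\cWc\iff W\in\cWc$.

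\emph{Step 2 (chains).} If $W_1\cong W_2$, then by \refT{Tunique0}\ref{Tunique0BCL} there is a graphon $W$ and measure-preserving maps $\gf_1,\gf_2$ with $W_1=W^{\gf_1}$ and $W_2=W^{\gf_2}$ a.e. Since the definition of $\cWc$ is insensitive to a.e.\ modifications of the graphon, two applications of Step 1 give
\begin{align*}
W_1\in\cWc\iff W\in\cWc\iff W_2\in\cWc.
\end{align*}
Alternatively, as in the proof of \refT{TMC2}, one can instead use the chain of four pullbacks from \cite[Theorem 8.3]{SJ249} and apply Step 1 at each link; each link preserves membership in both directions.

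\emph{Main obstacle.} There is essentially none beyond invoking the representation theorem for equivalent graphons. The only care needed is to check that the identity $k_{W^\gf}=k_W\circ\gf$ holds a.e.\ without assuming positive density, which is immediate from the change of variables.
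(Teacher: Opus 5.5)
Your proof is correct, but your treatment of the pullback case is more direct than the paper's. The paper uses the integral characterisation \eqref{chc}, namely $\int_A k_W\dd\mu\ge c\mu(A)$ for all measurable $A$, and treats the two directions separately. For $W_1=W_2^{\gf}$, it gets $W_1\in\cWc\implies W_2\in\cWc$ by a change of variables over $\gf\qw(A)$. For the converse, it conditions $\etta_A$ on the $\sigma$-field generated by $\gf$ and writes the conditional expectation as $g\circ\gf$, as in the proof of \refL{lem:cheeger-pullback}. You instead apply $\mu_1(\gf\qw(B))=\mu_2(B)$ to the single sublevel set $B=\set{k_W<c}$. This gives both directions at once and makes the conditional-expectation step unnecessary: the direction $W\in\cWc\implies W^\gf\in\cWc$ becomes just the observation that preimages of null sets under a measure-preserving map are null. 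The paper's version has the mild advantage of staying in the set-function form \eqref{chc}, which it reuses in \refL{LCWc2}. For this lemma, though, your argument is shorter and entirely elementary. The reduction of the general case to pullbacks via \refT{Tunique0}\ref{Tunique0BCL} is the same in both. You also note that, because your Step 1 is symmetric, the chain of pullbacks from \cite[Theorem 8.3]{SJ249} suffices. That remark is valid and avoids the deep uniqueness theorem; the paper does not make it for this lemma.
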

\begin{proof}
Let $W_j$ be defined on the probability space $(\gO_j,\cF_j,\mu_j)$ 
($j=1,2$).

Suppose first that $W_1=W_2^\gf$ is a pullback of $W_2$ by a \mpp{} map
$\gf:\gO_1\to\gO_2$.
As seen in \refL{lem:cheeger-pullback}, then $k_{W_1}=k_{W_2}\circ\gf$, and a
change of variables yields, for any $A\subseteq\gO_2$,
\begin{align}\label{chc3}
  \int_{A}k_{W_2}(x)\dd\mu_2(x)
=\int_{\gf\qw(A)} k_{W_2}(\gf(y))\dd\mu_1(y)
=\int_{\gf\qw(A)} k_{W_1}(y)\dd\mu_1(y).
\end{align}
Hence, if $W_1\in\cWc$ and thus
\eqref{chc} holds for $W_1$ and any $A\subseteq \gO_1$, then 
\eqref{chc} holds for $W_2$ too, and thus $W_2\in\cWc$.

Conversely, suppose that $W_2\in\cWc$, and let $A\subseteq\gO_1$.
We let $\psi:=\sindic{A}$, and argue  
as in the proof of \refL{lem:cheeger-pullback}: we let
$\cF_\varphi$ be the $\sigma$-field generated by $\varphi$, and
define $\bpsi$ by \eqref{chpu12}.
Then, again, there exists a measurable $g:\gO_2\to\oi$ such that
$\bpsi=g\circ\gf$, and thus, using \eqref{chpu14},
\begin{align}\label{chc4}
 m(W_1,\etta_A)&
= m(W_2,g)
=  \int_{\gO_2}g(y)k_{W_2}(y)\dd\mu_2(y)
\notag\\&
\ge c\int_{\gO_2}g(y)\dd\mu_2(y)
=c\int_{\gO_1}g(\gf(x))\dd\mu_1(x)
=c\int_{\gO_1}\bpsi(x)\dd\mu_1(x)
\notag\\&
=c\int_{\gO_1}\psi(x)\dd\mu_1(x)
=c\mu_1(A).
\end{align}
Hence \eqref{chc} is satisfied for $W_1$, and thus $W_1\in\cWc$.

This proves the result when $W_1$ is a pullback of $W_2$.
The general case follows because if $W_1\cong W_2$, then there is a third
graphon $W$ such that  both $W_1$ and $W_2$ are pullbacks of $W$, see
\refT{Tunique0}, and hence $W_1\in\cWc\iff W\in\cWc\iff W_2\in\cWc$.
\end{proof}

\refL{LCWc} shows that we also can regard $\cWc$ as a set of graph limits
(recall that these can be defined as equivalence classes of graphons).
The following lemma shows that then
$\cWc$ is a closed subset of the compact space of graph limits.

\begin{lemma}\label{LCWc2}
Let $W_n$ and $W$ be graphons and suppose that $\dcut(W_n,W)\to0$ as \ntoo.
If\/ $c\in\oi$ and all $W_n\in\cWc$, then $W\in\cWc$.
\end{lemma}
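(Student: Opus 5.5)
We reduce to graphons on a common probability space and use the characterisation \eqref{chc}: $W\in\cWc$ if and only if $m(W,\etta_A)\ge c\mu(A)$ for every measurable $A$.

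First, the class $\cWc$ is invariant under equivalence (\refL{LCWc}) and under pullbacks. By the definition \eqref{dcut1} of the cut distance, there are therefore pullbacks $W_n^{\gf_n}$ and $W^{\tau_n}$ on a common space with $\cnorm{W_n^{\gf_n}-W^{\tau_n}}\to0$. It is cleaner to pass to $\oi$. By \refL{Loi} (uncoloured case) and \refL{LCWc}, we may assume $W$ and all $W_n$ are defined on $\oi$. We then choose \mpp{} bijections $\gf_n$ with $\cnorm{W_n^{\gf_n}-W}\to0$, as used in the proof of \refT{TQ1} via \eqref{nt2}. Since $W_n^{\gf_n}\cong W_n$, \refL{LCWc} gives $W_n^{\gf_n}\in\cWc$. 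Renaming, we may assume $W_n,W$ are on $\oi$ with $\cnorm{W_n-W}\to0$.

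Next, fix a measurable $A\subseteq\oi$. By \eqref{chfm} in \refL{lem:cheeger-cut-three}, applied with $\psi=\etta_A$ (or directly from \eqref{cutnorm1} with $S=A$, $T=\oi$),
\begin{align}
|m(W_n,\etta_A)-m(W,\etta_A)|\le\cnorm{W_n-W}\to0.
\end{align}
Since $m(W_n,\etta_A)\ge c\lambda(A)$ for every $n$ by \eqref{chc}, letting $n\to\infty$ gives $m(W,\etta_A)\ge c\lambda(A)$. As $A$ is arbitrary, \eqref{chc} shows $W\in\cWc$, i.e.\ $k_W\ge c$ a.e. This holds because otherwise the set $A=\set{k_W<c}$ of positive measure would violate the inequality.

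The only delicate step is the first reduction: going from $\dcut\to0$ to cut-norm convergence on one space while keeping membership in $\cWc$. This is handled by \refL{LCWc}, since pullbacks and equivalent representatives preserve the degree lower bound. If one prefers to avoid bijections, one can instead apply the same argument to the common pullbacks in \eqref{dcut1} directly, since the test-set argument works on any probability space.
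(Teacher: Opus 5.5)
Your proof is correct and is the paper's argument: you reduce via \refL{LCWc} and \eqref{dcut1} to cut-norm convergence on one space, then apply \eqref{chfm} with $\psi=\etta_A$ and pass to the limit in \eqref{chc}. Your closing alternative also works and is slightly more elementary. Testing the $n$-dependent common pullbacks on $\tau_n\qw(A)$ gives $m(W,\etta_A)\ge c\mu(A)-\cnorm{W_n^{\gf_n}-W^{\tau_n}}$ directly. This needs only that pullbacks of graphons in $\cWc$ stay in $\cWc$, not the equivalence case of \refL{LCWc}, which rests on \refT{Tunique0}.
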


\begin{proof}
  Using \refL{LCWc} and \eqref{dcut1}, 
we may assume that all $W_n$ and $W$ are defined on the
  same probability space $\OFmu$ (for example $\oi$) and that
  $\cnorm{W_n-W}\to0$. Then, for every $A\subseteq\gO$, by \eqref{chfm},
  \begin{align}\label{lcw9}
\bigabs{m(W_n,\etta_A)-m(W,\etta_A)}
\le\cnorm{W_n-W} \to0.
  \end{align}
We have $m(W_n,\etta_A)\ge c\mu(A)$ by \eqref{chc} since  $W_n\in\cWc$, and
thus it follows from \eqref{lcw9} 
that $m(W,\etta_A)\ge c\mu(A)$. Consequently, $W\in\cWc$.
\end{proof}

We can now show that
the Cheeger constant is a continuous functional on each set $\cWc$ with $c>0$.
\begin{theorem}\label{TCH-conv}
Let $W_n$ and $W$ be graphons and suppose that $\dcut(W_n,W)\to0$ as \ntoo.
Assume that for some $c>0$, $W_n\in\cWc$ for all sufficiently large $n$. 
Then,  
\begin{align}\label{tchc}
h(W_n) \to h(W).
\end{align}
\end{theorem}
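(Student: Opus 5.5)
The plan is to combine the closedness of $\cWc$ under cut-distance limits with the quantitative stability bound. Since $W_n\in\cWc$ for all $n\ge n_0$, I first apply \refL{LCWc2} to the tail sequence $(W_n)_{n\ge n_0}$, which still satisfies $\dcut(W_n,W)\to0$. This shows $W\in\cWc$, that is, $k_W\ge c$ a.e.

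Next I need $\rho_W>0$ and $\rho_{W_n}>0$, so that $h(W)$ and $h(W_n)$ are defined. This is immediate from the degree bound: $\rho_W=\int k_W\dd\mu\ge c>0$, and the same holds for each $W_n$ with $n\ge n_0$.

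With both $W_n$ and $W$ in $\cWc$, \refProp{prop:cheeger-stability} applies directly. It allows different probability spaces, which is why it is stated in terms of $\dcut$. It gives, for all $n\ge n_0$,
\begin{align}
|h(W_n)-h(W)|\le \frac{4}{c^2}\,\dcut(W_n,W)\to0,
\end{align}
and this proves \eqref{tchc}.

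The only potentially delicate step is ensuring that the limit $W$ inherits the degree lower bound, since the proposition requires the bound on \emph{both} graphons. \refL{LCWc2} handles exactly this, including the passage to a common probability space through \refL{LCWc} and \eqref{dcut1}, so I expect no real obstacle: the theorem is essentially a corollary of the stability proposition together with the closedness lemma.
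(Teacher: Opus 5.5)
Your proposal is correct and is essentially the paper's proof: you use \refL{LCWc2} to get $W\in\cWc$ and then apply \refProp{prop:cheeger-stability}, which gives the bound $\frac{4}{c^2}\dcut(W_n,W)\to0$. Your extra details, namely passing to the tail sequence and noting that $\rho\ge c>0$ so the Cheeger constants are defined, are correct points that the paper leaves implicit.
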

\begin{proof} 
By \refL{LCWc2}, we have $W\in\cWc$.
Hence, $k_W \ge c$ a.e.\ and $k_{W_n}\ge c$ a.e.\ for all sufficiently
large $n$. Proposition \ref{prop:cheeger-stability} therefore 
implies $h(W_n)\to h(W)$.
\end{proof}

\begin{remark}\label{rem.h_conv_not_easy}
Suppose we have a sequence of $(H_n)_n$ such that $|H_n|=n$ and $H_n \to W_H$ for some graphon $W_H$ with $\rho_{W_H}>0$. Construct the graph
$G_{n'}$ be taking the disjoint union of $H_n$ with a clique $S$ on $n/\log n$ vertices, then adding a single edge connecting $V(H_n)$ and $S$. Then we may note that $G_{n'}\to W_H$.

Since $\rho_{W_H}>0$, for large enough $n'$, we have $\vol_{G_{n'}}(S)\leq \vol(G)/2$. This implies that for $n'$ large enough we have \begin{align}
h(G_{n'})\leq \frac{e_{G_{n'}(S, S\comp)}}{\vol_{G_{n'}}(S)}=O\Bigpar{\frac{\log^2 n}{n^2}}=o(1)\end{align}
Thus, if we take any sequence $(H_n)_n$ with $h(H_n)=\Omega(1)$, then we have that $H_n, G_n\to W_H$ i.e.,\ the graph limits coincide while the corresponding limits of the Cheeger constants of the sequences do not, since $h(G_n)\to 0$ and $h(H_n)\not\to 0$. 
\end{remark}

This leads to a similar continuity result for graphs.  We note that by Remark~\ref{rem.h_conv_not_easy} in the theorem below it would not be enough to replace the condition on $d_{\rm min}(G_n)$ with the condition that $k_W(x)\geq c$ a.e.. Indeed, it would not be enough to replace the min-degree condition on $G_n$ with any condition on the graphon for which $G_n\to W$. 

\begin{theorem}\label{thm:cheeger-conv}
Let $G_n$ be graphs such that $|G_n|\to\infty$, and suppose that 
$G_n\to W$
for a graphon $W$. 
Assume that for some $c>0$,
\begin{align}\label{emma1}
 d_{\rm min}(G_n)  
\ge c|G_n|   
\end{align}
for all sufficiently large $n$. 
Then, $k_W \ge c$ a.e.\ and
\begin{align}\label{emma}
h(G_n) \to h(W),
\end{align}
where $h(W)$ is the mixed Cheeger constant.
\end{theorem}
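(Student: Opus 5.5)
We will argue by the triangle inequality, splitting
\begin{align*}
|h(G_n)-h(W)|\le |h(G_n)-h(W_{G_n})| + |h(W_{G_n})-h(W)|,
\end{align*}
and handling the first term by the finite-graph comparison of \refL{lem:cheeger-graph-graphon} and the second by the stability result \refT{TCH-conv}.

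First, by \eqref{chc1}, the hypothesis \eqref{emma1} says exactly that $W_{G_n}\in\cWc$ for all sufficiently large $n$. The assumption $G_n\to W$ means that $\dcut(W_{G_n},W)\to0$. Hence \refL{LCWc2} gives $W\in\cWc$, i.e.\ $k_W\ge c$ a.e., which is the first claim. Since $c>0$, we get $\rho_W\ge c>0$, so $h(W)$ is well defined. Also, for large $n$ the graph $G_n$ has no isolated vertices, since $d_{\min}(G_n)\ge c|G_n|>0$, so $h(G_n)$ is well defined. Then \refT{TCH-conv}, applied to $W_n:=W_{G_n}$, yields $h(W_{G_n})\to h(W)$. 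Here $h(W_{G_n})$ is the mixed Cheeger constant of the step graphon, which is the quantity appearing in \refL{lem:cheeger-graph-graphon}.

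For the first term, \eqref{chg} gives
\begin{align*}
0\le h(G_n)-h(W_{G_n})\le \sqrt{\frac{d_{\max}(G_n)}{\vol(G_n)}}.
\end{align*}
Writing $N:=|G_n|$, we have $d_{\max}(G_n)\le N$ and $\vol(G_n)\ge N\,d_{\min}(G_n)\ge cN^2$. Hence the bound is at most $(cN)^{-1/2}$, which tends to $0$ as $N\to\infty$. Combining the two terms gives \eqref{emma}.

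There is no real obstacle here; the work was done in \refProp{prop:cheeger-stability} and \refL{lem:cheeger-graph-graphon}. The only point needing care is that the degree bound is placed on the graphs, so that it passes to the step graphons through \eqref{chc1} and to the limit through \refL{LCWc2}, as \refR{rem.h_conv_not_easy} indicates is necessary. We should also note that \refT{TCH-conv} only requires $W_n\in\cWc$ for large $n$, which matches the hypothesis as stated.
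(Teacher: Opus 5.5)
Your proposal is correct and follows the paper's proof essentially step for step. You use \refT{TCH-conv} for $h(W_{G_n})\to h(W)$, the bound \eqref{chg} of \refL{lem:cheeger-graph-graphon} with $d_{\rm max}(G_n)\le |G_n|$ and $\vol(G_n)\ge c|G_n|^2$ for the graph--graphon comparison, and \refL{LCWc2} for $k_W\ge c$ a.e.; the only cosmetic point is that \refL{LCWc2} is stated with all $W_n\in\cWc$, so you should first discard finitely many initial terms, as the paper does when it assumes \eqref{emma1} holds for all $n$.
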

\begin{proof} 
We may assume that \eqref{emma1} holds for all $n$.
Assume also, for notational convenience, that $|G_n|=n$.
As noted in \eqref{chc1}, \eqref{emma1} is equivalent to $W_{G_n}\in\cWc$.
Furthermore, 
the assumption $G_n\to W$ is equivalent to
\begin{align}\label{emma3}
\dcut(W_{G_n},W)\to0
\end{align}
Consequently, \refT{TCH-conv} yields $h(W_{G_n})\to h(W)$.
Furthermore, by
Lemma \ref{lem:cheeger-graph-graphon}
and the bounds $d_{\rm max}(G_n)\leq n$ and $\vol(G_n)\ge cn^2$
(which follows from \eqref{emma1}), we have
\begin{align}\label{emma4}
0\le h(G_n)-h(W_{G_n})
\le\sqrt{\frac{d_{\rm max}(G_n)}{\vol(G_n)}}
\le\frac1{\sqrt{cn}}
\to 0.
\end{align}
Combining these two limits proves $h(G_n)\to h(W)$.

\noindent Finally, $W\in\cWc$ by \refL{LCWc2}, and thus $k_W\ge c$ a.e.
\end{proof}

\begin{theorem}\label{TCH4}
  Let $W$ be a graphon on $[0,1]$ with $\rho_W>0$. 
Then $h_\pure(W)=h_\mix(W)$.
\end{theorem}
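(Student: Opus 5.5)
The plan follows the proof of \refT{TMC4}: approximate by step graphons, where mixed colourings can be replaced by pure ones without changing any of the relevant homomorphism densities. The inequality $h_\mix(W)\le h_\pure(W)$ is trivial, since every pure colouring is a mixed one. For the reverse inequality, fix an admissible mixed colouring $\Psi=(\psi,1-\psi)$ with $D(W,\psi)>0$ and $\eta>0$. The goal is a pure colouring $\chi$ with $D(W,\chi)>0$ and $H(W,\chi)\le H(W,\Psi)+\eta$.

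For $N\ge1$, let $I_i:=[(i-1)/N,i/N)$ and let $W_N$ be the step graphon obtained by averaging $W$ over each square $I_i\times I_j$. As in \eqref{tmc4}, choose a pure colouring $\chi_N$ (with indicator $\sindic{S_N}$ of colour class $1$) such that $\gl(S_N\cap I_i)=\int_{I_i}\psi$ for every $i$. Since $W_N$ is constant on the squares, $k_{W_N}$ is constant on each $I_i$. Integrating square by square therefore gives
\begin{align}
\C(W_N,\chi_N)=\C(W_N,\psi),\qquad m(W_N,\chi_N)=m(W_N,\psi),
\end{align}
and also $\rho_{W_N}=\rho_W$, so $D(W_N,\chi_N)=D(W_N,\psi)$. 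Now apply \refL{lem:cheeger-cut-three} twice, once to $\chi_N$ and once to $\psi$, comparing $W$ with $W_N$. With $\eps_N:=\cnorm{W_N-W}\le\linorm{W_N-W}\to0$, this yields
\begin{align}
|\C(W,\chi_N)-\C(W,\psi)|\le 2\eps_N,\qquad |D(W,\chi_N)-D(W,\psi)|\le 2\eps_N.
\end{align}

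Since $D(W,\psi)>0$ is fixed, for large $N$ we have $D(W,\chi_N)\ge D(W,\psi)-2\eps_N>0$, so $\chi_N$ is admissible. Then
\begin{align}
H(W,\chi_N)\le \frac{\C(W,\psi)+2\eps_N}{D(W,\psi)-2\eps_N}\to H(W,\Psi).
\end{align}
Hence $h_\pure(W)\le H(W,\Psi)$, and taking the infimum over $\Psi$ gives $h_\pure(W)\le h_\mix(W)$.

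The only point needing care is the denominator. In contrast to \refP{prop:cheeger-stability}, no degree lower bound is available here. This causes no difficulty because $\Psi$ is fixed before $N\to\infty$, so $D(W,\psi)$ is a fixed positive number and the perturbation $2\eps_N$ eventually falls below it; no uniformity over colourings is needed. The hypothesis $\rho_W>0$ serves only to make $h$ well defined. I would also check that $k_{W_N}$ equals the average of $k_W$ on $I_i$, so that the identity $\rho_{W_N}=\rho_W$ used for $D$ holds.
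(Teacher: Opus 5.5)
Your proposal is correct and follows the paper's proof essentially step for step: block-average to $W_N$, pick a pure colouring matching the interval masses of $\psi$ as in \eqref{tmc4}, apply Lemma~\ref{lem:cheeger-cut-three} twice to get the $2\varepsilon_N$ bounds on $\C$ and $D$, and use that the fixed positive $D(W,\psi)$ eventually dominates the perturbation. One small remark: the identity $D(W_N,\chi_N)=D(W_N,\psi)$ needs only $m(W_N,\chi_N)=m(W_N,\psi)$, since both sides involve the same $\rho_{W_N}$, so the check $\rho_{W_N}=\rho_W$ is harmless but unnecessary.
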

\begin{proof}
We follow the block-approximation argument in the proof of  \refT{TMC4}. Fix
a 
mixed colouring $\Psi=(\psi,1-\psi)$ of $W$ with $D(W,\Psi)>0$.

For $N\ge1$, partition $[0,1]$ into intervals
$I_1,\ldots,I_N$ of length $1/N$, and let $W_N$ be obtained
by averaging $W$ over each rectangle $I_i\times I_j$. Then, as $\Ntoo$,
\begin{align}\label{emma5}
\varepsilon_N:=\|W-W_N\|_{L^1([0,1]^2)} \to 0.\end{align}
Choose a pure 2-colouring $\chi$ such that \eqref{tmc4} holds,
and set $\psi_N:=\etta_{\chi\qw(1)}$
and $\Psi_N:=(\psi_N,1-\psi_N)$. 
Since $W_N$ is constant on each partition rectangle,
integrating over these rectangles gives,
similarly to \eqref{mcx1b},
\begin{align}\label{emma6}
\C(W_N, \psi_N)=\C(W_N,\psi),
\qquad
m(W_N, \psi_N)=m(W_N,\psi).\end{align}
Consequently, $D(W_N,\psi_N)=D(W_N,\psi)$.
Lemma \ref{lem:cheeger-cut-three} together with \eqref{emma6} and
$\|W-W_N\|_\square\le\varepsilon_N$ imply
\begin{align}\label{emma7}
|\C(W,\psi_N)-\C(W,\psi)|\le2\varepsilon_N,
\qquad
|D(W,\psi_N)-D(W,\psi)|\le2\varepsilon_N.\end{align}
Since $D(W,\psi)>0$, \eqref{emma7} and \eqref{emma5} imply
that $D(W,\psi_N)>0$ for all sufficiently large $N$.
Furthermore, recalling \eqref{cheh},
for all sufficiently large $N$, and 
furthermore, recalling \eqref{cheh},
\begin{align}H(W,\Psi_N)\to H(W,\Psi).\end{align}
Thus the infimum $h_\pure(W)$ over pure colourings is at most
$H(W,\Psi)$ for every admissible mixed colouring $\Psi$. 
Taking the infimum over all such colourings $\Psi$, this yields
$h_\pure(W) \le h_\mix(W)$.
The reverse inequality follows because every pure colouring
is also mixed.
\end{proof}

\subsection{Minimum $k$-section}\label{subsec.min_k}
Fix $k\ge 2$. For a graph $G$ on vertex set $[n]$, define the normalised
minimum $k$-section value by
\begin{align}\label{mink}
\MS_k(G):=\frac{1}{n^2}\min_{\substack{A_1,\ldots,A_k\text{ partition of }[n]\\ |A_i|=\lfloor n/k\rfloor \text{ or } \lceil n/k\rceil}}
\sum_{1\le i<j\le k} e_G(A_i,A_j),
\end{align}
where $e_G(A_i,A_j)$ is the number of unordered edges with one endpoint in $A_i$ and the other in $A_j$. Let $W$ be a graphon on a probability space $(\gO,\mathcal F,\mu)$.
As in Section~\ref{SSSmax}, we distinguish pure and mixed colourings. Define the pure minimum $k$-section value by
\begin{align}\label{MS1}
\MS_{k,\text{pure}}(W):=\inf_{\substack{
S_1,\ldots,S_k\text{ measurable partition of }\gO\\
\mu(S_i)=1/k,\ i\in[k]}}
\sum_{1\le i<j\le k}
\iint_{S_i\times S_j}
W(x,y)\,\dd\mu(x)\,\dd\mu(y).
\end{align}
We use the convention $\inf\varnothing=+\infty$, since a
general probability space need not admit such a partition. For $k=2$, this recovers the \emph{minimum bisection problem}. Recall from~\eqref{MCGx} that for a pure $k$-colouring $\chi$,
\begin{align}
C_k(W,\chi)=
\frac12\iint_{\gO^2}
W(x,y)\mathbf 1[\chi(x)\neq\chi(y)]\,
\dd\mu(x)\,\dd\mu(y).
\end{align}
(The factor $1/2$ again appears because the double integral counts both
ordered directions between distinct colour classes.) Thus, similar to maximum cut, we can cast minimum $k$-section as a constrained pure $k$-colouring problem:
\begin{align}\label{MS2}
\MS_{k,\text{pure}}(W)=
\inf_{\substack{
\chi:\gO\to[k]\text{ measurable}\\
\mu(\chi^{-1}(i))=1/k,\ i\in[k]}}
{C_k(W,\Psi)}
\end{align}
Thus, the finite balance condition becomes the colour-mass constraint
$\mu(S_i)=1/k$ for each colour class.
Recall from~\eqref{MCWx2} and~\eqref{MC3} that for a mixed $k$-colouring $\Psi=(\psi_1,\ldots,\psi_k)$, we have 
\begin{align}
\C_k(W,\Psi)
&=\sum_{1\le i<j\le k}\iint_{\gO^2}
W(x,y)\psi_i(x)\psi_j(y)\,
\dd\mu(x)\,\dd\mu(y)\label{msk1}\\
&=\sum_{1\le i<j\le k}
t\bigl(K_{2;ij},(W,\Psi)\bigr).\label{msk2}
\end{align}
 We will sometimes call $\C_k(W, \Psi)$ the colouring score of $\Psi$.
We call $\Psi$ balanced if
$\int_{\gO}\psi_i\,\dd\mu=1/k$ for every $i\in[k]$, and define
\begin{align}\label{msk21}
\MS_{k,\mathrm{mix}}(W):=
\inf_{\substack{
\Psi\in\Lambda_k(\gO),\\
\Psi \text{ balanced}}}\C_k(W,\Psi).
\end{align}
The mixed admissible set is always nonempty, since the
constant colouring $\psi_i\equiv1/k$ is balanced. Restricting to pure colourings
recovers $\MS_{k,\pure}(W)$.

As for maximum cut, we take the mixed version as the default:
\begin{align}\label{msk22}
\MS_k(W):=\MS_{k,\mathrm{mix}}(W).
\end{align}
We next show the correspondence between finite and graphon minimum $k$-sections. Because of the slightly different forms of the balance constraint, we
  cannot expect to have equality as for maximum cut in \refT{TMC1},
but the difference vanishes asymptotically for large $n$. The first lemma identifies the normalised finite minimum $k$-section with
that of the associated step graphon up to an $o(1)$ error, while the second
proves cut-norm continuity and characterises the limit along cut-convergent
graph sequences. 
\begin{lemma}\label{lem:minksection-graph-step}
Let $k \ge 2$ be fixed. Let $G$ be a graph on vertex set $[n]$, and let
$W_G$ be the associated step graphon
defined in \refE{EG1}.
Then 
\begin{align}\label{aaaL}
\MS_k(G)=\MS_k(W_G)+o(1),
\end{align}
as $n\to \infty$.
\end{lemma}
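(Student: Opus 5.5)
We prove the two inequalities $\MS_k(W_G)\le \MS_k(G)+o(1)$ and $\MS_k(G)\le \MS_k(W_G)+o(1)$ separately. In both directions the error comes only from the mismatch between the exact colour masses $1/k$ and the integer part sizes $\lfloor n/k\rfloor$, $\lceil n/k\rceil$. That mismatch is $O(k/n)$ in mass, and since $0\le W\le 1$, changing colour mass $\delta$ changes $\C_k$ by at most $O(\delta)$. As a first step we reduce from $W_G$ to the atomic graphon $W'_G$ (or work directly with interval averages). Because $W_G$ is constant on each $I_u\times I_v$, replacing a balanced mixed colouring $\Psi$ of $W_G$ by its averages $\bPsi$ on each interval $I_u$ leaves $\C_k(W_G,\Psi)$ unchanged, as in the proof of \refL{LMC}. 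This replacement also preserves the masses $\int\psi_i=1/k$. Hence $\MS_k(W_G)$ equals the infimum of the fractional score $n^{-2}\sum_{i<j}\sum_{uv\in E(G)}\bigl(p_i(u)p_j(v)+p_j(u)p_i(v)\bigr)$ over fractional colourings $p:[n]\to\gD_k$ with $\sum_u p_i(u)=n/k$ for every $i$. Here we must check that the normalisation of $\C_k(W_G,\cdot)$ over ordered pairs matches $n^{-2}$ times the unordered edge count in \eqref{mink}.

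\emph{Upper bound on $\MS_k(W_G)$.} Take an optimal $k$-section $A_1,\dots,A_k$ of $G$. Colour each $I_v$ with $v\in A_i$ by colour $i$. This is a pure colouring of $W_G$ with masses $|A_i|/n=1/k+O(1/n)$ and score exactly $\MS_k(G)$. To make it balanced, move mass $O(1/n)$ per colour: split some intervals fractionally, using either a mixed colouring or a pure colouring on subintervals. Since $W_G\le1$, this changes $\C_k$ by at most $O(k/n)$. Thus $\MS_k(W_G)\le \MS_k(G)+O(k/n)$.

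\emph{Upper bound on $\MS_k(G)$.} This direction is the main obstacle, since we must round a fractional balanced colouring $p$ to an exact integer $k$-section without losing more than $o(1)$. Independent randomized rounding, as in the proof of \refT{TMC1}, preserves the expected score but only gives sizes $n/k+O_P(\sqrt n)$, so the repair cost would be $O(n^{-1/2})$. That is still $o(1)$ and would already suffice, but we would instead use a deterministic pipage/dependent-rounding argument. Regard $p$ as a point of the transportation polytope $\{p\ge0:\sum_i p_i(u)=1,\ \sum_u p_i(u)=s_i\}$ with integer targets $s_i\in\{\lfloor n/k\rfloor,\lceil n/k\rceil\}$, $\sum_i s_i=n$. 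First shift $p$ by total mass $O(k)$ to reach these targets, which costs $O(k/n)$ in score. Next use the fact that the score is multilinear and, along any direction that alternates $\pm\eps$ around a cycle in the bipartite support graph (vertices versus colours), it has no diagonal quadratic terms from loops since $A_{uu}=0$. The terms from pairs $u\ne v$ on the cycle can be quadratic, so we handle this as in pipage rounding: choose the cycle direction so that the score is concave or linear along it, or bound the quadratic correction by $O(1/n^2)$ per step times $O(n)$ steps. Moving to the endpoint with smaller score reduces the support until $p$ is a vertex of the polytope, which is integral by total unimodularity. The result is an exact $k$-section with score at most the fractional score plus $O(k/n)$.

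If the pipage step is awkward, we fall back on randomized rounding plus repair. Round independently, then move $O(\sqrt n)$ expected vertices between classes to fix the sizes, each move costing at most $n/n^2$ in score. Either route gives $\MS_k(G)\le\MS_k(W_G)+O(n^{-1/2})$, and combining the two bounds yields \eqref{aaaL}.
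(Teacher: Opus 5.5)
Your proposal is correct. The first inequality is the paper's argument. Your fallback for the hard direction is exactly how the paper proves $\MS_k(G)\le\MS_k(W_G)+O_k(n^{-1/2})$: independent rounding with colour probabilities $a_v(i)=n\int_{I_v}\psi_i$, then recolouring $O_k(\sqrt n)$ vertices in expectation at cost at most $1/n$ each.

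Your primary pipage route is a genuinely different, deterministic alternative. It works, but not via the first option you list. Along a cycle direction $d$, the $t^2$-coefficient of the score is $-n^{-2}\sum_{\{u,v\}\in E(G)}\langle d(u),d(v)\rangle$. This is unchanged under $d\mapsto -d$, so concavity cannot be arranged by choosing the orientation, and nothing guarantees that a cycle with nonpositive curvature exists.

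Your second option is the one to use, and the two counts it needs are easy to supply:
\begin{itemize}
\item A simple cycle in the bipartite fractional-support graph has at most $k$ colour nodes, hence at most $k$ vertices of $G$, each with $d(u)=\pm(e_i-e_{i'})$. So the quadratic coefficient is at most $k^2/n^2$ in absolute value, while $|t|\le 1$.
\item Only fractional entries move, so each step makes at least one more entry integral. Hence there are at most $nk$ steps.
\end{itemize}
Moving each time to the endpoint where the linear term is nonpositive (or simply to the better endpoint) therefore costs at most $k^3/n$ in total. The process ends at an integral point directly, so no appeal to total unimodularity is needed.

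Compared with the paper, this route gives a sharper error $O_k(1/n)$ in both directions and a deterministic rounding. The paper's randomized argument is shorter and uses no polytope structure.
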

\begin{proof}
A finite $k$-section $A_1,\ldots,A_k$ corresponds to the measurable partition
of $\gO$ given by
$S_i:=\bigcup_{v \in A_i} I_v$, for $i \in [k]$. 
For this partition, 
\begin{align}\sum_{1 \le i<j \le k}\int_{S_i \times S_j}W_G(x,y) \dd\mu(x)\dd\mu(y)=\frac{1}{n^2}\sum_{1\le i<j\le k}e_G(A_i,A_j).\end{align}
The only discrepancy is the balance condition: the sets $S_i$ associated with a
finite balanced $k$-section have measures 
$\mu(S_i)=|A_i|/n$ which is 
within $1/n$ of $1/k$. 
Since each class measure differs from $1/k$ by at most $1/n$, the total
mass that must be redistributed to make all class measures equal to $1/k$
is $O_k(1/n)$. This produces a balanced mixed colouring
$\Psi^0=(\psi_1^0,\ldots,\psi_k^0)$ such that
\begin{align}\label{aaaa}
\sum_{i=1}^k
\int_{\gO}|\psi_i^0-\mathbf1_{S_i}|\,\dd\mu
=
\sum_{i=1}^k|\mu(S_i)-1/k|
\le \frac{k}{n}.
\end{align}
Since $0\le W_G\le1$, the corresponding change in the
crossing score satisfies
\begin{align}
&\Big|
\C_k(W_G,\Psi^0)
-
\sum_{i<j}\iint_{S_i\times S_j}
W_G(x,y)\,\dd\mu(x)\,\dd\mu(y)
\Big|
\le
\sum_{i=1}^k
\int_{\gO}|\psi_i^0-\mathbf1_{S_i}|\,\dd\mu
=O_k(n^{-1}).
\end{align}
Taking the minimum over all finite $k$-sections of $G$, we obtain
\begin{align}\label{aaa0}
    \MS_k(W_G)
    \le
    \MS_k(G)+O_k(1/n).
\end{align}
Conversely, let $\Psi=(\psi_1,\ldots,\psi_k)$ be a balanced
mixed colouring of $W_G$. Define
\begin{align}
a_v(i):=n\int_{I_v}\psi_i(x)\,\dd\mu(x),
\qquad v\in[n],\ i\in[k].
\end{align}
Then 
\begin{align}\label{aaa1}
0\le a_{v}(i)\le1,\qquad
\sum_{i=1}^k a_{v}(i)=1,\qquad
\sum_{v=1}^n a_{v}(i)=n/k.  
\end{align}
Since $W_G$ is constant on each rectangle $I_u\times I_v$,
we may compute the mixed colouring score
$\C_k(W_G,\Psi)$ in \eqref{msk1}
edge by edge. For each edge $\{u,v\}$, summing over $1\le i<j\le k$ gives 
a contribution that is $n^{-2}$ times
\begin{align}\label{aaa2}
  \sum_{1\le i<j\le k}\bigl(a_u(i)a_v(j)+a_u(j)a_v(i)\bigr)
=
1-\sum_{i=1}^k a_u(i)a_v(i).
\end{align}
Thus the score of this mixed colouring is
\begin{align}\label{aaa3}
  \C_k(W_G,\Psi)=
 \frac1{n^2}
  \sum_{\{u,v\}\in E(G)}
  \left(1-\sum_{i=1}^k a_u(i)a_v(i)\right).
\end{align}
Now independently assign each vertex $v\in G$ a random
colour $C_v\in[k]$ with 
\begin{align}\label{msk3}
\Prob(C_v=i)=a_v(i),  
\end{align}
cf.\ \eqref{aaa1}, and let $A_i:=\set{v:C_v=i}$. Denote the corresponding normalised crossing value by
\begin{align}
\C_k\bigpar{G;(A_i)_1^k}:=
\frac{1}{n^2}\sum_{1\le i<j\le k} e_G(A_i,A_j).
\end{align}
Then the expected normalised crossing value is, 
using \eqref{aaa2}--\eqref{msk3},
\begin{align}\label{aaa5}
&\E\bigsqpar{ \C_k\bigpar{G;(A_i)_1^k}}
= \frac1{n^2}\E
\sum_{1\le i<j\le k}
  \sum_{\{u,v\}\in E(G)}
\bigpar{\sindic{C_u=i}\sindic{C_v=j}+\sindic{C_u=j}\sindic{C_v=i}}
\notag\\&\qquad =
\C_k\bigpar{W_G,\Psi}  
.\end{align}
Let
$N_i:=|\{v:C_v=i\}|$. Then, using \eqref{aaa1} again, 
$\E N_i=\sum_{v=1}^na_v(i)=n/k$ and $\operatorname{Var}(N_i)\le n$,
so $\mathbb E|N_i-n/k|\le \sqrt n$ and hence $\mathbb E\sum_{i=1}^k |N_i-n/k|=O_k(\sqrt n)$. Choose target sizes $b_1,\ldots,b_k\in\{\lfloor n/k\rfloor,\lceil n/k\rceil\}$
with $\sum_i b_i=n$. Since $|b_i-n/k|\le1$, we also have $\E\sum_{i=1}^k |N_i-b_i|=O_k(\sqrt n)$.
By recolouring vertices from classes with $N_i>b_i$ into classes with $N_i<b_i$,
the colouring can be made balanced after changing $O_k(\sqrt n)$ vertices in
expectation. Recolouring one vertex changes the crossing-edge count by at most
$n-1$, so the normalised value changes by $O_k(n^{-1/2})$ in expectation.
Hence, there exists a random balanced partition $(A_i')_1^k$ such that,
using \eqref{aaa5},
\begin{align}\label{aaa6}
\E\bigsqpar{\C_k\bigpar{G;(A'_i)_1^k}}
&\le
\E\bigsqpar{\C_k\bigpar{G;(A_i)_1^k}}
+ O_k(n^{-1/2})
\notag\\&
=
\C_k\bigpar{W_G,\Psi}  
+ O_k(n^{-1/2})
.\end{align}
Hence, some balanced realization $(A'_i)_1^k$
has value at most 
$\C_k\bigpar{W_G,\Psi} + O_k(n^{-1/2})$, and thus
\begin{align}\label{aaa6b}
&\MS_k(G)
\le \C_k\bigpar{G;(A'_i)_1^k}
\le
\C_k\bigpar{W_G,\Psi}  
+ O_k(n^{-1/2})
.\end{align}
Taking the infimum over all balanced mixed colourings
$\Psi$ gives
\begin{align}\label{aaa7}
  \MS_k(G)
  \le
  \MS_k(W_G)+O_k(n^{-1/2}).  
\end{align}

Combining the two inequalities 
\eqref{aaa0} and \eqref{aaa7}
yields \eqref{aaaL}.
\end{proof}

We obtain some results similar to corresponding result for maximum cut in
\refSS{SSSmax}.

\begin{lemma}\label{LMS1} 
If\/ $W_1$ and $W_2$ are graphons on the same probability space
$(\gO,\mathcal F,\mu)$, then 
\begin{align}\label{lms1}
|\MS_k(W_1)-\MS_k(W_2)| \le \tbinom{k}{2}\|W_1- W_2\|_\square.  
\end{align}
\end{lemma}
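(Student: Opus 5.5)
The plan follows the proof of \refL{Lmaxcut}. The key observation is that the admissible set in \eqref{msk21} depends only on the probability space, not on the graphon. The balance condition $\int_\gO\psi_i\dd\mu=1/k$ involves $\mu$ alone. Hence $W_1$ and $W_2$, being on the same space $\OFmu$, are minimised over the same nonempty family of balanced mixed $k$-colourings. It is nonempty because it contains the constant colouring $\psi_i\equiv1/k$, so both values are finite.

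Next, fix an arbitrary balanced $\Psi=(\psi_1,\dots,\psi_k)$. By \eqref{msk2},
\begin{align*}
\C_k(W_j,\Psi)=\sum_{1\le i<l\le k} t\bigpar{\KK_{2;il},(W_j,\Psi)}, \qquad j=1,2 .
\end{align*}
Each term is the density of a coloured graph with one edge. \refL{LA1} with $e(\FF)=1$ gives
\begin{align*}
\bigabs{t(\KK_{2;il},(W_1,\Psi))-t(\KK_{2;il},(W_2,\Psi))}\le\cnorm{W_1-W_2}.
\end{align*}
Alternatively, this follows directly from \eqref{cutnorm2}, since $0\le\psi_i,\psi_l\le1$. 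Summing over the $\binom k2$ pairs yields
\begin{align*}
\bigabs{\C_k(W_1,\Psi)-\C_k(W_2,\Psi)}\le\tbinom k2\cnorm{W_1-W_2}
\end{align*}
for every balanced $\Psi$.

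Finally, I take infima. Write $\varepsilon:=\binom k2\cnorm{W_1-W_2}$. Then $\MS_k(W_1)\le\C_k(W_1,\Psi)\le\C_k(W_2,\Psi)+\varepsilon$ for every balanced $\Psi$. Taking the infimum over $\Psi$ gives $\MS_k(W_1)\le\MS_k(W_2)+\varepsilon$, and by symmetry \eqref{lms1} follows. This is a uniform bound over a common index set, so the infimum inherits it.

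There is no real obstacle. The only point needing care is the first step: one must use the mixed version, because the pure version may have an empty admissible set, with infimum $+\infty$, on a general space. That same-space, same-constraint observation is what makes the sup/inf argument valid.
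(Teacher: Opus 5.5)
Your proposal is correct and is essentially the paper's proof: for each balanced mixed $k$-colouring, bound $\bigabs{\C_k(W_1,\Psi)-\C_k(W_2,\Psi)}$ termwise by $\cnorm{W_1-W_2}$ using \eqref{msk1} and \eqref{cutnorm2}, giving a total of $\tbinom{k}{2}\cnorm{W_1-W_2}$. Then take infima over the common set of balanced mixed colourings. Routing the termwise bound through \refL{LA1} via \eqref{msk2} instead, and adding the remark that the admissible set is nonempty, are only cosmetic differences.
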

\begin{proof}
For every balanced mixed $k$-colouring $\Psi$ of $\gO$, \eqref{msk1} and
\eqref{cutnorm2} give
\begin{align}
|\C_k(W_1,\Psi)-\C_k(W_2,\Psi)| 
&\le
\sum_{1\le i<j\le k}
\left|
\iint_{\gO^2}(W_1-W_2)(x,y)\psi_i(x)\psi_j(y)
\,\dd\mu(x)\,\dd\mu(y)
\right| \notag\\
&\quad\le \tbinom{k}{2}\|W_1-W_2\|_\square.
\end{align}
Taking infima over the same set of balanced mixed colourings
proves \eqref{lms1}.
\end{proof}

\begin{theorem}\label{TMS3}
If\/ $W_1$ and $W_2$ are 
graphons on possibly different probability
spaces, then 
\begin{align}\label{tms31}
|\MS_k(W_1)-\MS_k(W_2)| 
\le \tbinom k2 \dcut(W_1,W_2).
\end{align}
In particular, if $W_1\cong W_2$, then $\MS_k(W_1)=\MS_k(W_2)$.

Consequently, if\/ $W_n\to W$ in cut distance, then $\MS_k(W_n)\to\MS_k(W)$.
In other words, $\MS_k$ is a continuous functional
on the set of graphons equipped with cut distance, 
and thus also on the quotient space $\hcWo$ of graph limits.
\end{theorem}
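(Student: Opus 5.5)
The proof will follow the pattern of \refT{TMC3} for maximum cut, which combines pullback invariance with the same-space Lipschitz bound. The first step is the analogue of \refL{LMC} for $\MS_k$: for a \mpp{} map $\gf:\gO_1\to\gO_2$ and a graphon $W$ on $\gO_2$, we aim to show $\MS_k(W^\gf)=\MS_k(W)$.

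\textbf{Step 1 (pullback invariance).} If $\Psi$ is a balanced mixed $k$-colouring of $\gO_2$, then $\Psi^\gf$ is a balanced mixed $k$-colouring of $\gO_1$, because $\gf$ is measure-preserving. A change of variables in \eqref{msk1}, exactly as in \eqref{lmc2}, gives $\C_k(W^\gf,\Psi^\gf)=\C_k(W,\Psi)$. Hence $\MS_k(W^\gf)\le\MS_k(W)$.

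For the reverse inequality, take a balanced colouring $\Psi$ of $\gO_1$ and set $\bPsi:=\E(\Psi\mid\cF_\gf)$ componentwise. Then $\bPsi$ still takes values in $\gD_k$ a.e., since conditional expectation preserves nonnegativity and the identity $\sum_i\psi_i=1$. It is also still balanced, since conditional expectation preserves integrals. Because $W^\gf(x,y)$ is $\cF_\gf$-measurable in each variable, we can integrate one variable at a time as in \eqref{lmc3}. Each term $\iint W^\gf\psi_i(x)\psi_j(y)$ is unchanged when $\psi_i$ and then $\psi_j$ are replaced by their conditional expectations, so $\C_k(W^\gf,\Psi)=\C_k(W^\gf,\bPsi)$. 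Writing $\bPsi=\Psi_2\circ\gf$ with $\Psi_2$ measurable on $\gO_2$ (by \cite[Lemma 1.13]{Kallenberg}), we may modify $\Psi_2$ on a null set so that it lies in $\gD_k$, and it is then balanced on $\gO_2$. Therefore $\C_k(W^\gf,\Psi)=\C_k(W,\Psi_2)\ge\MS_k(W)$.

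\textbf{Step 2 (the bound \eqref{tms31}).} For graphons on different spaces, we use the characterisation \eqref{dcut1} of $\dcut$ as an infimum over cut-norm distances between pullbacks $W_1^{\gf_1}$ and $W_2^{\gf_2}$ to a common probability space. By Step~1 and \refL{LMS1},
\[
|\MS_k(W_1)-\MS_k(W_2)|=|\MS_k(W_1^{\gf_1})-\MS_k(W_2^{\gf_2})|\le\tbinom k2\cnorm{W_1^{\gf_1}-W_2^{\gf_2}}.
\]
Taking the infimum over all such pairs of pullbacks gives \eqref{tms31}.

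\textbf{Step 3 (consequences).} If $W_1\cong W_2$, then $\dcut(W_1,W_2)=0$ by the standard uncoloured theory (\refApp{app:uncoloured-pullback}, \refT{Tunique0}), so $\MS_k(W_1)=\MS_k(W_2)$. Continuity of $\MS_k$ in cut distance is then immediate from \eqref{tms31}, and it descends to the quotient $\hcWo$ because $\MS_k$ is constant on equivalence classes.

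The main obstacle is Step~1. The point to verify is that the conditional-expectation averaging preserves all the constraints at once: values in the simplex, the balance condition, and the bilinear cross terms. The balance constraint is linear, so it poses no real difficulty. The main care needed is in using the pullback characterisation of $\dcut$ for arbitrary probability spaces, where common pullbacks are needed rather than bijections of $\oi$.
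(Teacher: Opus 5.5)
Your proposal is correct and follows essentially the same route as the paper: first prove pullback invariance $\MS_k(W^\gf)=\MS_k(W)$ by the change-of-variables and conditional-expectation argument of \refL{LMC} applied coordinatewise (balance is preserved because conditional expectation preserves integrals), then apply \refL{LMS1} to common pullbacks and take the infimum over them. One small citation point: for arbitrary probability spaces the common-pullback characterisation you need is \eqref{dcut}, not \eqref{dcut1}, which only concerns \mpp{} bijections of $\oi$.
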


\begin{proof}
The pullback and conditional-expectation arguments in the proof of \refL{LMC}, applied coordinatewise, preserve both the score and each colour mass. Hence, they also preserve the balance constraints, and give 
\begin{equation}\label{eq:pullback-ms}\MS_k(W^\varphi)=\MS_k(W),
\end{equation}
for every measure-preserving map $\varphi$.

For any common pullbacks $W_1^{\varphi_1}$ and $W_2^{\varphi_2}$, \eqref{eq:pullback-ms}  and \refL{LMS1} give
\begin{equation}
|\MS_k(W_1)-\MS_k(W_2)|
\le \tbinom{k}{2}
\|W_1^{\varphi_1}-W_2^{\varphi_2}\|_\square.
\end{equation}
Taking the infimum over all such pullbacks, as in \eqref{dcut}, proves \eqref{tms31}. This bound also shows that $\MS_k$ is invariant under equivalence
and continuous with respect to cut distance.
\end{proof}

\begin{theorem}\label{Lmink}
Let $k\ge2$ be fixed, and let $W$ be a graphon on an arbitrary
probability space $(\gO,\mathcal F,\mu)$. If\/ $G_n$ is a sequence of graphs with
$|G_n|\to\infty$ and $G_n\to W$, then
  \begin{align}\label{lmink}
\MS_k(G_n)\longrightarrow \MS_k(W).
  \end{align}
\end{theorem}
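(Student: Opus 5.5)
The plan is to combine \refL{lem:minksection-graph-step} with \refT{TMS3}, exactly as \refC{CMC3} was derived for maximum cut. Write $n_j:=|G_j|$, so $n_j\to\infty$. By \refL{lem:minksection-graph-step},
\begin{align}
\MS_k(G_j)=\MS_k(W_{G_j})+o(1)\qquad\text{as } j\to\infty.
\end{align}
The error terms there are $O_k(1/n)$ and $O_k(n^{-1/2})$, uniformly over all graphs on $n$ vertices. Since $n_j\to\infty$, this gives $\MS_k(G_j)-\MS_k(W_{G_j})\to0$.

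Next, the hypothesis $G_j\to W$ means $\dcut(W_{G_j},W)\to0$. The graphon $W$ may live on an arbitrary probability space $\OFmu$, while $W_{G_j}$ lives on $\oi$. This causes no difficulty, because \refT{TMS3} is stated for graphons on possibly different probability spaces. It gives
\begin{align}
|\MS_k(W_{G_j})-\MS_k(W)|\le\tbinom k2\dcut(W_{G_j},W)\to0.
\end{align}
Combining the two displays with the triangle inequality yields \eqref{lmink}.

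The only point needing care is that $\MS_k(W)$ is always finite here, even on a general $\gO$ that may admit no balanced pure partition. This holds because $\MS_k=\MS_{k,\mix}$ by \eqref{msk22}, and the constant colouring $\psi_i\equiv 1/k$ is always admissible. So the convention $\inf\varnothing=+\infty$, which applies only to the pure version, plays no role.

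I also need to check that \refT{TMS3} genuinely covers different spaces. Its proof uses common pullbacks together with the pullback invariance \eqref{eq:pullback-ms}, so it does. There is no real obstacle: the substantive work is already contained in the rounding argument of \refL{lem:minksection-graph-step} and in the pullback invariance in \refT{TMS3}.
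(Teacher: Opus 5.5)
Your proposal is correct and follows the paper's proof essentially verbatim: it combines Lemma~\ref{lem:minksection-graph-step}, whose error bounds are uniform over graphs on $n$ vertices, with the bound $\tbinom k2\dcut$ of Theorem~\ref{TMS3}, which already allows $W_{G_n}$ on $\oi$ and $W$ on a different probability space. Your additional check that $\MS_k(W)=\MS_{k,\mix}(W)$ is finite, because the constant balanced colouring is admissible, is a sound sanity remark that the paper leaves implicit.
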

\begin{proof}
Represent each $G_n$ by its usual step graphon $W_{G_n}$ on $\oi$, while $W$ remains on $(\gO,\mathcal F,\mu)$. By \refT{TMS3}, which allows different underlying probability spaces, $G_n\to W$ implies $\MS_k(W_{G_n})\to\MS_k(W)$.
Moreover, \refL{lem:minksection-graph-step}, 
applied to $W_{G_n}$ on $\oi$, yields
\begin{equation}
\MS_k(G_n)
=\MS_k(W_{G_n})+o(1)
.\end{equation}
Thus, $\MS_k(G_n)\to\MS_k(W)$, as required.
\end{proof}

\begin{theorem}\label{TMS4}
  Let\/ $W$ be a graphon on $\oi$. Then
$\MS_{k,\text{mix}}(W)=\MS_{k,\text{pure}}(W)$.
\end{theorem}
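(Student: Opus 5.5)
We follow the block-approximation argument from the proofs of \refT{TMC4} and \refT{TCH4}, but with the balance constraint preserved exactly.

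The inequality $\MS_{k,\mix}(W)\le\MS_{k,\pure}(W)$ is trivial, since every balanced pure colouring is a balanced mixed colouring. Note also that $\MS_{k,\pure}(W)<\infty$ on $\oi$, since a partition into intervals of length $1/k$ is admissible. It therefore suffices to fix a balanced mixed colouring $\Psi=(\psi_1,\dots,\psi_k)$ and show that $\MS_{k,\pure}(W)\le\C_k(W,\Psi)$.

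For $N\ge1$, let $I_1,\dots,I_N$ be the intervals of length $1/N$ and let $W_N$ be the step graphon obtained by averaging $W$ over each square $I_a\times I_b$. Then $\cnorm{W-W_N}\le\linorm{W-W_N}\to0$. Inside each $I_a$ we partition the interval into consecutive subintervals $J_{a,1},\dots,J_{a,k}$ of lengths $\int_{I_a}\psi_i\dd x$; these lengths sum to $1/N$ because $\sum_i\psi_i=1$. Define a pure colouring $\chi_N$ by $\chi_N:=i$ on $J_{a,i}$. This is the $k$-colour analogue of \eqref{tmc4}.

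The colour masses are then $\gl(\chi_N^{-1}(i))=\sum_a\int_{I_a}\psi_i=\int_0^1\psi_i=1/k$, so $\chi_N$ is exactly balanced and is admissible in \eqref{MS1}. This is the point that needs care, and it is the reason for matching masses on each block rather than rounding globally.

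Since $W_N$ is constant on each $I_a\times I_b$, integrating \eqref{msk1} over these squares gives $\C_k(W_N,\chi_N)=\C_k(W_N,\Psi)$, exactly as in \eqref{mcx1b}. The bound in the proof of \refL{LMS1} holds for every colouring, pure or mixed, and gives $|\C_k(W,\Phi)-\C_k(W_N,\Phi)|\le\binom k2\cnorm{W-W_N}$. Applying this twice, once with $\Phi=\chi_N$ and once with $\Phi=\Psi$, we get
\[
\MS_{k,\pure}(W)\le\C_k(W,\chi_N)\le\C_k(W,\Psi)+2\tbinom k2\cnorm{W-W_N}.
\]

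Letting $N\to\infty$ gives $\MS_{k,\pure}(W)\le\C_k(W,\Psi)$. Taking the infimum over all balanced $\Psi$ gives $\MS_{k,\pure}(W)\le\MS_{k,\mix}(W)$, which completes the argument.

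I expect no serious obstacle. The only step needing attention is checking that the block-wise construction of $\chi_N$ keeps each colour mass exactly equal to $1/k$, which holds because the masses are matched on every block $I_a$ separately.
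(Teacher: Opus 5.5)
Your proposal is correct and follows essentially the same route as the paper: blockwise mass-matching on the intervals $I_r$ so that the pure colouring stays exactly balanced, equality of scores for the step graphon $W_N$, and the $\binom{k}{2}\cnorm{W-W_N}$ bound from \refL{LMS1} applied twice as in \eqref{tmc43}. You simply write out explicitly the details that the paper omits by referring back to the proof of \refT{TMC4}.
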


\begin{proof}
This is similar to the proof of \refT{TMC4}, so we will omit some details.
Suppose again first that $W$ is a step graphon,
constant on every square $I_{i}\times I_{j}$, $i,j\in[N]$,
with $I_{i}:=[(i-1)/N,i/N)$ as above.
Given a balanced mixed $k$-colouring $\Psi$, choose a measurable
partition $S_1,\ldots,S_k$ of $\oi$ by splitting each interval $I_r$
so that
\begin{align}
\lambda(I_r\cap S_j)=\int_{I_r}\psi_j\,d\lambda,
\qquad r\in[N],\ j\in[k].
\end{align}
Define $\chi=j$ on $S_j$.
This colouring is balanced, since
$\lambda(S_j)=\sum_{r=1}^N\lambda(I_r\cap S_j)
=\int_0^1\psi_j\,d\lambda=1/k$.
Then, by \eqref{MC3},
\begin{align}\label{msx1b}
\C_k(W,\Psi) =
\sum_{1\le i<j\le k} t\bigpar{\KK_{2;ij},(W,\Psi)}
=
\C_k(W,\chi):=
\sum_{1\le i<j\le k} t\bigpar{\KK_{2;ij},(W,\chi)}.
\end{align}
The result now follows as in \eqref{tmc43}, using 
the approximating step graphons $W_N$ as before and \refL{LMS1}.
\end{proof}

\begin{theorem}\label{TMS5}
Let\/ $W$ be a graphon on a probability space $\OFmu$, and fix $k\ge2$.
Then the supremum in \eqref{msk21} is attained. In other words, there exists
a balanced mixed colouring $\Psi$ of $W$ such that $\MS_k(W)=\C_k(W,\Psi)$.
\end{theorem}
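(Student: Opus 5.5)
The plan is to mimic the proof of \refT{TMC5}: a continuous functional on a compact set attains its extremum. The infimum in \eqref{msk21} (called a supremum in the statement, but it is the infimum over balanced colourings) is taken over the set
\begin{align*}
\gL_k^{\mathrm{bal}}:=\Bigl\{\Psi\in\gL_k:\ \int_\gO\psi_i\dd\mu=1/k,\ i\in[k]\Bigr\}.
\end{align*}
This set is nonempty, since it contains the constant colouring $\psi_i\equiv1/k$. We equip $\gL_k$ with the \wxt{} as a subset of $(L^\infty\Omu)^k$, as in \refSS{SStop}.

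The first step is compactness. As noted at the beginning of \refSS{SStop}, $\gL_k$ is a \wx{} closed subset of $B^k$, where $B$ is the unit ball of $L^\infty$; by Banach--Alaoglu, $\gL_k$ is therefore compact Hausdorff. Each balance constraint is the preimage of the point $1/k$ under the map $\Psi\mapsto\int_\gO\psi_i\cdot 1\dd\mu$. Since $1\in L^1\Omu$, this map is \wx{} continuous by definition of the topology. Hence $\gL_k^{\mathrm{bal}}$ is a closed subset of a compact space, and so it is compact.

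The second step is continuity. By \eqref{msk2}, $\C_k(W,\Psi)=\sum_{i<j}t(\KK_{2;ij},(W,\Psi))$. For fixed $W$, each term is continuous in $\Psi$ for the \wxt{} by \refL{LA2}; this is really \refL{LA3} applied with $F=W\in L^1(\gO^2)$. A finite sum of continuous functions is continuous, so $\Psi\mapsto\C_k(W,\Psi)$ is continuous on $\gL_k^{\mathrm{bal}}$. A continuous real function on a nonempty compact space attains its infimum, which gives a balanced $\Psi$ with $\MS_k(W)=\C_k(W,\Psi)$.

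The main subtlety is topological rather than analytic. When $L^1\Omu$ is not separable, $\gL_k$ need not be metrizable, so I will argue with compactness directly and avoid sequences; the extreme value theorem holds on any compact topological space. The only other point to check is that the balance constraints are genuinely \wx{} closed, which holds because the constant function $1$ lies in $L^1$ on a probability space.
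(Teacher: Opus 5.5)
Your proposal is correct and follows essentially the same route as the paper's proof. The balance constraints are weak-$*$ closed because they are integrals against $1\in L^1$, so the balanced colourings form a compact subset of $\Lambda_k$; then $\Psi\mapsto \C_k(W,\Psi)$ is continuous by \eqref{msk2} and Lemma~\ref{LA2}, so the optimum is attained (and you are right that the optimum in \eqref{msk21} is an infimum, so ``supremum'' in the statement is a slip).
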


\begin{proof}
The mappings $\Psi\to\int_{\gO}\psi_j\dd\mu$, $j=1,\dots,k$, 
are continuous of the set
$\gL=\gL_k$ of mixed $k$-colourings of $W$ for the \wx-topology defined in
\refSS{SStop}. Hence, the set of balanced colourings is a closed subset of
$\gL$, which as noted in \refSS{SStop} is compact.
Consequently, the set of balanced mixed colourings of $W$ is compact,
and the existence of an optimising $\Psi$ follows from the continuity of 
the map $\Psi\mapsto\C_k(W,\Psi)$, which is a consequence of \eqref{msk2}.
\end{proof}

\begin{remark}\label{rem:conn-bclsv} In this remark we describe the family of partition problems studied in~\cite{BCLSV2}.
For a fixed number of colours $\kappa\ge 1$, symmetric interaction matrix $J\in\mathbb R^{\kappa\times\kappa}$ and $h\in \IR^\kappa$, consider the optimisation problem over pure colourings $\Psi$ of a graph~$G$ 
\begin{align}\label{eq.ground_state_energy}
     \max_{\Psi:V(G)\to [\kappa]} \bigg( \frac{1}{|G|}\sum_{u\in V(G)} h_{\varphi(u)} + \frac{2}{|G|^2}\sum_{uv\in E(G)} J_{\Psi(u)\Psi(v)}\bigg). 
\end{align}
which is known as the ground state energy.
Consider also the case where $h$ is the zero vector and the optimsation is over the set pure colourings $\Psi:V(G)\to [\kappa] $ that assign colour $i$ to approximately $a_i|G|$ vertices, which is known as the micro-canonical version of the ground state energy. For partition problems covered by~\eqref{eq.ground_state_energy} and the micro-canonical version, Theorem~3.7 of \cite{BCLSV2} shows for a convergent graph sequence, the limits of the partition problem converge to 
their graphon analogues. Note that for $\kappa=2$,  $J=[0,1;1,0]$ and $h=(0,0)$, \eqref{eq.ground_state_energy} is precisely twice the normalised maximum cut~\eqref{MC-G}. Likewise minimum $k$-section is covered by the microcanonical version. However, the Cheeger constant and modularity are not directly covered by this energy formulation in \eqref{eq.ground_state_energy}.

In detail, note that \refL{lem:minksection-graph-step} 
can also be obtained from their Theorem 5.5, \refC{CMC3} and \refT{Lmink}
from their Theorem 3.7, and \refT{TMC3} and \refT{TMS3}  from their Equation
(4.12). While their proofs rely on fractional $q$-quotient as intermediate
steps, our results emerge naturally from the theory of coloured graphon
developed in Section \ref{sec:coloured-graphons}. We also prove
that optimisations over pure and mixed colourings on atomless Borel spaces
give the same optimal value in Theorems~\ref{TMC4} and~\ref{TMS4}; and
that this supremum is attained by a mixed colouring in Theorems \ref{TMC5}
and \ref{TMS5}.
\end{remark}

\section{Modularity scores from graphon colourings}\label{sec3:modularity}
As mentioned earlier,  modularity  is defined by a
similar partition-optimisation 
structure as 
maximum cut and minimum $k$-section 
above, but it is more delicate since  
besides the within-class edge mass, the modularity 
score includes a within-class degree-product correction. 
The precise definition for a finite graph is given in Definition~\ref{D1}. 

Our goal is to define a corresponding  modularity $\hq(W)$ for
uncoloured graphons. To do this we first fix a colouring $\Psi$ 
of $W$ and define 
(in \refD{def:mod-score} below)
a modularity score $\hq(\WW)$ for the 
coloured graphon $\WW=(W,\Psi)$.
The uncoloured modularity $\hq(W)$ is then the
supremum of these scores over all choices of $\Psi$. 
(The direct analogue of the vertex partitions for finite graphs in \refD{D1}
is pure coloured graphons, but we will allow general mixed coloured graphons
in our definitions below.)

Note that in \refD{D1}, the number of parts in the partition $\cA$ is
arbitrary, and bounded only by the number of vertices in $G$.
Hence, for the corresponding notion for graphons, we have to allow
colourings with an arbitrary finite number $\kk$ of colours. 
(Unlike the examples of maximum cut, minimum $k$-section, 
and Cheeger constant above where the
number of colours is fixed.)
Thus, in the present section (in contrast to the earlier section),
the number $\kk$ of colours is not fixed, and
we sometimes have to be careful  to include $\kk$ in the notation.

\begin{definition}[Modularity score of a coloured graphon]\label{def:mod-score} 
For a $\kk$-coloured graphon $\WW=(W,\Psi)$ with $\rho>0$, 
we define its \emph{modularity score} by
\begin{align}\label{m3}
\hskip-0.5em 
\hq(\WW)=\hq_\kappa(\WW)&:=\frac{1}{\rho}\iint_{\gO^2}\Bigpar{W(x,y)-\frac{k(x)k(y)}{\rho}}
\innprod{\Psi(x),\Psi(y)}\dd\mu(x)\dd \mu(y)
.
\end{align}
\noindent For a pure colouring $\chi$, the definition is as above, noting that the inner product in \eqref{m3} becomes
\begin{align}
\label{m3pure}
\innprod{\Psi(x),\Psi(y)}=\sum_{\ell=1}^{\kappa}\psi_\ell(x)\psi_\ell(y)=\mathbf{1}[\chi(x)=\chi(y)].
\end{align}
\end{definition}

\begin{remark}\label{RG1}
The definition \eqref{m3} is a natural extension of the definition of
modularity score for graphs with a given partition.
In fact, if $\GG=(G,\chi)$
is a coloured graph, and  $\WW'_\GG$
is the corresponding (atomic) pure  coloured graphon defined in
\refE{EG1},
then 
the modularity score of $\WW'_{\GG}$ defined in \eqref{m3} coincides with the
score \eqref{D1a} of $\GG$ 
for the partition defined by the colouring. 
\end{remark}

For convenience (to avoid having to make too many trivial exceptions below)
we define $\hq(W,\Psi):=0$ when $\rho=0$, i.e., when $W=0$ a.e.
We note also that the uncoloured case $\kk=1$ is trivial:
\eqref{m3} and \eqref{m1}--\eqref{m2} yield $\hq_1(\WW)=0$ 
for every 1-coloured graphon $\WW$. It is easy to see that
$-1 \le \hq(\WW)\le1$; we omit the proof since we prove sharper bounds in 
\refProp{LMOv2} below.

We can express the modularity score
$\hat q$ in terms of homomorphism densities.
Let again $\KK_{2;\ell\glm}$ be the coloured graph with 
one edge and two vertices coloured $\ell$ and $\glm$.

\begin{lemma}\label{LM}
For any $\kk$-coloured graphon $\WW$ with $\rho>0$,
\begin{align}\label{lm}
  \hq(\WW)=\frac{1}{\rho} \sumlkk t(\KK_{2,\ell\ell},\WW)
-\sumlkk\Bigpar{\frac{1}{\rho} \sumglkk t(\KK_{2,\ell\glm},\WW)}^2
.\end{align}
\end{lemma}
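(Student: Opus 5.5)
The plan is to expand the definition \eqref{m3} into two terms and identify each with coloured edge densities via \eqref{tab2}-type formulas.

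First, write $\innprod{\Psi(x),\Psi(y)}=\sumlkk\psi_\ell(x)\psi_\ell(y)$ and split \eqref{m3} as
\begin{align}
\hq(\WW)=\frac1\rho\sumlkk\iint_{\gO^2}W(x,y)\psi_\ell(x)\psi_\ell(y)\dd\mu(x)\dd\mu(y)
-\frac1{\rho^2}\sumlkk\iint_{\gO^2}k(x)k(y)\psi_\ell(x)\psi_\ell(y)\dd\mu(x)\dd\mu(y).
\end{align}
All integrands are bounded, so splitting the integral and interchanging the finite sum with the integral are justified. By \eqref{pa1b} applied to the one-edge coloured graph $\KK_{2;\ell\ell}$, the first integral is exactly $t(\KK_{2;\ell\ell},\WW)$, which gives the first term of \eqref{lm}.

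For the second term, use Fubini to factor the double integral as $\bigpar{\int_\gO k(x)\psi_\ell(x)\dd\mu(x)}^2$. Then insert the definition \eqref{m1} of $k$ and the identity $\sum_{\glm=1}^\kk\psi_\glm\equiv1$ to get
\begin{align}
\int_\gO k(x)\psi_\ell(x)\dd\mu(x)=\sumglkk\iint_{\gO^2}W(x,y)\psi_\ell(x)\psi_\glm(y)\dd\mu(x)\dd\mu(y)=\sumglkk t(\KK_{2;\ell\glm},\WW),
\end{align}
again by \eqref{pa1b}. This is the mixed-colour analogue of the row-sum identity stated after \eqref{tab2}. Dividing by $\rho^2$ and pulling one factor $1/\rho$ inside the square yields the second term of \eqref{lm}.

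No step is a real obstacle. The only points needing care are Fubini (the integrands are bounded and measurable on a probability space) and keeping the normalisation $1/\rho$ versus $1/\rho^2$ consistent. The assumption $\rho>0$ is needed only so that the division is defined.
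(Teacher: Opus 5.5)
Your proposal is correct and follows essentially the same route as the paper. Like the paper, you split the inner product in \eqref{m3}, identify the edge term with $\sum_\ell t(\KK_{2,\ell\ell},\WW)$ via \eqref{lm1}, factor the degree term as $\bigl(\int_\gO k\psi_\ell\,\dd\mu\bigr)^2$, and use $\sum_\glm\psi_\glm=1$ to rewrite this as the row sum $\sum_\glm t(\KK_{2,\ell\glm},\WW)$.
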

\begin{proof}
By the definition \eqref{pa1a}--\eqref{pa1b},
  \begin{align}\label{lm1}
        t(\KK_{2,\ell\glm},\WW)
=
\iint_{\gO^2}W(x,y)\psi_\ell(x)\psi_\glm(y)\dd\mu(x)\dd\mu(y).
  \end{align}
Hence, recalling $\sumlkk\psi_\ell(x)\psi_\ell(y)=\innprod{\Psi(x),\Psi(y)}$
and $\sumlkk\psi_\ell(y)=1$,
we have
\begin{align}\label{lm2}
  \sumlkk t(\KK_{2,\ell\ell},\WW)&
=\iint_{\gO^2}W(x,y)\innprod{\Psi(x),\Psi(y)}\dd\mu(x)\dd\mu(y),
\\\label{lm3}
  \sumglkk t(\KK_{2,\ell\glm},\WW)&
=\iint_{\gO^2}W(x,y)\psi_\ell(x)\dd\mu(x)\dd\mu(y)
=\int_{\gO}k(x)\psi_\ell(x)\dd\mu(x),
\\\label{lm4}
\sumlkk \sumglkk t(\KK_{2,\ell\glm},\WW)&
=\int_{\gO}k(x)\dd\mu(x)
=\rho.
\end{align}
The definition \eqref{m3} can be written
\begin{align}\label{lm5}
  \hq(\WW)&
=\frac{1}{\rho}\iint_{\gO^2}W(x,y)\innprod{\Psi(x),\Psi(y)}\dd\mu(x)\dd \mu(y)
\notag\\&\hskip6em
-
\frac{1}{\rho^2}\sumlkk\iint_{\gO^2}k(x)k(y)\psi_\ell(x)\psi_\ell(y)\dd\mu(x)\dd
\mu(y)
\notag\\&
=\frac{1}{\rho}\iint_{\gO^2}W(x,y)\innprod{\Psi(x),\Psi(y)}\dd\mu(x)\dd \mu(y)
\notag\\&\hskip6em
-
\frac{1}{\rho^2}\sumlkk\Bigpar{\int_{\gO}k(x)\psi_\ell(x)\dd\mu(x)}^2
\end{align}
and \eqref{lm} follows by \eqref{lm2}--\eqref{lm3}.
\end{proof}

\begin{proposition}\label{LMOv2}
For any $\kk$-coloured graphon $\WW=(W,\Psi)$,
\begin{align}
  \label{lm0}
-\tfrac{1}{2} \le \hq(\WW)\le1-\frac{1}{\kappa}.
\end{align}
\begin{romenumerate}
\item\label{LMOa}
Furthermore, $\hq(\WW)=-\tfrac{1}{2}$ 
if and only if
then there exists $\lambda, \lambda'\in[\kappa]$ with $\lambda \neq
\lambda'$ 
and a tripartition $\gO=A_1\cup A_2\cup B$ into disjoint sets such that
$\psi_{\gl}=1$ on $A_1$, $\psi_{\gl'}=1$ on $A_2$, and
$W(x,y)=0$ a.e.\ on 
$\gO^2\setminus \bigpar{(A_1\times A_2) \cup (A_1\times A_2)}$. 

\item \label{LMOb}
Similarly, $\hq(\WW)=1-\frac{1}{\kappa}$ 
if and only if
there exists a partition $\gO=\bigcup_{\ell=1}^\kk A_\ell \cup B$
such that 
$W=0$ a.e.\ on $\gO^2\setminus \bigcup_{\ell=1}^\kk (A_\ell\times A_\ell)$,
$\int_{A_\ell}k_W(x)=c$ for some $c>0$, and
$\psi_\ell=1$ on $A_\ell$, for every $\ell\in[\kk]$. 
\end{romenumerate}
\end{proposition}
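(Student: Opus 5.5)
Throughout we may assume $\rho>0$, since otherwise $\hq(\WW)=0$ and the bounds hold trivially. The plan is to rewrite \eqref{lm} in normalised variables. For $\ell,\gl\in[\kk]$ set $e_{\ell\gl}:=t(\KK_{2;\ell\gl},\WW)/\rho$. Then $e_{\ell\gl}=e_{\gl\ell}\ge0$ and $\sum_{\ell,\gl}e_{\ell\gl}=1$ by \eqref{lm4}. Put $a_\ell:=\sum_\gl e_{\ell\gl}=\rho^{-1}\int k\psi_\ell\dd\mu$, so that $\sum_\ell a_\ell=1$. By \refL{LM},
\begin{align}
\hq(\WW)=\sum_{\ell}e_{\ell\ell}-\sum_\ell a_\ell^2 .
\end{align}
The problem is now a finite-dimensional inequality for a symmetric nonnegative matrix $(e_{\ell\gl})$ with total mass $1$.

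\emph{Upper bound.} Since $e_{\ell\ell}\le a_\ell$, we get $\hq\le\sum_\ell a_\ell(1-a_\ell)=1-\sum_\ell a_\ell^2\le 1-1/\kk$, the last step by Cauchy--Schwarz. Equality forces $e_{\ell\ell}=a_\ell$ for all $\ell$ and $a_\ell=1/\kk$ for all $\ell$. The first condition means $\iint W\psi_\ell(x)(1-\psi_\ell(y))=0$ for each $\ell$, i.e.\ $W(x,y)\psi_\ell(x)\psi_{\ell'}(y)=0$ a.e.\ for $\ell\ne\ell'$.

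\emph{Lower bound.} We have $\hq\ge-\sum_\ell a_\ell^2+\sum_\ell e_{\ell\ell}$. Write $a_\ell=e_{\ell\ell}+r_\ell$ with $r_\ell=\sum_{\gl\ne\ell}e_{\ell\gl}$. The key symmetry constraint is $r_\ell\le\sum_{\gl\ne\ell}r_\gl$, because off-diagonal mass leaving class $\ell$ lands in other classes; hence $r_\ell\le\frac12\sum r=\frac12(1-\sum e_{\ell\ell})$. Let $s:=\sum_\ell e_{\ell\ell}$. Then $\sum_\ell a_\ell^2\le(\max_\ell a_\ell)\sum_\ell a_\ell=\max_\ell a_\ell$, and $a_\ell\le e_{\ell\ell}+\frac{1-s}{2}\le s+\frac{1-s}{2}$. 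This gives $\hq\ge s-\frac{1+s}{2}=\frac{s-1}{2}\ge-\frac12$. Equality needs $s=0$, so $e_{\ell\ell}=0$ for all $\ell$, and $\sum a_\ell^2=\max a_\ell=\frac12$. This forces exactly two indices $\gl\ne\gl'$ with $a_\gl=a_{\gl'}=\frac12$ and all other $a_\ell=0$; consequently all off-diagonal mass sits on the pair $(\gl,\gl')$.

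\emph{Translating equality back to $W$ and $\Psi$.} This is the main obstacle, because $\Psi$ is mixed and so the structural statements must be extracted from a.e.\ vanishing of integrals. I would put $B_0:=\{x:k(x)=0\}$; there $W(x,\cdot)=0$ a.e., so these points can be absorbed into $B$. Take $x$ outside $B_0$ with $\psi_\ell(x)>0$.

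In case \ref{LMOb}, the conditions $W\psi_\ell\otimes\psi_{\ell'}=0$ for $\ell\ne\ell'$, together with $\sum\psi=1$, give that if $\psi_\ell(x),\psi_{\ell'}(x)>0$ then $W(x,y)\psi(y)$ vanishes for every colour, so $k(x)=0$, a contradiction. Hence off $B_0$ the colouring is pure, and we set $A_\ell:=\{\psi_\ell=1\}\setminus B_0$. The support condition for $W$ follows, and $a_\ell=1/\kk$ gives $\int_{A_\ell}k=\rho/\kk=:c>0$.

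Case \ref{LMOa} is the same argument. From $e_{\ell\ell}=0$ and $a_\ell=0$ for $\ell\notin\{\gl,\gl'\}$ we get $\psi_\ell=0$ a.e.\ off $B_0$ for $\ell\notin\{\gl,\gl'\}$, and mixed points of colours $\gl,\gl'$ have zero degree. This yields $A_1,A_2$ and the bipartite support (the statement's $(A_1\times A_2)\cup(A_1\times A_2)$ is presumably meant to be $(A_1\times A_2)\cup(A_2\times A_1)$). For the converse directions in both cases one simply computes $e_{\ell\gl}$ from the given structure and substitutes into the displayed formula. I would also check in case \ref{LMOa} that $a_\gl=a_{\gl'}$ holds automatically by symmetry of $W$, so no extra mass condition is needed there.
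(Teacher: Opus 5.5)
Your proof is correct. The upper bound and its equality case match the paper's argument: $\sum_\ell t_{\ell,\ell}\le\rho$ plus Cauchy--Schwarz, with equality forcing $t_{\ell,\ell'}=0$ for $\ell\neq\ell'$ and $t_{\ell,\ell}=\rho/\kk$. Your lower bound, however, is a genuinely different argument.

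The paper introduces per-colour quantities $f_\ell$ with $\sum_\ell f_\ell=\rho^2\bigl(\hq(\WW)+\tfrac12\bigr)$. Substituting $\rho=t_{\ell,\ell}+2t_{\ell,\lnot\ell}+t_{\lnot\ell,\lnot\ell}$, it checks algebraically that each $f_\ell$ is a positive combination of products of densities. Equality then forces $t_{\ell,\ell}=0$ and $t_{\ell,\lnot\ell}\,t_{\lnot\ell,\lnot\ell}=0$ for every $\ell$. A separate short argument is then needed to show that only one pair $\set{\gl,\gl'}$ carries mass.

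You instead combine $\sum_\ell a_\ell^2\le\max_\ell a_\ell$ with the symmetry bound $r_\ell\le\tfrac12(1-s)$. This gives the stronger estimate $\hq(\WW)\ge\tfrac{s-1}{2}$ in terms of the normalised monochromatic mass $s$. The equality case ($s=0$, and exactly two colours with $a=\tfrac12$) then falls straight out of the chain of inequalities. The paper's route yields an explicit algebraic certificate; yours is shorter and more transparent.

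For the structural translation, the paper argues pointwise on the set where $W>0$ and the vanishing relations hold. You instead pass to sections via Fubini and set aside the zero-degree set $\set{k=0}$. Your step $a_\ell=0\Rightarrow\psi_\ell=0$ a.e.\ on $\set{k>0}$ is a clean way to remove the unused colours. You are also right that the support set in (i) should read $(A_1\times A_2)\cup(A_2\times A_1)$.

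One small caveat applies equally to the paper. The reduction to $\rho>0$ is harmless for the bounds but not for the equivalences as stated. If $W=0$ a.e., the structure in (i) holds while $\hq(\WW)=0$ by convention. Likewise, for $\kk=1$ and $\rho=0$ one has $\hq=0=1-1/\kk$, yet no $c>0$ exists. So the converse in (i), and (ii) when $\kk=1$, tacitly require $\rho>0$.
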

Thus, ignoring null sets and possibly a set $B$ such that $W=0$ on $B\times\gO$,
$\hq(\WW)$ is minimal if it is a pure coloured bipartite graphon, with one
colour on each part,
and $\hq(\WW)$ is maximal if it is a pure coloured graphon, 
with equal edge density for monochromatic edges of each colour and
density 0 for edges that are not monochromatic.
(The latter case can be expressed by saying that $W$ is the sum of some graphons
$W_1,\dots,W_\ell$ in the sense of \cite{SJ213}, 
with all $W_\ell$ having the same volume, 
and  we  give $W_\ell$ colour $\ell$.)
\begin{proof}
We may assume $\rho>0$.
We use again the notation 
\begin{align}
   t_{\ell, \lambda} & :=  t(\KK_{2,\ell\lambda},\WW)
\end{align}
from \eqref{eq.tab_defn},
and we further define
\begin{align}\label{eq.wts_non_neg-2}
f_\ell:=  
 t_{\ell,\ell} \rho
-\Bigpar{ \sumgllkk  t_{\ell,\gl}}^2 
+ \frac{\rho}{2}   \sumgllkk t_{\ell,\gl}
.\end{align}
Then Lemma~\ref{LM} implies, using \eqref{lm4}, that
\begin{align}
  \label{ele1}
\sumlkk f_\ell = \rho^2\bigpar{\hq(\WW)+\tfrac12}.
\end{align}
For the lower bound, it is thus
sufficient to show $f_\ell \geq 0$ for each $\ell$. 
We define
the following short-hand for sums of densities. (These will
correspond to the densities of edges with 
exactly one of colour $\ell$, and neither of colour $\ell$.)
\begin{align}
 t_{\ell, \lnot \ell} 
 := \sum_{\lambda\neq \ell} t_{\ell,\gl}
\quad \mbox{and} \quad t_{\lnot \ell, \lnot \ell} 
:= 
\sum_{\lambda \neq \ell} \sum_{\lambda' \neq \ell} t_{\gl,\gl'}.
\end{align}
We may rewrite $f_\ell$ in this new notation,
\begin{align}\label{eq.wts_non_neg_v2}
f_\ell &= t_{\ell, \ell}\rho 
-\bigpar{t_{\ell, \ell} + t_{\ell, \lnot \ell}}^2 + \tfrac{1}{2}\bigpar{t_{\ell, \ell} + t_{\ell, \lnot \ell}}\rho
\notag\\&
= t_{\ell, \ell}\, \rho  + 
\bigpar{t_{\ell, \ell} + t_{\ell, \lnot \ell}}\bigpar{\tfrac{1}{2}\rho- (t_{\ell, \ell} + t_{\ell, \lnot \ell})}.
\end{align}
By~\eqref{lm1}, since $\sum_{\gl\neq \ell} \psi_\gl =(1-\psi_\ell)$, 
\begin{align}\label{mag1}
t_{\ell, \lnot \ell } 
=
\sum_{\lambda \neq \ell}\iint_{\gO^2}W(x,y)\psi_\ell(x)(1-\psi_\ell(y))\dd\mu(x)\dd\mu(y),
\end{align}
and similarly,
\begin{align}\label{mag2}
t_{\lnot \ell, \lnot \ell } 
=
\iint_{\gO^2}W(x,y)(1-\psi_\ell(x))(1-\psi_\ell(y))\dd\mu(x)\dd\mu(y).  
\end{align}
Thus 
\begin{align}\label{mag3}
\rho = t_{\ell, \ell} + 2
t_{\ell, \lnot \ell} + t_{\lnot \ell, \lnot \ell} 
.\end{align}
Substituting this into~\eqref{eq.wts_non_neg_v2} we obtain
\begin{align}
f_\ell&= t_{\ell, \ell}\, \bigpar{ t_{\ell, \ell} + 2
t_{\ell, \lnot \ell} + t_{\lnot \ell, \lnot \ell} }  + 
\tfrac{1}{2}\bigpar{t_{\ell, \ell} + t_{\ell, \lnot \ell}}\bigpar{ t_{\lnot \ell, \lnot \ell} - t_{\ell, \ell}} 
\notag\\&
\label{eq.wts_non_neg_v4}
=\tfrac{1}{2}t_{\ell, \ell}^2+ 
\tfrac{3}{2}t_{\ell, \ell} t_{\ell, \lnot \ell} 
+ \tfrac{3}{2}t_{\ell,\ell} t_{\lnot \ell, \lnot \ell} + \tfrac{1}{2}t_{\ell, \lnot \ell}t_{\lnot \ell, \lnot \ell} 
.\end{align}
Hence $f_\ell\geq 0$, since it is a linear combination of non-negative densities, all with positive coefficients; this establishes the lower bound.

We now consider the case that $\hq(\WW)=-1/2$.
By \eqref{ele1}, $\hq(\WW)=-1/2$ implies $f_\ell=0$ for each
$\ell\in [\kappa]$. Thus by~\eqref{eq.wts_non_neg_v4}, for each $\ell\in
[\kappa]$,  
\begin{equation}\label{eq.must_be_zero} t_{\ell, \ell}=0 \quad \mbox{and} \quad t_{\ell, \lnot \ell}\, t_{\lnot \ell, \lnot \ell}=0.\end{equation}
Since $t_{\ell,\ell}=0$ for each $\ell$, 
i.e., the density of all monochromatic edges is $0$, 
we thus have that $\rho=2\sum_{\ell<\ell'} t_{\ell,\ell'}$. Recall that by
assumption $\rho>0$ and hence there must be some pair $\lambda\neq \lambda'$ such
that $t_{\lambda, \lambda'}>0$.
We will now argue that 
\begin{align}\label{mag4}
t_{j,j'}=0
\qquad\text{for any $\{j,j'\}\neq \{\lambda,\lambda' \}$}. 
\end{align}
By symmetry we may assume that $\lambda\notin \set{j,j'}$.
Then $t_{j,j'} \le t_{\lnot\gl,\lnot\gl}$. Furthermore,
$t_{\gl,\lnot\gl}\ge t_{\gl,\gl'}>0$, and thus \eqref{eq.must_be_zero} 
yields $t_{\lnot\gl,\lnot\gl}=0$, whence \eqref{mag4} follows.

For notational convenience, assume $(\gl,\gl')=(1,2)$.
Then, \eqref{mag4} and \eqref{lm1} yield
\begin{align}\label{mag5}
  \iint_{\gO^2}W(x,y)\psi_j(x)\psi_{j'}(y)\dd\mu(x)\dd\mu(y)=0,
\qquad \set{j,j'}\neq\set{1,2}.
\end{align}
In other words, a.e.\ on the set $E:=\set{(x,y):W(x,y)>0}\subset\gO^2$
we have
\begin{align}\label{mag6}
  \psi_j(x)\psi_{j'}(y)=0,
\qquad \set{j,j'}\neq\set{1,2}.  
\end{align}
Let $E'$ be the subset of $E$ where \eqref{mag6} holds 
(for all $\set{j,j'}\neq\set{1,2}$); 
thus $E$ and $E'$ differ only by a null set.
Let $A_j:=\set{x\in\gO:\psi_j(x)=1}$, and $B:=\gO\setminus(A_1\cup A_2)$.

Assume $(x,y)\in E_1$.
If $j\ge3$, then  \eqref{mag6} holds for all $j'\in[\kk]$, and by summing
over all $j'$ we obtain $\psi_j(x)=0$. Hence, $\psi_1(x)+\psi_2(x)=1$.
Similarly, $\psi_1(y)+\psi_2(y)=1$.
Suppose now that $\psi_1(x)>0$. Then \eqref{mag6} with $j=j'=1$
implies $\psi_1(y)=0$ and thus $\psi_2(y)=1$; then \eqref{mag6} with $j=j'=2$
implies $\psi_2(x)=0$ and thus $\psi_1(x)=1$.
Hence, if $(x,y)\in E'$, then $\psi_1(x)\in\set{0,1}$,
and by symmetry also $\psi_1(y)\in\set{0,1}$.
Consequently, $E'\subseteq (A_1\times A_2)\cup (A_2\times A_1)$,
and thus $W$ and $\Psi$ have the properties claimed in \ref{LMOa}.

Conversely, if these properties hold, then \eqref{mag4} holds, 
and it follows that $t_{\ell,\ell}=0$ for every $\ell$,
$t_{\ell,\lnot\ell}=0$ for $\ell\notin\set{1,2}$, and 
$t_{\lnot\ell,\lnot\ell}=0$ for $\ell\in\set{1,2}$.
Hence \eqref{eq.wts_non_neg_v4}  shows that $f_\ell=0$ for every $\ell$, and
thus, by \eqref{ele1}, $\hq(\WW)=-\frac12$.

To see the upper bound in \eqref{lm0}, 
note that by \eqref{lm} it is equivalent to prove that
\begin{align}\label{ele3}
\rho\sum_\ell t_{\ell, \ell} - \sum_\ell(t_{\ell, \ell} + t_{\ell, \lnot \ell})^2  
\leq \bigpar{1-\frac{1}{\kappa} }\rho^2. 
\end{align}
Since $\sum_\ell t_{\ell, \ell}\leq \rho$ by \eqref{lm4},
it is thus enough to show that $\sum_\ell (t_{\ell, \ell} + t_{\ell, \lnot
  \ell})^2 \geq \rho^2 / \kappa$. However, by Cauchy--Schwarz,
\begin{equation}\label{eq.at_most_one_on_k_CS}
\sum_{\ell} (t_{\ell, \ell} + t_{\ell, \lnot \ell})^2 \geq \frac{1}{\kappa} \Bigpar{\sum_{\ell} (t_{\ell, \ell} + t_{\ell, \lnot \ell})}^2 = \frac{1}{\kappa} \rho^2, 
\end{equation}
and hence we have shown $\q(\WW)\leq 1-\frac{1}{\kappa}$.

Lastly, we consider the case that $\hq(\WW)=1-\frac{1}\kk$. 
By the calculations above, this holds if and only if 
$\sum_\ell t_{\ell, \ell}=\rho$ and equality holds 
in~\eqref{eq.at_most_one_on_k_CS}.
The first condition implies that $t_{\ell, \ell'}=0$ for all $\ell\neq\ell'$. 
Assuming this, we have $t_{\ell,\lnot\ell}=0$, and thus the Cauchy--Schwarz
inequality in \eqref{eq.at_most_one_on_k_CS} then 
reduces to  
\begin{align}
  \kk\sumlkk t_{\ell,\ell}^2 = \Bigpar{\sumlkk t_{\ell,\ell}}^2 = \rho^2,
\end{align}
which holds if and only if  $t_{\ell, \ell}=\rho/\kappa$ for each $\ell$.
Consequently, $\hq(\WW)=1-\frac1\kk$ if and only if, for some $c>0$,
\begin{align}
  \label{ele2}
t_{\ell,\ell'}=
  \begin{cases}
    c, & \ell=\ell',\\
    0, & \ell\neq\ell'.
  \end{cases}
\end{align}
Assume \eqref{ele2} and let, as above, $E:=\set{(x,y):W(x,y)>0}$. 
Then \eqref{ele2} implies
\begin{align}\label{mag7}
  \iint_{\gO^2}W(x,y)\psi_\ell(x)\psi_{\ell'}(y)\dd\mu(x)\dd\mu(y)
=t_{\ell,\ell'}
=0,
\qquad \ell\neq\ell',
\end{align}
and thus, for a.e.\  $(x,y)\in E$,
\begin{align}\label{mag8}
\psi_\ell(x)\psi_{\ell'}(y)=0,
\qquad \ell\neq\ell',
\end{align}
Let $E'$ be the subset of $E$ where \eqref{mag8} holds,
and define $A_\ell:=\set{x\in\gO:\psi_\ell(x)=1}$ 
($\ell\in[\kk]$) and $B:=\gO\setminus\bigcup_{\ell=1}^\kk A_\ell$.
Assume $(x,y)\in E'$. If $\psi_\ell(x)>0$ for some $\ell$, then 
\eqref{mag8} shows that $\psi_{\ell'}(y)=0$ for all $\ell'\neq\ell$, and
thus $\psi_\ell(y)=1$. By symmetry this implies $\psi_\ell(x)=1$.
Consequently, $E'\subseteq\bigcup_{\ell=1}^\kk A_\ell\times A_\ell$,
and thus 
$W=0$ a.e.\ on $\gO^2\setminus \bigcup_{\ell=1}^\kk (A_\ell\times A_\ell)$.
This implies also, using \eqref{tab2} and \eqref{m2},
\begin{align}\label{mag9}
t_{\ell,\ell}=\int_{A_{\ell}\times A_{\ell}}W(x,y)\dd\mu(x)\dd\mu(y)
=\int_{A_{\ell}\times \gO}W(x,y)\dd\mu(x)\dd\mu(y)
=\int_{A_{\ell}}k_W(x)\dd\mu(x).
\end{align}
Since $t_{\ell,\ell}=c$ by \eqref{ele2}, this yields
$\int_{A_{\ell}}k_W(x)\dd\mu(x)=c$, and thus $W$ and $\Psi$ satisfy the 
properties in \ref{LMOb}.

Conversely, if these properties hold, then \eqref{mag9} and \eqref{ele2}
hold, which implies that we have equality in \eqref{ele3} and thus
$\hq(\WW)=1-\frac1\kk$.
\end{proof}

\begin{theorem}
Fix $\kk\ge1$.
  If\/ $\WW_1$ and $\WW_2$ are two equivalent $\kk$-coloured graphons, then
$\hq(\WW_1)=\hq(\WW_2)$. In other words, $\hq$ is well-defined on 
coloured graph limits.
Furthermore, the functional $\WW\mapsto\hq_\kk(\WW)$ is continuous at every 
$\kk$-coloured graph limit $\WW$ with $\rho_{\WW}>0$.
\end{theorem}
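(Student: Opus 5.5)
The plan is to reduce everything to \refL{LM}, which expresses $\hq(\WW)$ as an explicit function of finitely many coloured homomorphism densities. Write $t_{\ell,\gl}(\WW):=t(\KK_{2;\ell\gl},\WW)$ for $\ell,\gl\in[\kk]$. By \eqref{lm4}, $\rho_\WW=\sum_{\ell,\gl}t_{\ell,\gl}(\WW)$; equivalently $\rho_\WW=t(K_2,\iota(\WW))$. Hence \eqref{lm} reads
\begin{align*}
\hq(\WW)=F\bigpar{(t_{\ell,\gl}(\WW))_{\ell,\gl\in[\kk]}},
\qquad
F(\mathbf t):=\frac{\sum_\ell t_{\ell,\ell}}{\sum_{\ell,\gl}t_{\ell,\gl}}
-\sum_\ell\Bigpar{\frac{\sum_\gl t_{\ell,\gl}}{\sum_{\ell',\gl'}t_{\ell',\gl'}}}^2,
\end{align*}
valid whenever $\rho_\WW>0$. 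When $\rho_\WW=0$ we have $\hq(\WW)=0$ by convention.

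\emph{Invariance.} Suppose $\WW_1\cong\WW_2$. By \refD{D=} and \refL{L=}\ref{L=t}, $t(\FF,\WW_1)=t(\FF,\WW_2)$ for every coloured graph $\FF$, in particular for each $\KK_{2;\ell\gl}$. Therefore the vectors $(t_{\ell,\gl})$ coincide, and so $\rho_{\WW_1}=\rho_{\WW_2}$. If this common value is $0$, both scores are $0$ by convention. Otherwise both scores equal $F$ evaluated at the same vector. Thus $\hq$ is constant on equivalence classes, and by \refT{TWB} it is a well-defined function on $\hcWW_\kk$.

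\emph{Continuity.} The topology on $\hcWW_\kk$ is the one induced from $\oi^\cU$, so by \refC{CTWBb} the map $\WW\mapsto t(\FF,\WW)$ is continuous on $\hcWW_\kk$ for each fixed $\FF$. Fix $\WW$ with $\rho_\WW>0$ and let $\WW_n\to\WW$. The space is metrizable (\refL{LC}), so sequences suffice. Then $t_{\ell,\gl}(\WW_n)\to t_{\ell,\gl}(\WW)$ for all $\ell,\gl$, and $\rho_{\WW_n}\to\rho_\WW>0$. Hence $\rho_{\WW_n}>0$ for all large $n$, and for those $n$ we have $\hq(\WW_n)=F(\mathbf t(\WW_n))$. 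The function $F$ is a rational function whose denominator is nonzero in a neighbourhood of $\mathbf t(\WW)$, so it is continuous there, and $\hq(\WW_n)\to\hq(\WW)$.

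The only delicate point is the role of the hypothesis $\rho>0$. This is handled entirely by the positivity of $\rho_\WW=t(K_2,\iota(\WW))$ in the limit, which forces $\rho_{\WW_n}>0$ eventually. No continuity is claimed at $\rho=0$, and indeed none should be expected there, because of the convention $\hq=0$ and the ratio structure of $F$. There is no other obstacle.
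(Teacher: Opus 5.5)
Your proof is correct and follows the paper's argument. You express $\hq$ through \refL{LM} and \eqref{lm4} as a rational function of the finitely many densities $t(\KK_{2;\ell\gl},\WW)$, then invoke \refD{D=} for invariance and \refC{CTWBb} for continuity; your explicit treatment of the convention at $\rho=0$ and of the eventual positivity of $\rho_{\WW_n}$ spells out details the paper leaves implicit.
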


\begin{proof}
This follows from \eqref{lm} and \eqref{lm4},
recalling that for each coloured graph $\FF$
we have $t(\FF,\WW_1)=t(\FF,\WW_2)$ when  $\WW_1\cong\WW_2$
(\refD{D=}),
and 
$t(\FF,\WW_n)=t(\FF,\WW)$ when  $\WW_n\to\WW$
(\refC{CTWBb}).
\end{proof}

We define modularity for uncoloured graphons by optimising the coloured
functional $\hq(\WW)$ over all colourings of the underlying graphon W. This
gives the graphon analogue of graph modularity, which optimises the same
score over all vertex partitions. As in \refS{SSSmax}, we consider two versions, one with only pure colourings
and one with general mixed colourings, and again we regard the mixed
coloured version as the standard one; we therefore may omit the subscript
${}_\mix$ from the notations.

\begin{definition}[Graphon modularity]\label{def:graphon-modularity}
Let $W$ be an uncoloured graphon, defined on $\OFmu$.
For each $\kappa\ge1$, define the \emph{(mixed) $\kappa$-modularity} of $W$ by
 \begin{align}\label{mod2}
\hqx_\kk(W)
=\hqx_{\kk,\mix}(W)
:=\sup_{\Psi\in\PPsi_\kk}\hq(W,\Psi),
\end{align}
where $\Lambda_\kappa$ is the set of mixed $\kappa$-colourings of
$\Omega$. Define the \emph{(mixed) graphon modularity} by 
\begin{align}\label{mod3}
\hqx(W)=  
\qhatstarmix(W):=\sup_{\kappa\ge1}\hqx_\kk(W).
\end{align}
Similarly, define the \emph{pure $\kappa$-modularity} by
\begin{align}\label{mod4}
    \hqx_{\kappa,\mathrm{pure}}(W)
  :=\sup_{\chi:\Omega\to[\kappa]}
  \qhat(W,\chi),
\end{align}
and the \emph{pure graphon modularity} by
\begin{align}\label{de:gmodp}
  \qhatstarpur(W)
  :=
  \sup_{\kappa\ge1}
  \hqx_{\kappa,\mathrm{pure}}(W).
\end{align}
\end{definition}
Note that $\hat q(W,\chi)$, the modularity score associated with the measurable partition induced by $\chi$, agrees with the graphon modularity score in \cite{Klimm}. In particular,
 we recover the graphon modularity defined in \cite{Klimm} from the pure graphon modularity $\qhatstarpur(W)$ as given in \eqref{de:gmodp}.

\begin{remark}\label{RM2}
We will see in \refT{LX1} that for graphons defined on $\oi$,
$\hqx_{\kk,\pure}(W)=\hqx_{\kk,\mix}(W)$
and thus
$\hqx_{\pure}(W)=\hqx_{\mix}(W)$, so pure and mixed colourings give the same 
result.
However, we will also see in \refE{ex:C5} that
for graphons defined on a general probability space, the supremum over pure
colourings may be strictly smaller than the one over mixed colourings.
We will further see in \refT{TLC2} that for our definition with mixed
colourings, two equivalent graphons have the same modularity, while 
(by \refE{ex:C5} again) this is
not true for the suprema over pure colourings only.
Hence, as for maximum cut in \refSS{SSSmax}, 
the definition above using mixed colourings seems to be the natural one
for general graphons.

Klimm, Jones, and  Schaub
\cite{Klimm} consider, as mentioned above, only pure colourings (in our
terminology) of graphons (and graphs).
However, they consider only graphons defined on $\oi$; hence their
supremum of graphon modularity over all (pure) colourings equals our 
modularity $\hqx$ by \refT{LX1}.
\end{remark}

It follows, see \refR{RG1}, 
that for a finite graph $G$, the modularity $\hqx(G)$ defined in \eqref{D1b}
equals $\hqx_\pure(W'_G)$. We study the relations between modularity for
graphs and graphons further in \refSS{SSMGG}, but we first give
some results for graphons, beginning with
some immediate consequences of the definition.
\begin{lemma}\label{LF1}
For any uncoloured graphon $W$, 
the $\kk$-modularity $\hqx_\kk(W)$ is a non-decreasing function of $\kk$:
\begin{align}\label{mod30}
  0=\hqx_1(W)\le\hqx_2(W)\le\dots,
\end{align}
and thus
\begin{align}
  \hqx_\kk(W)\to\hqx(W)
\qquad\text{as $\kk\to\infty$}.
\end{align}
Furthermore, for every $\kk\ge1$,
\begin{align}
  \label{lf1a}
0 &\le \hqx_\kk(W)
\le1-\tfrac1\kk<1,
\intertext{and thus}
  \label{lf1b}
0 &\le \hqx(W)\le1.
\end{align}
\end{lemma}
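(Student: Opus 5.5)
The plan is to derive each assertion from the definitions \eqref{mod2}--\eqref{mod3}, together with the two-sided bound of \refProp{LMOv2}. We may assume $\rho_W>0$; otherwise every score is $0$ by convention and all claims are trivial.

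First, monotonicity in $\kk$. Given a mixed $\kk$-colouring $\Psi=(\psi_\ell)_1^\kk$, extend it to a $(\kk+1)$-colouring $\Psi'$ by setting $\psi_{\kk+1}:=0$. Then $\innprod{\Psi'(x),\Psi'(y)}=\innprod{\Psi(x),\Psi(y)}$ for all $x,y$, so by \eqref{m3} we get $\hq(W,\Psi')=\hq(W,\Psi)$. Taking suprema gives $\hqx_\kk(W)\le\hqx_{\kk+1}(W)$.

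For $\kk=1$, the only colouring is $\psi_1\equiv1$. The score is then
\begin{align*}
\rho^{-1}\Bigpar{\rho-\rho^{-1}\Bigpar{\int_\gO k\dd\mu}^2}=0
\end{align*}
by \eqref{m2}, which is the remark after \refR{RG1}. Hence $\hqx_1(W)=0$, and monotonicity then gives $\hqx_\kk(W)\ge0$ for all $\kk$. A non-decreasing sequence converges to its supremum, and by \eqref{mod3} that supremum is $\hqx(W)$. This proves the convergence $\hqx_\kk(W)\to\hqx(W)$, with the limit possibly $+\infty$ a priori; finiteness follows from the next step.

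For the upper bound, \refProp{LMOv2} gives $\hq(W,\Psi)\le1-\frac1\kk$ for every $\Psi\in\PPsi_\kk$. Taking the supremum over $\Psi$ yields \eqref{lf1a}. Taking the supremum over $\kk$ then gives $0\le\hqx(W)\le1$, which is \eqref{lf1b}.

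There is no real obstacle here. The only point needing care is that the padding step must preserve the score exactly, and it does, because the zero colour contributes nothing to the inner product.
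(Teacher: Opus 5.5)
Your proposal is correct and matches the paper's proof essentially step by step. Like the paper, you pad a $\kk$-colouring with an unused colour to get monotonicity, compute the single-colour score to be $0$ from \eqref{m1}--\eqref{m2}, and take suprema of the bound $1-\frac1\kk$ from \refProp{LMOv2}; your explicit remark that a non-decreasing sequence converges to its supremum is a detail the paper leaves implicit.
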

\begin{proof}
Any $\kk$-colouring may be regarded as an $(\kk+1)$-colouring (not using the
last colour), and thus $\hqx_\kk(W)\le\hqx_{\kk+1}(W)$, which gives the
inequalities in \eqref{mod30}.
Furthermore,
when $\kk=1$, the only colouring $\Psi$ is the monochromatic colouring with
$\psi_1(x)=1$ for all $x$, which  
as noted above 
has $\hq_1(W,\Psi)=0$, as a simple consequence of 
\eqref{m3} and \eqref{m1}--\eqref{m2}. Hence $\hqx_1(W)=0$.

The upper bound in \eqref{lf1a} follows from \eqref{lm0} and \eqref{mod2},
and the lower bound follows from \eqref{mod30}.
Finally, \eqref{lf1b} follows from \eqref{lf1a} and \eqref{mod3}. 
\end{proof}

The following results are similar to results for graph partition parameters in
\refS{sec:graphon-partition}.

\begin{theorem}\label{LX1}
  Let $W$ be an uncoloured graphon defined on $\oi$ (equipped with Lebesgue
  measure $\gl$, as   usual). Then, 
$\hqx_{\kk,\pure}(W)=\hqx_{\kk,\mix}(W)$
for every $\kk\ge1$, 
and consequently\/
$\hqx_{\pure}(W)=\hqx_{\mix}(W)$.
\end{theorem}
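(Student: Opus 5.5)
We follow the block-approximation argument of \refT{TMC4} and \refT{TCH4}. Since every pure colouring is a mixed one, $\hqx_{\kk,\pure}(W)\le\hqx_{\kk,\mix}(W)$ is trivial. The case $\rho_W=0$ is also trivial, since both sides are $0$. So we assume $\rho:=\rho_W>0$, fix $\kk$ and a mixed colouring $\Psi$, and aim to show $\hqx_{\kk,\pure}(W)\ge\hq(W,\Psi)$. The statement for $\hqx_\pure$ and $\hqx_\mix$ then follows by taking the supremum over $\kk$.

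\emph{Step 1 (step graphons).} Let $W_N$ be the average of $W$ over the squares $I_i\times I_j$, with $I_i=[(i-1)/N,i/N)$. Choose a pure colouring $\chi_N$ by splitting each $I_i$ into subintervals with $\gl\set{x\in I_i:\chi_N(x)=j}=\int_{I_i}\psi_j$, as in \eqref{tmc4}. Since $W_N$ is constant on each square, integrating square by square gives
\begin{align}
t(\KK_{2;\ell\gl},(W_N,\chi_N))=t(\KK_{2;\ell\gl},(W_N,\Psi))\qquad \text{for all } \ell,\gl\in[\kk].
\end{align}
By \refL{LM}, $\hq$ is a function of these densities and of $\rho$. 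Moreover $\rho_{W_N}=\rho$. Hence $\hq(W_N,\chi_N)=\hq(W_N,\Psi)$.

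\emph{Step 2 (passing back to $W$).} We need $\hq(W,\Phi)$ to be Lipschitz in $W$ in cut norm, uniformly over all colourings $\Phi$, once $\rho_W$ is bounded below. By \refL{LA1}, each $t(\KK_{2;\ell\gl},(\cdot,\Phi))$ changes by at most $\cnorm{W-W_N}\le\linorm{W-W_N}=:\eps_N$. In \eqref{lm}, the first term $\rho^{-1}\sum_\ell t_{\ell,\ell}$ therefore changes by at most $\eps_N/\rho$. For the second term, set $m_\ell:=\rho^{-1}\sum_\gl t_{\ell,\gl}$, so that $m_\ell\in[0,1]$ and $\sum_\ell m_\ell=1$. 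We then have
\begin{align}
\Bigabs{\sum_\ell m_\ell^2-\sum_\ell m_\ell'^2}\le 2\sum_\ell|m_\ell-m'_\ell|.
\end{align}
We must be careful not to lose a factor $\kk$ here. The point is that $\sum_\ell\bigl|\int(k_W-k_{W_N})\psi_\ell\bigr|$ is bounded by a cut-norm quantity: take signs $s_\ell$ and write it as $\iint(W-W_N)(x,y)\bigl(\sum_\ell s_\ell\psi_\ell(x)\bigr)\dd x\dd y$. Then $|\sum_\ell s_\ell\psi_\ell|\le1$, so this is at most $4\cnorm{W-W_N}$, since \eqref{cutnorm2} extends to $[-1,1]$-valued functions with a factor $4$. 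Combining these bounds gives $|\hq(W,\Phi)-\hq(W_N,\Phi)|\le C\eps_N/\rho$ for an absolute constant $C$. In any case $\kk$ is fixed, so even a $\kk$-dependent constant would be harmless; uniformity in $\kk$ is not needed.

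\emph{Step 3 (conclusion).} Combining the two steps,
\begin{align}
\hqx_{\kk,\pure}(W)\ge\hq(W,\chi_N)\ge\hq(W_N,\chi_N)-C\eps_N/\rho=\hq(W_N,\Psi)-C\eps_N/\rho\ge\hq(W,\Psi)-2C\eps_N/\rho.
\end{align}
Letting $N\to\infty$, we have $\eps_N\to0$ by the standard $L^1$ approximation of $W$ by its block averages, which gives the claim. The main obstacle is Step 2: the modularity score is not linear in the densities, because it contains the quadratic degree-product term and the normalisation by $\rho$. We therefore need the Lipschitz bound with the denominator controlled. Here this is easy because $\rho_{W_N}=\rho>0$ exactly, and the quadratic term is handled by the simplex bound above.
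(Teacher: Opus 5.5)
Your proof is correct and follows the paper's argument: you use the same interval-splitting construction to get $\hq(W_N,\chi_N)=\hq(W_N,\Psi)$ on the block-averaged step graphon, then a perturbation bound between $W$ and $W_N$ (using $\rho_{W_N}=\rho_W$), and then let $N\to\infty$. The only difference is that the paper bounds the perturbation directly in $L^1$ from the integrand of \eqref{m3}, obtaining $\rho_W^{-1}\norm{W-W_N}_{L^1}+2\rho_W^{-1}\norm{k-k_N}_{L^1}$, whereas you go through \refL{LM} and cut-norm estimates. One small slip there: summing the per-colour bounds of \refL{LA1} gives $\kk\eps_N/\rho$ for the first term, not $\eps_N/\rho$ (you get $\eps_N/\rho$ directly from $0\le\innprod{\Psi(x),\Psi(y)}\le1$), but as you note this is harmless.
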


\begin{proof}
This is similar to the proof of \refT{TMC4}.
We may assume that $\rho_W>0$.
Suppose first that $W$ is a step graphon in the sense that,
for some $N\ge1$, $W$ is 
constant on every square $I_{i}\times I_{j}$, $i,j\in[N]$,
where $I_{i}:=[(i-1)/N,i/N)$. 
Given a mixed $\kk$-colouring $\Psi$ of $W$, we can find a pure 
$\kk$-colouring $\chi$
such that for every $i\in[N]$ and $j\in[\kk]$, we have, as in \eqref{tmc4},
\begin{align}\label{lx1}
  \gl\set{x\in I_{i}:\chi(x)=j} = \int_{I_{i}}\psi_j(x)\dd x.
\end{align}
Since $W(x,y)$ and $k(x)k(y)$ are constant on each square 
$I_{i}\times I_{j}$,
it follows from \eqref{lx1} (by integrating over each such square separately)
that the last integral in \eqref{m3} is the same for 
the colourings $\Psi$ and $\chi$, and thus
\begin{align}\label{lx1b}
\hq_\kk(W,\Psi)=\hq_\kk(W,\chi).
\end{align}

Consider now a general graphon $W$
and a $\kk$-colouring $\Psi$ of it.
Let $N\ge1$ and let $W_N$ be the step graphon 
obtained by averaging $W$ over each square $I_{i}\times I_{j}$,
with $I_i$ as above.
Let $k_N(x):=\int_\gO W_N(x,y)\dd\mu(y)$ be the corresponding degree
function,
and let $\chi_N$ be the 
pure $\kk$-colouring constructed by \eqref{lx1}.
Then, using \eqref{m3} and \eqref{m2}, and noting that $\rho_{W_N}=\rho_W$,
\begin{align}\label{lx3}
&\bigabs{\hq_\kk(W,\Psi)-\hq_\kk(W_N,\Psi)}
\notag\\&
\le  \frac{1}{\rho_W}\iint_{\gO^2}
\Bigabs{W(x,y)-W_N(x,y)-\frac{k(x)k(y)-k_N(x)k_N(y)}{\rho_W}}\dd\mu(x)\dd\mu(y)
\notag\\&
\le  \frac{1}{\rho_W}\iint_{\gO^2}|W(x,y)-W_N(x,y)|\dd\mu(x)\dd\mu(y)
\notag\\&\qquad
+  \frac{1}{\rho_W}\iint_{\gO^2}
\frac{|k(x)-k_N(x)|\,k(y)+k_N(x)\,|k(y)-k_N(y)|}{\rho_W}
\dd\mu(x)\dd\mu(y)
\notag\\&
= \rho_W\qw\norm{W-W_N}_{L^1(\gO^2)} + 2\rho_W\qw\norm{k-k_N}_{L^1(\gO)},
\end{align}
and similarly for $\bigabs{\hq(W,\chi_N)-\hq(W_N,\chi_N)}$.
It is well-known that $\norm{W-W_N}_{L^1(\gO^2)}\to0$ as \Ntoo, and it
follows from \eqref{m1} that 
$\norm{k-k_N}_{L^1(\gO)}\le\norm{W-W_N}_{L^1(\gO^2)}$.
Since we have shown that
$\hqx(W_N,\Psi)=\hqx(W_N,\chi_N)$, it follows that
\begin{align}\label{lx4}
\bigabs{\hq_\kk(W,\Psi)-\hq_\kk(W,\chi_N)}
&\le \bigabs{\hq_\kk(W,\Psi)-\hq_\kk(W_N,\Psi)} + \bigabs{\hq_\kk(W,\chi_N)-\hq_\kk(W_N,\chi_N)}
\notag\\&
\le 2\rho_W\qw\norm{W-W_N}_{L^1(\gO^2)} + 4\rho_W\qw\norm{k-k_N}_{L^1(\gO)}
\to0
\end{align}
as \Ntoo. Consequently, the supremum of $\hq_\kk(W,\chi)$ over pure
$\kk$-colourings $\chi$ is at least $\hq_\kk(W,\Psi)$ for every mixed
$\kk$-colouring 
$\Psi$, and thus the suprema over pure and mixed colourings are the same.
\end{proof}

By \refR{R01+}, \refT{LX1} holds more generally for any probability space
that is a Borel space with an atomless
probability measure.

\begin{lemma}\label{LC1}
  Let $\gf:\gO_1\to\gO_2$ be a \mpp{} map.
If\/ $W$ is an uncoloured graphon on $\gO_2$, then
$\hqx_{\kkpure}(W)\le\hqx_\kkpure(W^\gf)$ 
and $\hqx_\kkmix(W)=\hqx_\kkmix(W^\gf)$ for every $\kk$,
and consequently
\begin{align}\label{lc1pure}
\hqx_\pure(W)&\le\hqx_\pure(W^\gf),\\
\hqx_\mix(W)&=\hqx_\mix(W^\gf), \label{lc1mix}
\end{align}
where in general there might be strict inequality in \eqref{lc1pure}.
\end{lemma}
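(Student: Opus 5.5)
We follow the pullback and conditional-expectation argument of \refL{LMC}, adapted to the modularity score. We may assume $\rho_W>0$. By \eqref{chpu7} in \refL{lem:cheeger-pullback}, $k_{W^\gf}=k_W\circ\gf$ a.e.\ and $\rho_{W^\gf}=\rho_W$.

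\emph{Step 1 (pullback).} For any mixed (or pure) $\kk$-colouring $\Psi$ of $\gO_2$, the pullback $\Psi^\gf=\Psi\circ\gf$ is a $\kk$-colouring of $\gO_1$, and it is pure if $\Psi$ is. The integrand in \eqref{m3} for $(W^\gf,\Psi^\gf)$ is the integrand for $(W,\Psi)$ composed with $\gf\times\gf$. A change of variables therefore gives $\hq(W^\gf,\Psi^\gf)=\hq(W,\Psi)$. Taking suprema over pure colourings yields $\hqx_\kkpure(W)\le\hqx_\kkpure(W^\gf)$. Taking suprema over mixed colourings yields $\hqx_\kkmix(W)\le\hqx_\kkmix(W^\gf)$.

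\emph{Step 2 (reverse inequality for mixed colourings).} Let $\Psi$ be a mixed $\kk$-colouring of $\gO_1$, and let $\cF_\gf$ be the \gsf{} generated by $\gf$. Define $\bPsi:=\E(\Psi\mid\cF_\gf)$ coordinatewise. This is again a map into $\gD_\kk$, since conditional expectation preserves nonnegativity and the constraint $\sum_\ell\psi_\ell=1$. Write the score using \refL{LM} as
\[
\frac1\rho\sum_\ell\iint W^\gf\psi_\ell\otimes\psi_\ell-\frac1{\rho^2}\sum_\ell\Bigl(\int k_{W^\gf}\psi_\ell\Bigr)^2 .
\]
Since $W^\gf(x,y)$ is $\cF_\gf$-measurable in each variable separately, Fubini applied one variable at a time, exactly as in \eqref{lmc3}, lets us replace $\psi_\ell$ by $\bpsi_\ell$ in both factors of the first term. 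Since $k_{W^\gf}$ is $\cF_\gf$-measurable, the integrals $\int k_{W^\gf}\psi_\ell$ in the second term are likewise unchanged, as in \eqref{chpu14}. Hence $\hq(W^\gf,\Psi)=\hq(W^\gf,\bPsi)$. By \cite[Lemma 1.13]{Kallenberg} there is a measurable $\Psi_2:\gO_2\to\gD_\kk$ with $\bPsi=\Psi_2\circ\gf$, and Step~1 then gives $\hq(W^\gf,\Psi)=\hq(W,\Psi_2)\le\hqx_\kkmix(W)$. This proves equality for every $\kk$; taking the supremum over $\kk$ gives \eqref{lc1pure} and \eqref{lc1mix}.

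\emph{Step 3 (strict inequality).} We expect this to be the only point needing thought, namely producing an example. Following \refE{EMCbad}, take $\gO_2$ to be a one-point space with $W=1$. Then every pure colouring is monochromatic, so $\hqx_\pure(W)=0$. On the other hand, $W^\gf\equiv1$ on $\oi$. For any colouring with colour masses $a_\ell$, the score is $\sum_\ell a_\ell-\sum_\ell a_\ell^2\cdot1$; with $\rho=1$ and $k\equiv1$ the two terms cancel and the score is $0$. So this example fails, and a better one is needed.

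Instead take $\gO_2=\{1,2\}$ with uniform measure and $W$ equal to $1$ on the diagonal and $0$ off it, so $W$ has loops at both points and no edge between them. Pure colourings of $\gO_2$ include colouring the two points differently, which gives score $1/2$, so this also fails to separate. The separating example is rather the one the paper defers to \refE{ex:C5}, built from $C_5$ with $W'_G$ versus $W_G$ (cf.\ \eqref{c511}). Our plan is to cite that example: there $\hqx_\pure(W'_{C_5})<\hqx_\mix(W'_{C_5})=\hqx_\pure(W_{C_5})$, using \refT{LX1} and the fact that $W_{C_5}=(W'_{C_5})^\gf$. Verifying that the fractional colouring strictly beats every partition of $C_5$ is a finite computation, and this is the main obstacle.
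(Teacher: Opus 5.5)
Your proposal is correct and follows essentially the paper's argument: a change of variables for the pullback inequality, conditional expectation onto $\sigma(\gf)$ plus the factorisation $\bPsi=\Psi_2\circ\gf$ for the reverse mixed inequality, and \refE{ex:C5} for strictness. The paper handles the combined kernel $H(\gf(x),\gf(y))$ instead of splitting the score via \refL{LM}, which is an immaterial difference; and in your discarded example $W\equiv1$, the score is $\sum_\ell a_\ell^2-\sum_\ell a_\ell^2=0$, not $\sum_\ell a_\ell-\sum_\ell a_\ell^2$ as you wrote.
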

\begin{proof}
This is similar to the proof of \refL{LMC}. Note first that \eqref{m1} implies, since $\gf$ is \mpp,
for $x\in\gO_1$,
\begin{align}\label{vq1}
  k_{W^\gf}(x)=\int_{\gO_1}W\bigpar{\gf(x),\gf(y)}\dd\mu_1(y)
=\int_{\gO_2}W\bigpar{\gf(x),z}\dd\mu_2(z)
=k_W^\gf(x).
\end{align}
Similarly, $\rho_{W^\gf}=\rho_W$.

We may assume $\rho=\rho_W>0$.
It then follows from \eqref{m3} and \eqref{vq1}, 
by a change of variables similar to \eqref{vq1}, 
that for every $\kk\ge1$ and 
$\Psi\in \PPsi_\kk(\gO_2)$,
  \begin{align}\label{vq2}
  \hq(W^\gf,\Psi^\gf)&
=\frac{1}{\rho}\iint_{\gO_1^2}\Bigpar{W^\gf(x,y)-\frac{k_W^\gf(x)k_W^\gf(y)}{\rho}}
\innprod{\Psi^\gf(x),\Psi^\gf(y)}\dd\mu_1(x)\dd \mu_1(y)
\notag\\&
=\frac{1}{\rho}\iint_{\gO_2^2}\Bigpar{W(x',y')-\frac{k_W(x')k_W(y')}{\rho}}
\innprod{\Psi(x'),\Psi(y')}\dd\mu_2(x')\dd \mu_2(y')
\notag\\&
=\hq(W,\Psi).
  \end{align}

If $\Psi$ is a pure colouring, then so is $\Psi^\gf$, and thus it follows by
taking the supremum over pure $\Psi$ that 
$\hqx_{\kkpure}(W)\le\hqx_\kkpure(W^\gf)$, which implies \eqref{lc1pure}. 
We will see in \refE{ex:C5} that the inequality might be strict.

Consider now general mixed colourings.
Then \eqref{vq2} implies, by \eqref{mod2}
\begin{align}\label{vq3}
  \hqx_\kk(W)=\sup_{\Psi\in\PPsi_\kk(\gO_2)}\hq_\kk(W^\gf,\Psi^\gf).
\end{align}
Hence, again using \eqref{mod2},
we want to show that the restricted supremum over $\Psi^\gf$
on the \rhs{} of \eqref{vq3}
equals the supremum over all $\Psi\in\PPsi_\kk(\gO_1)$.

Thus,  consider any $\Psi\in\PPsi_\kk(\gO_1)$.
To ease notation, 
let
\begin{align}\label{vq4}
  H(x,y):=W(x,y)-\frac{k_{W}(x)k_{W}(y)}{\rho}.
\end{align}
Then \eqref{m3} and \eqref{vq1} yield, similarly to \eqref{vq2},
\begin{align}\label{vq5}
  \hq(W^\gf,\Psi)&=\frac{1}\rho\iint_{\gO_1^2}H\bigpar{\gf(x),\gf(y)}
  \innprod{\Psi(x),\Psi(y)}\dd\mu_1(x)\dd\mu_1(y).
\end{align}
Let $\cG:=\set{\gf\qw(A):A\in\cF_2}$; this is a sub-$\gs$-field of $\cF_1$,
and a (real-valued, say)
function on $\gO_1$ is $\cG$-measurable if and only if it can be
written as $g^\gf$ for some $\cF_2$-measurable function $g$.
(See e.g.\ \cite[Lemma 1.13]{Kallenberg}.)
Let $\Psi':=\E(\Psi\mid\cG)$ (defined by taking the conditional expectation
$\E(\psi_\ell\mid\cG)$ of each component separately);
then $\Psi'\in\PPsi_\kk(\gO_1)$. Furthermore, $\Psi'$ is $\cG$-measurable,
and thus $\Psi'=\gU^\gf$ for some function $\gU$ on $\gO_2$; it is easy to
see that (after adjustment on a null set) $\gU\in\PPsi_\kk(\gO_2)$.
Consequently, since $H\bigpar{\gf(x),\gf(y)}$ is $(\cG\times\cG)$-measurable,
\eqref{vq5} implies
\begin{align}\label{vq6}
  \hq(W^\gf,\Psi)&=\frac{1}\rho\iint_{\gO_1^2}H\bigpar{\gf(x),\gf(y)}
  \innprod{\Psi'(x),\Psi'(y)}\dd\mu_1(x)\dd\mu_1(y).
\end{align}
Using \eqref{vq5} again, we thus obtain
\begin{align}
  \label{vq7}
  \hq(W^\gf,\Psi)&=
  \hq(W^\gf,\Psi')
=
  \hq(W^\gf,\gU^\gf)
.\end{align}
Hence \eqref{vq3} shows that
$\hq(W^\gf,\Psi)\le \hqx_\kk(W)$. Since $\Psi\in\PPsi_\kk(\gO_1)$ is arbitrary, 
this shows $\hqx_\kk(W^\gf)\le\hqx_\kk(W)$ by \eqref{mod3}.
The converse inequality follows directly from \eqref{vq3} and \eqref{mod3}.
Consequently we have
$\hqx_\kk(W^\gf)=\hqx_\kk(W)$ for every $\kk$, and thus also \eqref{lc1mix}
holds.  
\end{proof}

\begin{remark}
  Note that in the proof above, even if $\Psi$ is a pure coloured graphon, then in general
  $\Psi'$ and $\gU$ are not. 
This explains why the proof does not yield equality also in \eqref{lc1pure}.
\end{remark}

\begin{theorem}\label{TLC2}
  If\/ $W_1$ and $W_2$ are two equivalent uncoloured graphons, 
then $\hqx_\kk(W_1)=\hqx_\kk(W_2)$ for every $\kk\ge1$
and thus $\hqx(W_1)=\hqx(W_2)$.
Consequently, $\hqx_\kk(\hW)$ 
and $\hqx(\hW)$
are well-defined for graph limits $\hW$ (i.e., equivalence classes  of graphons).
\end{theorem}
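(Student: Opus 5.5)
The argument will follow the proof of \refT{TMC2}, with \refL{LC1} taking the place of \refL{LMC}. Let $W_1\cong W_2$ be uncoloured graphons on probability spaces $(\gO_1,\cF_1,\mu_1)$ and $(\gO_2,\cF_2,\mu_2)$. By \refT{Tunique0}\ref{Tunique0BCL} there is a third graphon $W$ on some probability space $\gO$, together with \mpp{} maps $\gf_j:\gO_j\to\gO$, such that $W_j=W^{\gf_j}$ a.e.\ for $j=1,2$. If one prefers to avoid this deep uniqueness theorem, one can instead use the more elementary chain of four pullbacks from \cite[Theorem 8.3]{SJ249}. That result connects $W_1$ to $W_2$ through graphons each of which is a pullback of, or pulls back to, the next one.

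Fix $\kk\ge1$. The mixed part of \refL{LC1} gives $\hqx_\kk(W^{\gf})=\hqx_\kk(W)$ for every \mpp{} $\gf$. Applying this once for $\gf_1$ and once for $\gf_2$ yields
\begin{align}
\hqx_\kk(W_1)=\hqx_\kk(W)=\hqx_\kk(W_2).
\end{align}
In the chain version the same identity is applied at each link, so equality propagates along the chain. A.e.\ modifications of a graphon do not change $k_W$, $\rho_W$ or $\hq(W,\Psi)$, since these are all defined by integrals, so identifying $W_j$ with $W^{\gf_j}$ a.e.\ causes no problems. The degenerate case $\rho=0$ is consistent as well: equivalent graphons have equal $\rho$, and $\rho_{W^\gf}=\rho_W$, so both sides vanish together.

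Taking the supremum over $\kk$ in \eqref{mod3} then gives $\hqx(W_1)=\hqx(W_2)$. Finally, the graph-limit statement is just a restatement: \refT{TWB} (in its uncoloured case) identifies graph limits with equivalence classes of graphons, and the equalities above say that $\hqx_\kk$ and $\hqx$ are constant on each such class. None of the steps is a real obstacle, since all the work was already done in \refL{LC1}. The one point needing care is that \refL{LC1} must be used in its mixed form, because by \refE{ex:C5} the statement is false for pure colourings.
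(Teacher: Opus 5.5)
Your proposal is correct and is essentially the paper's proof: as in \refT{TMC2}, you combine the mixed invariance $\hqx_\kk(W^\gf)=\hqx_\kk(W)$ from \refL{LC1} with the common-pullback representation of \refT{Tunique0}\ref{Tunique0BCL} (or, alternatively, the chain of pullbacks from \cite[Theorem 8.3]{SJ249}). Your extra remarks on a.e.\ modifications and the case $\rho=0$ are correct and harmless.
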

\begin{proof}
This follows from \refL{LC1} by exactly the same argument as in the proof of
\refT{TMC2}. 
\end{proof}

Note that \refE{ex:C5} below shows that \refT{TLC2} does not hold 
for $\hqx_\pure(W)$. 
This is the main reason for our choice of 
mixed colourings in the definition \eqref{mod2}--\eqref{mod3} of $\hqx(W)$.
We note also the following.

\begin{theorem}\label{TLC3}
  For every uncoloured graphon $W$ and $\kk\ge1$, 
the supremum over $\Psi\in\PPsi_\kk$ in \eqref{mod2} is
  attained. (So the supremum is a maximum.) 
 However, there exists an uncoloured graphon $W$ such that the supremum over $\kappa$ in~\eqref{mod3} is not attained.
\end{theorem}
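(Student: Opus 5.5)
The statement has two parts, and I will treat them separately.

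\emph{Attainment for fixed $\kk$.} I would follow the proof of \refT{TMC5}. If $\rho_W=0$ then $\hq\equiv0$ and there is nothing to show, so assume $\rho_W>0$. By \refL{LM}, $\hq(W,\Psi)$ is a fixed continuous function of finitely many densities $t(\KK_{2;\ell\gl},(W,\Psi))$ and of $\rho_W$. For fixed $W$, each such density is a \wx{} continuous function of $\Psi$, by \refL{LA2} (or directly by \refL{LA3} with $F=W$). Hence $\Psi\mapsto\hq(W,\Psi)$ is continuous on $\PPsi_\kk$. As noted at the start of \refSS{SStop}, $\PPsi_\kk$ is compact, so the supremum in \eqref{mod2} is attained.

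\emph{Non-attainment over $\kk$.} Here I need a graphon with $\hqx_\kk(W)<\hqx(W)$ for every $\kk$. By \eqref{mod30}, this amounts to the sequence $\hqx_\kk(W)$ never stabilising. My first candidate is a disjoint union of infinitely many cliques of decreasing sizes. Take $\gO=\oi$, split it into intervals $J_i$ of lengths $a_i>0$ with $\sum a_i=1$, and set $W:=\sum_i\sindic{J_i\times J_i}$. Then $k(x)=a_i$ on $J_i$ and $\rho=\sum a_i^2$. Colouring each $J_i$ with its own colour would formally give
\[
1-\frac{\sum_i a_i^4}{\rho^2}.
\]
With finitely many colours, one colours $J_1,\dots,J_{\kk-1}$ separately and merges the tail $\bigcup_{i\ge\kk}J_i$. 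I would show that $\hqx(W)$ is at least the limit of these scores, and therefore equals $1-\sum_i a_i^4/\rho^2$. For the reverse inequality, I would use an upper bound: for any colouring with class degree masses $m_\ell$, the score is
\[
\frac{\sum_\ell t_{\ell\ell}}{\rho}-\frac{\sum_\ell m_\ell^2}{\rho^2},
\]
and I would bound this by comparing against the cluster structure.

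The key step is strictness: for each $\kk$, every $\kk$-colouring scores strictly less than the supremum. A $\kk$-colouring cannot separate infinitely many components, so some colour $\ell$ carries positive degree mass from at least two distinct components $J_i$ and $J_j$. I would then show that splitting colour $\ell$ so as to separate these components strictly increases the score. Doing so removes a cross term $2m_{\ell,i}m_{\ell,j}/\rho^2>0$ from the penalty. It does not change $\sum t_{\ell\ell}$, because $W=0$ between components, provided the split is made along component boundaries, i.e.\ by restricting $\psi_\ell$ to $J_i$ versus the rest. The new colouring uses $\kk+1$ colours and has a strictly larger score. Combining this with attainment at level $\kk$, I get $\hqx_\kk(W)<\hqx_{\kk+1}(W)\le\hqx(W)$ for all $\kk$. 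So the supremum over $\kk$ is not attained by any finite colouring.

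The main obstacle is the pigeonhole claim: that an optimal $\kk$-colouring must genuinely mix mass from two components within one colour. Mixed colourings could spread across components fractionally, so I would argue on the optimiser $\Psi$ given by the first part. Every point $x$ has some $\ell$ with $\psi_\ell(x)\ge1/\kk$. With infinitely many components and only $\kk$ colours, some colour $\ell$ has $\int_{J_i}\psi_\ell>0$ for at least two indices $i$. In that case the splitting argument applies, and $m_{\ell,i}=a_i\int_{J_i}\psi_\ell>0$ because $k=a_i>0$ on $J_i$.
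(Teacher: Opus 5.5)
Your proof is correct. The first part is exactly the paper's argument: continuity of $\Psi\mapsto\hq(W,\Psi)$ via \refL{LM} and \refL{LA2}/\refL{LA3}, plus compactness of $\PPsi_\kk$.

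For the second part you use the same kind of example as the paper, which takes $a_j=2^{-j}$ in Example~\ref{ex:inf_colours_needed}, but a different improvement step.

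\emph{The paper's move.} It recolours the entire interval $I_h$ with a new colour $\kk+1$. This alters every colour class present on $I_h$, so the paper distinguishes two cases: (i) some interval carries two colours, or (ii) two intervals share one colour. It then needs the explicit computation \eqref{mar2}, which relies on the specific facts $W=1$ on $I_h^2$ and $\rho=1/3$.

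\emph{Your move.} You split a single class, $\psi_\ell=\psi_\ell\sindic{J_i}+\psi_\ell\sindic{J_i\comp}$. Since $W=0$ on $J_i\times J_i\comp$, the term $\sum_\ell t_{\ell\ell}$ is unchanged. The penalty $\sum_\ell m_\ell^2/\rho^2$ drops by exactly $2m_{\ell,i}\,m_{\ell,J_i\comp}/\rho^2$. This is $\ge 2m_{\ell,i}m_{\ell,j}/\rho^2$, not equal to it as you wrote, but the conclusion is unaffected.

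\emph{The pigeonhole input.} All you need is that some colour has positive mass on two components, and pigeonhole always gives this. Each $J_i$ has some $\ell$ with $\int_{J_i}\psi_\ell>0$, because these integrals sum to $a_i>0$. Your remark that every point has a colour with $\psi_\ell(x)\ge1/\kk$ is not needed.

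\emph{What each route buys.} Your route avoids the case split and the calculation, and works for arbitrary sizes $a_i$. In fact it works for any graphon that decomposes into infinitely many pieces with no edges between them and positive edge mass in each piece. Like the paper's move, it preserves purity. The paper's computation is more explicit but tied to the clique structure.

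Finally, your digression identifying $\hqx(W)=1-\sum_i a_i^4/\rho^2$, whose upper bound you only sketch, plays no role in the argument and can be dropped.
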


\begin{proof}
We may assume $\rho_W>0$. Then
\refLs{LM} and \ref{LA2} (or \ref{LA3}) show that 
$\Psi\mapsto\hq(W,\Psi)$ is a continuous function on the compact space
$\PPsi_\kk$, and thus it attains its maximum.

An example where the supremum over all $\kk$ is not attained
is given in \refE{ex:inf_colours_needed} below.
\end{proof}

\begin{remark}If 
$W$ is a graphon on $\oi$, so $\hqx_\kkpure(W)=\hqx_\kk(W)$ by
\refT{LX1}, we do not know whether there always is a pure colouring that
attains the supremum in \eqref{mod4} (and thus \eqref{mod2}).  
\end{remark}

\begin{figure}
   \centering

    \includegraphics[scale=0.35]{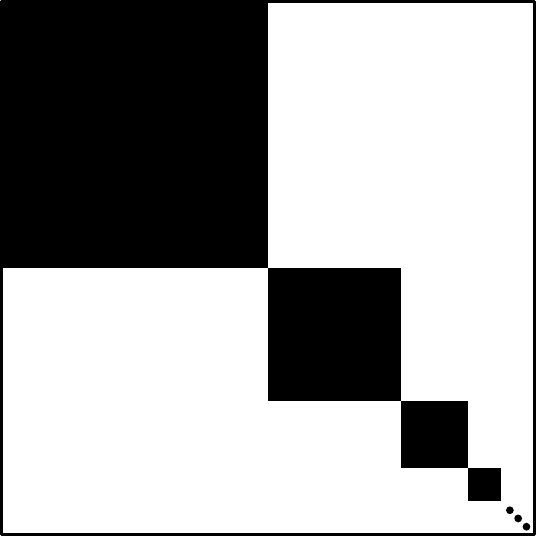}
    \caption{{\small A graphon whose modularity value is not achieved using any finite number of colours. See \refT{TLC3} and Example~\ref{ex:inf_colours_needed}.}}
    \label{fig:inf_colours_needed} 
\end{figure}

\begin{example}\label{ex:inf_colours_needed}
Consider the graphon in Figure~\ref{fig:inf_colours_needed} corresponding to the limit of disjoint unions of cliques on $n/2, n/4, \ldots$ many vertices. We may define it $W(x,y):=1$ if $\exists j\in \mathbb{N}_0$ such that $2^{-j} < x, y \leq 2^{-j+1}$, and $W(x,y):=0$ otherwise. Label these intervals by $I_j:=(2^{-j}, 2^{-j+1}]$ and note for $x\in I_j$ we have,
\begin{equation}\label{eq.inf_basics} k(x) = \int_{I_j} W(x,y) \dd y = 2^{-j} \quad \mbox{and} \quad \rho=\sum_{j=0}^\infty \int_{I_j^2} W(x,y) \dd x \dd y = 1/3 .\end{equation}
We \textbf{claim} that the modularity value of this graphon is not achieved by any finite number of colours. To prove the claim, assume for a contradiction that the maximum modularity value may be achieved by a colouring $\Psi$ using colours $\{1, \ldots, \kappa\} =[\kappa]$.
Since there are finitely many colours, but infinitely many intervals $I_j$,
there must be either (i) some interval $I_h$ such that at least two colours
have non-zero measure on that interval, or (ii) there are two intervals
$I_{h}$ and $I_{h_1}$ and a colour $\ell^*$ such that $\ell^*$ is the only colour with non-zero measure on both intervals. 

In both cases, define a colouring $\Psi'$ on $\kappa+1$ colours by re-colouring the
interval $I_h$ with the `new colour' $\kappa+1$, i.e. 
\begin{equation}\label{eq.phi_dash_col}
\Psi'(x) = 
    \begin{cases}
    (0, \ldots, 0, 1)                            & \mbox{if } x \in I_h,\\
    (\psi_1(x), \ldots, \psi_\kappa(x), 0)  & \mbox{otherwise}.
    \end{cases}
\end{equation}
(Note that if $\Psi$ is a pure colouring then so is $\Psi'$.) We shall show
$\hat{q}(W, \Psi')>\hat{q}(W, \Psi)$,
which yields the desired contradiction.
Define also, for convenience, $\psi_{\kk+1}(x):=0$. 
Let
\begin{equation}\label{eq.defn_x} 
x_\ell:=\int_{I_h}\psi_\ell(x) dx \ge0, 
\qquad \ell \in[\kk+1],
\end{equation}   
and note that
\begin{equation}\label{mar0}
\sum_\ell x_\ell = \gl(I_h)=2^{-h}.
\end{equation}
Write $\WW'=(W, \Psi')$, $\WW=(W, \Psi)$,  
\begin{align}t'_{\ell,\gl}=t(\KK_{2,\ell\gl},\WW') \quad \mbox{and}\quad t_{\ell,\gl}=t(\KK_{2,\ell\gl},\WW).\end{align}
We also write \begin{equation}\label{eq.tdash_vol}
t'_{\ell, \bullet}:=\sum_{\gl} t'_{\ell, \gl} =\intoi W(x,y) \psi'_\ell(x) \dd x \dd y = \intoi k(x) \psi'_\ell(x) \dd x, 
\end{equation}
defining $t_{\ell, \bullet}$ in the corresponding way. Then by Lemma~\ref{LM}, we may write the modularity score of $\Psi'$ on $W$ as below, scaling by $\rho^2$ for convenience,
\begin{align} 
\hq(W, \Psi')\rho^2 = \sum_{\ell=1}^{\kappa+1} 
\bigpar{\rho\, t'_{\ell, \ell} - (t'_{\ell, \bullet})^2 }   
.\end{align}
Thus,
\begin{align}\label{eq.mod_diff_after_purple_introduced}
\bigpar{\hat{q}(W, \Psi')-\hat{q}(W, \Psi)}\rho^2
& = \sum_{\ell=1}^{\kk+1} 
\bigsqpar{
\rho\, (t'_{\ell, \ell}-t_{\ell, \ell}) - (t'^{\;2}_{\ell, \bullet}-t_{\ell, \bullet}^2)  
}.\end{align}
We may now directly calculate the terms above. Note that $\psi_{\kappa+1}(x)=\mathbf{1}[x \in I_h]$ and for $x,y\in I_h$, $W(x,y)=1$, and so
\begin{equation}\label{eq.new_colour_edges} t'_{\kappa+1, \kappa+1} = \int_{[0,1]^2} W(x,y) \psi_{\kappa+1}(x) \psi_{\kappa+1}(y) \dd x \dd y = \int_{I_h^2} W(x,y) \dd x \dd y = 2^{-2h}, \end{equation}
and similarly
\begin{equation}\label{eq.new_colour_vol} t'_{\kappa+1, \bullet} = \int_{I_h\times [0,1]} W(x,y) \psi_{\kappa+1}(x) \dd x \dd y = 2^{-2h}. \end{equation}
Now fix $a\in [\kk]$, then, split the integral over $x\in [0,1]$ to $x\in [0,1]\setminus I_h$ and $x\in I_h$; then by~\eqref{eq.tdash_vol} 
\begin{align} 
t'_{a, \bullet} 
     &= \int_{[0,1]\setminus I_h} k(x) \, \psi'_{a}(x) \dd x + \int_{I_h} k(x) \, \psi'_{a}(x)  \dd x
.\end{align}
Since $\psi'_a(x)=0$ for $x\in I_h$, the second term above is zero. On the other hand for $x\notin I_h$, $\psi'_a(x)=\psi_a(x)$. Hence,  
\begin{align} \label{mar10}
t'_{a, \bullet} 
    &= \int_{[0,1]\setminus I_h} k(x) \, \psi_{a}(x) \dd x 
= t_{a, \bullet} - \int_{I_h} k(x) \, \psi_{a}(x) \dd x
,\end{align}
where for $x\in I_h$, $k(x)=2^{-h}$. Now, apply this and recall the definition of $x_\ell$ in~\eqref{eq.defn_x} to obtain,
\begin{align} \label{mar11}
t'_{a, \bullet} 
    &= t_{a, \bullet} - 2^{-h} x_a
\qquad\text{or}\qquad
t_{a, \bullet} = t'_{a, \bullet} + 2^{-h} x_a
\end{align}
and thus
\begin{align} \label{mar1}
t_{a, \bullet}^2 - t'^{\;2}_{a, \bullet} 
    &=  2^{1-h} t'_{a, \bullet} x_a +  2^{-2h} x_a^2. 
\end{align}
To relate $t'_{a,a}$ and $t_{a,a}$ is similar. Again for fixed $a\in [\kk]$,
noting that $\psi'_a(x)=0$ for all $x\in I_h$ and $\psi'_a(x)=\psi_a(x)$ for
$x\notin I_h$  we have, using the symmetry of $W$,
\begin{align} 
t'_{a, a} 
    &= \int_{[0,1]\setminus I_h}\int_{[0,1]\setminus I_h} W(x,y) \psi'_a(x) \psi'_a(y) \dd y \, \dd x \notag \\ 
    &= t_{a,a} - 2\int_{I_h}\int_{[0,1]\setminus I_h} W(x,y) \psi_a(x) \psi_a(y) \dd y \, \dd x 
\notag \\
& \hskip4em- \int_{I_h}\int_{I_h} W(x,y) \psi_a(x) \psi_a(y) \dd y \, \dd x
.\end{align}
However, $W(x,y)=0$ if $x\in I_j$ and $y \in I_{j'}$ for some $j\neq j'$, while $W(x,y)=1$ for $x,y\in I_h$. Thus,
\begin{align} 
\label{eq.t_dash_edges}
t'_{a, a} 
    =  t_{a,a} - \int_{I_h}\int_{I_h} \psi_a(x) \psi_a(y) \dd y \, \dd x 
    =  t_{a,a} - x_a^2 
.\end{align}
Hence by our expression for the difference in modularity
scores~\eqref{eq.mod_diff_after_purple_introduced}, as well
as~\eqref{eq.new_colour_edges}, \eqref{eq.new_colour_vol},
\eqref{mar1}, \eqref{eq.t_dash_edges}, and that $\rho=1/3$
from~\eqref{eq.inf_basics},
\begin{align}\label{mar2}
\bigpar{\hat{q}(W, \Psi')-\hat{q}(W, \Psi)}\rho^2
& = \rho\, t'_{\kappa+1, \kappa+1}- (t'_{\kappa+1, \bullet})^2 + 
\sum_{\ell=1}^\kk \bigsqpar{\rho\, (t'_{\ell, \ell}-t_{\ell, \ell}) 
+ (t_{\ell, \bullet}^2-(t'_{\ell, \bullet})^2)}
\notag\\
& =  2^{-2h}/3- 2^{-4h} + 
\sum_{\ell=1}^\kk\bigsqpar{-x_\ell^2/3 + 2^{1-h}t'_{\ell, \bullet}x_\ell+ 2^{-2h} x_\ell^2} 
\notag \\
& = \bigpar{1/3-2^{-2h}}\Bigpar{2^{-2h} -  \sum_{\ell=1}^\kk x_\ell^2}
 + 2^{1-h}\sumlkk t'_{\ell, \bullet} x_\ell.
\end{align}
By \eqref{mar0}, we have
\begin{align}
  \label{mar3}
2^{-2h} -  \sum_{\ell=1}^\kk x_\ell^2
=\Bigpar{\sum_{\ell=1}^\kk x_\ell}^2 - \sum_{\ell=1}^\kk x_\ell^2
=\sum_{\ell\neq\gl} x_\ell x_\gl
\ge0.
\end{align}
Hence, all terms on the \rhs{} of \eqref{mar2} are non-negative.
Moreover, in case (i), we have $x_\ell,x_\gl>0$ for two different colours
$\ell$ and $\gl$, and thus we have strict inequality in \eqref{mar3}.
And in case (ii), for some colour $\ell=\ell^*$ we have $x_\ell>0$ and also 
$\int_{I_{h_1}}\psi_\ell(x)\dd x>0$, and thus by \eqref{mar10}
\begin{align}
  t'_{\ell,\bullet} \ge \int_{I_{h_1}}k(x)\psi_\ell(x)\dd x
>0.
\end{align}
Hence, in both cases the \rhs{} of \eqref{mar2} is strictly positive, and
thus
$\hat{q}(W, \Psi')-\hat{q}(W, \Psi)>0$, which yields the desired contradiction.
\end{example}

 \begin{remark}
\refE{ex:inf_colours_needed} suggests that one might extend the
theory developed in this paper to include coloured graphons with a countably
infinite set of colours. We leave this for future research;
in the present paper, we have chosen to consider only colourings
with finitely many colours.
\end{remark}

\subsection{Pure and fractional modularity for graphs and graphons}
\label{SSMGG}

We have seen that pure colourings of a graphon correspond to
partitions of the vertex set $V(G)$ of a graph.
In particular,
for a graph $G$ and the corresponding graphon $W'_G$ (see \refE{EG1}), 
it was noted above that $\hqx(G)=\hqx_\pure(W'_G)$.
Similarly, mixed colourings of graphons correspond to fractional partitions.

    Let $\kk\ge 1$. A \emph{fractional $\kk$-partition}\label{def.frac_partition} of $V(G)$ is a collection
$\Phi=(\phi_1,\dots,\phi_\kk)$ of functions $\phi_j:V(G)\to[0,1]$ such that
\begin{align}\label{eq.def.frac_partition}
\sum_{j=1}^{\kk}\phi_j(v)=1\qquad\text{for every }v\in V(G).
\end{align}
We interpret $\phi_j(v)$ as the weight with which vertex $v$ belongs to
community $j$.

\begin{definition}[Fractional modularity of $G$]\label{def:fractional-modularity}
Let $G$ be a graph with $m\ge 1$ edges, and let
$\Phi=(\phi_1,\dots,\phi_\kk)$ be a fractional $\kk$-partition of $V(G)$. The
\emph{fractional modularity score} of $\Phi$ on $G$ is
\begin{equation}\label{eq:qPhi-def}
q(G,\Phi)
  := \frac{1}{2m}\sum_{u,v\in V(G)}
     \left(\mathbf{1}[uv\in E(G)]-\frac{d_ud_v}{2m}\right)
     \sum_{j=1}^{\kk}\phi_j(u)\phi_j(v).
\end{equation}
The \emph{fractional modularity} of $G$ is
\begin{align}\label{fm1}
q_\fra^*(G):=\sup_{\kk\ge 1}\ \sup_{\Phi} q_{\Phi}(G,\Phi),
\end{align}
where the inner supremum is over all fractional $\kk$-partitions $\Phi$ of
$V(G)$. We let $q(G,\Phi):=0$ and consequently $\qx_\fra(G):=0$ if $G$ has no edges.
Since every vertex partition $\cA$ gives rise to a corresponding
$\Phi_\cA$ satisfying $q(G,\Phi_\cA)=q(G,\cA)$, we have
$\qx_\fra(G)\ge \qx(G)$ for every graph $G$.
\end{definition}

We then have the following relations between pure and fractional/mixed
modularity for a graph $G$ and the corresponding 
atomic and continuous  graphons
defined in \refE{EG1}.
Cf.\ the similar result for maximum cut and the Cheeger constant
in \refTs{TMC1} and \ref{TCH1}, 
and note that, 
as for the Cheeger constant but unlike maximum cut, the quantities in
\eqref{tm2a} and \eqref{tm2b} may differ, 
as will be seen in \refE{ex:C5}.

\begin{theorem}\label{TM2}
  Let $G$ be a graph and let $W'_G$ and $W_G$ be the corresponding
graphons defined in \refE{EG1}.
Then
\begin{align}\label{tm2a}
  \qx(G)&=  \hqx_\pure(W'_G),
\\\label{tm2b}
\qx_\fra(G)&
=\hqx_\mix(W'_G)
=\hqx_\mix(W_G)
= \hqx_\pure(W_G)
.\end{align}
The same equalities hold also for $\kk$-modularity, for every fixed $\kk\ge1$.
\end{theorem}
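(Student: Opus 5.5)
The plan follows the proofs of \refT{TMC1} and \refT{TCH1}, and works throughout at a fixed number of colours $\kk$; the statements for $\qx$ then follow by taking $\sup_{\kk}$, using \refL{LF1}. We may assume $G$ has at least one edge, since otherwise every quantity is $0$ by convention. Write $n=|G|$ and $m=e(G)$.

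First I will identify the atomic graphon with the finite objects. For $W'_G$ on $V(G)$ with uniform measure, we have $k_{W'_G}(v)=d_v/n$ and $\rho_{W'_G}=2m/n^2$. Substituting into \eqref{m3}, the factors of $n$ cancel, giving
\[
\hq\bigpar{W'_G,\Phi}=\frac{1}{2m}\sum_{u,v}\Bigpar{\mathbf 1[uv\in E(G)]-\frac{d_ud_v}{2m}}\sum_j\phi_j(u)\phi_j(v)=q(G,\Phi).
\]
Here a mixed $\kk$-colouring of $V(G)$ is exactly a fractional $\kk$-partition $\Phi$, and a pure colouring is exactly a vertex partition; this is also \refR{RG1}. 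So the $\kk$-versions of \eqref{tm2a} and of $\qx_\fra(G)=\hqx_\mix(W'_G)$ hold by direct comparison of the definitions, with pure colourings matching partitions into at most $\kk$ parts.

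Next, $W_G=(W'_G)^\gf$ for the map $\gf$ sending $I_v$ to $v$ (\refE{EG1}). Hence \refL{LC1} gives $\hqx_{\kk,\mix}(W'_G)=\hqx_{\kk,\mix}(W_G)$. Since $W_G$ is defined on $\oi$, \refT{LX1} gives $\hqx_{\kk,\mix}(W_G)=\hqx_{\kk,\pure}(W_G)$.

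Alternatively, one can argue directly without \refT{LX1}. The function $W_G$ is constant on each square $I_u\times I_v$, and $k_{W_G}$ is constant on each $I_v$. So the score is unchanged if we replace a mixed colouring by its averages over the intervals $I_v$, which is the conditional-expectation step in \refL{LC1}. Conversely, a fractional partition can be realised by a pure colouring that splits each $I_v$ into subintervals of lengths $\phi_j(v)/n$, exactly as in \eqref{lx1}. With this splitting the integrand in \eqref{m3} integrates to the same value, since it is piecewise constant and the colour masses in each block are preserved. These three equalities give \eqref{tm2b} at each $\kk$; taking $\sup_\kk$ gives the full statement.

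I do not expect a serious obstacle. The only step needing care is the normalisation check in the first identification: \eqref{m3} must reproduce \eqref{eq:qPhi-def} including the diagonal terms $u=v$. These terms are consistent on both sides, because $W'_G(v,v)=0$ and the sum in \eqref{eq:qPhi-def} also runs over all ordered pairs, including $u=v$.
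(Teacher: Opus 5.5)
Your proposal is correct and follows the paper's proof essentially verbatim. It compares the definitions directly for $W'_G$ at each fixed $\kk$, applies \refL{LC1} to the pullback $W_G=(W'_G)^\gf$, applies \refT{LX1} on $\oi$, and takes $\sup_\kk$ at the end; your normalisation check (including the diagonal terms) and the alternative block-splitting argument are both sound, though the paper does not need them.
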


\begin{proof}
As noted above, it follows directly by
comparing the definitions that
$\qx_\kk(G)=  \hqx_\kkpure(W'_G)$ for every $\kk$, and thus
$\qx(G)=  \hqx_\pure(W'_G)$.
Similarly, we have
$\qx_\kkfra(G)=  \hqx_\kkmix(W'_G)$ for every $\kk$, and thus
$\qx_\fra(G)=  \hqx_\mix(W'_G)$.

Furthermore, we have
$\hqx_\kkmix(W'_G)= \hqx_\kkmix(W_G)$ and
$\hqx_\mix(W'_G)= \hqx_\mix(W_G)$ 
by \refL{LC1} (or \refT{TLC2}),
and, finally,
$\hqx_\kkmix(W_G)= \hqx_\kkpure(W_G)$ and
$\hqx_\mix(W_G)= \hqx_\pure(W_G)$
by \refT{LX1}.
\end{proof}

We now show that for $C_5$ we have $\q(C_5)<\q_{\rm frac}(C_5)$ as referred to several times above; the path $P_5$ also exhibits this property, see~\cite{thesis}. 

\begin{example}\label{ex:C5}
Let $G=C_5$, with vertices labelled cyclically as $v_1,\ldots,v_5$. We first
compute the ordinary modularity of $C_5$. Since $C_5$ is 2-regular and
$m=e(C_5)=5$, for every partition $\cA$ of $V(C_5)$,
\begin{align}\label{c51}
q(C_5,\cA)=\frac{e_\cA(C_5)}{5}-\sum_{B\in \cA}\frac{\vol(B)^2}{100}=
\frac{e_\cA(C_5)}{5}-\sum_{B\in \cA}\frac{|B|^2}{25}.
\end{align}
The possible block-size patterns give the following upper bounds, all of which are attainable by suitable partitions of $C_5$:
\begin{align}\label{c52}
\begin{array}{c|c|c|c}
\text{block sizes of } \cA
& \max e_\cA(C_5)& \sum_{B\in \cA} (|B|/5)^2 & \max q(C_5,\cA)\\ 
\hline
5 & 5 & 1 & 0 \\
4+1 & 3 & 17/25 & -2/25\\
3+2 & 3 & 13/25 & 2/25 \\
3+1+1 & 2 & 11/25 & -1/25 \\
2+2+1 & 2 & 9/25 & 1/25\\
2+1+1+1 & 1 & 7/25 &-2/25 \\
1+1+1+1+1 & 0 & 5/25 & -1/5.
\end{array}
\end{align}
Hence, $q^*(C_5)=\frac{2}{25}$. This value is attained, for example, by the partition $\cA=\bigl\{\{v_1,v_2,v_3\},\{v_4,v_5\}\bigr\}$, for which $e_\cA(C_5)=3$. We now exhibit a fractional $2$-partition with larger score. Let
$\Phi=(\Phi_1,\Phi_2)$ be defined by
\begin{align}\label{c53}
\Phi(v_1)=\Phi(v_2)=(1,0),\qquad
\Phi(v_4)=\Phi(v_5)=(0,1),\qquad
\Phi(v_3)=\left(\tfrac12,\tfrac12\right).
\end{align}
Using Definition \ref{def:fractional-modularity},
\begin{align}\label{bb1}
q(C_5,\Phi)=\frac{1}{10}\sum_{u,v\in V(C_5)}
\left(\mathbf 1[uv\in E(C_5)]-\frac{d_ud_v}{10}\right)
\langle \Phi(u),\Phi(v)\rangle .  
\end{align}
The edge contribution in \eqref{bb1} is
\begin{align}\label{c54}
\frac1{10}\sum_{u,v}\mathbf 1[uv\in E(C_5)]\langle \Phi(u),\Phi(v)\rangle=
\frac2{10}\sum_{\{u,v\}\in E(C_5)}\langle \Phi(u),\Phi(v)\rangle .
\end{align}
For the five cyclic edges,
\begin{gather}\label{c55}
\langle \Phi(v_1),\Phi(v_2)\rangle =1,\quad \langle
  \Phi(v_2),\Phi(v_3)\rangle =\tfrac12,\quad \langle
  \Phi(v_3),\Phi(v_4)\rangle =\tfrac12,
\notag\\
\langle \Phi(v_4),\Phi(v_5)\rangle =1,\quad \langle \Phi(v_5),\Phi(v_1)\rangle =0.
\end{gather}
Thus,
\begin{align}\label{c56}
\sum_{\{u,v\}\in E(C_5)}\langle \Phi(u),\Phi(v)\rangle=3,
\end{align}
and the edge contribution to \eqref{bb1} is $3/5$. 
The degree-tax term is,
since $d_v=2$ for every $v$, 
\begin{align}\label{c57}
\sum_{\ell=1}^2\Bigpar{\frac1{10}\sum_{v\in V(C_5)}d_v\Phi_\ell(v)}^2=
\Bigpar{\frac{2(1+1+\frac12)}{10}}^2
+\Bigpar{\frac{2(\frac12+1+1)}{10}}^2
=\frac12.
\end{align}
Therefore,
\begin{align}\label{c58}
q(C_5,\Phi)=\frac35-\frac12=\frac1{10}.
\end{align}
Consequently,
\begin{align}\label{c59}
q_\fra^*(C_5)\ge \frac1{10}>\frac{2}{25}=q^*(C_5).
\end{align}

Equivalently, we can regard the partitions $\cA$ and the colouring $\Phi$ above as 
pure and mixed partitions of the atomic graphon $W'_{C_5}$.
\refT{TM2} shows that
$\hat q^*_{\mathrm{pure}}(W'_{C_5})=q^*(C_5)$
and $\qx_\fra(C_5)=\hqx_\mix(W'_{C_5})$; thus \eqref{c59} yields
\begin{align}\label{c510}
\hat q^*_{\mathrm{mix}}(W'_{C_5})
\ge \frac1{10} > \frac{2}{25}
=\hat q^*_{\mathrm{pure}}(W'_{C_5}).
\end{align}
This shows that, on an atomic probability space, optimising over mixed colourings can give a strictly larger value than optimising over pure colourings.

For the blow-up graphon $W_{C_5}$ on $[0,1]$, this discrepancy disappears:
\refT{TM2} shows  that
$\hqx_\mix(W_{C_5})= \hqx_\pure(W_{C_5})$, i.e.,  
the pure and mixed suprema agree. 
\refT{TM2} further shows that
$\hqx_\mix(W_{C_5})=\hqx_\mix(W'_{C_5})$,
but also, combined with \eqref{c510},
\begin{align}\label{c511}
\hqx_\pure(W_{C_5})=\hqx_\mix(W'_{C_5})>\hqx_\pure(W'_{C_5}),
\end{align}
in spite of the fact that 
$W_{C_5}$ is a pullback of $W'_{C_5}$, and thus  
$W_{C_5}$ and $W'_{C_5}$ are equivalent graphons.
Hence, this example also illustrates why the mixed definition is the
invariant one on general probability spaces. 
\end{example}

The next lemma shows that fractional modularity differs only slightly from
ordinary modularity when the graph is sufficiently dense. Recall that we denote the maximum degree of $G$ by $d_{\rm max}(G):=\max_v d_v$.

\begin{lemma}\label{lem:rounding}
Let $G$ be a graph on $n$ vertices with $m\ge 1$ edges, and let $\Phi$ be a
fractional partition of $V(G)$. Then there exists a partition $\cA$ of $V(G)$
such that
\begin{align}\label{lro1}
q(G,\cA)\ge q(G,\Phi)-\frac{ d_{\rm max}(G) 
}{2m}.\end{align}
Consequently, 
  \begin{align}\label{lro2}
    \qx(G) \le \qx_\fra(G) \le \qx(G)+\frac{{ d_{\rm max}(G)}}{2m}.
  \end{align}
\end{lemma}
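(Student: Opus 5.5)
Given a fractional $\kk$-partition $\Phi$ of $V(G)$, we round it independently at random, exactly as in the proofs of \refT{TMC1} and \refL{lem:cheeger-graph-graphon}. Each vertex $v$ is assigned colour $C_v=j$ with probability $\phi_j(v)$, independently over $v$. Let $\cA$ be the resulting random partition into the (nonempty) classes $A_j=\{v:C_v=j\}$. Empty classes contribute nothing to \eqref{D1a}, so $q(G,\cA)$ equals the pure score of the colouring $C$. We then compute $\E q(G,\cA)$ and compare it with $q(G,\Phi)$. Some realisation satisfies $q(G,\cA)\ge \E q(G,\cA)$, and this realisation gives \eqref{lro1}.

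To compute the expectation, write the score as in \eqref{D1a}:
\begin{align}
q(G,\cA)=\frac1{2m}\sum_{u,v}\Bigpar{\tone_{uv\in G}-\frac{d_ud_v}{2m}}\indic{C_u=C_v}.
\end{align}
For $u\neq v$, independence gives $\P(C_u=C_v)=\sum_j\phi_j(u)\phi_j(v)=\innprod{\Phi(u),\Phi(v)}$, which is exactly the weight in \eqref{eq:qPhi-def}. The only discrepancy is on the diagonal $u=v$. There $\indic{C_u=C_u}=1$, whereas the fractional score uses $\sum_j\phi_j(u)^2\le 1$. Since $G$ is loopless, the diagonal terms involve only the degree correction $-d_u^2/(2m)$. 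Hence
\begin{align}
\E q(G,\cA)-q(G,\Phi)=-\frac1{4m^2}\sum_u d_u^2\Bigpar{1-\sum_j\phi_j(u)^2}\ge -\frac1{4m^2}\sum_u d_u^2 .
\end{align}

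We bound the error by $\sum_u d_u^2\le d_{\rm max}\sum_u d_u=2m\,d_{\rm max}$. This gives $\E q(G,\cA)\ge q(G,\Phi)-d_{\rm max}/(2m)$, which is \eqref{lro1}. (Using $1-\sum_j\phi_j^2\le 1-1/\kk$ would even sharpen this, but the stated bound suffices.) For \eqref{lro2}, the lower inequality $\qx(G)\le\qx_\fra(G)$ was already noted after \refD{def:fractional-modularity}. The upper inequality follows by applying \eqref{lro1} to each $\Phi$, using $q(G,\cA)\le\qx(G)$, and taking the supremum over $\kk$ and $\Phi$ in \eqref{fm1}.

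The computation is routine. The only step needing care is the bookkeeping of the diagonal terms: the edge term vanishes there because there are no loops, while the degree-tax term does not. This diagonal term is the sole source of the error.
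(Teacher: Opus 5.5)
Your proposal is correct and matches the paper's proof essentially step for step. Both use independent rounding with $\P(C_v=j)=\phi_j(v)$, observe that the off-diagonal terms agree exactly, isolate the diagonal error $-\frac{1}{4m^2}\sum_u d_u^2\bigl(1-\sum_j\phi_j(u)^2\bigr)$, and bound it using $\sum_u d_u^2\le 2m\,d_{\rm max}$. Your side remark is also valid: $1-\sum_j\phi_j(u)^2\le 1-1/\kk$ sharpens the constant by a factor $1-1/\kk$.
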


\begin{proof}
For each vertex $v\in V(G)$, assign independently a random colour $X_v$ by
\begin{align}
\Pr(X_v=j)=\Phi_j(v),\qquad j=1,\dots,\kk.
\end{align}
Let $\cA_X$ be the resulting random partition of $V(G)$.

By taking expectations in the definition \eqref{D1a}, we obtain
\begin{align}
\E[q(G,\cA_X)]
 = \frac{1}{2m}\sum_{u,v\in V(G)}
   \left(\indic{uv\in E(G)}-\frac{d_ud_v}{2m}\right)
   \E [\indic{X_u=X_v}].
\end{align}
For $u\neq v$, we have
\begin{align}
   \E [\indic{X_u=X_v}]
=    \P(X_u=X_v)
   = \sum_{j=1}^{\kk}\Phi_j(u)\Phi_j(v),
\end{align}
while for $u=v$ we have
$   \E [\indic{X_u=X_v}]=1$.
Hence
\begin{align}
\E[q(G,\cA_X)]
&= \frac{1}{2m}\sum_{u\neq v}
   \left(\mathbf{1}[uv\in E(G)]-\frac{d_ud_v}{2m}\right)
   \sum_{j=1}^{\kk}\Phi_j(u)\Phi_j(v)
   - \frac{1}{4m^2}\sum_{u} d_u^2 \notag\\
&= q(G,\Phi)
   - \frac{1}{4m^2}\sum_{u} d_u^2
     \left(1-\sum_{j=1}^{\kk}\Phi_j(u)^2\right).
\end{align}
Therefore,
\begin{align}
\bigl| \E[q(G,\cA_X)] - q(G,\Phi) \bigr|
&\le \frac{1}{4m^2}\sum_u d_u^2
 \le \frac{1}{4m^2}{ d_{\rm max}(G)}\sum_u d_u
 = \frac{{d_{\rm max}(G)}}{2m}.
\end{align}
Hence, $\E[q(G,\cA_X)]\ge q(G,\Phi)-{d_{\rm max}(G)}/(2m)$, and so there exists a
deterministic realization $\cA$ of $\cA_X$ satisfying the same inequality.
This proves \eqref{lro1}, and \eqref{lro2} follows by taking the suprema over all fractional partitions.
\end{proof}

One consequence of \refL{lem:rounding} is the following interpretation of
the fractional modularity of a graph.
For a graph $G$ and an integer $b\ge1$, let 
$G[b]$ be the $b$-\emph{blow-up} of $G$, defined by
replacing each vertex $v$ in $G$ by $b$ copies $(v,1),\dots,(v,b)$,
and putting an edge between $(u,i)$ and $(v,j)$ whenever $uv\in E(G)$.
Thus, if $G$ has $n$ vertices and $m$ edges, then $G[b]$ has $bn$ vertices
and $b^2m$ edges. We note Theorem~\ref{TB} was first proven in~\cite{sampling}, however the theory in this paper allows a much shorter proof.

\begin{theorem}\label{TB}
For any graph $G$,
\begin{align}\label{tb1}
\qx_\fra(G) = \lim_{b\to\infty}\qx(G[b])    
=\sup_{b\ge1}\qx(G[b]).
\end{align}
Furthermore, $\qx_\fra(G[b])=\qx_\fra(G)$ for every $b\ge1$.
\end{theorem}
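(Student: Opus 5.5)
The plan is to pass through the graphons $W'_G$, $W'_{G[b]}$ and $W_G$, and to use the invariance of mixed modularity under pullbacks together with the rounding bound of \refL{lem:rounding}. If $G$ has no edges, every quantity in \eqref{tb1} is $0$, so we assume $m=e(G)\ge1$.

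The first step is the second claim. Let $\gf:V(G[b])\to V(G)$ be the projection $(v,i)\mapsto v$. With uniform measures this map is measure-preserving, and $W'_{G[b]}=(W'_G)^\gf$, since $(u,i)\sim(v,j)$ in $G[b]$ if and only if $uv\in E(G)$.

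1. \refT{TM2} gives $\qx_\fra(G[b])=\hqx_\mix(W'_{G[b]})$ and $\qx_\fra(G)=\hqx_\mix(W'_G)$.
2. \refL{LC1} gives $\hqx_\mix\bigl((W'_G)^\gf\bigr)=\hqx_\mix(W'_G)$.

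Combining these, $\qx_\fra(G[b])=\qx_\fra(G)$ for every $b\ge1$.

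Next we bound $\qx(G[b])$ from both sides. By \eqref{lro2}, $\qx(G[b])\le\qx_\fra(G[b])=\qx_\fra(G)$ for every $b$, so $\sup_b\qx(G[b])\le\qx_\fra(G)$. For the lower bound, apply \refL{lem:rounding} to $G[b]$. This graph has $d_{\max}(G[b])=b\,d_{\max}(G)$ and $b^2m$ edges, so
\begin{align}
\qx(G[b])\ge\qx_\fra(G[b])-\frac{b\,d_{\max}(G)}{2b^2m}=\qx_\fra(G)-\frac{d_{\max}(G)}{2bm}.
\end{align}
Letting $b\to\infty$ gives $\liminf_b\qx(G[b])\ge\qx_\fra(G)$.

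Together these show that the limit exists and equals both the supremum and $\qx_\fra(G)$, which is \eqref{tb1}.

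The only point that needs care is the identification $W'_{G[b]}=(W'_G)^\gf$. Both sides are indicator functions of adjacency, and there are no loops to worry about, since $(v,i)$ and $(v,j)$ are not adjacent in $G[b]$, matching $W'_G(v,v)=0$. Once this is checked, the argument above applies directly.
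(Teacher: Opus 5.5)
Your proof is correct and follows essentially the paper's route. The paper uses the continuous graphons, noting $W_{G[b]}=W_G$ and applying \refT{TM2} twice to get $\qx_\fra(G[b])=\qx_\fra(G)$, then applies \eqref{lro2} to $G[b]$ with the cruder bound $d_{\max}(G[b])\le bn$. You instead work with the atomic graphons and apply \refL{LC1} directly to the projection pullback, which is the same invariance underlying \refT{TM2}, and you handle the edgeless case explicitly, whereas the paper's error term $n/(2bm)$ is undefined when $m=0$.
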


\begin{proof}
For every $b\ge1$ 
we have $W_{G[b]}=W_G$ for the corresponding graphons defined by the
standard construction in \refE{EG1}.  
Hence, \refT{TM2} (twice) yields
\begin{align}\label{tb2}
  \qx_\fra(G[b]) = \hqx(W_{G[b]}) = \hqx(W_{G}) =\qx_\fra(G).
\end{align}
Consequently, \eqref{lro2} applied to $G[b]$ yields
\begin{align}\label{tb3}
    \qx(G[b]) \le \qx_\fra(G) \le \qx(G[b])+\frac{bn}{2b^2m}
= \qx(G[b])+\frac{n}{2bm},
\end{align}
and \eqref{tb1} follows.
\end{proof}

\subsection{Continuity of modularity}

The main result of the present subsection is the following continuity
result, which is proved at the end of the subsection after some preliminary
results.

\begin{theorem}\label{TM}
The functional $W\mapsto\hqx(W)$ is continuous 
at every uncoloured graphon $W$ with $\rho_W>0$.
\end{theorem}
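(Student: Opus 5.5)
Continuity here is with respect to cut distance (equivalently, in the space $\hcWo$ of graph limits), since $\hqx$ is invariant under equivalence by \refT{TLC2}. So let $W_n\to W$ with $\dcut(W_n,W)\to0$ and $\rho_W>0$. By \eqref{dcut1} and \refL{LC1}, I will replace $W_n$ and $W$ by pullbacks to a common space (say $\oi$) with $\cnorm{W_n-W}\to0$, without changing any $\hqx_\kk$. By \eqref{m30}, $\rho_{W_n}\to\rho_W>0$, so we may assume $\rho_{W_n}\ge\rho_W/2$.

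\emph{Step 1: a uniform Lipschitz bound for fixed colourings.} For any $\kk$ and any $\Psi\in\PPsi_\kk$, I want
\[
\bigabs{\hq(W_1,\Psi)-\hq(W_2,\Psi)}\le C(\rho)\,\cnorm{W_1-W_2},
\]
with $C$ independent of $\kk$ and $\Psi$ whenever both densities are at least $\rho>0$. I will use \refL{LM}. The first term is $\rho^{-1}\iint W\innprod{\Psi(x),\Psi(y)}$. Here the danger is that $\sum_\ell$ over $\kk$ colours of cut-norm errors grows with $\kk$. To avoid this, I would write $\sum_\ell\iint(W_1-W_2)\psi_\ell(x)\psi_\ell(y)$ and bound it by $4\cnorm{W_1-W_2}$ uniformly. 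The justification is that $\innprod{\Psi(x),\Psi(y)}=\E[\indic{L_x=L_y}]$ for independent random colours $L_x\sim\Psi(x)$, and one can realise this as an integral over an auxiliary uniform variable $u$. In that representation the kernel is a sum of products $a_u(x)b_u(y)$ with disjoint supports. Alternatively, I would appeal to the standard fact that $\sup_{\text{partitions}}|\sum_\ell\int_{S_\ell\times S_\ell}K|\le C\cnorm{K}$, which is equivalent to the $L^\infty\to L^1$ norm.

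The degree term is $\rho^{-2}\sum_\ell(\int k\psi_\ell)^2$, and I bound it using
\[
\sum_\ell\bigabs{\textstyle\int (k_1-k_2)\psi_\ell}\le 2\cnorm{W_1-W_2}.
\]
This holds by splitting into the colours where the sign is positive or negative, and $\sum_\ell\int k\psi_\ell=\rho\le1$. Together with the $\rho$-differences, this gives the claimed uniform bound.

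\emph{Step 2: conclusion.} Taking suprema over $\Psi$ and then over $\kk$ gives $|\hqx(W_n)-\hqx(W)|\le C\cnorm{W_n-W}\to0$. If the uniform-in-$\kk$ bound for the monochromatic term fails, the fallback is a reduction to boundedly many colours: the paper's intended \refL{LMk2} says that for $\rho\ge\rho_0$, $\hqx(W)-\hqx_\kk(W)\le\eps(\kk,\rho_0)\to0$ uniformly. One would prove it by merging low-volume colour classes, which loses little because the degree tax of small classes is small and merging only adds nonnegative within-class mass up to cross terms bounded by products of small volumes. Then for fixed $\kk$, continuity follows from the $\binom{\kk}{2}$-type cut-norm bound as in \refL{LMS1}. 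The main obstacle is this uniformity in the number of colours, either through the partition cut-norm inequality or through the merging lemma; I would try the merging argument first.
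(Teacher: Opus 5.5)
Your main route is correct, and it is genuinely different from the paper's. The paper never proves a $\kk$-independent estimate. In \refT{TMk} it bounds $\hQx_\kk=\rho^2\hqx_\kk$ in cut norm with a constant $C_\kk$, obtained by applying \refL{LA1} separately to each density $t(\KK_{2,\ell\gl},\WW)$. It then removes the dependence on $\kk$ via $|\hqx(W)-\hqx_\kk(W)|\le 1/\kk$, which comes from merging colours \emph{at random} (\refL{LMkl} and \refL{LMk2}). So your fallback is essentially the paper's proof, except that random merging gives the factor $(\kk-1)/\kk$ for every $W$, with no density assumption and no bookkeeping of low-volume classes.

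Your Step~1 instead gives $|\hq(W_1,\Psi)-\hq(W_2,\Psi)|\le C(\rho_0)\cnorm{W_1-W_2}$ uniformly in $\kk$ and $\Psi$ whenever $\rho_{W_1},\rho_{W_2}\ge\rho_0$. Your sign-splitting for the degree term and your handling of the $\rho$-factors are fine, and $C=11/\rho_0^2$ works. Combined with common pullbacks as in \eqref{dcut} and \refL{LC1}, this yields the stronger statement $|\hqx(W_1)-\hqx(W_2)|\le C(\rho_0)\dcut(W_1,W_2)$. That is a local Lipschitz bound analogous to \refProp{prop:cheeger-stability}. The paper's argument only gives a modulus of the form $\min_\kk(C_\kk\eps+2/\kk)$, with $C_\kk$ growing in $\kk$. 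What the paper gains from its route is \refL{LMkl}, which it reuses in \refProp{prop.equiv_to_mod_zero}. Note that \eqref{dcut1} concerns graphons on $\oi$ only; for general spaces use \eqref{dcut} directly, which is harmless because your bound does not depend on the underlying space.

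The one step you should tighten is the monochromatic term, because your auxiliary-variable sketch does not close it. A single shared uniform $u$ gives a coupling $\int_0^1\indic{L_x(u)=L_y(u)}\dd u$, which is not $\innprod{\Psi(x),\Psi(y)}$. Independent uniforms instead reduce you to $\sum_\ell\int_{A_\ell\times B_\ell}K$ for two \emph{different} partitions, which is not the diagonal fact you cite. Random signs handle mixed colourings directly. With i.i.d.\ uniform $\epsilon_\ell\in\{\pm1\}$, put $f:=\sum_\ell\epsilon_\ell\psi_\ell$. Then $|f|\le1$ and $\E[f(x)f(y)]=\innprod{\Psi(x),\Psi(y)}$, so
\[
\Bigabs{\sum_{\ell}\iint_{\gO^2}K(x,y)\psi_\ell(x)\psi_\ell(y)\dd\mu(x)\dd\mu(y)}
=\Bigabs{\E\iint_{\gO^2}K(x,y)f(x)f(y)\dd\mu(x)\dd\mu(y)}
\le4\cnorm{K}.
\]
The last inequality follows by splitting $f=f^+-f^-$ and applying \eqref{cutnorm2} to the four resulting terms. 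With this in place, the merging fallback is unnecessary.
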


\begin{remark}\label{Rnot0}
  We do not have continuity at $W$ when $\rho_W=0$, i.e, when $W=0$ 
a.e.
To see this, take any graphon $W_1$ with $\hqx(W_1)>0$, and let
$W_n:=\frac1n W_1$, $n\ge2$. Then $W_n\to0$
(in cutnorm, and thus in $\cWo$),
but $\hqx(W_n)=\hqx(W_1)>0$ for all $n$.
\end{remark}

We begin with a version where the number of colours is fixed.

\begin{theorem}\label{TMk}
Fix $\kk\ge1$. Then
the functional $W\mapsto\hqx_\kk(W)$ is continuous 
at every uncoloured graphon $W$ with $\rho_W>0$.
\end{theorem}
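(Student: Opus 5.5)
The plan is to prove a quantitative Lipschitz-type estimate for a fixed colouring, take suprema over colourings, and then pass from cut norm to cut distance using pullback invariance. Throughout, the topology on uncoloured graphons (and graph limits) is the one given by the cut distance $\dcut$. So it suffices to show: if $\dcut(W_n,W)\to0$ and $\rho_W>0$, then $\hqx_\kk(W_n)\to\hqx_\kk(W)$.

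\emph{Step 1 (same space, fixed colouring).} Let $W_1,W_2$ be graphons on the same probability space $\gO$, with $\eps:=\cnorm{W_1-W_2}$ and $\rho_1:=\rho_{W_1}>0$, $\rho_2:=\rho_{W_2}>0$. Fix $\Psi\in\PPsi_\kk(\gO)$. By \refL{LM}, write
\begin{align*}
\hq(W_j,\Psi)=\frac{A_j}{\rho_j}-\sumlkk\Bigpar{\frac{B_{j,\ell}}{\rho_j}}^2,
\qquad A_j:=\sumlkk t(\KK_{2,\ell\ell},(W_j,\Psi)),
\qquad B_{j,\ell}:=\int_\gO k_{W_j}\psi_\ell\dd\mu .
\end{align*}
The following bounds hold:
\begin{itemize}
\item $|A_1-A_2|\le\kk\eps$, by \refL{LA1} applied to each single-edge coloured graph $\KK_{2,\ell\ell}$.
\item $|B_{1,\ell}-B_{2,\ell}|\le\eps$, by \eqref{cutnorm2} with the test functions $\psi_\ell$ and $1$.
\item $|\rho_1-\rho_2|\le\eps$, by \eqref{m30}.
\end{itemize}
Moreover $0\le A_j\le\rho_j$, $0\le B_{j,\ell}$ and $\sum_\ell B_{j,\ell}=\rho_j$, so all the ratios $A_j/\rho_j$ and $B_{j,\ell}/\rho_j$ lie in $\oi$. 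Elementary estimates of the form $|a/r-a'/r'|\le(|a-a'|+|r-r'|)/\min(r,r')$ for $0\le a\le r$ then give
\begin{align*}
\bigabs{\hq(W_1,\Psi)-\hq(W_2,\Psi)}\le \frac{C_\kk\,\eps}{\min\set{\rho_1,\rho_2}},
\end{align*}
with a constant $C_\kk$ depending only on $\kk$ (something like $4\kk+2$). This bound is uniform in $\Psi$. Taking suprema over $\Psi\in\PPsi_\kk(\gO)$ on both sides gives the same bound for $|\hqx_\kk(W_1)-\hqx_\kk(W_2)|$.

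\emph{Step 2 (different spaces).} For graphons on possibly different spaces, the graphon $W_n$ and the limit $W$ can be replaced by pullbacks $W_n^{\gf_n}$ and $W^{\gf}$ on a common space with $\cnorm{W_n^{\gf_n}-W^{\gf}}\le\dcut(W_n,W)+1/n$, using \eqref{dcut1}. \refL{LC1} (or \refT{TLC2}) shows that $\hqx_\kk$ is unchanged by these pullbacks, and the edge density is also unchanged. Since $|\rho_{W_n}-\rho_W|\le\dcut(W_n,W)$ by \eqref{m30} applied to the pullbacks, we have $\rho_{W_n}\ge\rho_W/2$ for all large $n$. Step 1 then yields
\begin{align*}
|\hqx_\kk(W_n)-\hqx_\kk(W)|\le \frac{2C_\kk}{\rho_W}\bigpar{\dcut(W_n,W)+1/n}\to0 .
\end{align*}
This proves the continuity claimed in \refT{TMk}.

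The only delicate point is the division by the edge density, which is why positivity of $\rho_W$ is essential (cf.\ \refR{Rnot0}). The plan handles it by first securing the lower bound $\rho_{W_n}\ge\rho_W/2$ and only then applying the uniform estimate from Step 1. Note also that the constant $C_\kk$ grows with $\kk$. This does not matter here, since $\kk$ is fixed, but it is exactly the obstacle that \refT{TM}, with unboundedly many colours, will need to overcome separately.
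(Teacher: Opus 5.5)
Your proof is correct and follows essentially the same route as the paper: a fixed-colouring estimate from \refL{LM}, \refL{LA1} and \eqref{m30}, then a supremum over $\Psi$, then pullback invariance (\refL{LC1}/\refT{TLC2}) to pass from cut norm to cut distance. The only difference is bookkeeping: the paper multiplies through by $\rho_W^2$ to get a globally Lipschitz functional $\rho_W^2\hqx_\kk(W)$ and divides by $\rho_W^2$ at the end, whereas you bound the normalised score directly with a factor $1/\min\{\rho_1,\rho_2\}$ (minor point: for graphons on general spaces, cite \eqref{dcut} rather than \eqref{dcut1}, and let the pullback of $W$ depend on $n$, which is harmless since Step~1 is applied for each $n$ separately).
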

\begin{proof}
  Considering the functionals
  \begin{align}\label{sw1}
    \hQ_\kk(W,\Psi)&:=\rho_W^2\hq_\kk(W,\Psi),
\\\label{sw2}
\hQx_\kk(W)&:=\sup_{\Psi\in\PPsi_\kk}\hQ_\kk(W,\Psi)=\rho_W^2\hqx_\kk(W).
  \end{align}
\refL{LM} shows that, for every $\WW=(W,\Psi)$ with $\Psi\in\PPsi_\kk$,
\begin{align}\label{lm+}
  \hQ_\kk(\WW)={\rho_W} \sumlkk t(\KK_{2,\ell\ell},\WW)
-\sumlkk\Bigpar{\sumglkk t(\KK_{2,\ell\glm},\WW)}^2
.\end{align}
Hence it follows from \refL{LA1} and \eqref{m30} that for some constant
$C_\kk$ depending only on $\kk$, 
\begin{align}\label{sw4}
  \bigabs{\hQ_\kk(W_1,\Psi)-\hQ_\kk(W_2,\Psi)}\le C_\kk\cnorm{W_1-W_2}.
\end{align}
Consequently,
taking the suprema over colourings $\Psi$,
\begin{align}\label{sw5}
  \bigabs{\hQx_\kk(W_1)-\hQx_\kk(W_2)}\le C_\kk\cnorm{W_1-W_2}.
\end{align}
Together with \refT{TLC2}, this shows that
$W\to\hQx_\kk(W)$ is continuous for 
the cut metric, and thus $\hQx_\kk$ is continuous as a functional on the space
of uncoloured graph limits. Hence, by \eqref{sw2}, $\hqx_\kk$ is continuous at
every $W$ with $\rho_W>0$.
\end{proof}

Lemma~\ref{LMkl} below shows that we may approximate the modularity of a
graphon to within a multiplicative factor of $(1-1/\kappa)$ by considering
only colourings with at most $\kappa$ colours. This 
mirrors the
behaviour of modularity for finite graphs --
see~\cite[Lemma~1]{dinh2011finding}, which proves that for any partition
$\cA$, there is a partition~$\cA'$ with at most $k$ parts such that 
\begin{align}\label{eq.dinhthai}
q_{\cA'}(G)\geq \frac{k-1}{k}\, q_{\cA}(G).
\end{align}
(We use the same method of proof.)
However, the modularity score of a graphon need not be achieved by a finite colouring
-- see Figure~\ref{fig:inf_colours_needed} and Example~\ref{ex:inf_colours_needed}.

\begin{lemma}\label{LMkl}
Let $\nu\ge\kk\ge1$. Then, for
every uncoloured graphon $W$,
\begin{align}\label{lmkl}
\frac{\kk-1}{\kk}  \hqx_\nu(W)
& \le  \hqx_\kk(W) \le  \hqx_\nu(W), \;\;\mbox{and }\\
\frac{\kk-1}{\kk}  \hqx_{\nupure}(W)
& \le  \hqx_{\kkpure}(W)  \le \hqx_{\nupure}(W)
\label{lmkl_pure}
.
\end{align}
\end{lemma}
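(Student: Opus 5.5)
The upper bounds $\hqx_\kk(W)\le\hqx_\nu(W)$ and $\hqx_{\kkpure}(W)\le\hqx_{\nupure}(W)$ are immediate from \refL{LF1}: a $\kk$-colouring can be regarded as a $\nu$-colouring that does not use the extra colours, and this preserves purity. For the lower bounds we may assume $\rho>0$ and $\kk\ge2$; the case $\kk=1$ is trivial since the left-hand side is $0$. The plan is to follow the merging argument of \cite[Lemma~1]{dinh2011finding}. Fix any $\nu$-colouring $\Psi=(\psi_1,\dots,\psi_\nu)$, pure or mixed. Assign the $\nu$ colours to $\kk$ groups by a uniformly random map $\sigma:[\nu]\to[\kk]$, with the colours assigned independently. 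Then merge them to obtain the $\kk$-colouring $\Psi^\sigma$ with components
\begin{align*}
\psi^\sigma_j:=\sum_{\ell:\sigma(\ell)=j}\psi_\ell .
\end{align*}
Merging colours clearly sends pure colourings to pure colourings, so the same argument will prove both \eqref{lmkl} and \eqref{lmkl_pure}.

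The key computation uses the formula of \refL{LM} through the kernel $H(x,y):=W(x,y)-k(x)k(y)/\rho$ from \eqref{vq4}. For $\ell,\gl\in[\nu]$ set
\begin{align*}
h_{\ell\gl}:=\frac1\rho\iint_{\gO^2}H(x,y)\psi_\ell(x)\psi_\gl(y)\dd\mu(x)\dd\mu(y).
\end{align*}
Then $\hq(W,\Psi)=\sum_\ell h_{\ell\ell}$, and
\begin{align*}
\hq(W,\Psi^\sigma)=\sum_{\ell,\gl}h_{\ell\gl}\indic{\sigma(\ell)=\sigma(\gl)}.
\end{align*}
Two facts follow. First, $\sum_{\ell,\gl}h_{\ell\gl}=0$, because $\sum_\ell\psi_\ell=1$ and $\iint H=\rho-\rho^2/\rho=0$. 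Second, $\P(\sigma(\ell)=\sigma(\gl))=1/\kk$ for $\ell\ne\gl$. Taking expectations gives
\begin{align*}
\E\,\hq(W,\Psi^\sigma)=\sum_\ell h_{\ell\ell}+\frac1\kk\sum_{\ell\ne\gl}h_{\ell\gl}
=\Bigl(1-\frac1\kk\Bigr)\hq(W,\Psi).
\end{align*}
Hence some realization $\sigma$ satisfies $\hq(W,\Psi^\sigma)\ge\frac{\kk-1}{\kk}\hq(W,\Psi)$.

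Since the left-hand side is at most $\hqx_\kk(W)$ (respectively $\hqx_{\kkpure}(W)$ when $\Psi$ is pure), taking the supremum over $\Psi$ yields the lower bounds. Only finitely many maps $\sigma$ are involved, so existence of a good realization is trivial. The main point to check carefully is the identity $\sum_{\ell,\gl}h_{\ell\gl}=0$, that is, that the null-model term exactly cancels the total edge mass. The rest is bookkeeping with \refL{LM}, and no attainment issue arises because we work with arbitrary $\Psi$ and then take suprema.
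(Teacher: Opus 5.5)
Your proposal is correct and follows essentially the same argument as the paper: the paper also merges colours by independent uniform random labels $Z_\ell\in[\kk]$, and computes $\E\,\hq_\kk(W,\Psix)=\frac{\kk-1}{\kk}\hq_\nu(W,\Psi)$ from $\P(Z_{\ell_1}=Z_{\ell_2})=\frac1\kk+\frac{\kk-1}{\kk}\indic{\ell_1=\ell_2}$ and the vanishing of $\iint H$. It likewise notes that merging preserves purity, which gives \eqref{lmkl_pure}.
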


\begin{proof}
  We may assume $\rho_W>0$. 
The second inequality 
in both \eqref{lmkl} and \eqref{lmkl_pure}
is immediate from the
  definitions, since every $\kk$-colouring may be regarded as a
  $\nu$-colouring.

Now, let $\Psi\in\PPsi_\nu$ be a $\nu$-colouring of $W$.
We define a random $\kk$-colouring $\Psix\in\PPsi_\kk$ by mapping each
colour $\ell\in[\nu]$ to a uniformly random colour $Z_\ell\in[\kk]$,
independently for each $\ell$. More formally, we define
\begin{align}\label{st1}
  \psix_k(x):=\sum_{\ell=1}^\nu \psi_\ell(x)\indic{Z_\ell=k},
\qquad x\in\gO.
\end{align}
This means that, for any $x,y\in\gO$,
\begin{align}\label{st2}
  \sum_{k=1}^\kk \psix_k(x)\psix_k(y)&
= \sum_{k=1}^\kk\sum_{\ell_1,\ell_2=1}^\nu \psi_{\ell_1}(x)\psi_{\ell_2}(y)\indic{Z_{\ell_1}=k}
\indic{Z_{\ell_2}=k}
\notag\\&
=\sum_{\ell_1,\ell_2=1}^\nu \psi_{\ell_1}(x)\psi_{\ell_2}(y)\indic{Z_{\ell_1}=Z_{\ell_2}}.
\end{align}
Note that for any $\ell_1,\ell_2\in[\nu]$,
\begin{align}\label{st3}
  \P\xpar{Z_{\ell_1}=Z_{\ell_2}}
=
  \begin{cases}
    1,& \ell_1=\ell_2
\\
\frac{1}{\kk}, & \ell_1\neq \ell_2
  \end{cases}
\Biggr\}
=\frac{1}{\kk}+\frac{\kk-1}{\kk}\indic{\ell_1=\ell_2}.
\end{align}
Consequently, 
\eqref{st2} yields by
taking the expectation over the random colouring $\Psix$
and recalling $\sumlnu \psi_\ell(x)=1$,
\begin{align}\label{st4}
\E \sum_{k=1}^\kk \psix_k(x)\psix_k(y)&
=\sum_{\ell_1,\ell_2=1}^\nu \psi_{\ell_1}(x)\psi_{\ell_2}(y)\P\xpar{Z_{\ell_1}=Z_{\ell_2}}
\notag\\&
=\sum_{\ell_1,\ell_2=1}^\nu \psi_{\ell_1}(x)\psi_{\ell_2}(y)
  \Bigpar{\frac{\kk-1}{\kk}\indic{\ell_1=\ell_2}+\frac{1}{\kk}}
\notag\\&
=\frac{\kk-1}{\kk}\sum_{\ell=1}^\nu \psi_\ell(x)\psi_\ell(y)
+\frac{1}{\kk}
.\end{align}
Substituting \eqref{st4} in \eqref{m3}, we obtain
\begin{align}\label{st5}
\E [\hq_\kk(W,\Psix)]&
=\frac{1}{\rho_W}\iint_{\gO^2}\Bigpar{W(x,y)-\frac{k(x)k(y)}{\rho_W}}
\E\biggsqpar{\sumkkk\psix_k(x)\psix_k(y)}\dd\mu(x)\dd \mu(y)
\notag\\&
=\frac{1}{\rho_W}\iint_{\gO^2}\Bigpar{W(x,y)-\frac{k(x)k(y)}{\rho_W}}
\frac{\kk-1}{\kk}\sumlgl\psi_\ell(x)\psi_\ell(y)\dd\mu(x)\dd \mu(y)
\notag\\&\qquad +
\frac{1}{\kk\rho_W}\iint_{\gO^2}\Bigpar{W(x,y)-\frac{k(x)k(y)}{\rho_W}}
\dd\mu(x)\dd \mu(y)
\notag\\&
=\frac{\kk-1}{\kk}\hq_\nu(W,\Psi)
,\end{align}
using \eqref{m3} again and noting that the final integral is 0 as a
consequence of \eqref{m1}--\eqref{m2}.
Consequently, there exists a realization $\Psix$ such that
\begin{align}\label{st6}
\frac{\kk-1}{\kk}\hq_\nu(W,\Psi)\le
\hq_\kk(W,\Psix)\le \hqx_\kk(W),
\end{align}
and the first inequality in \eqref{lmkl} follows by taking the supremum over
$\Psi\in\PPsi_\nu$. To see that the statement extends to the pure case in~\eqref{lmkl_pure}, it is enough to note that if $\Psi$ is pure, then the random colouring $\Psi^*$ is a random pure colouring.
\end{proof}

\begin{lemma}\label{LMk2}
Let $\kk\ge1$. Then, for
every uncoloured graphon $W$,
\begin{align}\label{lmk2}
\frac{\kk-1}{\kk}  \hqx(W)
&\le  \hqx_\kk(W) 
\le  \hqx(W) \qquad\text{and}
\\\label{lmk2_pure}
\frac{\kk-1}{\kk}  \hqx_{\pure}(W)
& \le  \hqx_{\kkpure}(W)  \le \hqx_{\pure}(W)
.\end{align}
\end{lemma}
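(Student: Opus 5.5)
The plan is to deduce \refL{LMk2} directly from \refL{LMkl} by letting $\nu\to\infty$. We may assume $\rho_W>0$; otherwise every quantity involved is $0$ by convention, and there is nothing to prove.

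The upper bounds $\hqx_\kk(W)\le\hqx(W)$ and $\hqx_{\kkpure}(W)\le\hqx_\pure(W)$ are immediate from the definitions \eqref{mod3} and \eqref{de:gmodp}, since each is a supremum over $\kk\ge1$ that includes the given $\kk$.

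For the lower bound in \eqref{lmk2}, fix $\kk$ and take any $\nu\ge\kk$. \refL{LMkl} gives $\frac{\kk-1}{\kk}\hqx_\nu(W)\le\hqx_\kk(W)$. By \refL{LF1}, the sequence $\hqx_\nu(W)$ is non-decreasing in $\nu$ and converges to $\hqx(W)$ as $\nu\to\infty$. Letting $\nu\to\infty$ in the inequality from \refL{LMkl} therefore yields $\frac{\kk-1}{\kk}\hqx(W)\le\hqx_\kk(W)$.

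One could also argue with a supremum instead of a limit: for any $\nu<\kk$ we have $\hqx_\nu(W)\le\hqx_\kk(W)$, and this is already at least $\frac{\kk-1}{\kk}\hqx_\nu(W)$ because $\hqx_\nu(W)\ge0$. So the inequality $\frac{\kk-1}{\kk}\hqx_\nu(W)\le\hqx_\kk(W)$ holds for all $\nu\ge1$, and taking the supremum over $\nu$ gives the same conclusion.

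The pure statement \eqref{lmk2_pure} follows in the same way from \eqref{lmkl_pure}. In this case we cannot quote \refL{LF1}, so we use the supremum form. Monotonicity of $\hqx_{\nu,\pure}(W)$ in $\nu$ holds because a pure $\nu$-colouring is also a pure $(\nu+1)$-colouring. Nonnegativity holds because the monochromatic pure colouring has score $0$.

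There is no real obstacle here. The only point needing care is the passage from finite $\nu$ to the supremum over all $\nu$, which the supremum formulation handles cleanly.
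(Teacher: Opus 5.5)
Your proposal is correct and matches the paper's proof, which simply takes the supremum over $\nu\ge\kk$ in \refL{LMkl}. Your remarks on monotonicity and nonnegativity only make explicit why that restricted supremum equals the full supremum defining $\hqx(W)$, respectively $\hqx_\pure(W)$.
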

\begin{proof}
  Follows from \refL{LMkl} by taking the supremum over $\nu\ge\kk$.
\end{proof}

\begin{proof}[Proof of \refT{TM}]
It follows from Lemmas \ref{LMk2} and \ref{LF1} that
\begin{align}\label{fa1}
\bigabs{\hqx(W)-\hqx_\kk(W)} 
\le  \frac{1}{\kk}.
\end{align}
Consequently, $\hqx_\kk(W)\to\hqx(W)$ uniformly in $W$ as $\kk\to\infty$, 
and the result follows by \refT{TMk}.
\end{proof}

\subsection{The graphon limit of modularity}\label{sec4:modularity-limit}  
The main result of this subsection is
Theorem~\ref{thm:limit-identification}, which identifies graphon
modularity \eqref{mod3} as the limit of finite graph modularity
along cut-convergent graph sequences with positive limiting
edge density.

\begin{theorem}\label{thm:limit-identification}
Let $(G_n)_{n \geq 1}$ be graphs with $N_n:=|G_n|\to \infty$, and suppose
$G_n\to W$,
where $W$ is a graphon with $\rho_W > 0$. Then,
\begin{align}\label{mell0}
\lim_{n\to \infty}q^*(G_n)=\hqx(W).
\end{align}
\end{theorem}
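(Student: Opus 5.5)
The plan is to sandwich $q^*(G_n)$ between $\qx_\fra(G_n)$ up to an error that vanishes, and then identify $\qx_\fra(G_n)$ with the graphon modularity of $W_{G_n}$. Continuity of $\hqx$ at $W$ (\refT{TM}) will then pass to the limit.

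The chain of steps is as follows.

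\emph{Step 1.} Apply \refT{TM2} to get
\begin{align}
\qx_\fra(G_n)=\hqx_\mix(W_{G_n})=\hqx(W_{G_n}).
\end{align}

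\emph{Step 2.} Apply \refL{lem:rounding} to get
\begin{align}
\qx(G_n)\le\qx_\fra(G_n)\le\qx(G_n)+\frac{d_{\rm max}(G_n)}{2m_n},
\end{align}
where $m_n=e(G_n)$.

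\emph{Step 3.} Show that this error term tends to zero. We have $d_{\rm max}(G_n)\le N_n$ and $2m_n=N_n^2\rho_{W_{G_n}}$. Since $G_n\to W$ means $\dcut(W_{G_n},W)\to0$, the invariance of $\rho$ under equivalence together with \eqref{m30} (applied to suitable pullbacks) gives $\rho_{W_{G_n}}\to\rho_W>0$. Equivalently, $\rho=t(K_2,\cdot)$ is continuous. Hence, for large $n$,
\begin{align}
\frac{d_{\rm max}(G_n)}{2m_n}\le\frac{1}{N_n\rho_{W_{G_n}}}\le\frac{2}{N_n\rho_W}\to0.
\end{align}
In particular $m_n\ge1$ eventually, so the definitions are nondegenerate.

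\emph{Step 4.} Conclude with \refT{TM}. The convergence $W_{G_n}\to W$ holds in cut distance, which is the topology of graph limits, and $\hqx$ is well defined on graph limits by \refT{TLC2}. Continuity at $W$, where $\rho_W>0$, then gives $\hqx(W_{G_n})\to\hqx(W)$. Combining with Steps 1–3 yields $q^*(G_n)=\hqx(W_{G_n})+o(1)\to\hqx(W)$.

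The main thing to check carefully is Step 4: that \refT{TM} really gives continuity with respect to convergence of graph limits, i.e.\ in $\dcut$, rather than only in cut norm on a fixed space. Tracing through, \refT{TMk} shows continuity of $\hQx_\kk$ via \eqref{sw5} combined with pullback invariance \refT{TLC2}, which is exactly the $\dcut$ statement. The uniform approximation \eqref{fa1} transfers this to $\hqx$. So this should go through, with the remaining work being the routine density estimate in Step 3.
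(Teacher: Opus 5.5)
Your proposal is correct and follows essentially the same route as the paper. The paper likewise combines the rounding bound of \refL{lem:rounding}, the estimate $m_n=\Theta(N_n^2)$ coming from $\rho(W_{G_n})\to\rho_W>0$, the identity $\qx_\fra(G_n)=\hqx(W_{G_n})$ from \refT{TM2}, and continuity of $\hqx$ at $W$ from \refT{TM}; your check that \refT{TM} gives continuity in $\dcut$ via \eqref{sw5}, \refT{TLC2} and \eqref{fa1} is exactly how the paper's argument is meant to be read.
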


\begin{proof}
Let $m_n:=e(G_n)$. 
By Lemma~\ref{lem:rounding},
we have
\begin{align}\label{mell1}
   \bigabs{\qx(G_n) - \qx_\fra(G_n)} \le \frac{N_n}{2m_n}.
\end{align}
let $W_{G_n}$ denote the step graphon corresponding
to $G_n$. Then
the assumption $G_n\to W$ is equivalent to 
\begin{align}\label{mell3}
W_{G_n}\to W  
\end{align}
(in the usual topology, i.e., in cut distance), which implies
\begin{align}\label{mell2}
  \frac{2m_n}{N_n^2} =\rho(W_{G_n}) \to \rho(W)>0,
\end{align}
and hence $m_n=\Theta(N_n^2)$.
Thus $N_n/2m_n\to 0$, so \eqref{mell1} implies
\begin{align}\label{mell5}
\qx(G_n) - \qx_\fra(G_n)\to0.  
\end{align}
Furthermore, we have $\qx_\fra(G_n)=\hqx(W_{G_n})$ by \refT{TM2}, and
thus, by \eqref{mell5} and \refT{TM},
\begin{align}\label{mell4}
\qx_\fra(G_n)=  \hqx(W_{G_n})\to\hqx(W)
\end{align}
The result \eqref{mell0} follows from \eqref{mell5} and \eqref{mell4}.
\end{proof}
In particular, \refT{LX1} implies that, if $G_n\to W$, where 
$W:[0,1]^2\to[0,1]$, 
then
\begin{align}
\lim_{n \to \infty} q^*(G_n) \to \qhatstarpur(W),
\end{align}
where $\qhatstarpur(W)$ is the graphon modularity defined in \cite{Klimm}.

\subsection{Graphons with modularity value zero.}
The following proposition characterises which (uncoloured) graphons $W$ have modularity zero. 
Note (i)--(iii) may be considered a graphon analogue of 
\cite[Proposition~1.7]{modexpansion} -- see Remark~\ref{rem.link_to_modexpansion}.
 We again write $\KK_{2;ij}$ for the coloured graph with 
one edge and two vertices coloured $i$ and~$j$, and define (similar to the $t_{i,\bullet}$ notation defined in~\eqref{eq.tdash_vol}),
\begin{align}\label{eq.def_tbullet}
t(\KK_{2,i\bullet},\WW):=\sum_{j}  t(\KK_{2, ij},\WW).
\end{align}
\begin{proposition}\label{prop.equiv_to_mod_zero}
Let $W$ be an uncoloured graphon with density $\rho_W>0$ defined on $\gO$. Then~\ref{equiv.mod_zero} to~\ref{equiv.hom_vers_vols} below are equivalent. 
  \begin{thmenumerate}
    \item\label{equiv.mod_zero} $\hqx(W)=\qhatstarmix(W)=0$
    \item\label{equiv.hom_vers}
    For any mixed two-colouring $\Psi$ of $W$, writing $\WW=(W, \Psi)$,
    \begin{align}\label{eq.hom_cond_for_zero} t(\KK_{2,12},\WW)^2 \geq \, t(\KK_{2,11},\WW)\, t(\KK_{2,22},\WW) \end{align}  
    \item\label{equiv.hom_vers_vols}
    For any mixed two-colouring $\Psi$ of $W$, writing $\WW=(W, \Psi)$,
    \begin{align}\label{eq.hom_vols_cond_for_zero} t(K_2, W) \, t(\KK_{2,12},\WW) \geq \, t(\KK_{2,1\bullet},\WW)\, t(\KK_{2,2\bullet},\WW) \end{align} 
    \end{thmenumerate}
    \noindent Additionally,~\ref{equiv.mod_pure_zero} to~\ref{equiv.int_vers_vols} below are equivalent, and implied by any of~\ref{equiv.mod_zero} to \ref{equiv.hom_vers_vols} above.
    \begin{thmenumerate}
    \addtocounter{thmenumerate}{3}
    \item\label{equiv.mod_pure_zero}
    $\qhatstarpur(W)=0$
    \item\label{equiv.hom_vers_pure}
    For any pure two-colouring $\chi$ of $W$, writing $\WW=(W, \chi)$,~\eqref{eq.hom_cond_for_zero} holds.
    \item\label{equiv.hom_vers_vols_pure}
    For any pure two-colouring $\chi$ of $W$, writing $\WW=(W, \chi)$,~\eqref{eq.hom_vols_cond_for_zero} holds.
    \item\label{equiv.int_vers}
    For any measurable set $I\subseteq\Omega$ (writing $I\comp:=\Omega\setminus I$)
    \begin{align}\label{eq.int_cond_for_zero} 
&\biggpar{\int_I \int_{I\comp}
      W(x,y) \dd\mu(x) \dd\mu(y)}^2 
\notag\\&\qquad
\geq \biggpar{\int_I \int_{I} W(x,y) \dd\mu(x) \dd\mu(y)}
\biggpar{ \int_{I\comp} \int_{I\comp} W(x,y) \dd\mu(x) \dd\mu(y)}
.\end{align}
    \item\label{equiv.int_vers_vols}
    For any measurable set $I \subseteq \Omega$ (writing $I\comp:=\Omega\setminus I$)
    \begin{align}\label{eq.int_cond_for_zero_vols} \rho\int_I \int_{I\comp} W(x,y) \dd\mu(x) \dd\mu(y) \geq \biggpar{\int_{I} k(x) \dd\mu(x)}\biggpar{ \int_{I\comp} k(x) \dd\mu(x)}
 .  \end{align}
\end{thmenumerate}
    \noindent Further, if we assume that $W$ is defined on $\oi$, then~\ref{equiv.mod_zero} to~\ref{equiv.int_vers_vols} are all equivalent.
\end{proposition}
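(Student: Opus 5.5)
The plan is to reduce everything to two colours and then to a single algebraic identity among the densities $t_{a,b}:=t(\KK_{2;ab},\WW)$ from \eqref{eq.tab_defn}.

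\emph{Step 1: reduction to two colours.} I will apply \refL{LMkl} with $\kk=2$ and arbitrary $\nu\ge2$. This gives $\tfrac12\hqx_\nu(W)\le\hqx_2(W)$. Together with $\hqx_\nu\ge0$ (\refL{LF1}) and $\hqx(W)=\sup_\nu\hqx_\nu(W)$, it follows that
\begin{align*}
\hqx(W)=0 \iff \hqx_2(W)=0 \iff \hq(W,\Psi)\le0 \text{ for every mixed two-colouring }\Psi .
\end{align*}
The same argument with \eqref{lmkl_pure} gives that $\qhatstarpur(W)=0$ holds if and only if $\hq(W,\chi)\le0$ for every pure two-colouring $\chi$.

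\emph{Step 2: the key identity.} For a two-colouring I use \refL{LM}, written as $\rho^2\hq=\rho(t_{11}+t_{22})-(t_{11}+t_{12})^2-(t_{22}+t_{12})^2$, together with $\rho=t_{11}+2t_{12}+t_{22}$ from \eqref{mag3}. Expanding should give
\begin{align*}
\rho^2\,\hq(W,\Psi)=2\bigl(t_{11}t_{22}-t_{12}^2\bigr).
\end{align*}
Similarly, since $t(\KK_{2,1\bullet},\WW)=t_{11}+t_{12}$, $t(\KK_{2,2\bullet},\WW)=t_{22}+t_{12}$ and $t(K_2,W)=\rho$, I expect
\begin{align*}
\rho\, t_{12}-(t_{11}+t_{12})(t_{22}+t_{12})=t_{12}^2-t_{11}t_{22}.
\end{align*}
Hence the score of a given colouring is nonpositive if and only if \eqref{eq.hom_cond_for_zero} holds for it, and if and only if \eqref{eq.hom_vols_cond_for_zero} holds for it. Combined with Step 1, this yields (i)$\iff$(ii)$\iff$(iii). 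Running the same argument over pure colourings only yields (iv)$\iff$(v)$\iff$(vi).

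\emph{Step 3: the set versions and the remaining implications.} A pure two-colouring is the same thing as a measurable set $I=\chi^{-1}(1)$. For such a colouring, \eqref{tab2} identifies $t_{11}$, $t_{12}$ and $t_{22}$ with the double integrals over $I\times I$, $I\times I\comp$ and $I\comp\times I\comp$, and identifies $t_{1\bullet}=\int_I k$ and $t_{2\bullet}=\int_{I\comp}k$. So (v)$\iff$(vii) and (vi)$\iff$(viii) are just rewritings. Every pure colouring is also a mixed one, so (ii)$\Rightarrow$(v), and thus the first group implies the second. Finally, when $\gO=\oi$, \refT{LX1} with $\kk=2$ gives $\hqx_{2,\pure}(W)=\hqx_2(W)$, so by Step 1 (iv)$\Rightarrow$(i), and all eight conditions are equivalent.

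The only real content is the algebraic identity in Step 2, together with the observation that \refL{LMkl} reduces arbitrarily many colours to two. The expansion should be checked carefully, but I do not foresee a genuine obstacle.
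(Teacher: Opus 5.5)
Your proposal is correct and follows essentially the paper's route. You reduce to two colours via \refL{LMkl} (equivalently \refL{LMk2}), use the identities $\rho^2\hq(W,\Psi)=2(t_{1,1}t_{2,2}-t_{1,2}^2)$ and $\rho\, t_{1,2}-(t_{1,1}+t_{1,2})(t_{2,2}+t_{1,2})=t_{1,2}^2-t_{1,1}t_{2,2}$, identify pure two-colourings with measurable sets, and invoke \refT{LX1} on $\oi$. The one small difference is that the paper uses attainment (\refT{TLC3}) in the direction (ii)$\Rightarrow$(i), whereas your supremum-based argument shows that attainment is not needed.
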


\begin{remark}\label{rem.link_to_modexpansion} In this remark we explain why we may consider Proposition~\ref{prop.equiv_to_mod_zero} the graphon equivalent of~\cite[Proposition~1.7]{modexpansion}. That result on graphs states that conditions (i)--(iii) are equivalent where (i) $\q(G)=0$, and the conditions (ii) and (iii) state, respectively, that for any set of vertices $A\subset V(G)$
\begin{align}\label{eq.modexpansion_equivs}
\mbox{(ii):}\;\; e(A, A^c)^2 \geq 4e(A)e(A^c)\quad \quad \mbox{(iii):}\;\; e(A, A^c)\vol(G) \geq \vol(A)\vol(A^c)
\end{align}
We will see that \ref{equiv.mod_zero}, \ref{equiv.hom_vers} and \ref{equiv.hom_vers_vols} respectively in
Proposition~\ref{prop.equiv_to_mod_zero} are the graphon equivalents of
these.
Condition \ref{equiv.mod_zero}  is immediate since  $\hqx(W)=0$ is clearly the graphon analogue of $\q(G)=0$.

Note that the partition $\{A, A^c\}$ induces a pure-colouring of $V(G)$, denote this $\chi_A$ and write $\GG=(G, \chi_A)$; then by defintion~\eqref{tinj} of coloured homomorphism densities we may calculate that 
\begin{align}\label{eq.homs_to_edges}
\tinj(\KK_{2,12},\GG) = \frac{e(A,A^c)}{n(n-1)} \quad \mbox{and} \quad 
\tinj(\KK_{2,11},\GG) = \frac{2e(A)}{n(n-1)}
\end{align}
(note also the corresponding relation between $\KK_{2,22}$ and $e(A^c)$). Then condition (ii) in~\eqref{eq.modexpansion_equivs} from~\cite{modexpansion} can be equivalently written as,
\begin{align}
t(\KK_{2,12},\GG)^2 \geq \, t(\KK_{2,11},\GG)\,t(\KK_{2,22},\GG)
\end{align}
which clearly matches the form of~\eqref{eq.hom_cond_for_zero} in condition \ref{equiv.hom_vers} of Proposition~\ref{prop.equiv_to_mod_zero}. 

To see the other equivalence write $\tinj(\KK_{1,\bullet},\GG):=\sum_{j} \tinj(\KK_{1,j},\GG)$. Then by~\eqref{eq.homs_to_edges}, $\tinj(\KK_{1,\bullet},\GG)= \vol(A)/(n(n-1))$. Note also the uncoloured density $\tinj(K_2, G)=\vol(G)/(n(n-1))$. Now condition (iii) in~\eqref{eq.modexpansion_equivs} from~\cite{modexpansion} becomes,
\begin{align} 
t(\KK_{2,12},\GG) \, t(K_2, G) \geq \, t(\KK_{2,1\bullet},\GG)\, t(\KK_{2,2\bullet},\GG) \end{align}  
which matches~\eqref{eq.hom_vols_cond_for_zero} in Proposition~\ref{prop.equiv_to_mod_zero}\ref{equiv.hom_vers_vols}.
\end{remark}

\begin{proof}[Proof of Proposition~\ref{prop.equiv_to_mod_zero}]
We first establish that \ref{equiv.mod_zero}--\ref{equiv.hom_vers_vols}
  are equivalent by showing that
  \ref{equiv.hom_vers}$\Rightarrow$\ref{equiv.mod_zero},
  \ref{equiv.mod_zero}$\Rightarrow$\ref{equiv.hom_vers}
  and~\ref{equiv.hom_vers}$\Leftrightarrow$\ref{equiv.hom_vers_vols} in Part
  1. Since any pure-colouring can be expressed as a mixed-colouring
  \ref{equiv.mod_zero} 
  implies \ref{equiv.mod_pure_zero}, hence by Part 1, 
any of \ref{equiv.mod_zero}--\ref{equiv.hom_vers_vols} 
then implies \ref{equiv.mod_pure_zero}. Also, if
  $W$ is defined on $[0,1]$, then by Theorem~\ref{LX1},
  $\qhatstarpur(W)=\qhatstarmix(W)$ and thus \ref{equiv.mod_zero}  and
  \ref{equiv.mod_pure_zero} are 
  equivalent. Hence by Part 1, to conclude the proof it then only remains to
  prove that~\ref{equiv.mod_pure_zero}--\ref{equiv.int_vers_vols} are all
  equivalent, which we do in Part 2 below.

\medskip
\noindent\emph{Part 1:~\ref{equiv.mod_zero}-\ref{equiv.hom_vers_vols} equivalent.}\\
\noindent\ref{equiv.hom_vers}$\Rightarrow$\ref{equiv.mod_zero}. Assume that
$\hqx(W)>0$, and we will show that $\eqref{eq.hom_cond_for_zero}$ does not
hold. Since $\hqx(W)>0$, 
we have also that $\hqx_2(W)>0$, see Lemma~\ref{LMk2}. 
By~\refT{TLC3}, we may fix two-colouring $\Psi$ such that $\hq(W,\Psi)=\hqx_2(W)$. 

Recall the definitions of $t_{i,j}$ 
from~\eqref{eq.tab_defn}; and that we have $\rho=t_{1,1}+2t_{1,2}+t_{2,2}$ from~\eqref{mag3}.
By \refL{LM}, 
\begin{align}\label{eq.q_as_t12}
\hq(W,\Psi)\rho^2 & = \bigpar{t_{1,1}+t_{2,2}}\bigpar{t_{1,1}+2t_{1,2}+t_{2,2}}-\bigpar{t_{1,1}+t_{1,2}}^2-\bigpar{t_{1,2}+t_{2,2}}^2 \\
\label{eq.q_as_t12_2}
 & = 2t_{1,1}t_{2,2}-2t_{1,2}^2 
\end{align}
and thus $t_{1,1}t_{2,2}>t_{1,2}^2$ which contradicts~\eqref{eq.hom_cond_for_zero} and finishes the proof of the implication~\ref{equiv.hom_vers}$\Rightarrow$\ref{equiv.mod_zero}.

\medskip
\noindent\ref{equiv.mod_zero}$\Rightarrow$\ref{equiv.hom_vers}. Assume that there exists a mixed two-colouring $\Psi$ such that $\eqref{eq.hom_cond_for_zero}$ does not
hold and we will show that
$\hqx(W)>0$. Our assumption implies that for the mixed two-colouring $\Psi$ we have $t_{1,1}t_{2,2}>t_{1,2}^2$. However, by~\eqref{eq.q_as_t12_2}, this implies that $\qhat(W, \Psi)>0$, and thus $\hqx(W)>0$. This concludes the proof of the implication~\ref{equiv.mod_zero}$\Rightarrow$\ref{equiv.hom_vers}.

\medskip
\noindent\ref{equiv.hom_vers}$\Leftrightarrow$\ref{equiv.hom_vers_vols}. Again, recall the definition of $t_{i,j}$ from~\eqref{eq.tab_defn} and also that $\rho=t(K_2,W)=t_{1,1}+2t_{1,2}+t_{2,2}$ from~\eqref{mag3}. Then~\eqref{eq.hom_vols_cond_for_zero} holds if and only if,
\begin{align}\label{eq.equiv1}
t_{1,2}\bigpar{t_{1,1}+2t_{1,2}+t_{2,2}} - \bigpar{t_{1,1}+t_{1,2}}\bigpar{t_{1,2}+t_{2,2}}\geq 0.
\end{align}
Noting cancellations on the left hand side we see that~\eqref{eq.equiv1} is equivalent to
\begin{align}
t_{1,2}^2 - t_{1,1}t_{2,2}\geq 0.
\end{align}
and thus~\eqref{eq.hom_vols_cond_for_zero} holds if and only if~\eqref{eq.hom_cond_for_zero} holds; and hence 
\ref{equiv.hom_vers}$\Leftrightarrow$\ref{equiv.hom_vers_vols}.

\bigskip
\noindent{\emph{Part 2: \ref{equiv.mod_pure_zero}--\ref{equiv.int_vers_vols} equivalent.}}\\ 
We first show that~\ref{equiv.mod_pure_zero}--\ref{equiv.hom_vers_vols_pure} are equivalent (which follows in much that same way as showing~\ref{equiv.mod_zero}--\ref{equiv.hom_vers_vols} above). Then to conclude the proof we show that~\ref{equiv.hom_vers_pure}$\Leftrightarrow$\ref{equiv.int_vers} and that~\ref{equiv.hom_vers_vols_pure}$\Leftrightarrow$\ref{equiv.int_vers_vols}.

\medskip
\noindent\ref{equiv.mod_pure_zero}$\Rightarrow$\ref{equiv.hom_vers_pure}. Assume that there exists a pure two-colouring $\chi$ such that $\eqref{eq.hom_cond_for_zero}$ does not
hold and we will show that
$\qhatstarpur(W)>0$. By assumption, for $\chi$ we have that
$t_{1,1}t_{2,2}>t_{1,2}^2$. But then noting that the
formula~\eqref{eq.q_as_t12_2} holds (as a special case) also 
for pure colourings; 
this implies
that $\qhat(W,\chi)>0$; and thus that $\qhatstarpur(W)>0$. This concludes
the proof that
\ref{equiv.mod_pure_zero}$\Rightarrow$\ref{equiv.hom_vers_pure}. 

\medskip
\noindent\ref{equiv.hom_vers_pure}$\Rightarrow$\ref{equiv.mod_pure_zero}.
Assume that $\qhatstarpur(W)>0$, and we will show that there exists a pure
colouring $\chi$, such that for $\WW=(W, \chi)$ the inequality
\eqref{eq.hom_cond_for_zero} does not hold. By Lemma~\ref{LMk2} this implies
$\qhatstarpurtwo(W)>0$ and hence by the definition of $\qhatstarpurtwo(W)$,
see~\eqref{mod4}, there exists a pure two-colouring $\chi$ with
$\qhat(W,\chi)>0$. Then as in the proof
of~\ref{equiv.hom_vers}$\Rightarrow$\ref{equiv.mod_zero}, for this
two-colouring $\chi$, set $\WW=(W,\chi)$ and we must have
$t_{1,1}t_{2,2}>t^2_{1,2}$, which implies that~\eqref{eq.hom_cond_for_zero}
does not hold. This concludes the proof
that~\ref{equiv.hom_vers_pure}$\Rightarrow$\ref{equiv.mod_pure_zero}.

\medskip
\noindent\ref{equiv.hom_vers_pure}$\Leftrightarrow$\ref{equiv.hom_vers_vols_pure}. In the proof of \ref{equiv.hom_vers}$\Leftrightarrow$\ref{equiv.hom_vers_vols} we established that for any fixed mixed two-colouring~$\Psi$, the inequality~\eqref{eq.hom_cond_for_zero} holds if and only if~\eqref{eq.hom_vols_cond_for_zero} holds. Hence since mixed two-colourings include pure two-colourings, we have that \ref{equiv.hom_vers_pure}$\Leftrightarrow$\ref{equiv.hom_vers_vols_pure}.

\medskip
\noindent(Note that we now have that~\ref{equiv.mod_pure_zero}--\ref{equiv.hom_vers_vols_pure} are equivalent, and it remains only to show that \ref{equiv.hom_vers_pure}$\Leftrightarrow$\ref{equiv.int_vers} and \ref{equiv.hom_vers_vols_pure}$\Leftrightarrow$\ref{equiv.int_vers_vols}.)

\medskip
\noindent\ref{equiv.hom_vers_pure}$\Leftrightarrow$\ref{equiv.int_vers}. Under the natural identification
between measurable $I$ and pure two-colourings $\chi$ (see, for example, after~\eqref{eq:graphon-MC}), by definition~\eqref{pa1b} we have, writing $\WW=(W,\chi)$,
\begin{align}\label{eq.t12}
t(\KK_{2,12},\WW) = \iint_{\gO^2} W(x,y) \chi_1(x)\chi_2(y) \dd\mu(x) \dd\mu(y)= \int_I \int_{I\comp} W(x,y) \dd\mu(x) \dd\mu(y)
\end{align}
and
\begin{align}\label{eq.t11}
t(\KK_{2,11},\WW) = \iint_{\gO^2} W(x,y) \chi_1(x)\chi_1(y) \dd\mu(x) \dd\mu(y)= \iint_{I^2} W(x,y) \dd\mu(x) \dd\mu(y)
\end{align}
with a similar expression for $t(\KK_{2,22},\WW)$. Thus under this identification between $I$ and $\chi$ the inequalities~\eqref{eq.hom_cond_for_zero} and~\eqref{eq.int_cond_for_zero} are equivalent, which concludes the proof that~\ref{equiv.hom_vers_pure}$\Leftrightarrow$\ref{equiv.int_vers}.

\medskip
\noindent\ref{equiv.hom_vers_vols_pure}$\Leftrightarrow$\ref{equiv.int_vers_vols}. This proof is similar to that immediately above, we show that under the natural identification between $I$ and $\chi$ the inequalities~\eqref{eq.hom_vols_cond_for_zero} and~\eqref{eq.int_cond_for_zero_vols} are equivalent. Since by definition, see~\eqref{eq.def_tbullet}, $t(\KK_{2,1\bullet},\WW):=t(\KK_{2,11},\WW)+t(\KK_{2,12},\WW)$, then by adding~\eqref{eq.t12} and~\eqref{eq.t11}, we get that
\begin{align}\label{eq.t1bullet}
t(\KK_{2,1\bullet},\WW) = \int_{I} \int_{\gO} W(x,y) \dd\mu(x) \dd\mu(y) = \int_{I} k(x) \dd\mu(x). 
\end{align}
where the second equality follows by the definition of $k(x)$, see~\eqref{m1}. By the same argument $t(\KK_{2,2\bullet},\WW)=\int_{I\comp} k(x) \dd\mu(x)$. Lastly note that for the (usual) uncoloured density of $K_2$ in $W$, we have
$t(K_2, W)=\iint_{\gO^2} W(x,y) = \rho$. 

Now by these expressions for $t(K_2, W)$, $t(\KK_{2,i\bullet}, \WW)$ and that for $t(\KK_{2,12}, \WW)$ from~\eqref{eq.t12}, the inequalites~\eqref{eq.hom_vols_cond_for_zero} and~\eqref{eq.int_cond_for_zero_vols} are indeed equivalent, as required. This concludes the proof that \ref{equiv.hom_vers_vols_pure}$\Leftrightarrow$\ref{equiv.int_vers_vols}, and hence concludes the proof of Proposition~\ref{prop.equiv_to_mod_zero}.
\end{proof}

\section{Restricted estimability}\label{sec.estim} Theorems \ref{thm:cheeger-conv} and \ref{thm:limit-identification} establish convergence of the Cheeger constant and modularity under positive lower bounds on normalised minimum degree and edge density, respectively. As discussed in Sections \ref{sec:cheeger-constant} and \ref{sec3:modularity}, without such restrictions, sequences with the same graphon limit can have different limiting parameter values; see also \cite[Section~5.1]{Khetan}. For bounded graph parameters, \cite[Theorem 6.1]{BCLSV1}
characterises estimability by convergence along every
cut-convergent graph sequence whose order tends to infinity. In this section, we develop an analogous characterisation for restricted graph
classes, connecting our convergence results to estimability.

\vspace{0.5em}

\noindent Let $\cG$ be the set of all finite simple graphs modulo isomorphisms.
For a graph $G\in\cG$ and an integer $k\le|G|$,
let $\mathbbm{G}(k,G)\in\cG$\label{def.G_vertex_sampling} be the random induced subgraph of $G$ obtained from sampling $k$ vertices uniformly without replacement in $G$.   
Furthermore, let $\cW=\cW(\oi)$ be the space of all graphons on $\oi$,
i.e., all measurable functions \(W: [0,1]^2\to [0,1]\) that are symmetric. (In this section, we consider only graphons defined on $\oi$.)

Moreover, let \(r: \cG\to [0,\infty)\) be a graph parameter with the following properties: 
\begin{enumerate}[label=(R\arabic*)]
\item\label{Cest1} There exists \(\hat{r}: \cW\to [0,\infty)\) such that  for every sequence of finite simple graphs \((G_n)_{n\ge 1}\) such that \(|G_n|\to \infty\) as \(n\toinf\), 
we have
\begin{align}\label{r1} 
r(G_n) - \hat{r}(W_{G_n})\to 0,\quad n\toinf,
\end{align}
and furthermore,
$\hr$ is upper semicontinuous for the usual topology on $\cW$ defined by the
cut distance, in other words, for any $W,W_n\in\cW$,
\begin{align}\label{usc}
\dcut(W_n,W)\to0
\implies \limsup_\ntoo\hr(W_n)\le \hr(W).
\end{align}

\item\label{Cest2} For every $\eta>0$ and $\eps>0$, there exists
$k_1(\eps,\eta)>0$ such that for all $k\ge k_1$
and \(G\in \cG\) with \(|G|\ge k\) and
\(r(G)\ge \eta\),
we have
\begin{align}
\IP\bigpar{r(\mathbbm{G}(k,G)) \le \eta - \eps} \le \eps.  
\end{align}
\end{enumerate}
Note that the upper semicontinuity \eqref{usc}
also can be written
$\limsup_{W'\to W}\hr(W')\le \hr(W)$ for all $W\in\cW$, and that it is
equivalent to the sets
$ 
  \set{W\in\cW: \hr(W)\ge\eta}
$
being closed in $\cW$ for every $\eta>0$.
Furthermore, \eqref{usc} in particular implies that
$\hat{r}(W)=\hat{r}(W')$ for any $W,W'\in \cW$ such that 
are equivalent, i.e., have
$\dels(W,W')=0$. 

We extend the definition of estimability in
\cite[Section 6]{BCLSV1} and \cite[Chapter 15.1]{Lovasz}
to restricted classes of graphs as follows.
\begin{definition}[Restricted estimability]\label{de:est1}
A graph parameter $f$ is \emph{estimable} on a given class
$\cG_1\subseteq\cG$ of graphs, if for every $\eps>0$, there is a positive integer $k = k(\eps)$
such that for every $G\in\cG_1$ with  $|G|\ge k$, 
we have
\begin{align}
\P(|f(G)-f(\mathbbm{G}(k,G))| >\eps)<\eps.
\end{align} 

Let \(r:\cG\to [0,\infty)\) be a graph parameter satifying conditions \ref{Cest1} and \ref{Cest2}. For $\eta>0$, define
\begin{align}
\mathcal G_{r,\eta}:=\{G \in \mathcal G: r(G) \geq \eta\}.
\end{align} 
We say that $f$ is \emph{$r$-estimable} if it is estimable on $\cG_{r,\eta}$ for every $\eta>0$.
\end{definition}

Two examples of a graph parameter $r$ satisfying the above are  subgraph
densities and the normalised minimum degree: recall $d_{\rm min}(G):=\min_{v\in V(G)}d_v$ and then set,
\begin{align}\label{de:norm-min-deg}
    \dnormmin (G):= \frac{ d_{\rm min}(G)}{|G|-1},
\end{align}
where we set $\dnormmin:=0$ for $|G|\leq 1$. In our applications to modularity and Cheeger constant, $r$ will be chosen
as the edge density and the normalised minimum degree, respectively.
For both choices, \ref{Cest1} is immediate;
the corresponding $\hat r(W)$ for graphons  are $\rho_W$ and 
$\essinf k_W$, see \eqref{che2}.
That subgraph densities meet condition \ref{Cest2} follows from
\cite[Theorems 2.7(a) and 2.9]{BCLSV1}, 
and for the normalised minimum degree, we verify \ref{Cest2} later in \refL{Lmin-degree-sampling} by a
concentration argument.

\subsection{Characterisations of restricted estimability}\label{app:testable}

The following proposition is an 
analogue of \cite[Theorem~6.1]{BCLSV1}. That result gives a list of
properties equivalent to the function $f$ being estimable over all graphs. 
We give the corresponding result for $f$ being $r$-estimable, using the following definition.

\begin{definition}\label{DWreta}
For $\eta\ge0$, let \(\cW_{r,\eta}\) be the subset of \(\cW\) 
consisting of all $W\in\cW$ such that there exists a sequence of finite graphs $(G_n)_{n\ge 1}$ such that
 $r(G_n)\ge \eta$ for all $n\ge 1$, and, as $n\toinf$, $|G_n|\toinf$ and
 $G_n\to W$. 

Furthermore, let $\cW_r=\bigcup_{\eta>0} \cW_{r,\eta}$.
\end{definition}

\begin{remark}\label{RWreta}
If $W\in\cW_{r,\eta}$, then, for a sequence $(G_n)_{n\ge1}$ as in the definition, 
 we have by \eqref{usc} and \eqref{r1}, since $\dcut(W_{G_n},W)\to0$,
\begin{align}\label{oct2}
  \hr(W)
\ge \limsup_\ntoo \hr(W_{G_n})
= \limsup_\ntoo r(G_n)
\ge\eta.
\end{align}
Conversely, for the examples above when $r$ is the edge density or 
normalised minimum degree, 
it is easily seen that for any graphon $W$ with $\hr(W)\ge\eta$,
there exists such a sequence of graphs; in other words,
$\cW_{r,\eta}=\set{W\in\cW:\hr(W)\ge\eta}$.
We think that this will be the case in all interesting examples,
although it does not follow from our definitions above, as can be seen by
simple counter examples.
See also \refL{Lhr} for a partial converse.\end{remark}

\begin{proposition}\label{prop:res-estimability}
    Let \(f:\cG\to [0,\infty)\) be a bounded graph parameter and let \(r:\cG\to [0,\infty)\) be a graph parameter satisfying \ref{Cest1} and \ref{Cest2}. Then the following statements are equivalent:
    \begin{enumerate}
 \renewcommand{\labelenumi}{\textup{(\alph{enumi})}}%
 \renewcommand{\theenumi}{\labelenumi}%

    \item $f$ is \(r\)-estimable.\label{esta}

    \item For every $\eps>0$ and $\eta>0$, there exists a positive integer $k:=k(\eps,\eta)$ such that for every $G\in\cG$ with $|G|\ge k$
    and $r(G)\geq \eta$, $|f(G)- \E[f(\mathbbm{G}(k,G))]|\leq \eps$.

    \item For every sequence $(G_n)_{n\ge 1}$ of simple graphs such that 
$\liminf_{n\toinf} r(G_n) >0$ and,
as $n\toinf$, 
$|G_n|\toinf$  and\/  $G_n\to W$ for some graphon $W$,
the limit  $\lim_{n\to\infty} f(G_n)$ exists.

\item There exists a functional $\check{f}:\mathcal{W}_{r}\to\IR$ with
      the following properties: 
(1) $\check{f}$ is continuous in the cut metric 
on each set $W_{r,\eta}$, $\eta>0$; and (2) if 
a sequence $(G_n)_{n\ge1}$ of graphs
satisfies $\liminf_{n\toinf} r(G_n) >0 $ and\/ $|G_n|\toinf$ as $n\toinf$, then $\check{f}(W_{G_n})-f(G_n)\to 0$ as $n\to\infty$.    
    \item For every $\eps>0$ and $\eta>0$, there are  $\eps'=\eps'(\eps,\eta)>0$ and $n_0=n_0(\eps,\eta)>0$ such that if $G_1, G_2$ are simple graphs such that (1) $|G_1|,|G_2|\geq n_0$; (2) $\dels(G_1,G_2)\leq\eps'$; and (3) $r(G_1),r(G_2)\ge \eta,$  then $|f(G_1)-f(G_2)|\leq\eps$.
 \end{enumerate}
\end{proposition}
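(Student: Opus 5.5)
I will follow the structure of the proof of \cite[Theorem~6.1]{BCLSV1}, proving the cycle \ref{esta}$\Rightarrow$(b)$\Rightarrow$(c)$\Rightarrow$(d)$\Rightarrow$(e)$\Rightarrow$\ref{esta}. Throughout, the restriction $r(G)\ge\eta$ is handled by \ref{Cest2}: a random sample $\mathbbm{G}(k,G)$ of a graph with $r(G)\ge\eta$ has, with high probability, $r\ge\eta-\eps$. Graph limits are handled by \ref{Cest1} together with Remark~\ref{RWreta}, which gives $\hr(W)\ge\eta$ for $W\in\cW_{r,\eta}$. Let $B:=\sup f<\infty$.

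\ref{esta}$\Rightarrow$(b) is immediate by boundedness: $|f(G)-\E f(\mathbbm{G}(k,G))|\le\eps+B\eps$.

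For (b)$\Rightarrow$(c), take $G_n\to W$ with $r(G_n)\ge2\eta>0$ eventually. For fixed $k$, the law of $\mathbbm{G}(k,G_n)$ converges, by Lemma~\ref{L1}\ref{L1k} in the uncoloured case. Hence $\E f(\mathbbm{G}(k,G_n))$ converges, and (b) makes $f(G_n)$ Cauchy within $2\eps$.

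For (c)$\Rightarrow$(d), for $W\in\cW_r$ I define $\check f(W):=\lim f(G_n)$ along any admissible sequence. This is well defined by interleaving two sequences, using $\liminf r>0$ for both. Property (2) follows by a subsequence and compactness argument: if (2) failed along some subsequence, pass to a further cut-convergent subsequence with limit $W'\in\cW_{r,\eta'}$ and obtain a contradiction. Continuity on $\cW_{r,\eta}$ follows by a diagonal argument. If $W_j\to W$ in $\cW_{r,\eta}$, choose graphs $H_j$ with $r(H_j)\ge\eta$, $|H_j|\ge j$, $\dcut(W_{H_j},W_j)<1/j$ and $|f(H_j)-\check f(W_j)|<1/j$. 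Then $H_j\to W$, so $\check f(W_j)\to\check f(W)$.

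For (d)$\Rightarrow$(e), I argue by contradiction. Take sequences $G_1^{(n)},G_2^{(n)}$ with orders $\to\infty$, $\dels\to0$, $r\ge\eta$, and $|f(G_1^{(n)})-f(G_2^{(n)})|>\eps$. By compactness, pass to a subsequence on which both converge to a common $W\in\cW_{r,\eta}$. Then (d)(2) and continuity give $f(G_i^{(n)})-\check f(W)\to0$, a contradiction.

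\textbf{The main obstacle is (e)$\Rightarrow$\ref{esta}.} Here I need the sampling lemma: $\dels(G,\mathbbm{G}(k,G))\to0$ in probability uniformly in $G$ as $k\to\infty$ \cite[Theorem~2.7 / Lemma~10.16]{BCLSV1,Lovasz}. Given $\eps,\eta$, apply (e) with $\eta/2$ to obtain $\eps'$ and $n_0$. By \ref{Cest2} with parameters $(\eps/2,\eta)$, for $k\ge k_1$ we have $r(\mathbbm{G}(k,G))\ge\eta/2$ except with probability $\eps/2$. Also $|\mathbbm{G}(k,G)|=k\ge n_0$. Choosing $k$ large so that $\P(\dels>\eps')<\eps/2$, the three conditions of (e) hold simultaneously with probability at least $1-\eps$, giving $|f(G)-f(\mathbbm{G}(k,G))|\le\eps$.

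The delicate points are the uniformity in $G$ of the sampling lemma and making the thresholds in \ref{Cest2} match (e) at level $\eta/2$. The lack of full upper semicontinuity control in (c)$\Rightarrow$(d) is absorbed by working with $\liminf r>0$ and defining $\check f$ only on $\cW_r$.
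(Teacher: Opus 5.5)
Your cycle (a)$\Rightarrow$(b)$\Rightarrow$(c)$\Rightarrow$(d)$\Rightarrow$(e)$\Rightarrow$(a) works in outline. Your diagonal proof that $\check f$ is continuous on $\cW_{r,\eta}$, derived directly from (c), is a valid alternative to the paper's route, which proves (c)$\Rightarrow$(e) and obtains continuity from (e).

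However, your (c)$\Rightarrow$(d) and your (d)$\Rightarrow$(e) both use a fact you never establish: that $W_{G_n}$ lies in $\cW_r$, and in fact in a single set $\cW_{r,\eta'}$ for all large $n$.

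In (c)$\Rightarrow$(d), property (2) involves $\check f(W_{G_n})$. Your definition assigns this a value only if some sequence of graphs $H_m$ with $r(H_m)\ge\eta'>0$ and $|H_m|\to\infty$ converges to $W_{G_n}$. Your contradiction also needs $\check f(W_{G_n})\to\check f(W')$, which requires continuity on one set $\cW_{r,\eta'}$ containing every $W_{G_n}$ and $W'$.

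The same problem arises in (d)$\Rightarrow$(e). Continuity is only given on each $\cW_{r,\eta}$ separately, so passing from $\check f(W_{G_i^{(n)}})$ to $\check f(W)$ needs a uniform $\eta'$. Remark~\ref{RWreta}, which you invoke, gives the opposite implication ($W\in\cW_{r,\eta}\Rightarrow\hr(W)\ge\eta$), so it does not supply this.

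The missing idea is the paper's \refL{Lhr}, proved via blow-ups. Since $W_{G[b]}=W_G$ and $|G[b]|\to\infty$, \eqref{r1} gives $r(G[b])\to\hr(W_G)$ as $b\to\infty$. Hence $\hr(W_G)>\eta'$ implies $W_G\in\cW_{r,\eta'}$.

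Applying \eqref{r1} again, now to $(G_n)$ itself, shows $\hr(W_{G_n})>\eta/2$ for all large $n$ whenever $r(G_n)\ge\eta$. So eventually $W_{G_n}\in\cW_{r,\eta/2}$, and this set also contains the limit, since $\cW_{r,\eta}\subseteq\cW_{r,\eta/2}$. With this inserted, both of your steps go through.

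A minor point in (e)$\Rightarrow$(a): applying \ref{Cest2} with $\eps/2$ only gives $r(\mathbbm{G}(k,G))>\eta-\eps/2$. This is at least $\eta/2$ only when $\eps\le\eta$, so use $\min\{\eps/2,\eta/2\}$ as the paper does.
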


The proof of this proposition follows in an easy way by generalising the proof in \cite{BCLSV1};
we give the proof in Section~\ref{subsec:proof_prop:estimability}.

\subsection{Applications}
We apply Proposition \ref{prop:res-estimability} above to Cheeger constant
and modularity, proving in particular that our convergence results in
Theorems \ref{thm:cheeger-conv} and \ref{thm:limit-identification} imply
that these graph parameters are $r$-estimable for suitably chosen $r$. We
further consider the application to the clustering coefficient, and
show that while these graph parameters are not estimable they are
$r$-estimable. To establish non-estimability, we show below that 
the equivalent condition (e) in \cite[Theorem 6.1]{BCLSV1} does not hold.

\subsubsection{Cheeger constant}
Recall the definition of the Cheeger constant $h(G)$ on a finite simple graph $G$ with no isolated vertices in Definition \ref{def:cheeger}, and $\dnormmin$ in \eqref{de:norm-min-deg}.

\begin{proposition}\label{prop:cheeger-estimation-alt}
For any $\eps, \eta>0$, $\exists k=k(\eps, \eta)$ such that for any graph
$G$ with minimum degree $ d_{\rm min}(G) \ge \eta(n-1)$ we have
\begin{align}\pr(|h(G)-h(\mathbbm{G}(k,G))| >\eps)<\eps,\end{align}
Moreover, the Cheeger constant is not estimable. 
\end{proposition}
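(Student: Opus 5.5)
The plan is to apply \refProp{prop:res-estimability} with $r:=\dnormmin$ and $f:=h$. We check that $r$ satisfies \ref{Cest1} and \ref{Cest2}, verify condition (c) of the proposition, and conclude (a), i.e.\ $r$-estimability. That is exactly the first assertion, since $d_{\rm min}(G)\ge\eta(n-1)$ means $G\in\cG_{r,\eta}$.

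For \ref{Cest1}, take $\hr(W):=\essinf k_W$. The step graphon $W_{G}$ has $k_{W_G}(x)=d_v/n$ on $I_v$, so $\hr(W_{G_n})=d_{\rm min}(G_n)/|G_n|$ differs from $r(G_n)$ by $O(1/|G_n|)$. Upper semicontinuity of $\hr$ is \refL{LCWc2}: if $\hr(W_n)\ge c$ along a subsequence and $\dcut(W_n,W)\to0$, then $W\in\cWc$, so $\hr(W)\ge c$. For \ref{Cest2} I would invoke \refL{Lmin-degree-sampling} (stated as verified later). If I had to argue it directly: for a sampled vertex $v$, its degree in $\mathbbm{G}(k,G)$ is hypergeometric with mean at least $\eta(k-1)$, so Hoeffding's bound for sampling without replacement plus a union bound over the $k$ sampled vertices gives failure probability $k\exp(-2\eps^2(k-1))\le\eps$ for $k$ large.

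For condition (c), let $\liminf r(G_n)>0$, $|G_n|\to\infty$ and $G_n\to W$. Then for some $c>0$ we have $d_{\rm min}(G_n)\ge c|G_n|$ eventually. There are no isolated vertices, so $h(G_n)$ is defined, and $h\le 1$ gives boundedness. \refT{thm:cheeger-conv} then yields $h(G_n)\to h(W)$, so the limit exists. \refProp{prop:res-estimability} now gives (a). Note that $h$ is only defined for graphs without isolated vertices. For $k$ large the sample $\mathbbm{G}(k,G)$ has minimum degree at least $\eta(k-1)/2>0$ with probability at least $1-\eps/2$ by \ref{Cest2}. So I would either set $h:=0$ on graphs with isolated vertices, which still leaves a bounded parameter, or absorb that event into $\eps$.

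For non-estimability, I would show that condition (e) of \cite[Theorem 6.1]{BCLSV1} fails, or equivalently that $h(G_n)$ need not converge along cut-convergent sequences. The example in \refR{rem.h_conv_not_easy} does this. Take $H_n$ with $h(H_n)$ bounded away from $0$, for example $H_n=K_n$ with $h(K_n)\ge 1/2$. Let $G_{n'}$ be $H_n$ plus a clique on $n/\log n$ vertices, joined by one edge. Both sequences converge to the same limit, here the constant graphon $1$, but $h(G_{n'})\to0$. Interleaving the two sequences gives a convergent graph sequence along which $h$ does not converge, so $h$ is not estimable by \cite[Theorem 6.1]{BCLSV1}. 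The main obstacle is \ref{Cest2} together with the bookkeeping about isolated vertices in samples; the rest is assembly of earlier results.
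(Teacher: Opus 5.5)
Your proposal is correct and follows the paper's proof. Like the paper, you take $r=\dnormmin$, get \ref{Cest2} from \refL{Lmin-degree-sampling}, verify (c) of \refProp{prop:res-estimability} via \refT{thm:cheeger-conv}, and conclude (a). For non-estimability the paper instead uses $K_{n,n}$ with a pendant clique and condition (e) of \cite[Theorem 6.1]{BCLSV1}; your $K_n$-plus-pendant-clique example from \refR{rem.h_conv_not_easy}, combined with interleaving and the sequential criterion, works equally well, and your convention $h:=0$ on graphs with isolated vertices fills a point the paper leaves implicit.
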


The proposition follows directly from Theorem \ref{thm:cheeger-conv} and Proposition \ref{prop:res-estimability} once we verify that \ref{Cest2} holds for a suitably defined $r(G)$ which ensures the minimum degree of the graph is $\eta n$. We first show that the minimum degree of a sampled graph is also sufficiently high then prove the proposition.

\begin{lemma}\label{Lmin-degree-sampling}
    For every \(\eta>0\) and \(\eps>0\), there exists 
\(k_1=k_1(\eps)>0\)
such that for every \(G\in \cG\) and integer $k$
such that $|G|\ge k\ge k_1$ and $\dnormmin(G)\ge\eta$, 
we have
    \begin{align}\label{eh0}
\P\Bigpar{\dnormmin(\mathbbm{G}(k,G))\le \eta-\eps}
\le e^{-\eps^2 k}.
    \end{align}
\end{lemma}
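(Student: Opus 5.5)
Let $n:=|G|$ and let $S=\{v_1,\dots,v_k\}$ be the uniformly random $k$-subset defining $\mathbbm{G}(k,G)$. The plan is a union bound over the $k$ sampled vertices, combined with hypergeometric concentration for the degree of each sampled vertex inside $S$. The event $\dnormmin(\mathbbm{G}(k,G))\le\eta-\eps$ means that some $v_i\in S$ has $\deg_S(v_i)\le(\eta-\eps)(k-1)$.

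\emph{Step 1 (single vertex).} Fix $i$ and condition on $v_i=v$. The remaining $k-1$ sampled vertices form a uniform $(k-1)$-subset of $V(G)\setminus\{v\}$. That set has $n-1$ elements, of which $d_v\ge\eta(n-1)$ are neighbours of $v$. Hence $\deg_S(v)$ is hypergeometric with mean at least $\eta(k-1)$. Hoeffding's inequality applies to sampling without replacement, since it is dominated by the with-replacement case. It gives
\begin{align}
\P\bigpar{\deg_S(v)\le(\eta-\eps)(k-1)}\le \exp\bigpar{-2\eps^2(k-1)}.
\end{align}

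\emph{Step 2 (union bound).} Summing over the $k$ sampled vertices gives
\begin{align}
\P\Bigpar{\dnormmin(\mathbbm{G}(k,G))\le\eta-\eps}\le k\,e^{-2\eps^2(k-1)}.
\end{align}
It remains to choose $k_1(\eps)$ so that $k e^{-2\eps^2(k-1)}\le e^{-\eps^2k}$ for all $k\ge k_1$. This is equivalent to $\log k+2\eps^2\le \eps^2 k$, which holds for all large $k$ depending only on $\eps$. In particular $k_1$ does not depend on $\eta$ or on $G$, as the statement requires.

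\emph{Main obstacle.} The only delicate point is the normalisation. The statement uses $\dnormmin=d_{\min}/(|G|-1)$, so the relevant population has size $n-1$ and the sample has size $k-1$. Conditioning on $v_i$ handles this exactly. We also need the exchangeability of the sampled vertices: each $v_i$ is uniform, and the other $k-1$ vertices are uniform among the rest. This holds for sampling without replacement.

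Edge cases are harmless. If $k=1$ then $\dnormmin=0$ by convention, but $k\ge k_1\ge2$ avoids this. The ``$\le$'' versus ``$<$'' in Hoeffding's inequality does not matter.
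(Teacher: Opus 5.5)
Your proposal is correct and follows essentially the paper's proof: the degree of a sampled vertex is hypergeometric with mean at least $\eta(k-1)$, Hoeffding's inequality for sampling without replacement gives $e^{-2\eps^2(k-1)}$, a union bound gives $k e^{-2\eps^2(k-1)}$, and $k_1(\eps)$ is then chosen large. The only cosmetic difference is that you union-bound over the $k$ sampled positions, whereas the paper sums over all $v\in V(G)$ weighted by $\P\bigl(v\in V(\mathbbm{G}(k,G))\bigr)=k/|G|$; both give the same bound.
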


\begin{proof}
Let $v\in V(G)$.
Conditionally on $v\in V(\mathbbm{G}(k,G))$,  
the distribution of $ d_{\mathbbm{G}(k,G)}(v)$ 
is hypergeometric and equals the distribution of the number of black balls in $k-1$ drawings without replacement from an urn
with initially $d_G(v)$ black balls and $|G|-1-d_G(v)$ white balls. 
Let
\begin{align}
\Ev [ d_{\mathbbm{G}(k,G)}(v)]:=
  \E [ d_{\mathbbm{G}(k,G)}(v)\mid v\in V(\mathbbm{G}(k,G)) ] 
\end{align}
denote the conditional expectation.
Then,
 by Hoeffding's concentration inequality for sampling without replacement
 \cite{Hoeff63}, 
see also \cite[Theorem 2.10]{JLR},
\begin{align}\label{eh1}
     \IP\Bigpar{d_{\mathbbm{G}(k,G)}(v) 
- {\Ev} [d_{\mathbbm{G}(k,G)}(v)] \le -t \mid v\in V(\mathbbm{G}(k,G))} 
\le e^{-2t^2/(k-1)}, 
\quad t>0.
\end{align}
We have $\Ev [d_{\mathbbm{G}(k,G)}(v)] = (k-1) d_G(v)/(|G|-1)
\ge (k-1)\eta$, and thus \eqref{eh1} yields
\begin{align}\label{Ehoeff}
    &\IP\Bigpar{d_{\mathbbm{G}(k,G)}(v) \le (k-1)
(\eta-\eps)  \mid v\in V(\mathbbm{G}(k,G)) }
\notag \\&\qquad
    \le \IP\Bigpar{d_{\mathbbm{G}(k,G)}(v) - 
\Ev [ d_{\mathbbm{G}(k,G)}(v)]
  \le  - (k-1)\eps \mid v\in V(\mathbbm{G}(k,G)) }
\notag \\&\qquad
    \le e^{-2(k-1)\eps^2}
.\end{align}
Since $\IP(v\in V(\mathbbm{G}(k,G)))=k/|G|$, it follows from a union bound and \eqref{Ehoeff} that
    \begin{align}
        &\IP\Bigpar{ \min_{v\in \mathbbm{G}(k,G) }d_{\mathbbm{G}(k,G)}(v) 
         \le (k-1)}(\eta-\eps)
\notag \\&\qquad
         \le \sum_{v\in V(G)} \IP(v\in V(\mathbbm{G}(k,G)))
\IP\Bigpar{d_{\mathbbm{G}(k,G)}(v) \le (k-1)(\eta-\eps)\mid v\in V(\mathbbm{G}(k,G))}
\notag \\ &\qquad
\le  k e^{-2(k-1)\eps^2},
    \end{align}
which implies \eqref{eh0} for $k\ge k_1$ if $k_1=k_1(\eps)$ is large enough.
\end{proof}

\begin{proof}[Proof of Proposition \ref{prop:cheeger-estimation-alt}]
    We show that the Cheeger constant is $\dnormmin$-estimable. By Lemma \ref{Lmin-degree-sampling}, $\dnormmin$ satisfies condition \ref{Cest2}, while \ref{Cest1} is immediate. 
    The proposition therefore follows from Theorem \ref{thm:cheeger-conv}, since we have shown the equivalence between (c) and (a) in Proposition \ref{prop:res-estimability}.

    The following example, which is essentially due to \cite[Section 5.1]{Khetan},
    demonstrates that the Cheeger constant is not estimable. Let $G_n$ be a complete bipartite graph, where each of the parts $V_{n,1}$ and $V_{n,2}$ has $n$ vertices, with $n$ even. Let $G_n'$ be a simple graph obtained by appending a clique $K_{\kappa_n}$ to $G_n$, where 
$\kappa_n \to \infty$ and $\kappa_n=o(n)$, and joining  $K_{\kappa_n}$ to $V_{n,1}$ by a single edge. Thus, for all sufficiently large $n$, the minimum degrees in $G_n$ and $G_n'$ are respectively $n$ and $\kappa_n-1$, while the Cheeger constants are $h(G_n)=1/2$ (see \cite{Khetan}) and
    \begin{align}
        h(G_n') &
= \frac{e_{G'_n}(V_{n,1}\cup V_{n,2}, K_{\kappa_n})}{\vol_{G'_n}(K_{\kappa_n})} = \frac{1}{\kappa_n(\kappa_n-1) +1 } \to 0,\quad n\toinf. 
    \end{align}
    Moreover, $G_n\to W$ and $G_n'\to W$ as $n\toinf$, where $W$ is the step graphon with $W(x,y)=1$ for $0\le x < 1/2 \le y\le 1$ and $0\le y < 1/2\le x \le 1$, and $W(x,y)=0$ otherwise. Thus, as $n\toinf$, $\dels(G_n,G_n')\to 0$ but $|h(G_n)-h(G_n')|\to 1/2$. 
Hence, \cite[Theorem 6.1(e)]{BCLSV1} shows that
$h$ is not estimable.
(This counterexample also shows that the condition $r(G_1),r(G_2)\ge \eta$
in \refProp{prop:res-estimability}(e) cannot be weakened to $r(G_j)\ge
\gamma(|G_j|$ for any function $\gamma(n)=o(n)$.)
\end{proof}

\subsubsection{Clustering coefficient}
\label{subsec.cc}
 We apply Proposition \ref{prop:res-estimability} to clustering coefficient, which is an important graph parameter in the study of networks; see~\cite{van2020limit} and references therein for details. For a graph $G$, with $\tinj(\cherry, G)>0$, the clustering coefficient is defined to be
 \begin{equation}\label{eq.cc_defn}
 \cc(G):= \frac{\tinj(\trian, G)}{\tinj(\cherry, G)} = \frac{\sum_{u,v,w} A_{uv}A_{vw}A_{uw}}{\sum_u d_u(d_u-1)}
 \end{equation}
where the sum is over tuples of distinct $i,j,k$.

Let \(\rho:\cG\to [0,\infty)\) be the edge density. Below we show that
clustering coefficient is $\rho$-estimable but not estimable, but before
that, we recall that it was noted above that edge density $\rho$ (as any subgraph
  density) satisfies \ref{Cest1} (immediately by the definitions),
and \ref{Cest2} by 
\cite[Theorems 2.7(a) and 2.9]{BCLSV1}.

\begin{proposition}\label{prop.cc_dense_est}
 The clustering coefficient is not estimable but \(\rho\)-estimable, i.e.\ for all $\eps, \rho>0$, there is a positive integer $k:=k(\eps, \rho)$ such that for every simple graph $G$ with $|G|\ge k$ and $\rho_G\geq \rho$,
    \begin{equation}
    \IP\big(|\cc(G)- \cc(\mathbbm{G}(k,G))|>\eps \big) < \eps.
    \end{equation}
\end{proposition}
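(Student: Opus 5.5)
The proof has two parts: $\rho$-estimability via \refProp{prop:res-estimability}, and non-estimability via a counterexample to condition (e) of \cite[Theorem 6.1]{BCLSV1}, as was done for the Cheeger constant.

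For $\rho$-estimability, recall that $r=\rho$ satisfies \ref{Cest1} and \ref{Cest2}. Also, $\cc$ is bounded with values in $\oi$, provided we set $\cc(G):=0$ when $\tinj(\cherry,G)=0$. So it suffices to verify condition (c) of \refProp{prop:res-estimability}.

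Let $G_n\to W$ with $|G_n|\to\infty$ and $\liminf\rho(G_n)>0$. By \eqref{tid} and the uncoloured counting lemma (\refT{TW}\ref{TW3} with $\kk=1$), we have $\tinj(\trian,G_n)\to t(\trian,W)$ and $\tinj(\cherry,G_n)\to t(\cherry,W)$. The key point is that the denominator stays away from zero. By Cauchy--Schwarz,
\[
t(\cherry,W)=\int_\oi k_W(x)^2\dd x\ge\Bigl(\int_\oi k_W\Bigr)^2=\rho_W^2,
\]
and $\rho_W=\lim\rho(G_n)>0$. Hence $\cc(G_n)\to t(\trian,W)/t(\cherry,W)$. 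The limit exists, so (c) holds, and (a) follows.

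For non-estimability, I need graphs that are close in cut distance but have clustering coefficients far apart.

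First, take $G_n=K_n$, so $\cc(G_n)=1$.

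Next, take $G'_n$ to be the disjoint union of a triangle-free graph $H_n$ with a clique $K_{s_n}$, where $s_n\to\infty$ and $s_n=o(n)$. Choose $H_n$ to be bipartite, with density tending to $0$ but with large degrees; for example, $K_{a_n,a_n}$ with $a_n=o(n)$ and $a_n\gg s_n$.

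Both sequences need to converge to the zero graphon, so instead I would compare two sparse sequences $G_n$ and $G'_n$ that both converge to $W=0$. Take $G_n=K_{s_n}$ plus $n-s_n$ isolated vertices, which gives $\cc(G_n)=1$. Take $G'_n=K_{s_n}\cup K_{a_n,a_n}$ plus isolated vertices, with $s_n\ll a_n=o(n)$. In $G'_n$ the triangle count is $\Theta(s_n^3)$, while the cherry count is at least of order $a_n^3\gg s_n^3$, so $\cc(G'_n)\to0$.

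Both sequences have edge density $o(1)$, so $\dels(G_n,G'_n)\to0$ while $|\cc(G_n)-\cc(G'_n)|\to1$. Therefore (e) of \cite[Theorem 6.1]{BCLSV1} fails, and $\cc$ is not estimable.

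The only delicate step is the positivity of the cherry density in the limit, and this is handled by the Cauchy--Schwarz bound above.
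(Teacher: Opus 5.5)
Your proof is correct and follows the paper's route: you verify (c) of \refProp{prop:res-estimability} using convergence of $\tinj(\trian,\cdot)$ and $\tinj(\cherry,\cdot)$, with Cauchy--Schwarz keeping the cherry density away from zero, and you refute (e) of \cite[Theorem 6.1]{BCLSV1} with two sequences tending to the zero graphon. The differences are minor: the paper applies Cauchy--Schwarz to the degrees of $G_n$ to get $\tinj(\cherry,G_n)\ge\rho_n^2+O(1/n)$, whereas you apply it to $k_W$ and use the convention $\cc:=0$; and the paper's counterexample is a single triangle versus a single cherry plus isolated vertices, whereas your clique versus clique-plus-$K_{a_n,a_n}$ construction also works, though you should delete the abandoned first attempt with $G_n=K_n$ from the write-up.
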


\begin{proof}
Suppose for notational convenience that the graph $G_n$ has $n$ vertices,
and let its density be $\rho_n$. 
Then $\tinj(\cherry,G_n)\geq \rho_n^2 +O(1/n)$. To see this, note that 
\begin{align} \tinj(\cherry, G_n) = \sum_{u}\frac{d_u(d_u-1)}{n(n-1)(n-2)},\end{align}
and also note 
that by Cauchy--Schwarz,
\begin{align}\sum_u d_u^2 \geq \frac{1}{n}(\sum_u d_u)^2 = \frac{1}{n}(2e(G))^2=\rho_n^2n(n-1)^2.\end{align}
Hence, since $2e(G_n)/(n(n-1)(n-2))=\rho_n/(n-2)$, we have
\begin{align} \tinj(\cherry, G_n) \geq \rho_n^2\frac{n-1}{n-2} - \rho_n\frac{1}{n-2}
= \rho_n^2 - \rho_n(1-\rho_n)\frac{1}{n-2}
=\rho_n^2 + O(1/n).
\end{align}

For any sequence $G_n\rightarrow W$ 
(with $|G_n|=n$), where $W$ has density $\rho>0$, there exists $n>n_0$ such that $\rho_n>\rho/2$ for all $n>n_0$. Hence, it follows from the computation
above that the clustering coefficient is defined for all $G_n$ with
$n>n_1$, for some $n_1\ge n_0$. 
Also, since $G_n\rightarrow W$, we have that
$\tinj(\cherry,G_n)\rightarrow t(\cherry,W)$ and
$\tinj(\trian,G_n)\rightarrow t(\trian, W)$. Thus the limit
$\lim_{n\rightarrow\infty} \cc(G_n)$ exists. This confirms part~(c) of
Proposition~\ref{prop:res-estimability}; and hence $\cc(G)$ is
  \(\rho\)-estimable.

 To see that $\cc(G)$ is not estimable in general, note the following example. Take $G_n$ to be the graph consisting of a single triangle $\trian$ and $n-3$ isolated vertices. Then $G_n$ converges to the zero graphon and $\cc(G_n)\rightarrow 1$. On the other hand, let $G_n'$ be the graph consisting of a single cherry $\cherry$ and $n-3$ isolated vertices. Then $G_n'$ also converges to the zero graphon but $\cc(G_n)\rightarrow 0$. Thus as $n\toinf$, $\dels(G_n, G_n')\to 0$ but $|\cc(G_n)-\cc(G'_n)|=1$.
\end{proof}

Given the importance of clustering coefficient in the study of networks, we also emphasise the following sampling result, which is an immediate consequence of Propositions~\ref{prop:res-estimability} and \ref{prop.cc_dense_est}.

\begin{corollary}   
For all $\eps, \rho>0$, there is a positive integer $k:=k(\eps, { \rho})$ such that for every simple graph $G$ with $|G|\ge k$
    and $\rho_G\geq \rho$, $|\cc(G)- \IE[\cc(\mathbbm{G}(k,G))]|\leq \eps$.
\end{corollary}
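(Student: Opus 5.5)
Since the clustering coefficient is \(\rho\)-estimable by \refProp{prop.cc_dense_est}, and the edge density \(\rho\) satisfies \ref{Cest1} and \ref{Cest2}, the plan is to apply \refProp{prop:res-estimability} with \(f=\cc\) and \(r=\rho\). The statement of the corollary is exactly condition (b) of \refProp{prop:res-estimability}, and condition (a) is \(\rho\)-estimability. So the corollary follows from the implication (a)\(\Rightarrow\)(b). The proof will therefore mainly consist of checking that the hypotheses of the proposition are met.

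The first check is that \(f=\cc\) is a bounded graph parameter \(\cG\to[0,\infty)\). Whenever it is defined, \(0\le\cc(G)\le1\), because each ordered triple spanning a triangle is counted among the cherries. Where \(\tinj(\cherry,G)=0\), \(\cc\) is undefined, and we adopt a convention such as \(\cc(G):=0\) to make it total and bounded. The convention has to be compatible with the estimability statement of \refProp{prop.cc_dense_est}. This causes no trouble: for \(G\) with \(\rho_G\ge\rho\) and \(|G|\) large, the bound \(\tinj(\cherry,G)\ge\rho_G^2+O(1/|G|)\) shows \(\cc(G)\) is genuinely defined. For the sampled graph \(\mathbbm{G}(k,G)\), the convention only affects an event of small probability, since by \ref{Cest2} the sample has density at least \(\rho/2\) with high probability.

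The main step is then the implication (a)\(\Rightarrow\)(b) itself, which we take from \refProp{prop:res-estimability}. Its content is elementary, and we can also verify it directly. Given \(\eps\), apply \(\rho\)-estimability with \(\eps/2\) to obtain \(k\). Split the expectation on the event \(\{|\cc(G)-\cc(\mathbbm{G}(k,G))|>\eps/2\}\) and its complement. On the complement the difference is at most \(\eps/2\). On the event itself the difference is at most \(1\) by boundedness, and the event has probability less than \(\eps/2\). Together these give \(|\cc(G)-\E[\cc(\mathbbm{G}(k,G))]|\le\eps\).

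The only point needing care, and the one I expect to be the main obstacle, is the convention for undefined \(\cc\) on sparse samples. It must be fixed consistently so that \(f\) is a genuine bounded parameter on all of \(\cG\) and \refProp{prop.cc_dense_est} still applies. The two observations in the first check handle this.
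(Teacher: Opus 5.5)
Your proposal is correct and follows the paper's route: the paper obtains this corollary as an immediate consequence of Proposition~\ref{prop.cc_dense_est} (which gives $\rho$-estimability, i.e.\ condition (a)) together with the implication (a)$\implies$(b) of Proposition~\ref{prop:res-estimability}, whose proof is the same splitting-of-the-expectation argument you give. Your remark on fixing a bounded convention for $\cc$ when $\tinj(\cherry,G)=0$ makes explicit a detail the paper leaves implicit; any bounded convention works, because $\cc$ is genuinely defined for graphs of large order with edge density bounded below.
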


\subsubsection{Modularity}

Recall the definition of modularity $q^*$ in \refD{D1}. Using Proposition
\ref{prop:res-estimability}, below we show that $\rho$-estimability of
modularity is a direct consequence of Theorem
\ref{thm:limit-identification}.

\begin{theorem}\label{thm.modest}
For all $\eps, \rho>0$, there exists a positive integer $k:=k(\eps, \rho)$ such that for every $G\in \cG$ with $|G|\ge k$ and $\rho_G\geq \rho$,
    \begin{align}
    \IP\big(|q^*(G)- q^*(\mathbbm{G}(k,G))|>\eps \big) < \eps,
    \end{align}
i.e., modularity is $\rho$-estimable.
 \end{theorem}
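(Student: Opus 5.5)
The plan is to apply \refProp{prop:res-estimability} with $r=\rho$, the edge density, and $f=q^*$, and to verify condition (c) there using \refT{thm:limit-identification}. Then (a) holds, which is exactly the claim. The structure is the same as in the proofs of \refProp{prop:cheeger-estimation-alt} and \refProp{prop.cc_dense_est}.

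First, we check the hypotheses on $r$ and $f$. The parameter $f=q^*$ is bounded: it takes values in $[0,1]$, since $q^*(G)\ge 0$ (take the one-part partition) and $q^*(G)<1$. For $r=\rho$, condition \ref{Cest1} holds with $\hat r(W)=\rho_W$. Indeed, $\rho_{W_G}$ equals $2e(G)/|G|^2$, which differs from the normalised density $2e(G)/(|G|(|G|-1))$ by $O(1/|G|)$; whichever normalisation is used, \eqref{r1} holds. Moreover $W\mapsto\rho_W$ is continuous in cut distance by \eqref{m30} and invariance under pullbacks, so \eqref{usc} holds. Condition \ref{Cest2} follows from \cite[Theorems 2.7(a) and 2.9]{BCLSV1}, as already noted before \refProp{prop.cc_dense_est}.

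Next, we verify (c). Let $(G_n)$ satisfy $\liminf r(G_n)=:2\eta>0$, $|G_n|\to\infty$, and $G_n\to W$. Since $\rho_{W_{G_n}}=t(K_2,W_{G_n})\to t(K_2,W)=\rho_W$ and $\rho_{W_{G_n}}=r(G_n)+o(1)$, we get $\rho_W\ge 2\eta>0$. \refT{thm:limit-identification} then gives $q^*(G_n)\to\hqx(W)$, so the limit exists. \refProp{prop:res-estimability} now yields that $q^*$ is $\rho$-estimable. Unpacking \refD{de:est1} on $\cG_{\rho,\rho}$ gives precisely the displayed probability bound for all $G$ with $|G|\ge k(\eps,\rho)$ and $\rho_G\ge\rho$.

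I do not expect a real obstacle here. The only care needed is in the bookkeeping: matching the normalisation of $\rho_G$ with $\rho_{W_G}$, and making sure the restriction $\liminf r(G_n)>0$ transfers to $\rho_W>0$, which is what \refT{thm:limit-identification} requires.
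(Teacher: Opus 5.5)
Your proposal is correct and is essentially the paper's proof: the paper also notes that $q^*$ is bounded and that \refT{thm:limit-identification} verifies condition (c) of \refProp{prop:res-estimability} with $r$ the edge density. Your explicit checks of \ref{Cest1} and \ref{Cest2}, of the normalisation of $\rho_G$, and of $\rho_W>0$ fill in details that the paper leaves implicit.
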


\begin{proof}
    Noting that modularity is a bounded graph parameter, that it is $\rho$-estimable follows from Theorem \ref{thm:limit-identification}, which shows that (c) in Proposition \ref{prop:res-estimability} holds. An example showing that modularity is not estimable is given in~\cite{sampling}. Alternatively, this follows from \refR{Rnot0}, since if modularity were estimable, the corresponding functional for graphons would be continuous everywhere on $\cWo$ as a consequence of~\cite[Theorems 6.1(d)]{BCLSV1}, but this functional would be our graphon modularity $\hqx$ by \refT{thm:limit-identification}, a contradiction. 
\end{proof}

\subsection{Proof of Proposition~\ref{prop:res-estimability}.}
\label{subsec:proof_prop:estimability}

We  begin with a lemma, mentioned in \refR{RWreta}.

\begin{lemma}\label{Lhr}
Let\/ $\hr$ be as in \ref{Cest1}. 
If\/ $G$ is a graph such that $\hr(W_G)>\eta>0$, then 
$W_G\in\cW_{r,\eta}$.
\end{lemma}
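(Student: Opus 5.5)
The natural route is to exhibit an explicit sequence of graphs witnessing $W_G\in\cW_{r,\eta}$: take the blow-ups $G_n:=G[n]$, the $n$-blow-ups defined before \refT{TB}. As noted in the proof of \refT{TB}, the continuous step graphons satisfy $W_{G[n]}=W_G$ for every $n\ge1$. Hence $|G_n|=n|G|\to\infty$, and trivially $G_n\to W_G$, since $\dcut(W_{G_n},W_G)=0$.

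It remains to check that $r(G_n)\ge\eta$, at least for all large $n$. Here we use \eqref{r1} from \ref{Cest1}. Applied to the sequence $(G_n)$ with $|G_n|\to\infty$, it gives
\begin{align*}
r(G_n)-\hr(W_{G_n})=r(G_n)-\hr(W_G)\to0 .
\end{align*}
Since $\hr(W_G)>\eta$ strictly, there is some $n_0$ such that $r(G_n)>\eta$ for all $n\ge n_0$. Discarding the first $n_0$ terms and reindexing yields a sequence $(G'_n)_{n\ge1}$ with $r(G'_n)\ge\eta$ for all $n$, $|G'_n|\to\infty$ and $G'_n\to W_G$. This is exactly the requirement of \refD{DWreta}, so $W_G\in\cW_{r,\eta}$.

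The only points needing care are, first, the identity $W_{G[n]}=W_G$ on $\oi$. It holds because each interval $I_v$ of length $1/|G|$ is split into $n$ equal subintervals, each of which inherits the adjacency of $v$; in particular the copies of $v$ are mutually nonadjacent, matching $W_G=0$ on $I_v\times I_v$. Second, the strict inequality $\hr(W_G)>\eta$ is essential: it is what absorbs the $o(1)$ error in \eqref{r1}. Upper semicontinuity from \ref{Cest1} is not needed for this lemma, and neither is \ref{Cest2}. I expect no real obstacle; the main thing to get right is the bookkeeping of the blow-up identity.
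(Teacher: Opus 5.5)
Your proposal is correct and follows essentially the same route as the paper: take the blow-ups $G[b]$, use $W_{G[b]}=W_G$ together with \eqref{r1} to get $r(G[b])\to\hr(W_G)>\eta$, and discard finitely many terms to obtain a sequence as in \refD{DWreta}. Your extra remarks on the blow-up identity (with the copies of each vertex placed consecutively) and on the role of the strict inequality are accurate.
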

\begin{proof}
Consider the blow-ups $G[b]$ of $G$, and let $b\to\infty$.
We have 
$|G[b]|\to\infty$ as $b\to\infty$, and 
$W_{G[b]}=W_G$ for every $b\ge1$;
hence \eqref{r1} implies $r(G[b])\to\hr(W_G)>\eta$. 
Consequently, $r(G[b])\ge\eta$ for all large $b$, so by omitting a finite number
of the blow-ups, we obtain a sequence $(G[b])_{b\ge b_0}$ that (after
renumbering) is as in \refD{DWreta}, 
and demonstrates that $W_G\in\cW_{r,\eta}$. 
\end{proof}

\begin{proof}[Proof of Proposition~\ref{prop:res-estimability}.]
We follow the proof of \cite[Theorem~6.1]{BCLSV1}, with some modifications.
We start by showing that 
(a)$\implies$(b)$\implies$(c)$\implies$(e)$\implies$(a).

 (a)$\implies$(b). 
Let $Y:=f(G)-f(\mathbbm{G}(k,G))$. 
Suppose that $|f|\leq b/2$, so that $|Y|\leq b$. 
Let $ \eps'=\eps/(1+b)$ and let $k:=k(\eps',\eta)$ be as in Definition
\ref{de:est1}. 
Then, by an easy calculation using (a),
\begin{align}
|\E Y|\le
\E |Y|\le \eps' \P[|Y|\le\eps'] + b\P[|Y|> \eps'] \leq  \eps'(1+b)=\eps.  
\end{align}
 
 (b)$\implies$(c). 
Let $(G_n)_{n\ge 1}$ be as in (c). 
Then there exist $n_0>0$ and $\eta>0$
such that for $n\ge n_0$, 
$r(G_n)\ge \eta$. 
Given $\eps>0$, choose $k=k(\eps/4,\eta)$ such that for every $G$ with
$r(G)\geq \eta$ and $|G|\geq k$, 
we have $|f(G)-\IE[f(\mathbbm{G}(k,G))]|\leq \eps/4$.  
Then choose $n_1\ge n_0$ such that
for $n\ge n_1$, we have $|G_n|\ge k$, and thus
by (b)
\begin{align}\label{oct1}
|f(G_n)-\E[f(\mathbbm{G}(k,G_n))]|\leq \eps/4.  
\end{align}
Let $F$ be a simple graph on $k$ vertices. 
Since $|G_n|\to\infty$,
the assumption $G_n\to W$ as \ntoo{} means that 
$\tind(F,G_n)\to\tind(F,W)$, 
see \cite{BCLSV1} and \cite{Lovasz}
(and our generalisations to coloured graphs in \refL{L1} and \eqref{tw3}).
Thus as $n\toinf$, recalling \eqref{tind} (for the uncoloured case $\kk=1$),
$\IP[\mathbbm{G}(k,G_n)=F]=t_{\mathrm{ind}}(F,G_n)\to t_{\mathrm{ind}}(F,W)$
and, summing over all graphs $F\in\cG$ with $k$ vertices, 
 \begin{align}\label{eq:limf}
    \IE [f(\mathbbm{G}(k,G_n))]
=\sum_{|F|=k} \P[\mathbbm{G}(k,G_n)=F]f(F) 
\to \sum_{|F|=k} t_{\mathrm{ind}}(F,W)f(F)=:a_k. 
\end{align} 
Thus, we may
choose $n_2\ge n_1$ large enough so that  $| \IE
[f(\mathbbm{G}(k,G_n))]-a_k|\leq \eps/4$ for $n\ge n_2$. Then by 
\eqref{oct1} and 
the triangle inequality, $|f(G_n)-a_k|\le \eps/2$ for $n\ge n_2$,
and so $|f(G_n)-f(G_{n'})|\le \eps$ for $n,n'\ge n_2$. 

This shows that the sequence $(f(G_n))_{n\ge1}$, is a Cauchy sequence, and
therefore it converges.

(c)$\implies$(e). 
Suppose that (c) is valid but (e) does not hold. 
Then there exist $\eps>0$ and $\eta>0$ such that for every $n\ge 1$, there are
$G_n,G'_{n}$ such that $|G_n|, |G'_{n}|\ge n$, 
$\dels(G_n,G'_n)\le 1/n$, and
$r(G_n), r(G'_{n})\ge\eta$, 
 but $|f(G_n)-f(G'_n)|>\eps$. By compactness,
there exists a subsequence $(n_i)_{i\ge 1}$ such that   as $i\toinf$,
$n_i\toinf$ and $G_{n_i}\to W$ for some graphon $W$.
So by the triangle inequality, $\dels(G'_{n_i},W)\le n_i^{-1} +
\dels(G_{n_i},W)\to 0$ as $i\toinf$. The sequence $(G_{m_1}, G'_{m_1},
G_{m_2}, G'_{m_2},\dots)$ thus converges to $W$, with  $\liminf_{i\to\infty}
r(G_{n_i})\ge\eta >0$. 
By (c), $|f(G_{n_i})-f(G'_{n_i})|\to 0$ as $i\toinf$, giving
the desired contradiction. 

(e)$\implies$(a). 
Given $\eps>0$ and $\eta>0$, let
$\eps'=\eps'(\eps,\eta/2)$ and $n_0=n_0(\eps,\eta/2)$ be as in (e).  Choose
$k\ge n_0$ large enough so that  
$10/\sqrt{\log_2 k}\le \eps'$ and $\exp\{-k^2/2\log_2 k\}\le \eps/2$. 
Then by
\cite[Theorem 2.9]{BCLSV1}, for $G$ such that $|G|\ge k$, we have
\\
$\dels(G,\mathbbm{G}(k,G)))\le \eps'$ with probability at least
$1-\eps/2$. 

Furthermore, let $k_1=k_1(\min\set{\eps/2,\eta/2},\eta)$ be as in
\ref{Cest2},
and assume, by increasing $k$ if necessary, that $k\ge k_1$.
Then,
if $|G|\ge k$ and $r(G)\ge \eta$,  
by \ref{Cest2} we have
$r(\mathbbm{G}(k,G))\ge\eta/2$ with probability at least $1-\eps/2$.

Let $G$ be a graph with $r(G)\ge\eta$ and $|G|\ge k$. Then, taking $G_1=G$ and $G_2=\mathbbm{G}(k,G)$ in (e), 
we see that with probability at least $1-\eps$,
$G_1$ and $G_2$ satisfy conditions (1), (2) and (3) in (e) (with $\eta/2$), and thus
$|f(G_1)-f(G_2)|\le \eps$. In other words, $f$ is $r$-estimable. 
 
\smallskip
The full equivalence of the conditions now follows from showing that  
(e)$\implies$(d)$\implies$(c).

(e)$\implies$(d). 
For any graphon $W\in \mathcal{W}_{r}$,
let $\eta>0$ be such that $W\in\cW_{r,\eta}$ and choose 
a sequence of graphs $(G_n)_{n\ge1}$
as in \refD{DWreta}.
Then, define $\check{f}(W):=\lim_{n\to\infty}f(G_n)$; the limit exists by (c) (which follows from (e) by the first part of the proof),
and it follows from (e) that if $(G'_n)_{n\ge1}$ is another such sequence of
graphs, then $f(G_n)-f(G_n')\to0$; 
hence the limit does not depend on the choice of $\eta$ and $G_n$. 

To prove the continuity of $\check{f}(W)$
on $\cW_{r,\eta}$, where $\eta>0$,
let $\eps>0$ and let $\eps'=\eps'(\eps,\eta)$ 
be as in (e).
Let $W,W'\in\cW_{r,\eta}$ and assume that $\dcut(W,W')<\eps'$.
Choose sequences $(G_n)_{n\ge1}$ and $(G'_n)_{n\ge1}$ of graphs 
as in \refD{DWreta} with
$G_n\to W$ and $G_n'\to W'$ as \ntoo.
Then $f(G_n)\to f(W)$ and $f(G_n')\to f(W')$ by the construction just made.
Moreover, 
\begin{align}
  \dcut(G_n,G_n')
\le
  \dcut(G_n,W)+  \dcut(W,W') +   \dcut(W',G_n')
\to \dcut(W,W')<\eps',
\end{align}
and thus $\dcut(G_n,G_n')<\eps'$ for all large $n$. 
Hence, (e) implies that 
$|f(G_n)-f(G_n')|<\eps$ for all large $n$, which implies
\begin{align}
|f(W)-f(W')|=
  \lim_\ntoo|f(G_n)-f(G_n')|\le\eps,
\end{align}
which proves the continuity of $\check{f}$ in the cut metric on $\cW_{r,\eta}$.

For the second part in (d), 
suppose that there exists a sequence of simple graphs $(G_n)_{n \geq 1}$
with $|G_n| \to \infty$, $\liminf_{n\toinf} r(G_n) >0$ and $|\check
f(W_{G_n}) - f(G_n)| \not \to 0$. Then there exist $\eta>0$, $\eps >0$ and a
subsequence of $(G_n)_{n\ge 1}$ such that (after relabelling) $r(G_{n})>
\eta$ and $|f(G_{n})- \check f(W_{G_{n}})|\ge \eps$ for every $n$. By
compactness of
$\cW$, we may by selecting a further subsequence assume that
$G_{n}\to W$ for some graphon $W$, 
and it follows from \refD{DWreta} that $W\in\cW_{r,\eta}$;
furthermore, by the definition of $\check f$,
\begin{align}\label{fn2}
\check f(W)=\lim_\ntoo f(G_n).
\end{align}
Moreover, 
by \eqref{r1},  for all large $n$ we have
$\hr(W_{G_n}) > r(G_n) -\eta/2>\eta/2$,
and thus \refL{Lhr} shows that $W_{G_n}\in\cW_{r,\eta/2}$.
Since also $W_{G_n}\to W$, the continuity of $\check f$ on $\cW_{r,\eta/2}$, which was shown above, implies
\begin{align}\label{fn3}
  \check{f}(W_{G_n})\to\check{f}(W).
\end{align}
By comparing \eqref{fn2} and \eqref{fn3}, we find
$\check f(W_{G_{n}})- f(G_{n})\to 0$ as $n\toinf$ -- a
contradiction 
to our assumption
$|f(G_{n})- \check f(W_{G_{n}})|\ge \eps$.

(d)$\implies$(c). Suppose that the sequence of simple graphs $(G_n)_{n\ge1}$ 
is such that $r(G_n)\ge \eta$ for some $\eta>0$ and as $n\toinf$,
$|G_n|\toinf$ and $G_n\to W$ for some 
$W\in\cW$.
Then $W\in \mathcal{W}_{r,\eta}$ by the definition of the latter.
Moreover, it follows from \eqref{r1} that, for all large $n$,
$\hr(W_{G_n}) > \eta/2$, and thus by \refL{Lhr} we have
$W_{G_n}\in\cW_{r,\eta/2}$. 
Since $\check{f}$ is continuous on $\cW_{r,\eta/2}$ by (d), and
we have $\dels(G_n,W)\to 0$, it follows that
$|\check{f}(W_{G_n})-\check{f}(W)|\to 0$. 
Furthermore, by (d),
$\check{f}(W_{G_n})-f(G_n)\to 0$ as
$n\to\infty$. By the triangle inequality, we have $f(G_n)-\check{f}(W)\to 0$
as $n\to\infty$, showing that $\lim_{n\to\infty}f(G_n)$ exists.  
\end{proof}

\needspace{6\baselineskip}


\section*{Acknowledgements}
SJ was supported by the Knut and Alice Wallenberg Foundation and the Swedish
Research Council. 
TL was supported by the Sverker Lerheden Foundation. 
AS and FS were supported by the Wallenberg AI, Autonomous Systems and Software Program WASP. This research was partly done at
the Simons Laufer Mathematical Sciences Institute in Berkeley, California,
where two authors participated in the program 
Probability and Statistics of Discrete Structures, 2025, supported by the
National Science Foundation Grant No. DMS-1928930.

\newpage
\appendix

\section{Uncoloured graphons} 
\label{app:uncoloured-pullback}
We recall some standard facts for uncoloured graphons, see e.g.\
\cite{Lovasz}, \cite{SJ249}.
Let $(\gO_1,\cF_1,\mu_1)$ and $(\gO_2,\cF_2,\mu_2)$ be two probability
spaces. We will often abuse notation and denote these by just $\gO_1$ and
$\gO_2$. 

A mapping $\gf:\gO_1\to\gO_2$ is  \emph{measure-preserving} if it is
measurable and 
$\mu_1(\gf\qw(A))=\mu_2(A)$ for every $A\in\cF_2$ (i.e., for every measurable
$A\subseteq\gO_2$). 

We define, for any functions $f$ on $\gO_2$ and $W$ on $\gO_2^2$,
the \emph{pullbacks}
\begin{align}
  f^\gf(x)&:=f(\gf(x)), \\
 W^\gf(x,y)&:= W(\gf(x),\gf(y)); \label{Wpull}
\end{align}
these are functions on $\gO_1$ and $\gO_1^2$, respectively.

 Consider next graphons on a fixed probability space $\OFmu$, and 
let $\cWo=\cWo\OFmu$ be the set of all uncoloured graphons on $\OFmu$.
Recall that the cut norm $\cnorm{\cdot}$ is defined for integrable functions 
on $\gO^2$ by
\begin{equation}\label{cutnorm1}
 \cnorm{F} := \sup_{S,T}  \Bigabs{\int_{S \times T} F(x,y) \dd\mu(x)\dd\mu(y)},
\end{equation}
where the supremum is taken over all pairs of measurable subsets of $\gO$.
As is well known, we also have
\begin{equation}\label{cutnorm2}
 \cnorm{ F} := \sup_{0\le f,g\le1}
  \Bigabs{\int_{\gO^2} F(x,y)f(x)g(y)\dd\mu(x)\dd\mu(y)},
\end{equation}
taking the supremum over all measurable functions $f,g:\gO\to\oi$.
(There are also other versions of the cut norm that are not identical but
equivalent within constant factors, see \cite[Appendix E]{SJ249}.)
Then $\cnorm{W_1-W_2}$ defines a metric, and thus a topology, on $\cWo$ 
(modulo a.e.\  equivalence).

For two uncoloured
graphons $W_1$ and $W_2$ defined on arbitrary, possibly different,
probability spaces $\gO_1$ and $\gO_2$,  the \emph{cut distance} 
is defined by
\begin{equation}\label{dcut}
 \dcut(W_1,W_2) = \inf_{\gf_1,\gf_2} \cnorm{W_1^{\gf_1} - W_2^{\gf_2}}, 
\end{equation}
where the infimum is taken over all 
probability spaces $\OFmu$ and \mpp{} maps $\gf_1:\gO\to\gO_1$
and $\gf_2:\gO\to\gO_2$,
and $W_i^{\gf_i}$ is the pullback defined in 
\eqref{Wpull};
see 
\cite[Lemma 3.5]{BCLSV1},
\cite[Theorem 8.13]{Lovasz} (both for $\gO=\oi$), and \cite[Theorem 6.9]{SJ249}
for several equivalent versions.
In particular, we use that if $W_1$ and $W_2$ are graphons defined on $\oi$,
then
\begin{align}\label{dcut1}
 \dcut(W_1,W_2) = \inf_{\gf} \cnorm{W_1^{\gf} - W_2},   
\end{align}
taking the infimum over all \mpp{} bijections $\gf:\oi\to\oi$.
It is well known that 
\begin{align}\label{dcut0}
\dcut(W_1,W_2)=0\iff W_1\cong W_2.
\end{align}
It follows that $\dcut$ is well defined also for graph limits; furthermore
$\dcut$ is a metric on the space of graph limits which defines the same 
compact topology as defined in Section \ref{sec:coloured-graphons}
(see e.g.\ \eqref{tw9})
for the uncoloured case $\kk=1$. 

It follows immediately from \eqref{dcut} that for any pullback $W^\gf$ of a
graphon $W$ we have $\dcut(W,W^\gf)=0$ and thus $W\cong W^\gf$.
The converse does not hold, but we have the following deep substitutes by 
\cite{BCL:unique} and \cite{BRmetrics};
see also \cite[Corollary 10.35 and Theorem 13.10]{Lovasz} 
and \cite[Theorems 8.3 and 8.6]{SJ249} for further related results.
\begin{theorem}\label{Tunique0}
  Let\/ $W_1$ and $W_2$ be uncoloured graphons defined on probability spaces
  $\gO_1$ and $\gO_2$ such that $W_1\cong W_2$. 
  \begin{romenumerate}
    
  \item \label{Tunique0BCL}(\cite{BCL:unique})
Then there exists a graphon $W$ defined on a probability space $\OFmu$ and
\mpp{} maps $\gf_j:\gO_j\to\gO$ such that $W_j=W^{\gf_j}$ a.e., $j=1,2$.

\item \label{Tunique0BR}(\cite{BRmetrics})
Suppose also that $\gO_1=\gO_2=\oi$. 
Then there exist \mpp{} maps $\gf_1,\gf_2:\oi\to\oi$ such that
$W_1^{\gf_1}=W_2^{\gf_2}$ a.e.
  \end{romenumerate}
\end{theorem}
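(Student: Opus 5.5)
Both parts are the uncoloured case of results proved elsewhere in the literature, so the plan is to reduce \ref{Tunique0BR} to the argument already given for \refT{Tunique1}, and to construct the common graphon in \ref{Tunique0BCL} from rooted homomorphism densities.

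\emph{Part \ref{Tunique0BR}.} This is \refT{Tunique1} with $\kk=1$, and that proof uses nothing about colours.
\begin{enumerate}
\item Represent $G(\infty,W_j)$ by the arrays $\bigpar{\indic{\xi_{ij}\le W_j(\xi_i,\xi_k)}}$ with i.i.d.\ uniform $\xi_i,\xi_{ik}$, as in \eqref{ok90}.
\item By \refL{L=}\ref{L=oo}, the assumption $W_1\cong W_2$ says that the two jointly exchangeable arrays are equal in distribution.
\item Apply \cite[Theorem 7.28]{Kallenberg:symmetries}. This gives $g_1,\tg_1$ and $g_2,\tg_2$ that are \mpp{} in the last variable, such that the analogue of \eqref{ok93} holds a.e.; remove the global variable $\xi_\emptyset$ by fixing a good value.
\item Take conditional expectations given $(\xi_i,\xi_k)$ exactly as in \eqref{ok95}. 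This yields $W_1^{g_1}=W_2^{\tg_1}$ a.e., so $\gf_1:=g_1$ and $\gf_2:=\tg_1$ work.
\end{enumerate}

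\emph{Part \ref{Tunique0BCL}.} Here the spaces are arbitrary, so Kallenberg's representation does not directly produce maps out of $\gO_j$. I would build the common graphon on a canonical compact space.
\begin{enumerate}
\item Let $\cR$ be the countable set of finite rooted graphs. Define $\Phi_j:\gO_j\to K:=\oi^{\cR}$ by $\Phi_j(x):=\bigpar{t_x(F,W_j)}_{F\in\cR}$, the rooted homomorphism densities with the root fixed at $x$. These maps are measurable.
\item Show that $\Phi_1$ and $\Phi_2$ push $\mu_1$ and $\mu_2$ forward to the same law $\nu$ on $K$. Mixed moments $\int\prod_i t_x(F_i,W_j)\dd\mu_j(x)$ are ordinary homomorphism densities of the graph obtained by gluing the $F_i$ at their roots, so they coincide by \refL{L=}\ref{L=t}. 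Moments determine laws on the compact space $K$.
\item Show that $W_j$ is a.e.\ measurable with respect to $\sigma(\Phi_j)\otimes\sigma(\Phi_j)$; equivalently, $W_j=\E\bigpar{W_j\mid\sigma(\Phi_j)\otimes\sigma(\Phi_j)}$. This is the step I expect to be the main obstacle. First I would prove $\E(W_j\mid\cG\otimes\cG)=W_j$ with $\cG:=\sigma(\Phi_j)$, by showing that $\norm{W_j}_{L^2}^2$ equals the $L^2$ norm squared of the conditional expectation. Both quantities should be expressible through homomorphism densities of graphs built from rooted pieces, and I would use the fact that $\int W_j(x,y)\,t_y(F,W_j)\dd\mu_j(y)$ is itself a coordinate of $\Phi_j(x)$. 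If this $L^2$ identity resists, I would fall back on martingale approximation by the finite $\sigma$-fields generated by finitely many coordinates, in the spirit of \refL{LTD}.
\item Given measurability, write $W_j=U_j\circ(\Phi_j\times\Phi_j)$ a.e.\ for some $U_j$ on $K^2$. Check that $U_1=U_2$ $\nu\otimes\nu$-a.e. Since the $\Phi_j$ are \mpp, it suffices that the integrals of $U_j$ against products $\prod_i t(F_i)(a)\prod_k t(F'_k)(b)$ agree. Each such integral is a homomorphism density of a two-rooted gluing joined by an edge, hence is equal for $j=1,2$.
\item Conclude with $W:=U_1$ on $(K,\nu)$ and $\gf_j:=\Phi_j$, which gives $W_j=W^{\gf_j}$ a.e.
\end{enumerate}
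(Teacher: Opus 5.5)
Your part \ref{Tunique0BR} is fine. It is the proof of \refT{Tunique1} with $\kk=1$, i.e.\ the argument of \cite[Theorem 7.1]{SJ209}. That proof never uses \refT{Tunique0}, so there is no circularity; the paper itself gives no proof of \refT{Tunique0} and only cites \cite{BRmetrics} and \cite{BCL:unique}.

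Part \ref{Tunique0BCL}, however, fails at your step~3, and the claim made there is false, not merely hard. Take $W=W_{C_5}$ on $\oi$ (or the atomic $W'_{C_5}$ of \refE{ex:C5}). For $x\in I_v$, $t_x(F,W)$ equals $5^{1-|F|}$ times the number of homomorphisms $F\to C_5$ sending the root to $v$. By vertex-transitivity of $C_5$ this does not depend on $v$. Hence $\Phi$ is constant, $\sigma(\Phi)$ is trivial, and $\E\bigpar{W\mid\sigma(\Phi)\otimes\sigma(\Phi)}\equiv\rho_W=\frac25\neq W$. Your $L^2$ identity would read $\frac25=\frac4{25}$. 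Correspondingly, $\nu$ is a point mass, every graphon on $(K,\nu)$ is a.e.\ constant, and $W_{C_5}$ is a pullback of none of them. The martingale fallback cannot rescue a false statement.

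Your key observation is that $\int W(x,y)t_y(F,W)\dd\mu(y)$ is again a coordinate of $\Phi(x)$. This only shows that $L^2(\sigma(\Phi))$ is invariant under the operator $T_Wf(x):=\int W(x,y)f(y)\dd\mu(y)$. Hence $T_W=PT_WP+(I-P)T_W(I-P)$, where $P$ is the conditional expectation onto $L^2(\sigma(\Phi))$. Your $L^2$ identity is then equivalent to $(I-P)T_W(I-P)=0$, and no moment information forces this; in the example it is the whole non-constant part of $T_W$.

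Structurally, $t_x(F,W_j)$ is invariant under every \mpp{} automorphism $\alpha$ of $W_j$. So a function of $(\Phi_j(x),\Phi_j(y))$ is invariant under $(x,y)\mapsto(x,\alpha(y))$. Step~3 would therefore force $W_j(x,\alpha(y))=W_j(x,y)$ a.e.\ for every such $\alpha$, which the rotation of $C_5$ violates. Any map into a fixed space like $K$ built from single-rooted densities has this defect.

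The missing idea is to quotient by a finer, $W$-dependent map, such as $x\mapsto W_j(x,\cdot)\in L^1(\gO_j)$. For this map $W_j$ is indeed a.e.\ measurable with respect to the product of the generated $\sigma$-fields, which yields twin-free representatives. These live on different spaces, though, and identifying them when $W_1\cong W_2$ is exactly the deep content of \cite{BCL:unique}. One way is to recover neighbourhood-type distances between sampled points from $G(\infty,W_j)$ and match the resulting metric measure spaces. For the paper's purposes one can avoid \ref{Tunique0BCL} altogether via the chain of four pullbacks in \cite[Theorem 8.3]{SJ249}, as noted in the proof of \refT{TMC2}.
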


\bigskip
\bigskip

\section{Notation Table}\label{app:notation-table}
\begin{center}
\renewcommand{\arraystretch}{1.15}
\small
\begin{longtable}{>{\raggedright\arraybackslash}p{0.3\textwidth} >{\raggedright\arraybackslash}p{0.80\textwidth}}      
\midrule
\bottomrule
\endlastfoot
\multicolumn{2}{l}{\textbf{General}}\\ 

$[n]$ & $\set{1,\dots,n}$ if $n\in\bbN$; $[\infty]:=\bbN$.\\
$[n]\pairs$ & The set of unordered pairs $\set{i,j}$ with $i\neq j\in[n]$.\\
$(\Omega,\mathcal F,\mu)$ & Underlying probability space.\\
$\kappa$ & Number of colours.\\
$\gl$ & Lebesgue measure.\\
$\innprod{v,w}$ & inner product of vectors in $\bbR^\kk$.\\
$\indic{A}$ or $\sindic{A}$ & The indicator of an event (or a set) $A$.\\

\addlinespace
\multicolumn{2}{l}{\textbf{Basic graph notation}}\\

$G,G_n$ & Finite simple graphs; $(G_n)$  denotes a graph sequence.\\
$V(G), E(G)$ & Vertex set, edge set of $G$.\\
$d_u$, $d_u(G)$ & Degree of vertex $u$ in a finite graph $G$.\\
$d_{\text{min}}, \ d_{\text{max}}$ & Minimum and maximum degrees.\\
$A_{u,v}$ & Adjacency matrix.\\
$e_G(S,T)$ & Number of ordered pairs $(i,j)\in S\times T$ such that $ij\in E(G)$.\\
$e_{\cA}(G)$ & Edges inside partition $\cA$ in $G$.\\
$\vol(A)$ & Volume of $A$, i.e.\ $\sum_{u\in A} d_u$.\\
$\rho_G$ & Edge density of a graph:
$\rho_G=e(G)\binom{|G|}{2}^{-1}$.\\
$G[b]$ & $b$-blow-up of $G$.\\
$\Gnp$ & Erd\H{o}s-R\'enyi random graph on $n$ vertices, with edge probability $p$. \\

\addlinespace
\multicolumn{2}{l}{\textbf{Coloured graphs and spaces of coloured graphs}}\\ 

$\cL_n$ & The finite set of all $\kk$-coloured graphs with vertex set
$[n]$.\\

$\cL$ & The countable set of all finite labelled $\kk$-coloured graphs.\\

$\cL^{\fra}_n$ & The set of all fractionally $\kk$-coloured graphs with vertex set
$[n]$. \\

$\cU_n$ & The set of all unlabelled $\kk$-coloured graphs with 
$n$ vertices.\\

$\cU$ & The countable set of all
finite unlabelled $\kk$-coloured graphs.\\

$\KK_{2;\ell\glm}$ & The coloured graph with 
one edge and two vertices coloured $\ell$ and $\glm$.\\
$\GGx(n,\WW)$, $\GG^{\fra}(n,\WW)$ & Pure coloured $\WW$-random graph and fractional version -- see page~\pageref{def.WW-random-graph}. \\
\addlinespace
\multicolumn{2}{l}{\textbf{Graph distances and norms}}\\
$d_\square(G,G')$ & Cut distance between two labelled graphs on the same vertex set.\\
 $\delta_\square(W_1,W_2)$\  & Cut metric / cut distance notation used for graphons. \\
$\|W\|_\square$ & Cut norm of a graphon or integrable kernel.\\
$\|\cdot\|_{L^1}$ & $L^1$ norm.\\
$\|\cdot\|_\infty$ & $L^\infty$ norm / sup norm.\\

\addlinespace

\multicolumn{2}{l}{\textbf{Graphons and graphon quantities}}\\
$W$ & A graphon, i.e.\ a symmetric measurable function taking values in $[0,1]$.\\
$W_G$, $W'_G$ & Graphons associated with a finite graph $G$, see Ex.~\ref{EG1}.\\
$W^\varphi$ & Pullback of a graphon $W$ under a measure-preserving map $\varphi$.\\
$\mathcal W$ & Space of graphons.\\
$k(x)$ & Degree function of a graphon:
$k(x)=\int W(x,y)\,dy$ -- see~\eqref{m1}.\\
$\rho_W$ & Edge density of a graphon: $\rho_W
=\iint_{\gO^2}W(x,y)\dd\mu(x)\dd\mu(y)$
-- see~\eqref{m2}\\
$\chi$ &Pure colouring: a measurable mapping $\gO\to[\kk]$ -- see Defn~\ref{def:pure-mixed}. \\
$\Psi$ & Mixed colouring: a measurable mapping $\gO\to\gD_\kk$ -- see Defn~\ref{def:pure-mixed}.\\
$\gD_\kappa$ & Set of probability measures on $[\kappa]$ -- see Defn~\ref{def:pure-mixed}.\\

\addlinespace

\multicolumn{2}{l}{\textbf{Finite-graph   maxcut, min $k$-section, Cheeger constant and modularity}}\\
$\mathcal A$ & A partition of the vertex set of a graph.\\
$\Phi=(\Phi_1,\dots,\Phi_\kappa)$ & Fractional $\kappa$-partition of $V(G)$.\\
$C_k(G, \chi)$, $C_k(G, \Phi)$ & Colouring score or cut-value of $\chi$, $\Phi$ -- see~\eqref{MC2G}, \eqref{MCGx}. \\
$\MC(G), \MC_\fra(G)$ &  Maxcut, and fractional maxcut of a graph -- see~\eqref{MC1pG},\eqref{MCGmx}.\\
$\MS_k(G)$ & Min $k$-section of a graph -- see~\eqref{mink}.\\
$h(G)$ & Cheeger constant of $G$ -- see~\eqref{che-G}\\
$q(G,{\mathcal A})$
& Modularity score of partition $\mathcal A$ on graph $G$ -- see Def~\ref{D1a}.\\
$q^*(G)$ & Modularity of $G$; max of $q(G,\cA)$ over all $\cA$ -- see Def~\ref{D1a}.\\
$q(G,\Phi)$ & Fractional modularity score of $\Phi$ on $G$ -- see~\eqref{def:fractional-modularity}.\\
$q_\fra^*(G)$ & Fractional modularity of $G$; max of $q(G,\Phi)$ over all $\Phi$
-- see Def~\ref{def:fractional-modularity}.\\

\addlinespace

\multicolumn{2}{l}{\textbf{Graphon   maxcut, min $k$-section, Cheeger constant and modularity}}\\
$C_\kappa(W, \chi)$, $C_\kappa(W, \Phi)$ & Colouring score or cut-value of $\chi$, $\Phi$ -- see~\eqref{MC2W}, \eqref{MCWx2}. \\
$\MC_\pure(W)$ & Pure maxcut of a graphon $W$; supremum over $\C_2(W,\chi)$ -- see~\eqref{MC1pW}.\\ 
$\MC(W)$, $\MC_\mix(W)$ & Maxcut of a graphon $W$; supremum over $\C_2(W,\Phi)$ -- see~\eqref{MCW0},~\eqref{MCWmx}.\\ 
$\MS_k(W)$, $\MS_{k,\mathrm{mix}}(W)$ & Mixed min $k$-section of a graphon  -- see~\eqref{msk22},~\eqref{msk21}\\
$\MS_{k,\text{pure}}(W)$ & Pure min $k$-section of a graphon -- see~\eqref{MS2}.\\
$h(W)$, $h_{\mathrm{mix}}(W)$
& Mixed graphon Cheeger constant; infimum over admissible
mixed two-colourings -- see~\eqref{che3}.\\
$h_{\mathrm{pure}}(W)$
& Pure graphon Cheeger constant; infimum over admissible
pure two-colourings -- see just below~\eqref{che3}.\\
$\hat q(W,\chi)$ & Pure graphon modularity score under a pure colouring $\chi$ -- see~\eqref{m3pure}\\
$\hat q(W,\Psi)$ & Mixed graphon modularity score under a mixed colouring $\Psi$ -- see~\eqref{m3}.\\
$\hat q^*_{\mathrm{pure}}(W)$ & Optimal pure graphon modularity -- see~\eqref{mod4}.\\
$\hqx(W)$, $\qhatstarmix(W)$ & Optimal mixed graphon modularity -- see~\eqref{mod3}.\\
$\hat q_\kappa^*(W)$, $\hqx_{\kappa,\mathrm{pure}}(W)$ & Optimal mixed, pure graphon modularities with at most $\kappa$ colours/communities -- see \eqref{mod2},\eqref{mod4}.\\

\addlinespace
\multicolumn{2}{l}{\textbf{Restricted estimability}}\\  
$\mathbbm{G}(k,G)$ & Random induced subgraph of $G$ on $k$ vertices -- see page~\pageref{def.G_vertex_sampling}.\\
$r$, $\hat r$
& Restricting graph parameter and its graphon counterpart.\\
$\cG_{r,\eta}$
& Graphs satisfying $r(G)\ge\eta$.\\
$\cW_{r,\eta}$, $\cW_r$
& Graphon limits of sequences in $\cG_{r,\eta}$, 
$\cW_r=\bigcup_{\eta>0}\cW_{r,\eta}$.\\
\end{longtable}
\end{center}

\newcommand\AMS{Amer. Math. Soc.}
\newcommand\Springer{Springer-Verlag}
\newcommand\Wiley{Wiley}

\newcommand\bvol{\textbf}
\newcommand\jour{\emph}
\newcommand\book{\emph}
\newcommand\inbook{\emph}
\def\no#1#2,{\unskip#2, no. #1,} 

\newcommand\arXiv{\arxiv}
\newcommand\arxiv[1]{Preprint: \url{https://arxiv.org/pdf/#1}}


\begin{thebibliography}{99}

\bibitem[ADW25]
{ADW25}
Romain Abraham, Jean-François Delmas and  Julien Weibel.
Probability-graphons: limits of large dense weighted graphs.
\emph{Innov. Graph Theory} \textbf2 (2025), pp.\ 25--117.

\bibitem[AHR26]
{AHR26} Siva Athreya, Frank den Hollander and Adrian R\"ollin. Co-evolving vertex and edge dynamics in dense graphs.  Preprint: \url{https://arxiv.org/pdf/2504.06493v2}, 2026.

\bibitem[AKK99]{AKK99} Sanjeev Arora, David Karger and Marek Karpinski. Polynomial time approximation schemes for dense instances of NP-hard problems. \emph{J.\ Comput.\ System Sci.} {\bf 58} (1999), no.\ 1, pp.~193--210.

\bibitem[Aldo81]{Aldous}
David J.  Aldous.
Representations for partially exchangeable arrays of random variables. 
\jour{J. Multivariate Anal.} {\bf 11} (1981), no.\ 4, pp.\ 581--598.  

\bibitem[AM85]{alon1985lambda1}
Noga Alon and Vitali ~D.~Milman.
$\lambda_1$, isoperimetric inequalities for graphs, and superconcentrators,
\emph{J.\ Combin.\ Theory Ser.\ B} \textbf{38} (1985), no.~1, pp.~73--88.


\bibitem[AS05]{AS05} Noga Alon and Asaf Shapira. Every monotone graph property is
testable. 
\emph{STOC'05: Proceedings of the thirty-seventh annual ACM symposium on Theory of computing}, 2005, pp.~128--137.




\bibitem[BCL10]{BCL:unique} 
Christian~Borgs, Jennifer T.~Chayes \& L\'aszl\'o Lov\'asz,
Moments of two-variable functions and the uniqueness of graph limits.  
\emph{Geom. Funct. Anal.}  {\bf 19} (2010),  no. 6, pp.\ 1597--1619.

\bibitem[BCLSSV06]{BCLSSV} Christian Borgs, Jennifer T. Chayes, L{\'a}szl{\'o} Lov{\'a}sz,
Vera T. S\'os, Bal\'azs Szegedy and Katalin Vesztergombi. 
Graph limits and parameter testing. 
In \emph{STOC '06: Proceedings of the thirty-eighth annual ACM symposium on Theory of Computing}, 2006, pp.~261--270.

\bibitem[BCLSV08]
{BCLSV1}
Christian Borgs, Jennifer T. Chayes, L{\'a}szl{\'o} Lov\'asz, Vera T. S\'os
and Katalin Vesztergombi. 
Convergent sequences of dense graphs I: Subgraph
frequencies, metric properties and testing.
\emph{Adv.\ Math.} {\bf 219} (2008), pp.\ 1801--1851.

\bibitem[BCLSV12]{BCLSV2}
Christian Borgs, Jennifer T. Chayes, L{\'a}szl{\'o} Lov{\'a}sz, Vera T. S\'os
and Katalin Vesztergombi. 
Convergent sequences of dense graphs II. Multiway cuts and statistical
physics. 
\emph{Ann.\ Math.} \textbf{176} (2012), no. 1, pp.\ 151--219.

\bibitem[Berk66]{Berk}
Robert H. Berk,
Limiting behavior of posterior distributions when the model is incorrect.
\emph{Ann.\ Math.\ Statist.} \textbf{37} (1966), no.\ 1, pp.\ 51--58.


\bibitem[BKK12]{BKK12} Marianna Bolla, Tam\'as K\'oi and Andr\'as Kr\'amli. Testability of minimum balanced multiway cut densities. \emph{Discrete Appl.\ Math.} {\bf 160} (2012), pp.\ 1019--1027.

\bibitem[BR09]{BRmetrics}
 B\'ela Bollob\'as and Oliver Riordan.
 Metrics for sparse graphs. 
\emph{Surveys in Combinatorics 2009}, LMS Lecture Notes Series 365, 
Cambridge Univ. Press, 2009, pp.~211--287.
 
\bibitem[Co80]{Cohn} 
Donald L. Cohn.
\emph{Measure Theory},
Birkh\"auser, Boston, 1980.

\bibitem[DGKR16]{Diao}
Peter Diao, Dominique Guillot, Apoorva Khare and Bala Rajaratnam.
Model-free consistency of graph partitioning.
\arxiv{1608.03860}, 2016.

\bibitem[DJ08]{SJ209}
Persi Diaconis and Svante Janson.
Graph limits and exchangeable random graphs.
\emph{Rendiconti di Matematica (7)},
\textbf{28} (2008), pp.\ 33--61.

\bibitem[DS58]{DS}
Nelson Dunford and  Jacob T. Schwartz.
\emph{Linear Operators. I. General Theory}.
Interscience Publishers, New York, 1958.


\bibitem[DT11]{dinh2011finding}
	Thang N.~Dinh and My T.~Thai, {Finding community structure with performance guarantees in scale-free networks},
	\emph{Privacy, Security, Risk and Trust (PASSAT) and 2011 IEEE Third International Conference on Social Computing (SocialCom), 2011 IEEE Third International Conference on},
	{2011}.

\bibitem[Eng89]{Engelking}
Ryszard Engelking.
\emph{General Topology}. 
2nd ed.,
Heldermann Verlag, Berlin, 1989. 

\bibitem[GGR98]{GGR98} Oded Goldreich, Shafi Goldwasser and Dana Ron. Property testing and its connection to learning and approximation. \emph{Journal of ACM} {\bf 45} (1998), no.\ 4, pp.~653--750.

\bibitem[GP26]{GrebikP}
Jan Greb{\'i}k and Oleg Pikhurko.
Large deviation principles for graphon sampling.
\emph{Electron.\ J.\ of Probab.} \textbf{31} (2026), no.\ 59, 37 pp.

\bibitem[Gut13]{Gut}
Allan Gut.
\emph{Probability: A Graduate Course}.
2nd ed., Springer, New York, 2013. 

\bibitem[HHLS20]{van2020limit}
Remco van der Hofstad, Pim van der Hoorn, Nelly Litvak and Clara Stegehuis.
{Limit theorems for assortativity and clustering in null models for scale-free networks.}
\emph{Adv.\ Appl.\ Probab.},
\textbf{52} (2020), no.\ 4, pp.\ 1035--1084.

\bibitem[Hoeff63]{Hoeff63} Wassily Hoeffding. Probability inequalities for
  sums of bounded random variables. \emph{J.\ Amer.\
    Statist.\ Assoc.}
\textbf{58} (1963), no.\ 301, pp.\ {13--30}.


\bibitem[Hoov79]{Hoover}
Douglas N. Hoover.
Relations on Probability Spaces and Arrays of Random Variables.
Preprint, Institute for Advanced Study, Princeton, NJ, 1979.


\bibitem[Jans08]
{SJ213}
Svante Janson,
Connectedness in graph limits. 
\arXiv{0802.3795}

\bibitem[Jans09]{SJ210}
Svante Janson.
Standard representation of multivariate functions on a general probability
space. 
\emph{Electron.\ Comm.\ Probab.} \textbf{14} (2009), no.\ 34,  343--346. 

\bibitem[Jans13]{SJ249}
Svante Janson.
Graphons, cut norm and distance, couplings and rearrangements.
\emph{NYJM Monographs}, \textbf4, 2013.

\bibitem[JLR00]{JLR}  
Svante Janson, Tomasz \L uczak and Andrzej Ruci\'nski.
\emph{Random Graphs}.
Wiley-Interscience, New York, 2000.


\bibitem[Kall02]{Kallenberg}
Olav Kallenberg:
\book{Foundations of Modern Probability.}
2nd ed., Springer, New York, 2002. 


\bibitem[Kall05]{Kallenberg:symmetries}
Olav Kallenberg.
\book{Probabilistic Symmetries and Invariance Principles}. 
Springer, New York, 2005.

\bibitem[KJS22]{Klimm}
Florian Klimm,  Nick S. Jones and  Michael T. Schaub.
Modularity maximization for graphons.
\emph{SIAM J. Appl. Math.} {\bf 82} (2022), no.\ 6, pp.\ 1930--1952.

\bibitem[KM24]{Khetan}
Abhishek Khetan and Mahan Mj.
Cheeger inequalities for graph limits.
\emph{Ann. Inst. Fourier (Grenoble)} {\bf 74} (2024), no.\ 1, pp.\ 257--305.


\bibitem[LMS24]{modexpansion}
Baptiste Louf, Colin McDiarmid and Fiona Skerman.
Modularity and graph expansion.
In \emph{15th Innovations in Theoretical Computer Science Conference
(ITCS 2024)}, Leibniz Int. Proc. Inform., vol.~287,
Schloss Dagstuhl--Leibniz-Zentrum f\"ur Informatik, 2024, pp.~78:1--78:21.


\bibitem[Lov\'a12]{Lovasz}
L\'aszl\'o Lov\'asz.
\emph{Large Networks and Graph Limits}.
American Mathematical Society, Providence, RI, 2012.


\bibitem[MS24]{sampling} 
Colin McDiarmid and Fiona Skerman. Modularity and partially observed graphs. 
Preprint: \url{https://arxiv.org/abs/2112.13190}, 2024.

\bibitem[MS25]{modchapter} 
Colin McDiarmid and Fiona Skerman. Modularity and random graphs. 
Preprint: \url{https://arxiv.org/abs/2509.22066}, 2025.

\bibitem[NG04]{NG04}
Mark E.~J.~Newman and Michelle Girvan.
Finding and evaluating community structure in networks.
\emph{Phys. Rev. E} \textbf{69} (2004), 026113.

\bibitem[POM09]{POM09}
Mason A.~Porter, Jukka-Pekka Onnela and Peter J.~Mucha.
Communities in networks.
\emph{Notices Amer. Math. Soc.} \textbf{56} (2009), no.~9,
pp.~1082--1097, 1164--1166.

\bibitem[Rudi91]{Rudin}
Walter Rudin.
\emph{Functional Analysis}, 2nd ed.
McGraw-Hill, Inc., New York, 1991. 

\bibitem[Ryan02]{Ryan}
Raymond A. Ryan.
\emph{Introduction to Tensor Products of Banach Spaces}.
Springer-Verlag London, Ltd., London, 2002. 

\bibitem[Sker16]{thesis} 
Fiona Skerman. 
Modularity of Networks. Doctoral Thesis, Oxford University, 2016. 



\end{thebibliography}
\end{document}